\theoremstyle{plain}
\newtheorem{theorem}{Theorem}
\numberwithin{theorem}{section}
\newtheorem{lemma}[theorem]{Lemma}
\newtheorem{proposition}[theorem]{Proposition}
\newtheorem{corollary}[theorem]{Corollary}
\newtheorem{conjecture}[theorem]{Conjecture}
\theoremstyle{definition}
\newtheorem{example}[theorem]{Example}
\newtheorem{question}[theorem]{Question}
\newtheorem{remark}[theorem]{Remark}
\newcommand{\C}{{\mathbb C}}
\newcommand{\R}{{\mathbb R}}
\newcommand{\Z}{{\mathbb Z}}
\newcommand{\Q}{{\mathbb Q}}
\renewcommand{\P}{{\mathbb P}}
\newcommand{\s}{{\mathbb S}}
              \newcommand{\M}{{\mathcal M}}
\begin{document}
\title{Small symplectic $4$-manifolds via contact gluing and some applications}
\author{Weimin Chen}
\subjclass[2000]{57K43, 57K33, 53D12, 14H99}
\keywords{Small $4$-manifolds, symplectic capping, $\s^1$-invariant contact structures, singular Lagrangian $\R\P^2$, rational unicuspidal curves}
\thanks{}
\date{\today}
\maketitle

\begin{abstract}
We introduce a streamlined procedure for constructing small symplectic $4$-manifolds via contact gluing, based on a technique invented by David Gay around 2000. We give several applications of this procedure, which include results concerning embeddings of singular Lagrangian $\R\P^2$s, or embeddings of lens spaces as a hypersurface of contact type, in small rational surfaces such as 
$\C\P^2\#\overline{\C\P^2}$ and $\s^2\times \s^2$, as well as results on the uniqueness or classification of $\Q$-homology ball symplectic fillings. Further work on the classification of singular Lagrangian $\R\P^2$s is suggested. Moreover, our investigation on the $\s^1$-invariant contact structures suggests an interesting and fairly strong upper bound for the self-intersection of a rational unicuspidal curve with one Puiseux pair $(p,q)$ in any algebraic surface (the bound depends only on the values $p,q$), and for the symplectic version, we prove the existence of an ``optimal" symplectic rational unicuspidal curve in a rational $4$-manifold which realizes the upper bound for any given Puiseux pair $(p,q)$. Our results also suggest a revisit of the ``symplectic divisorial capping" problem first considered by Li and Mak. Further applications of the techniques developed in this paper hinge upon better understandings on the tightness and fillability criterions of $\s^1$-invariant contact structures as well as their (small) symplectic fillings. 
\end{abstract}
\tableofcontents

\section{Introduction and the main results}

A compact symplectic $4$-manifold $(W,\omega)$ is said to have a {\bf convex contact boundary} 
if there is an outward-pointing normal vector field $v$ along its boundary $M=\partial W$ such that
$L_v \omega =\omega$. In this case, there is a canonical contact structure $\xi:=\ker (i_v\omega)|_M$
on $M$, which is uniquely determined up to a contact isotopy, as the normal vector fields $v$, though not unique, form a convex, thus path-connected, space. If in the above definition, the normal vector field $v$ is inward-pointing, then the compact symplectic $4$-manifold $(W,\omega)$ is said to have a
{\bf concave contact boundary}. One continues to have a canonically defined contact structure 
$\xi:=\ker (i_v\omega)|_M$ which is unique up to a contact isotopy, however, a crucial difference in the concave case is that the orientation of $M=\partial W$ as the boundary of $W$ is the opposite of the 
canonical orientation of the contact manifold $(M,\xi)$. Finally, in either the convex or the concave case, it is well-known that the symplectic structure $\omega$ near the boundary of $W$ is completely determined by the 
contactomorphism class of the contact manifold $(M,\xi)$. Consequently, if $(W,\omega)$ is a
compact symplectic $4$-manifold with a convex contact boundary $(M,\xi)$ and 
$(W^\prime,\omega^\prime)$ is a compact symplectic $4$-manifold with a concave contact 
boundary $(M^\prime,\xi^\prime)$, and moreover, there is a contactomorphism $\phi:(M,\xi)
\rightarrow (M^\prime,\xi^\prime)$ (i.e., $\phi_\ast(\xi)=\xi^\prime$), then one can form a closed 
symplectic $4$-manifold $X:=W\cup_\phi W^\prime$ by gluing symplectically $(W,\omega)$
and $(W^\prime,\omega^\prime)$ along their contact boundaries via $\phi:(M,\xi)
\rightarrow (M^\prime,\xi^\prime)$. This procedure is called {\bf contact gluing}, and our goal is to construct symplectic $4$-manifolds with a small topology (e.g. small Betti numbers), preferably even with an exotic smooth structure, via contact gluing (cf. \cite{C}). 

The $4$-manifolds $(W,\omega)$ with convex contact boundary come in natural examples, such
as Stein surfaces, while the $4$-manifolds $(W^\prime,\omega^\prime)$ with concave contact boundary do not, so they need to be constructed. Historically and in the literature, the $4$-manifold 
$(W^\prime,\omega^\prime)$ with concave contact boundary $(M^\prime,\xi^\prime)$ is often called a {\bf symplectic cap} of the contact manifold $(M^\prime,\xi^\prime)$, while $(W,\omega)$ is called a (strong) {\bf symplectic filling} of the convex contact boundary $(M,\xi)$. The cap served as a technical device which allows one to embed a fillable contact $3$-manifold into a closed symplectic $4$-manifold as a hypersurface of contact type, so that the $4$-manifold theory tools such as Seiberg-Witten theory or Gromov-Taubes theory can be used to study the contact $3$-manifolds
(see \cite{PM0, C00} for some earlier applications of this idea in the Stein fillable case). The most general existence theorem for symplectic caps was due to Eliashberg \cite{Eli} and 
Etnyre \cite{Etn} independently, which at the time was the missing ingredient in the solution of Property P Conjecture in knot theory \cite{KM}. We should point out that the symplectic caps constructed in these contexts often have a very large, and uncontrollable, topology thus are unsuitable for the purpose of constructing small symplectic $4$-manifolds with interesting geometric or topological properties. 

Around 2000, Gay \cite{G} invented a technique which has the effect of ``turning a convex
contact boundary into a concave contact boundary". More concretely and phrasing it slightly differently, what Gay introduced in \cite{G} roughly goes as follows. Suppose $(W,\omega)$ is a compact symplectic $4$-manifold with a convex contact boundary $(M,\xi)$. Assume $(B,\pi)$ is a rational open book on $M$ (cf. \cite{BEV}) such that the contact structure $\xi$ is supported by $(B,\pi)$. Then if we attach a symplectic $2$-handle to $(W,\omega)$ along each of the binding components $B_i$ of the rational open book $(B,\pi)$, with a framing $F_i>0$ relative to the page framing determined by 
$(B,\pi)$, we obtain a compact symplectic $4$-manifold $(W^\prime,\omega^\prime)$ with a concave contact boundary. It is clear that in this construction, the ``size" of the topology of 
$(W^\prime,\omega^\prime)$, which has a concave contact boundary, can be easily controlled by limiting the size of the $4$-manifold $(W,\omega)$ and the number of binding components of the rational open book $(B,\pi)$. 

There are however a number of obvious obstacles which prevented Gay's technique from becoming an effective tool in constructing closed symplectic $4$-manifolds with interesting geometric or topological significance. In this paper we introduce some ideas which will help remove these obstacles. See also recent work \cite{EMPR}.

More concretely, we shall restrict to the class of closed, oriented $3$-manifolds which are equipped with a smooth $\s^1$-action, typically a Seifert fibered $3$-manifold. It turns out that on such a $3$-manifold, rational open books which have a periodic monodromy and are compatible with the $\s^1$-action exist in abundance, and can be very easily constructed via a simple procedure. Furthermore, the contact structures supported by such a rational open book belong to a more restricted class, i.e., those which are invariant under the $\s^1$-action on the $3$-manifold. It turns out that this extra symmetry property of the contact structures allows for a very easy identification of the $\s^1$-equivariant contactomorphism class of the $\s^1$-invariant contact structure --- simply look at the location where the $\s^1$-invariant contact structure fails to be transversal to the orbits of the 
$\s^1$-action, and this in turn gives rise to much easier conditions for the tightness of the contact structures. As a result, these advantages, by working with a more restricted class of objects, allow us to establish a streamlined procedure for constructing symplectic caps with very small topology, thus a streamlined procedure for constructing small, closed symplectic $4$-manifolds via contact gluing. 

The technical materials concerning $\s^1$-invariant contact structures, rational open books with
periodic monodromy, as well as Gay's construction in \cite{G} applied to this context, are developed in Sections 2 through 5. We shall refer the readers to these sections for a summary of results or more detailed discussions. 
Furthermore, the table of contents should give an indication as how the paper is structured. In the remaining part of this introduction, we shall describe several applications of this streamlined procedure, stating several theorems, corollaries, and even questions and conjectures, which have interesting geometric or topological implications. 

\vspace{3mm}

Let $M_0=\s^3/Q_8$, where $\s^3$ is given with the non-standard orientation, and $Q_8$ is the subgroup of order $8$ of $SU(2)$ generated by the quaternions $i,j,k$, acting complex linearly 
on $\s^3$. As an oriented $3$-manifold, $M_0$ can be easily identified with the small Seifert 
space $M((2,1),(2,-1), (2,-1))$. It is well-known that $M_0$ is also a non-orientable circle bundle 
over the non-orientable surface $\R\P^2$ with Euler number $-2$. We let $W_0$ denote the associated 
disk bundle, which has a symplectic structure with convex contact boundary 
(e.g., it is known that $W_0$ is a Stein domain, cf. \cite{Go}). 

Let $V_0$ be the compact, oriented smooth $4$-manifold with boundary, which is a handlebody 
consisting of one $0$-handle, two $2$-handles attached along a pair of parallel copies of unknots 
with linking number $2$, with framings $4$ and $0$ respectively (relative to the zero-framing). 
Now we state our first theorem. 

\begin{theorem}
The $4$-manifold $V_0$ admits a symplectic structure with a concave contact boundary 
$(M_0,\xi_{inv})$, where $\xi_{inv}$ is a certain $\s^1$-invariant, tight contact structure on $M_0$. 
Furthermore, the following properties hold true:
\begin{itemize}
\item [{(1)}] The symplectic $4$-manifold $V_0$ contains a pair of embedded symplectic spheres 
$S_1,S_2$ of self-intersection $4$ and $0$, which intersects at a single point with a tangency of order two.\footnote{We may assume that the pair of spheres $S_1,S_2$ satisfy the following technical condition: at the intersection point there exists a local holomorphic chart $(z_1,z_2)$ with respect to which the symplectic form is K\"{a}ler, such that $S_1,S_2$ are given by $z_2=0$ and $z_2=c z_1^2+\cdots$ respectively. Throughout this paper, this condition is always assumed when we speak of such a pair of symplectic spheres.}
\item [{(2)}] For any symplectic structure on $W_0$ with convex contact boundary 
$(M_0,\xi_{fil})$, where $\xi_{fil}$ is a fillable contact structure on $M_0$, there exist exactly two 
contactomorphisms up to a smooth isotopy, $\Phi_1,\Phi_2: (M_0,\xi_{inv})\rightarrow (M_0, \xi_{fil})$, 
such that the closed symplectic $4$-manifolds obtained via contact gluing, i.e., $V_0\cup_{\Phi_1} W_0$ and $V_0\cup_{\Phi_2} W_0$, are diffeomorphic to $\C\P^2\# \overline{\C\P^2}$ and $\s^2\times \s^2$ respectively. 
\end{itemize}
As a corollary, $W_0$ can be realized as a compact domain with convex contact boundary in both 
a symplectic $\C\P^2\# \overline{\C\P^2}$ and a symplectic $\s^2\times \s^2$. 
\end{theorem}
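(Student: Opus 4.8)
The plan is to build the concave symplectic structure on $V_0$ by running Gay's construction from \cite{G} on a concrete symplectic filling of $(M_0,\xi_{inv})$, where $\xi_{inv}$ is the $\s^1$-invariant contact structure arising from a carefully chosen rational open book. First I would produce, following the machinery of Sections 2--6, a rational open book $(B,\pi)$ on the Seifert space $M_0=M((2,1),(2,-1),(2,-1))$ that is compatible with the $\s^1$-action and has periodic monodromy; the natural choice has binding equal to a pair of singular Seifert fibers, and its page is a disk orbifold. I would then verify that the supported contact structure is $\s^1$-invariant and identify its contactomorphism class by locating the circle of points where $\xi_{inv}$ fails to be transverse to the $\s^1$-orbits, which by the discussion in the introduction also certifies tightness. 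Attaching a symplectic $2$-handle along each of the two binding components, with framings $F_1,F_2>0$ chosen relative to the page framing, yields a compact symplectic $4$-manifold with concave boundary $(M_0,\xi_{inv})$; a Kirby-calculus computation, starting from the handle diagram of the chosen filling and sliding the new $2$-handles, should show that the resulting $4$-manifold is the prescribed handlebody $V_0$ (one $0$-handle plus two $2$-handles on parallel unknots of linking number $2$ with framings $4$ and $0$). The framings $4$ and $0$ are exactly what one expects from the two binding components of a disk-orbifold page over $\R\P^2$ with Euler number $-2$: the two handles are the cores of the added $2$-handles, capped off by the pages, and their self-intersections are computed from the page framing corrections $F_i$ together with the Seifert data.

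For property (1), the two embedded symplectic spheres $S_1,S_2$ are the visible ones in the handlebody picture: $S_1$ is the sphere obtained from the $2$-handle with framing $4$ (its core union a disk in the $0$-handle, made symplectic by the standard argument that a $2$-handle attached along a transverse or Legendrian-type curve in a contact-type hypersurface caps off to a symplectic sphere), so $S_1\cdot S_1=4$, and similarly $S_2$ comes from the framing-$0$ handle with $S_2\cdot S_2=0$. That the two unknots have linking number $2$ translates, on the level of the capped-off spheres, into a single intersection point with a tangency of order $2$ rather than two transverse intersections; I would make this symplectic by appealing to the local model for two symplectic surfaces meeting with a prescribed order of tangency and by keeping track of the framings/linking during the handle construction, so that the spheres can be isotoped to be symplectic with exactly this local intersection pattern.

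For property (2), I would start from the observation (from Section 2 or the cited literature, e.g.\ \cite{Go}) that $W_0$, the disk bundle over $\R\P^2$ with Euler number $-2$, is Stein and hence admits a symplectic structure with convex contact boundary $(M_0,\xi_{fil})$ for a fillable $\xi_{fil}$; moreover on a small Seifert space the set of fillable contact structures and the isotopy classes of self-contactomorphisms are very constrained, which is where the $\s^1$-invariance pays off. The key point is to enumerate the contactomorphisms $(M_0,\xi_{inv})\to(M_0,\xi_{fil})$ up to smooth isotopy: because both contact structures are pinned down by where they fail to be transverse to the $\s^1$-action, the relevant mapping class information reduces to a finite combinatorial problem, and I expect exactly two classes $\Phi_1,\Phi_2$, distinguished by how they act on the two singular fibers (equivalently, by a $\Z/2$ coming from the symmetry swapping two of the three orbifold points). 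For each $\Phi_i$ one forms $X_i:=V_0\cup_{\Phi_i}W_0$, a closed symplectic $4$-manifold; computing its intersection form from the two handle diagrams glued along $M_0$ gives an indefinite odd form of rank $2$ for $X_1$ (hence $\C\P^2\#\overline{\C\P^2}$) and an even one for $X_2$ (hence $\s^2\times\s^2$), after checking simple-connectivity and invoking Freedman plus the fact that both candidates are known to carry a unique smooth structure in the relevant (non-exotic) range, or alternatively by exhibiting an explicit diffeomorphism via Kirby moves. The corollary is then immediate: $W_0$ sits inside each $X_i$ as the convex piece of the decomposition, i.e.\ as a compact domain with convex contact boundary in a symplectic $\C\P^2\#\overline{\C\P^2}$ and in a symplectic $\s^2\times\s^2$ respectively.

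The main obstacle I anticipate is property (2), specifically the precise count of contactomorphisms and the identification of which gluing produces which manifold: showing that there are \emph{exactly} two isotopy classes of contactomorphisms $(M_0,\xi_{inv})\to(M_0,\xi_{fil})$ that yield \emph{closed} symplectic manifolds requires both a classification of the smooth mapping class group of $M_0$ acting compatibly on the contact structures (an $\s^1$-equivariant argument, using the transversality locus as in the introduction) and a careful Kirby-calculus bookkeeping to distinguish $\C\P^2\#\overline{\C\P^2}$ from $\s^2\times\s^2$ at the end; the risk is that a naive gluing produces a Kirby diagram whose diffeomorphism type is not obvious, so the delicate step is choosing the framings $F_i$ and the matching $\Phi_i$ so that the resulting diagram simplifies to a standard one. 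Everything else — the existence of the rational open book, the $\s^1$-invariance and tightness of $\xi_{inv}$, the symplectic $2$-handle attachment — I expect to follow formally from Sections 2--6 and from Gay's construction.
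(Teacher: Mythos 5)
Your overall plan inverts the paper's construction in a way that does not actually work, and it misses a genuinely delicate step in the classification of contactomorphisms.

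\textbf{On the construction of $V_0$.} You propose to pick a rational open book on $M_0$ itself, with binding a pair of singular fibers, and to attach symplectic $2$-handles along its binding to produce a manifold with concave boundary $(M_0,\xi_{inv})$. That cannot work: in Gay's construction, the $2$-handles are attached to $\{1\}\times M_0$, so the resulting relative handlebody has \emph{two} concave boundary components, $(M_0,\xi_{inv})$ and some other $(M',\xi')$, and the original $M_0$-boundary is the one opposite to the handle attachment. Attaching handles to a \emph{filling} of $(M_0,\xi_{inv})$ instead produces a concave boundary that is $(M',\xi')$, not $(M_0,\xi_{inv})$. To obtain the closed-up cap $V_0$ you would still need to fill $(M',\xi')$; nothing in your proposal accomplishes this. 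The paper instead starts from $\s^3=H^{-1}(1)$ bounding the ball $H^{-1}([0,1])$ in $(\C^2,\omega_0)$ with the weighted $\s^1$-action $\lambda\cdot(z_1,z_2)=(\lambda z_1,\lambda^2 z_2)$, chooses a rational open book on $\s^3$ of periodic monodromy whose binding is two \emph{regular} Seifert fibers $K_1,K_2$ (with $\mathrm{lk}(K_1,K_2)=2$, realized as $\{z_2=0\}$ and $\{z_2=\epsilon z_1^2\}$), builds the relative handlebody $Z$ on $\s^3$, checks via Theorem 4.4 that the other concave boundary is exactly $(M_0,\xi_{inv})$, and then sets $V_0=H^{-1}([0,1])\cup Z$. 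This also explains where the framings $4$ and $0$ and the order-$2$ tangency come from: the core disks of the two handles are capped off by the symplectic disks $\{z_2=0\}$ and $\{z_2=\epsilon z_1^2\}$ inside the $4$-ball, and these are tangent to order $2$ at the origin by construction, not by an isotopy argument you would have to supply post hoc. Your appeal to a "local model for prescribed tangency" is not a substitute for this, because you would first need symplectic spheres with the correct self-intersections to isotope, and producing those is the main content.

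\textbf{On property (2).} You correctly suspect the count of contactomorphisms reduces to a finite mapping-class-group calculation, but the proposal glosses over the single real subtlety. The mapping class group $\pi_0(\mathrm{Diff}(M_0))\cong S_3$ acts on the four $\mathrm{spin}^c$-structures of $M_0$ with two orbits, $\{s\}$ and $\{\delta_0\cdot s,\delta_1\cdot s,\delta_2\cdot s\}$, and the pair induced by $W_0$ is a priori either $\{s,\delta_0\cdot s\}$ or $\{\delta_1\cdot s,\delta_2\cdot s\}$; the count of contactomorphisms and their effect on the glued manifold depend on which. The paper has to prove (Lemma 7.5) that it is $\{\delta_1\cdot s,\delta_2\cdot s\}$, and the proof is by contradiction and actually \emph{uses} the cap $V_0$ and its symplectic spheres $S_1,S_2$. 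Without this lemma, you could not rule out a $6$-element count of gluings or a wrong pairing with the manifolds. Likewise, merely writing down an intersection form and invoking Freedman is not how the diffeomorphism types are pinned down; the paper explicitly exhibits (Lemma 7.8) an embedded $(-1)$-sphere in $X_1$ and an embedded $0$-sphere in $X_2$ meeting the canonical $\R\P^2$ and the symplectic spheres in the prescribed way, and computes their self-intersections from the Seifert/framing data of Lemma 4.6. Your "Kirby-calculus bookkeeping" would ultimately have to reproduce these framing computations, and the explicit geometric spheres are what actually make the identification of $\C\P^2\#\overline{\C\P^2}$ versus $\s^2\times\s^2$ go through.

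Finally, the tightness of $\xi_{inv}$ is not a formal consequence of the introductory discussion; it requires Theorem 2.3(2), whose proof rests on admissible transverse surgery and the Baldwin--Etnyre and Wand theorems, plus the uniqueness of tight contact structures on $M_0$ from the Ghiggini--Lisca--Stipsicz classification.
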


By a {\bf singular Lagrangian $\R\P^2$} in a symplectic $4$-manifold, we mean a topological embedding of $\R\P^2$ which is smooth and Lagrangian except at one point where the embedding is
singular. Furthermore, the following technical assumption will be imposed, i.e., a neighborhood of
the singular point in $\R\P^2$ is symplectically modeled by a cone in $(\R^4,\omega_{std})$ over 
a Legendrian unknot $L_u\subset (\s^3, \xi_{std})$ with $tb=-2$ and $rot=\pm 1$, or equivalently, by
an open Whitney umbrella. (For a comprehensive discussion on singular non-orientable Lagrangian
surfaces, we recommend the paper by Nemirovski and Siegel \cite{NS}.) These technical assumptions
about the singularity will be assumed throughout, however, for notational simplicity, we will simply
call it a {\it singular Lagrangian $\R\P^2$}. With this understood, the following fact should be noted:
given any singular Lagrangian $\R\P^2$, one can resolve the singularity so that an embedded 
Lagrangian Klein bottle is produced nearby (cf. \cite{NS}, Proposition 4.6).

With the preceding understood, it was shown (cf. \cite{NS}, Example 5.5) that $W_0$ admits a symplectic structure with convex contact boundary such that a singular Lagrangian $\R\P^2$ is contained in $W_0$. We note the following immediate corollary of Theorem 1.1.

\begin{corollary}
Both $\C\P^2\# \overline{\C\P^2}$ and $\s^2\times \s^2$ admit an embedding of a singular 
Lagrangian $\R\P^2$ (with respect to some symplectic structure on $\C\P^2\# \overline{\C\P^2}$ 
and $\s^2\times \s^2$), such that in its complement there is a pair of embedded symplectic spheres of self-intersection $4$ and $0$ respectively, intersecting at a single point with a tangency of order two. Moreover,
in an arbitrarily small neighborhood of the singular Lagrangian $\R\P^2$, there is an embedded Lagrangian Klein bottle which has the property that in its complement there is a pair of embedded symplectic spheres of self-intersection $4$ and $0$ which intersect at a single point with a tangency of order two.
\end{corollary}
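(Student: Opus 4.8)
The plan is to derive Corollary 1.2 directly from Theorem 1.1 together with the cited result of Nemirovski–Siegel and the resolution statement that was recalled in the excerpt. I would begin by invoking Nemirovski–Siegel, Example 5.5: there is a symplectic structure $\omega_{NS}$ on $W_0$ with convex contact boundary such that $W_0$ contains a singular Lagrangian $\R\P^2$. The induced contact structure $\xi_{fil}$ on $M_0=\partial W_0$ is fillable (it is filled by $(W_0,\omega_{NS})$), so Theorem 1.1(2) applies to this particular filling: there exist contactomorphisms $\Phi_1,\Phi_2\colon (M_0,\xi_{inv})\to(M_0,\xi_{fil})$ so that $V_0\cup_{\Phi_1}W_0\cong\C\P^2\#\overline{\C\P^2}$ and $V_0\cup_{\Phi_2}W_0\cong\s^2\times\s^2$ as closed symplectic $4$-manifolds.

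The key observation is that the contact gluing is supported near $\partial W_0$, hence the singular Lagrangian $\R\P^2\subset W_0$ survives untouched inside each of the glued manifolds: since the Lagrangian and its singular point lie in the interior of $W_0$, a collar neighborhood of $\partial W_0$ can be chosen to be disjoint from it, and the gluing only modifies the symplectic form in such a collar. Thus both $\C\P^2\#\overline{\C\P^2}$ and $\s^2\times\s^2$ acquire an embedded singular Lagrangian $\R\P^2$. For the statement about the complement, I would note that the pair of symplectic spheres $S_1,S_2$ from Theorem 1.1(1) lives in $V_0$, hence in the complement $W_0$-part — which is exactly where the singular Lagrangian does \emph{not} sit. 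More precisely, $S_1\cup S_2\subset V_0\setminus(\text{collar of }\partial V_0)$ while $\R\P^2\subset W_0\setminus(\text{collar of }\partial W_0)$, and in $X=V_0\cup_{\Phi_i}W_0$ these two regions are disjoint; so $S_1,S_2$ lie in the complement of the singular Lagrangian $\R\P^2$, with the asserted intersection data $S_1\cdot S_1=4$, $S_2\cdot S_2=0$, and a single tangency of order $2$, all of which are preserved because these spheres and their intersection pattern are unchanged by the gluing.

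For the final sentence I would apply the resolution procedure recalled just before the corollary (Nemirovski–Siegel, Proposition 4.6): any singular Lagrangian $\R\P^2$ can be replaced, within an arbitrarily small neighborhood of itself, by an embedded Lagrangian Klein bottle. Choosing this neighborhood small enough that it remains disjoint from the collar of $\partial W_0$ (hence from $S_1\cup S_2$), we obtain an embedded Lagrangian Klein bottle in $\C\P^2\#\overline{\C\P^2}$ and in $\s^2\times\s^2$ whose complement still contains the same pair of symplectic spheres with the same self-intersections and tangency. This completes the deduction.

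The only point requiring genuine care — and the step I expect to be the main obstacle — is verifying that the three local pictures can be made simultaneously disjoint and that each is genuinely localized where claimed: the contact gluing must be arranged in a collar of $\partial W_0$ avoiding both the singular Lagrangian $\R\P^2$ (interior of $W_0$) and the symplectic spheres $S_1,S_2$ (interior of $V_0$), and the Klein bottle resolution must happen in a neighborhood small enough to stay clear of that collar. All three are local/collar-type statements, so the argument is essentially a bookkeeping of where things live; the substance is entirely in Theorem 1.1 and the two cited results of Nemirovski–Siegel.
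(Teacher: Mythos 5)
Your deduction is correct and is essentially the paper's (implicit) argument: the paper states Corollary 1.2 as an immediate consequence of Theorem 1.1 together with Nemirovski--Siegel Example 5.5 and Proposition 4.6, which is exactly what you do. The careful bookkeeping — that the singular Lagrangian $\R\P^2$ and its resolution into a Klein bottle live in the interior of $W_0$, while $S_1,S_2$ live in $V_0$, and that the contact gluing only deforms the symplectic form in a collar of $\partial W_0=\partial V_0$ disjoint from both — is precisely what justifies the word ``immediate'' in the paper.
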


\begin{remark}
Obstruction of embeddings of $W_0$ in a symplectic $4$-manifold as a compact domain with convex contact boundary, and the somewhat equivalent problem of embeddings of a singular Lagrangian 
$\R\P^2$, represents some global symplectic rigidity phenomenon that is unique in dimension $4$,
and is in contrast to the symplectic flexibility known in the higher dimensions. More concretely, it was shown in \cite{NS} that $W_0$ can be realized as a compact domain in $\C^2$ with a strictly pseudoconvex boundary, which however cannot be made rationally convex, and this result is in 
a direct contrast to the flexibility in higher dimensions as shown in \cite{CE}. Similarly, there is no embedding of a singular Lagrangian $\R\P^2$ in $\C^2$, however, a singular totally real $\R\P^2$ 
in $\C^2$ does exist (cf. \cite{NS}). We should point out that the nature of these non-existence 
results still remain somewhat mysterious, as the only proof, which is an indirect proof, relies on
a constraint for embedded Lagrangian Klein bottles (see \cite{She, Nem}). By a similar argument, there is no corresponding embedding of $W_0$ or a singular Lagrangian $\R\P^2$ in $\C\P^2$ (cf. \cite{C}). It follows easily that $\C\P^2\# \overline{\C\P^2}$ and $\s^2\times \s^2$ are the smallest possible symplectic rational $4$-manifolds which allow for an embedding of $W_0$ or a singular Lagrangian $\R\P^2$, and any such an embedding requires a construction of global nature, i.e., not simply by grafting an embedding in $\C^2$ using Darboux theorem, because no embeddings in 
$\C^2$ exist. With this understood, Theorem 1.1 and
Corollary 1.2 confirm the existence of such embeddings of $W_0$ and a singular Lagrangian $\R\P^2$ in $\C\P^2\# \overline{\C\P^2}$ and $\s^2\times \s^2$. It remains an interesting open problem to find an alternative interpretation for the symplectic rigidity phenomenon in dimension $4$, for example using ideas and techniques from Symplectic Field Theory \cite{HWZ} as we have been advocating in \cite{C}. 
\end{remark}

Corollary 1.2 naturally raises the following question. 

\begin{question}
(1) For any embedding of a singular Lagrangian $\R\P^2$ in $\C\P^2\# \overline{\C\P^2}$ or 
$\s^2\times \s^2$, is there always a pair of embedded symplectic spheres $S_1,S_2$ in its complement which have self-intersection $4$ and $0$ respectively and intersect at a single point with a tangency of order two? Furthermore, is the embedding of a singular Lagrangian $\R\P^2$ (fixing its $\Z_2$-homology) unique up to a 
Hamiltonian/smooth isotopy? 

(2) Does a Lagrangian Klein bottle in $\C\P^2\# \overline{\C\P^2}$ or $\s^2\times \s^2$ always have in its complement a pair of embedded symplectic spheres $S_1,S_2$ of self-intersection $4$ and $0$, which intersect at a single point with a tangency of order two? (Lagrangian Klein bottles 
are known to exist in $\C\P^2\# \overline{\C\P^2}$ and $\s^2\times \s^2$. For descriptions from various perspectives, see \cite{CU, BHL, M, Mik, Nem0, Sym}.)
\end{question}

We remark that the existence of such a pair of symplectic spheres $S_1,S_2$ in the complement of the non-orientable Lagrangian surface may be the first step in an attempt to classify the Lagrangian surface\footnote{The areas of the spheres are constrained by $Area(S_1)>2 Area(S_2)$, which always holds true in the case of $\C\P^2\# \overline{\C\P^2}$. However, for $\s^2\times \s^2$, $Area(S_1)>2 Area(S_2)$ holds true if and only if $Area(S_2)$ is less than twice of the area of the other $\s^2$-factor. We note that the latter condition is ensured by the presence of a Lagrangian Klein bottle in the complement, see \cite{AE}.} (cf. \cite{C3}). For analogous results in the case of Lagrangian spheres, see \cite{Hind, Evans, LW}. In particular, using Gromov's theory of pseudoholomorphic curves, one can show that such a pair of symplectic spheres $S_1,S_2$ in $\C\P^2\# \overline{\C\P^2}$ or $\s^2\times \s^2$ has a unique symplectic isotopy class (see Lemma 7.14). 

The uniqueness of such a pair of symplectic spheres up to isotopy implies the following uniqueness theorem on symplectic fillings as a corollary of Theorem 1.1: recall that $M_0$ has a unique tight contact structure, to be denoted by $\xi_{tight}$,  up to a contactomorphism (see \cite{GLS}). 

\begin{theorem}
Up to a diffeomorphism, $W_0$ is the unique $\Q$-homology ball symplectic filling of $(M_0,\xi_{tight})$.
\end{theorem} 

In the course of proving Theorem 1.1, we also came upon the following result as a byproduct of Theorem 1.1. 

\begin{proposition}
Let $\kappa:\R\P^2\rightarrow \R\P^2$ be the involution induced by the reflection $(x,y,z)\mapsto (x,-y,z)$ on the unit two-sphere $\s^2\subset \R^3$. Then 
\begin{itemize}
\item [{(1)}] $\kappa: \R\P^2\rightarrow \R\P^2$ can be lifted to an involution $\tau: W_0\rightarrow W_0$, such that
the restriction of $\tau$ to the boundary preserves the Seifert fibration on $M_0$, leaving one singular fiber invariant and switching the other two singular fibers. 
\item [{(2)}] $\tau: W_0\rightarrow W_0$ induces a nontrivial action on the set of $spin^c$-structures of $W_0$, i.e., switching the two $spin^c$-structures of $W_0$. 
\end{itemize}
\end{proposition}

We observe the following immediate corollaries of Proposition 1.6, whose proofs are left to the reader. 

\begin{corollary}
(1) $W_0$ admits two distinct Stein structures which are not path-connected through almost complex structures on $W_0$. More concretely, for any Stein structure $J$ on $W_0$, $\tau^\ast J$ and $J$ have distinct $spin^c$-structures, hence are not homotopic as almost complex structures on $W_0$. 

(2) Let $(X,J)$ be any $4$-dimensional almost complex manifold (not necessarily compact closed). Suppose there is a smooth embedding $\iota: \R\P^2\rightarrow X$ such that the Euler number of the normal bundle of $\iota(\R\P^2)$ in $X$ equals $-2$. Then $\iota: \R\P^2\rightarrow X$ and $\iota\circ \kappa: \R\P^2\rightarrow X$ are not smoothly isotopic. 
\end{corollary}

For an example of Corollary 1.7(2), there is a smooth embedding $\iota: \R\P^2\rightarrow \C^2$ whose normal bundle has Euler number $-2$. By Corollary 1.7(2),  $\iota: \R\P^2\rightarrow \C^2$ and $\iota\circ \kappa:  \R\P^2\rightarrow \C^2$ are not smoothly isotopic. We should point out that this statement is consistent with the following fact: the $4$-sphere $\s^4$ admits a decomposition $\s^4=W_0\cup_\psi -W_0$ for some nontrivial self-diffeomorphism $\psi: \partial W_0\rightarrow \partial W_0$ (e.g. see \cite{A}, p. 426). With this understood, if we denote by $i: W_0\rightarrow \s^4$ the embedding induced from the decomposition 
$\s^4=W_0\cup_\psi -W_0$, then $i\circ \tau: W_0\rightarrow \s^4$ cannot be extended to a self-diffeomorphism of $\s^4$, because $\psi\circ \tau\circ \psi^{-1}: \partial W_0\rightarrow \partial W_0$ does not extend to a continuous map from $W_0$ to itself. 

Now we continue with the discussions concerning Question 1.4. The following two theorems give some additional embeddings of $W_0$ as a compact domain with convex contact boundary in 
$\C\P^2\# \overline{\C\P^2}$ and $\s^2\times \s^2$. They give interesting, concrete examples to consider for the more general questions raised in Question 1.4.

\begin{theorem}
There exists a relative handlebody $Z_0$ of two symplectic $2$-handles which have two concave
contact boundaries $(M_0,\xi_{inv})\sqcup (M_0,\xi_{inv})$, such that for any two copies of $W_0$,
each equipped with a symplectic structure $\omega$, $\omega^\prime$ having a convex contact
boundary $(M_0,\xi_{fil})$ and $(M_0,\xi^\prime_{fil})$ respectively, there exist precisely two 
contactomorphisms up to a smooth isotopy, $\Psi_1,\Psi_2: (M_0,\xi_{inv})\sqcup (M_0,\xi_{inv})
\rightarrow (M_0,\xi_{fil})\sqcup (M_0,\xi^\prime_{fil})$, such that the closed symplectic $4$-manifolds
formed by gluing $Z_0$ to two disjoint copies of $W_0$ via $\Psi_1,\Psi_2$ are diffeomorphic to
$\C\P^2\# \overline{\C\P^2}$ and $\s^2\times \s^2$ respectively. 
\end{theorem}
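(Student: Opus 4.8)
The plan is to build $Z_0$ as a variant of $V_0$ from Theorem 1.1: whereas $V_0$ arises by applying Gay's construction (as specialized in Sections 2--6) to the disk bundle $W_0$ over $\R\P^2$, equipped with a suitable $\s^1$-invariant rational open book compatible with the circle action, I would instead apply Gay's construction to a \emph{trivial $I$-bundle} $M_0 \times [0,1]$ carrying the obvious $\s^1$-action and a compatible $\s^1$-invariant rational open book on each boundary copy. More precisely, start from the symplectization-type collar $(M_0,\xi_{inv}) \times [0,1]$, which has one convex and one concave contact boundary, both contactomorphic to $(M_0,\xi_{inv})$; then attach symplectic $2$-handles along the binding components of the $\s^1$-invariant rational open book on the convex end, with positive framings relative to the page framing, exactly as in Gay's theorem recalled in the introduction. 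This turns the convex end into a second concave end, producing a relative handlebody $Z_0$ consisting of those two symplectic $2$-handles, with concave contact boundary $(M_0,\xi_{inv}) \sqcup (M_0,\xi_{inv})$. The number of $2$-handles is two because the $\s^1$-invariant rational open book on $M_0$ that supports $\xi_{inv}$ has two binding components --- the same count that produced the two $2$-handles of $V_0$ --- and the framing data is inherited verbatim from the proof of Theorem 1.1.

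Next I would identify the closed manifolds. Gluing $Z_0$ to two disjoint copies of $W_0$ along contactomorphisms $\Psi = \psi \sqcup \psi'$, where $\psi\colon (M_0,\xi_{inv})\to (M_0,\xi_{fil})$ and $\psi'\colon (M_0,\xi_{inv})\to (M_0,\xi'_{fil})$, first cancels one copy of $W_0$ against the collar factor: since $W_0 \cup_{\psi} \big((M_0,\xi_{inv})\times[0,1]\big)$ is again a symplectic filling of $(M_0,\xi_{inv})$ deformation-equivalent to $(W_0,\omega)$ (the collar is an invisible symplectic cobordism), the result of the gluing is diffeomorphic to gluing $W_0$ directly to the image of $W_0$ under $\psi$ across the two added $2$-handles. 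At this point the construction is exactly the one in Theorem 1.1: $W_0$ glued to $W_0$ along a contactomorphism of $(M_0,\xi_{inv})$, realized topologically by the handle attachments encoded in $V_0$. So $Z_0$ glued to two copies of $W_0$ is diffeomorphic to $V_0 \cup_{\Phi} W_0$ for the corresponding $\Phi$, and Theorem 1.1(2) gives that exactly the two choices $\Phi_1,\Phi_2$ (up to smooth isotopy) yield $\C\P^2\#\overline{\C\P^2}$ and $\s^2\times\s^2$ respectively. Pulling these back through the collar-cancellation produces the two contactomorphisms $\Psi_1,\Psi_2$ claimed, and the ``precisely two'' count is inherited from the corresponding count in Theorem 1.1, which itself comes from the easy identification of the contactomorphism class of $\s^1$-invariant contact structures described in the introduction (counting the components of the mapping class group of $M_0$ that preserve $\xi_{inv}$, equivalently the locations where $\xi_{inv}$ fails to be transverse to the $\s^1$-action).

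The main obstacle I expect is bookkeeping the handle decomposition carefully enough to make the collar-cancellation rigorous at the level of \emph{symplectic} cobordisms, not just smooth ones: one must check that inserting the collar $(M_0,\xi_{inv})\times[0,1]$ between $W_0$ and the $2$-handles of $V_0$ genuinely leaves the symplectic deformation class of the filling unchanged, using the standard fact quoted in the introduction that the germ of $\omega$ near a convex (resp.\ concave) boundary depends only on the contactomorphism class of the contact boundary. A secondary subtlety is verifying that Gay's construction applied to the collar yields precisely the relative handlebody structure on $Z_0$ with the stated framings; this is routine given the explicit $\s^1$-equivariant models of Sections 2--6, but one should confirm that the page framing of the $\s^1$-invariant rational open book on the convex end of the collar matches the one used for $W_0$, so that the framings $F_i>0$ transported through the cancellation agree with the $4$ and $0$ appearing in $V_0$. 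Finally, one should note that the ``two copies of $W_0$'' are allowed to carry \emph{different} fillable contact structures $\xi_{fil}, \xi'_{fil}$ on their convex ends; this causes no difficulty because Theorem 1.1(2) already allows $\xi_{fil}$ to be an arbitrary fillable contact structure, and the argument there is insensitive to which such structure is chosen.
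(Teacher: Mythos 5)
Your high-level idea --- build $Z_0$ by applying Gay's construction to a collar of $(M_0,\xi_{inv})$, attaching symplectic $2$-handles along the binding of an $\s^1$-invariant rational open book --- is indeed the paper's starting point (Lemma 7.9). But the reduction to Theorem 1.1 that you propose breaks down, and the handle data is not the one you claim.

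First, the framings and binding components are not ``inherited verbatim'' from the proof of Theorem 1.1. The two $2$-handles of $V_0$ are attached along curves $K_1,K_2$ lying in $H^{-1}(1)\cong\s^3$ (with framings $4$ and $0$ in $\s^3$); $V_0$ is then $Z\cup H^{-1}([0,1])$, so the $4$-ball piece is essential to that construction. For $Z_0$ the handles must be attached along curves in $M_0$ itself, so entirely different rational open book data is needed. The paper uses a rational open book on $M_0$ of periodic monodromy of order $2$, with binding consisting of one singular fiber (with reversed orientation, multiplicity $p_1=4$) and one regular fiber (multiplicity $p_2=1$), and page framings $F_1=\tfrac14$, $F_2=4$; Theorem 4.4 then shows the other boundary is again $(M_0,\xi_{inv})$. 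Your proposal leaves the actual open book and framings unspecified, and the claim that they match $V_0$'s is false.

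Second, the ``collar cancellation'' step, which is where you claim $W_0\cup_\psi Z_0\cup_{\psi'}W_0'$ is diffeomorphic to $V_0\cup_\Phi W_0$, asserts without proof that $W_0\cup_\psi Z_0\cong V_0$ rel boundary. This is not established and is not obviously true: $V_0$ is built on a $4$-ball $H^{-1}([0,1])$, whereas $W_0\cup Z_0$ is built on the $\R\P^2$-disk bundle $W_0$. These have different fundamental groups and different torsion in $H_*$; even if the total spaces $X_1, X_2$ coincide in the end (both are Hirzebruch surfaces), the intermediate identification you need is not available. The paper's identification of $X_1,X_2$ in Lemma 7.10 proceeds by a completely different route: it explicitly constructs embedded spheres of self-intersection $-1$ (resp.\ $2$) in $X_1$ (resp.\ $X_2$), built from disks in the two copies of $W_0$ joined by cylinders $C_2,C_3$ in $Z_0$ that run between the two boundary copies of $M_0$, and then reads off the diffeomorphism type from the intersection data with the spheres and with the embedded $\R\P^2$s. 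This argument has no analogue in Theorem 1.1 and cannot be obtained by ``pulling back through the collar''.

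Third, the ``precisely two'' count does not follow formally from Theorem 1.1(2). In Theorem 1.5 one glues along two boundary components simultaneously, so naively there are up to four isotopy classes to consider; the paper pins down which two actually give Hirzebruch surfaces by tracking where each singular fiber $f_1,f_2,f_3$ is sent on each boundary component and by using the cylinders in $Z_0$ that couple the two boundaries. This is substantive additional work, not a corollary of the count in Theorem 1.1.

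In short: the opening idea is sound, but everything after the first sentence would need to be redone. You need to (a) exhibit the specific $\s^1$-invariant rational open book on $M_0$ and check via Theorem 4.4 that the second boundary is again $(M_0,\xi_{inv})$, (b) compute $c_1(K_{Z_0})\cdot[\omega_{Z_0}]<0$ to conclude the gluings are Hirzebruch surfaces, and (c) identify which Hirzebruch surface by an explicit sphere construction using the cylinders in $Z_0$, not by reduction to $V_0\cup W_0$.
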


As an immediate corollary, there is a disjoint embedding of two copies of singular Lagrangian 
$\R\P^2$ in $\C\P^2\# \overline{\C\P^2}$ and $\s^2\times \s^2$. 

For the next theorem, let $M_1$ be the small Seifert space $M((13,6), (3,2), (2,-1))$. It is known that $M_1$ is
the link of a weighted homogeneous singularity which admits a rational homology disk smoothing
(cf. \cite{BS}, Figure 1(f) with $q=1$). Let $\xi_{Mil}$ be the Milnor fillable contact structure on 
$M_1$ (cf. \cite{CNP}). Then as a consequence, the contact manifold $(M_1,\xi_{Mil})$ admits a symplectic filling $W$ which is a $\Q$-homology ball, i.e., $W$ is the corresponding Milnor fiber from the smoothing of the weighted homogeneous singularity, hence a Stein domain. According 
to \cite{W}, there is a unique smoothing component, so such a Stein filling $W$ is uniquely determined. 

The lens space $L(25,19)$ as the link of a cyclic quotient singularity also admits a rational homology disk smoothing
(cf. \cite{W0}). Let $W_{Mil}$ be the Milnor fiber of the smoothing, which is a $\Q$-homology ball symplectic filling of $(L(25,19),\xi_{Mil})$, where $\xi_{Mil}$ is the Milnor fillable contact structure on $L(25,19)$. 

\begin{theorem}
(1) There exists a relative handlebody $Z_1$ of two symplectic $2$-handles which have two concave
contact boundaries $(M_1,\xi_{Mil})$ and $(M_0,\xi_{inv})$, such that for any symplectic structure on $W_0$ 
with a convex contact boundary $(M_0,\xi_{fil})$, and for any $\Q$-homology ball symplectic filling $W$ of 
$(M_1,\xi_{Mil})$, the symplectic $4$-manifolds $W\cup Z_1\cup_{\Phi_1} W_0$ and 
$W\cup Z_1\cup_{\Phi_2} W_0$ are diffeomorphic to $\s^2\times \s^2$  and $\C\P^2\# \overline{\C\P^2}$ respectively, where $\Phi_1,\Phi_2$ are the contactomorphisms from Theorem 1.1.

(2) There exists a relative handlebody $Z_2$ of two symplectic $2$-handles which have two concave
contact boundaries $(L(25,19),\xi_{Mil})$ and $(M_0,\xi_{inv})$, such that for any symplectic structure 
on $W_0$ with a convex contact boundary $(M_0,\xi_{fil})$, the symplectic $4$-manifolds $W_{Mil}\cup Z_2\cup_{\Phi_1} W_0$ and $W_{Mil}\cup Z_2\cup_{\Phi_2} W_0$ are diffeomorphic to 
$\C\P^2\# \overline{\C\P^2}$ and $\s^2\times \s^2$ respectively, where $\Phi_1,\Phi_2$ are the 
contactomorphisms from Theorem 1.1.
\end{theorem}

We remark that it is known that $W_{Mil}$ is the unique $\Q$-homology ball symplectic filling of 
$(L(25,19),\xi_{Mil})$ (cf. \cite{ER}). However, it is unknown whether $(M_1,\xi_{Mil})$ has a unique $\Q$-homology ball symplectic filling up to a diffeomorphism. Note that if the singular Lagrangian $\R\P^2$'s in $\C\P^2\# \overline{\C\P^2}$ or $\s^2\times \s^2$ were to have a unique Hamiltonian or smooth isotopy class, it would suggest 
that $(M_1,\xi_{Mil})$ should have a unique $\Q$-homology ball symplectic filling up to a diffeomorphism.


In the next theorem, we reduce the diffeomorphism classification of $\Q$-homology ball symplectic 
fillings of $(M_1,\xi_{Mil})$ to a question concerning certain pairs of pseudoholomorphic rational curves in $\C\P^2$.

\begin{theorem}
There exists a one to one correspondence between $\Q$-homology ball symplectic fillings $W$ of 
$(M_1,\xi_{Mil})$ and pairs of $J$-holomorphic curves $C_1,C_2$ in $\C\P^2$, where $C_1$ is
a degree $5$ unicuspidal rational curve with singularity $x_1$ whose link is a $(2,13)$-torus knot,
and $C_2$ is a degree $3$ rational nodal curve with a nodal singularity $x_2$, such that 
$C_1\cap C_2=\{x_1,x_2\}$, with local intersection number at $x_1,x_2$ being $13$ and $2$
respectively. Moreover, the compatible almost complex structure $J$ is integrable near the singularities 
$x_1$ and $x_2$. With this understood, the relation between $W$ and the configuration 
$C_1\cup C_2\subset \C\P^2$ is as follows: we blow up $\C\P^2$ at $x_2$, 
and denote the proper transforms of $C_1,C_2$ by $\tilde{C}_1,\tilde{C}_2$. Then the complement of 
a regular neighborhood of $\tilde{C}_1\cup \tilde{C}_2$ is diffeomorphic to the $\Q$-homology ball symplectic 
filling $W$ of $(M_1,\xi_{Mil})$.
\end{theorem}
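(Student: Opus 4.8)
The plan is to invert the streamlined cap construction developed in Sections 2--6. Applied to $M_1$, that procedure starts from an auxiliary convex symplectic filling carrying a rational open book with periodic monodromy and attaches Gay's symplectic $2$-handles \cite{G} along the binding with suitable positive framings, producing a symplectic cap $C$ for $(M_1,\xi_{Mil})$: its concave boundary contact structure is identified with $\xi_{Mil}$ through the non-transversality criterion for $\s^1$-invariant contact structures and the uniqueness of the Milnor fillable structure \cite{CNP}, and $C$ is a regular neighborhood of a pair of symplectic spheres, one carrying a single cusp of link the $(2,13)$-torus knot and one smooth of self-intersection $5$, with $b_2(C)=2$. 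The content of the theorem is that gluing $C$ to \emph{any} $\Q$-homology ball filling $W$ yields a symplectic $\C\P^2\# \overline{\C\P^2}$, inside which $C$ blows down to a configuration $C_1\cup C_2\subset\C\P^2$ of the asserted type, and that the resulting assignment $W\mapsto(C_1,C_2)$ is a bijection up to the appropriate notions of equivalence on both sides.

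For the direction ``filling $\Rightarrow$ curves'' I would glue $C$ to a given $\Q$-homology ball filling $W$ along $(M_1,\xi_{Mil})$ via a contactomorphism, obtaining a closed symplectic $4$-manifold $X=C\cup W$. Since $M_1$ is a rational homology sphere and $W$ is a $\Q$-homology ball, a Mayer--Vietoris computation gives $b_1(X)=0$ and $b_2(X)=b_2(C)=2$, while the smooth symplectic sphere in $C$ of self-intersection $5$ gives $X$ a symplectic sphere of positive square and a homology class of odd self-intersection. By the structure theorem for closed symplectic $4$-manifolds containing a symplectic sphere of non-negative self-intersection, $X$ is a symplectic rational or ruled surface; then $b_1(X)=0$, $b_2(X)=2$ and the oddness of the intersection form force $X\cong\C\P^2\# \overline{\C\P^2}$, symplectically deformation equivalent to the standard model by uniqueness of symplectic forms on rational surfaces. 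Using the uniqueness up to symplectomorphism of symplectic spheres in a fixed homology class in a rational surface, I may place $C$ inside $X$ in standard position; blowing down the exceptional sphere (represented by a symplectic sphere, by Taubes--Seiberg--Witten theory) then produces a symplectic $\C\P^2$ together with the images $C_1,C_2$. Their degrees ($5$ and $3$), the singularity types ($C_1$ unicuspidal with cusp link the $(2,13)$-torus knot, $C_2$ with one node), and the local intersection numbers ($13$ at $x_1$ and $2$ at $x_2$, so $13+2=15=5\cdot 3$ by B\'ezout) are dictated by the homology classes, by the fact that the blow-up at $x_2$ separates the two branches of the node while leaving the cusp untouched, and by the plumbing graph of $M_1$, which is the dual graph of the minimal good resolution of the singular configuration $\tilde C_1\cup\tilde C_2$. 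Finally, because $x_1$ and $x_2$ lie in $C$, which by construction carries near the relevant handle cores the algebraic local models of a cusp and a node, the compatible $J$ can be taken integrable near $x_1$ and $x_2$.

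For the direction ``curves $\Rightarrow$ filling'', given a pair $C_1,C_2$ as in the statement I would symplectically blow up $\C\P^2$ at $x_2$ (Lemma 8.3), pass to the proper transforms $\tilde C_1,\tilde C_2$ (so $\tilde C_2$ becomes smooth and $\tilde C_1\cap\tilde C_2=\{x_1\}$ with intersection number $13$), and take a regular symplectic neighborhood $N$ of $\tilde C_1\cup\tilde C_2$; by the symplectic neighborhood theorem for symplectic divisors, $N$ is symplectomorphic to the reference cap $C$ above, hence has concave contact boundary $(M_1,\xi_{Mil})$, the identification of the boundary $3$-manifold with $M_1$ and of the contact structure with $\xi_{Mil}$ going through the dual resolution graph of $\tilde C_1\cup\tilde C_2$, the $\s^1$-invariant description of Sections 2--6, and \cite{CNP}. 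The complement $W:=(\C\P^2\# \overline{\C\P^2})\setminus\mathrm{int}(N)$ is then a symplectic filling of $(M_1,\xi_{Mil})$, and it is a $\Q$-homology ball: the classes $[\tilde C_1]=5H-E$ and $[\tilde C_2]=3H-2E$ have intersection matrix $\left(\begin{smallmatrix}24&13\\13&5\end{smallmatrix}\right)$ of determinant $-49\neq 0$, hence span $H_2(\C\P^2\# \overline{\C\P^2};\Q)$, so Mayer--Vietoris forces $b_1(W)=b_2(W)=0$ (and the order $49=7^2$ of the resulting cokernel matches $|H_1(M_1)|$, as it must for a $\Q$-homology ball filling). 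That $W\mapsto(C_1,C_2)$ and $(C_1,C_2)\mapsto W$ are mutually inverse follows from blow-up and blow-down being inverse operations together with the uniqueness, up to symplectomorphism, of this symplectic divisor in $\C\P^2\# \overline{\C\P^2}$, which simultaneously shows that the reference cap reappears inside $X$ and that recovering $W$ as the complement is well defined.

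I expect the main obstacle to be the implication ``filling $\Rightarrow$ curves'': one must show that gluing the cap to an \emph{arbitrary} $\Q$-homology ball filling --- not assumed Stein, nor even to admit a plurisubharmonic function, hence outside the scope of Wahl's uniqueness of smoothing components \cite{W} --- necessarily produces the standard symplectic $\C\P^2\# \overline{\C\P^2}$, which leans on the full strength of Taubes--Seiberg--Witten theory and of McDuff's classification of symplectic rational and ruled surfaces, and then on enough uniqueness of symplectic spheres to make the correspondence genuinely bijective rather than merely surjective. A secondary, bookkeeping-heavy point is to verify that the blow-down creates \emph{precisely} one cusp of link the $(2,13)$-torus knot on $C_1$ and one node on $C_2$ and nothing more, which comes down to matching the dual resolution graph of $\tilde C_1\cup\tilde C_2$ against the plumbing graph of $M_1=M((13,6),(3,2),(2,-1))$.
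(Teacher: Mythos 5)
Your proposal follows the same overall strategy as the paper: build the cap $V$ from the rational open book machinery, glue it to an arbitrary $\Q$-homology ball filling $W$, recognize $X = V\cup W$ as a symplectic $\C\P^2\#\overline{\C\P^2}$ (the paper uses the sign of $c_1(K_X)\cdot[\omega_X]$ via Lemma 5.4 and Taubes, whereas you invoke McDuff's structure theorem for a symplectic sphere of non-negative square; both are valid), and then blow down to land the configuration in $\C\P^2$, with the converse being blow-up plus a neighborhood argument. However, two steps are off in ways that matter.

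For the forward direction, you propose to ``place $C$ inside $X$ in standard position'' using uniqueness of symplectic spheres in a fixed homology class. That step does not make sense here: the cap's location is determined by the gluing to $W$, which is arbitrary, and one cannot move the cap without moving $W$ along with it. What must be argued instead --- and what the paper does --- is that \emph{relative to the fixed configuration} there is an exceptional sphere in good position: fix a compatible $J$ making $\tilde C_1, \tilde C_2$ $J$-holomorphic, take an embedded $J$-holomorphic sphere of class $-mH+(m+1)E$ (the paper cites \cite{C0}), and use positivity of intersection with $\tilde C_1 = 5H-E$ to force $-4m+1\ge 0$, hence $m=0$. Then $C\cdot\tilde C_1 = 1$ and $C\cdot\tilde C_2 = 2$ give exactly the intersection pattern needed: the $(-1)$-sphere misses the cusp of $\tilde C_1$, meets $\tilde C_2$ in two points (possibly tangentially, which is perturbed away), and blowing down produces a degree $5$ unicuspidal $C_1$ with an unchanged $(2,13)$-cusp, and a degree $3$ nodal $C_2$ with the node at the image of $C\cap\tilde C_2$. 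You relegate this to ``secondary bookkeeping,'' but it is where the specific singularity types and intersection numbers actually come from, and the ``standard position'' framing would not give you control of the blow-down locus relative to $\tilde C_1$'s singular point.

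For the converse, you assert that a regular neighborhood $N$ of $\tilde C_1\cup\tilde C_2$ ``is symplectomorphic to the reference cap $C$.'' Even granting some neighborhood theorem, this leaves the contact structure on $\partial N$ unidentified, which is the whole point. The paper instead modifies $\omega$ near $x_1$ to a standard local model (Lemma 6.2/8.3), runs the Theorem 6.1 construction to produce a concave boundary supported by a rational open book with periodic monodromy, and then observes that all binding components carry the fiber orientation, so by Theorem 4.1 the supported $\s^1$-invariant contact structure is transverse and hence is $\xi_{Mil}$. Your version needs this transversality observation (or an equivalent) to be made explicit; without it you have a filling of $M_1$ with an unspecified contact structure.
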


We observe the following immediate corollary of Theorem 1.10.

\begin{corollary}
{\em(1)} There exists a pair of $J$-holomorphic curves $C_1,C_2$ in $\C\P^2$, where $C_1$ is
a degree $5$ unicuspidal rational curve with singularity $x_1$ whose link is a $(2,13)$-torus knot,
and $C_2$ is a degree $3$ rational nodal curve with a nodal singularity $x_2$, such that 
$C_1\cap C_2=\{x_1,x_2\}$, with local intersection number at $x_1,x_2$ being $13$ and $2$
respectively, and the compatible almost complex structure $J$ is integrable near the singularities 
$x_1$ and $x_2$. 

{\em(2)} For any $\Q$-homology ball symplectic filling $W$ of $(M_1,\xi_{Mil})$, the induced 
homomorphism $\pi_1(M_1)\rightarrow \pi_1(W)$ is always onto. 
\end{corollary}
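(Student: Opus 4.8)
The plan is to derive both statements directly from Theorem 1.7. For part (1), the existence of the pair of $J$-holomorphic curves $C_1, C_2$ follows immediately once I exhibit at least one $\Q$-homology ball symplectic filling $W$ of $(M_1, \xi_{Mil})$: by Theorem 1.7, the $1:1$ correspondence is in particular surjective onto the set of such configurations, so any filling produces a configuration with all the stated properties. The Milnor fiber $W$ coming from the rational homology disk smoothing of the weighted homogeneous singularity whose link is $M_1$ (cf. \cite{BS}, Figure 1(f) with $q=1$) is precisely such a filling — it is a $\Q$-homology ball, it is Stein, and it fills the Milnor fillable contact structure $\xi_{Mil}$ by construction. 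So I would simply invoke the existence of this Milnor fiber, feed it into the correspondence of Theorem 1.7, and read off the resulting curves $C_1 \cup C_2 \subset \C\P^2$ together with the integrability of $J$ near $x_1, x_2$.

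For part (2), I would argue as follows. Let $W$ be an arbitrary $\Q$-homology ball symplectic filling of $(M_1, \xi_{Mil})$. By Theorem 1.7, there is a symplectic $\C\P^2 \# \overline{\C\P^2}$ which decomposes as $W \cup_{M_1} N$, where $N$ is a regular neighborhood of $\tilde{C}_1 \cup \tilde{C}_2$ (the proper transforms after blowing up at $x_2$) with concave contact boundary $(M_1, \xi_{Mil})$. I would then apply van Kampen's theorem to this decomposition: $\pi_1(\C\P^2 \# \overline{\C\P^2}) = \pi_1(W) \ast_{\pi_1(M_1)} \pi_1(N)$. Since $\pi_1(\C\P^2 \# \overline{\C\P^2}) = 1$, the amalgamated product is trivial, which forces the image of $\pi_1(M_1)$ in $\pi_1(W)$ (equivalently, the map $\pi_1(M_1) \to \pi_1(W)$) together with the image in $\pi_1(N)$ to generate the trivial group; in particular $\pi_1(W)$ is generated by the image of $\pi_1(M_1) \to \pi_1(W)$, i.e., that map is onto. (The fact that $M_1$ is connected and the gluing is along the full boundary $3$-manifold is what makes van Kampen applicable in this clean form.)

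The main obstacle — and the only place where one must be slightly careful — is the van Kampen step for part (2): one needs $N$, or at least its $\pi_1$, to behave well enough that the trivialness of $\pi_1(W \cup N)$ genuinely implies surjectivity of $\pi_1(M_1) \to \pi_1(W)$ rather than merely some joint condition. This is automatic from the structure of the amalgamated free product: if $A \ast_C B = 1$, then the natural map $C \to A$ is surjective (any element of $A$ is, as a word in $A \ast_C B$, equal to $1$, hence lies in the image of $C$ after reducing). So no hypothesis on $\pi_1(N)$ is needed at all, and the corollary follows formally. For part (1), there is no real obstacle beyond recalling that the rational homology disk smoothing of this particular singularity exists and yields a filling of $\xi_{Mil}$, which is exactly the content cited from \cite{BS, W, CNP} in the paragraph preceding Theorem 1.7.
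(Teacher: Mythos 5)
Your treatment of part (1) is fine and is the natural argument: the Milnor fiber of the rational homology disk smoothing supplies a $\Q$-homology ball Stein filling of $(M_1,\xi_{Mil})$, and running this through the correspondence of Theorem~1.7 produces the configuration $C_1\cup C_2$.

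Part (2) has a genuine gap, and it is exactly in the place you flag as ``the only place where one must be slightly careful'' and then dismiss. Your claim that if $A\ast_C B=1$ then $C\to A$ is automatically onto is false. In the van Kampen setting this is a \emph{pushout}, not an amalgamated free product with injective structure maps, so no normal-form theorem is available; the reasoning ``any element of $A$ reduces to lie in the image of $C$'' is not valid. A concrete counterexample: take $A=A_5$, $C=\Z/2$ a (non-normal) subgroup of $A_5$, $B=1$, with $\alpha\colon C\hookrightarrow A$ the inclusion and $\beta\colon C\to B$ trivial. The pushout is $A_5/\langle\!\langle\Z/2\rangle\!\rangle=1$ because $A_5$ is simple, yet $\alpha$ is very far from onto. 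So ``no hypothesis on $\pi_1(N)$ is needed at all'' is incorrect.

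The hypothesis on $\pi_1(N)$ is in fact exactly what makes the argument run, and it is available: in the paper's construction the cap is $V=H^{-1}([0,1])\cup Z$, a $4$-ball with two symplectic $2$-handles attached, so $\pi_1(V)=1$. With that in hand, van Kampen gives $1=\pi_1(X)=\pi_1(W)/\langle\!\langle i_\ast\pi_1(M_1)\rangle\!\rangle$, i.e.\ the image of $\pi_1(M_1)$ \emph{normally generates} $\pi_1(W)$. You should make this use of $\pi_1(V)=1$ explicit. Note also that normal generation is still a priori weaker than surjectivity onto $\pi_1(W)$ (the $A_5$ example above shows the two notions differ for abstract pushouts), so one should either say clearly what further input upgrades normal generation to the full ``onto'' claimed in the corollary, or verify that normal generation is all that is actually needed downstream. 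In any case, the step you declare unnecessary is precisely the step that carries the argument.
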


We remark that Corollary 1.11(2) is automatically true if $W$ is Stein. On the other hand, in 
Corollary 1.11(1), the individual curves $C_1,C_2$ exist, and even exist algebraically (in fact, $C_1$ is the first member of an infinite family of rational unicuspidal curves with one Puiseux pair, see \cite{F} for a
complete list). However, a pair of $J$-holomorphic $C_1,C_2$ which intersect as specified in 
Theorem 1.10 is not known to exist before, and is still unknown if we insist $C_1,C_2$ be algebraic. Theorem 1.10 naturally calls for an investigation of the corresponding symplectic isotopy problem for the configuration
$C_1\cup C_2\subset \C\P^2$; in particular, whether any such a configuration of $J$-holomorphic
curves is isotopic to an algebraic configuration. The underlying contact geometry question is whether
the symplectic filling $W$ is always Stein, and whether $W$ is unique up to a diffeomorphism. We shall also
point out that should $(M_1,\xi_{Mil})$ admit non-diffeomorphic $\Q$-homology ball  symplectic fillings, Theorem 1.10 would imply that there is a Zariski pair for the symplectic configuration $C_1\cup C_2\subset \C\P^2$.

\vspace{2mm}

The next theorem is concerned with embeddings of an infinite family of lens spaces in $\C\P^2\# \overline{\C\P^2}$ 
or $\s^2\times \s^2$ as a hypersurface of contact type. For any $\delta\geq 4$, we let $M_\delta$ denote the lens space $L(\delta^2,-\delta-1)$. As the link of a cyclic quotient singularity, it is known that $M_\delta$ bounds a $\Q$-homology ball, which is a Stein domain from a rational homology disk smoothing of the singularity, see \cite{W0}. From the contact geometry's point of view, there is a canonical tight contact structure $\xi_{can}$ on $M_\delta$, which is characterized by being a universally tight contact structure, and $\xi_{can}$ is unique as 
an un-oriented tight contact structure (cf. \cite{H}). The Milnor fillable contact structure on $M_\delta$ 
is the same as $\xi_{can}$ up to a change of orientation. In conclusion, $(M_\delta,\xi_{can})$ admits a $\Q$-homology ball symplectic filling, denoted by $W_\delta$, which is unique up to a 
diffeomorphism (cf. \cite{ER}).  

For $\delta\geq 4$, let $M_\delta^\prime$ be the small Seifert space 
$M((2,1),(6\delta-17,6),(4\delta-3,\delta-1))$. It is the link of a weighted homogeneous singularity 
which admits a rational homology disk smoothing (cf. \cite{BS}, Figure 1(c) with $r=3$, 
$q=\delta-4$). Furthermore, the singularity has a unique smoothing component 
(cf. \cite{W}). Let $\xi_{Mil}$ be the Milnor fillable contact structure on $M_\delta^\prime$. Then 
$(M_\delta^\prime,\xi_{Mil})$ admits a $\Q$-homology ball symplectic filling. 

\begin{theorem}
For each $\delta\geq 4$, there is a relative handlebody $Z_\delta$ consisting of two symplectic $2$-handles, where $Z_\delta$ is simply connected, with two concave contact boundaries 
$(M_\delta,\xi_{can})$ and $(M_\delta^\prime,\xi_{Mil})$, such that for any $\Q$-homology ball symplectic filling $W_\delta^\prime$ of $(M_\delta^\prime,\xi_{Mil})$, the closed symplectic 
$4$-manifold $X_\delta:=W_\delta\cup Z_\delta\cup W_\delta^\prime$ is diffeomorphic to a Hirzebruch
surface. Furthermore, $X_\delta\cong \s^2\times \s^2$ if and only if $\delta$ is odd. 
\end{theorem}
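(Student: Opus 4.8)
The plan is to build the relative handlebody $Z_\delta$ explicitly using the streamlined procedure from Sections 2--6, by realizing a neighborhood of a suitable configuration of symplectic surfaces inside a symplectic Hirzebruch surface whose complement is precisely $W_\delta \cup W_\delta'$. First I would use the $\s^1$-invariant contact structure machinery to identify a rational open book with periodic monodromy on $M_\delta$ supporting $\xi_{can}$, and a second one on $M_\delta'$ supporting $\xi_{Mil}$; since $M_\delta = L(\delta^2,-\delta-1)$ is a Seifert-fibered (indeed, lens) space and $M_\delta'$ is small Seifert, such open books exist in abundance. Applying Gay's construction (as developed in Sections 5--6) to each of these open books by attaching symplectic $2$-handles along the binding components with appropriate positive framings produces a symplectic cobordism from the two convex boundaries to the two concave boundaries; arranging it so that the underlying $4$-manifold is a two-handle relative handlebody that is simply connected is the construction of $Z_\delta$. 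Capping off one end with the $\Q$-homology ball $W_\delta$ and the other with any $\Q$-homology ball filling $W_\delta'$ then yields a closed symplectic $4$-manifold $X_\delta$.

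The next step is to identify the diffeomorphism type of $X_\delta$. Because $W_\delta$ and $W_\delta'$ are $\Q$-homology balls and $Z_\delta$ consists of two $2$-handles, $X_\delta$ has $b_2 = 2$, and a Mayer--Vietoris / van Kampen computation (using that $Z_\delta$ is simply connected and, by Corollary 1.8(2)-type surjectivity or the explicit handle structure, that $\pi_1$ is killed) shows $X_\delta$ is simply connected with $b_2^+ = b_2^- = 1$. By Freedman's theorem $X_\delta$ is homeomorphic to $\C\P^2 \# \overline{\C\P^2}$ or $\s^2 \times \s^2$ according to the parity of its intersection form, and to pin down the smooth structure I would exhibit $X_\delta$ as a \emph{symplectic} manifold with $b_2^+ = 1$ containing a symplectic sphere of nonnegative self-intersection --- e.g. a fiber of a Lefschetz-type fibration visible from the handle decomposition, or a symplectic sphere arising as a cap page --- so that by McDuff's classification of rational/ruled symplectic $4$-manifolds, $X_\delta$ must be a (symplectic, hence smooth) Hirzebruch surface $\s^2 \tilde{\times} \s^2$. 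The parity of the intersection form, which distinguishes $\s^2 \times \s^2$ from $\C\P^2 \# \overline{\C\P^2}$, is then read off from the framings $F_i$ of the symplectic $2$-handles in $Z_\delta$ together with the linking data of the binding components; the arithmetic here reduces to computing the self-intersection number $\mathrm{mod}\ 2$ of the generator of $H_2$ coming from a capped-off handle core, and tracking how it depends on $\delta$ through the Seifert invariants $(2,1),(6\delta-17,6),(4\delta-3,\delta-1)$ of $M_\delta'$ and the continued-fraction expansion of $\delta^2/(\delta+1)$ for $M_\delta$.

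The main obstacle I expect is the last bookkeeping step: verifying that the parity of the intersection form of $X_\delta$ is exactly the parity of $\delta$. This requires a careful and completely explicit description of the second homology of $X_\delta$ in terms of the page classes and handle cores of $Z_\delta$, the rational open book data on both boundary components, and the (rational) homology of the two $\Q$-homology-ball caps --- in particular one must check that the answer is \emph{independent} of the choice of filling $W_\delta'$, which should follow because $W_\delta'$ is a $\Q$-homology ball so it contributes nothing to the rational intersection form and the integral refinement is forced by the contact boundary data and the simple connectivity of $Z_\delta$. A secondary, more technical point is ensuring that the symplectic $2$-handle framings $F_i > 0$ can be chosen so that the underlying smooth cobordism is genuinely a simply connected two-handlebody with the stated boundary; this is a matter of choosing the open books with a minimal number of binding components (ideally two, one on each boundary piece) and verifying the Kirby-calculus picture matches that of a neighborhood of two transversally intersecting symplectic spheres in a blown-up $\C\P^2$, analogously to Theorem 1.7. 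Once the homological parity computation is carried out, the identification $X_\delta \cong \s^2 \times \s^2 \iff \delta$ odd is immediate.
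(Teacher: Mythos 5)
Your broad plan — build $Z_\delta$ via Gay's construction from a rational open book with periodic monodromy on $M_\delta$, glue in the two $\Q$-homology balls, identify $X_\delta$ as a Hirzebruch surface using the $b_2^+=1$ symplectic classification, and then decide $\s^2\times\s^2$ versus $\C\P^2\#\overline{\C\P^2}$ by an intersection-form parity computation in $Z_\delta$ — is the right strategy and is the route the paper takes (Section 9, Lemmas 9.1--9.4). However there are genuine gaps in the key steps.

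First, the parity computation cannot be done ``mod $2$'' on a single class as you propose. The paper builds a closed singular cycle $\alpha = \Sigma \cup p_1 D_1 \cup p_2 D_2$ in $Z_\delta$ (a page of the rational open book capped off by appropriate multiples of the two handle cores) and must compute $\alpha\cdot\alpha$ \emph{mod $4$}: a value $\equiv \pm 1$ forces $\C\P^2\#\overline{\C\P^2}$, a value $\equiv 2$ forces $\s^2\times\s^2$, but a value $\equiv 0 \pmod 4$ is inconclusive, since both manifolds have primitive classes with self-intersection divisible by $4$. With the most natural choice of rational open book on $M_\delta$, the case $\delta\equiv -1 \pmod 4$ lands precisely in this inconclusive residue, and the paper must construct a \emph{second} compatible rational open book on $M_\delta$ (with the same binding circles but different monodromy order $n=8\delta-6$) to produce a different cycle with $\alpha\cdot\alpha\equiv 2 \pmod 4$. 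This extra step is not a minor bookkeeping issue; without it the conclusion fails for infinitely many odd $\delta$.

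Second, the simple connectivity of $Z_\delta$ is not automatic from ``the explicit handle structure.'' Since $Z_\delta$ is built on $M_\delta$ by attaching two $2$-handles, $\pi_1(Z_\delta)$ is the quotient of $\pi_1(M_\delta)\cong\Z_{\delta^2}$ by the normal subgroup generated by the two attaching circles. The key fact needed (Lemma 9.3 in the paper) is that the binding component $B_2$ — a singular fiber of multiplicity $\delta-3$ in the Seifert description $M_\delta = M((\delta-3,-(\delta-2)),(4\delta-3,3\delta-2))$ of Lemma 9.1 — actually generates $\pi_1(M_\delta)$. Proving this requires a case analysis on $\gcd(\delta-3,4\delta-3)\in\{1,3,9\}$; it is not formal. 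Relatedly, you never verify that the contact structure supported by the chosen open book on $M_\delta$ is really the universally tight $\xi_{can}$ rather than an overtwisted one: one needs the Seifert description from Lemma 9.1 to check $e(M_\delta)>0$ and then invoke Theorem 2.3(1). Finally, note that Gay's construction as used here attaches $2$-handles to \emph{one} convex boundary $(M_\delta,\xi_{can})$, producing a cobordism with \emph{two} concave boundary components; you do not attach handles to $M_\delta'$ at all — its open book and contact structure fall out of Theorem 4.4 automatically.
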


\begin{corollary}
{\em(1)} The infinite family of lens spaces $L(\delta^2,-\delta-1)$, where $\delta\geq 5$ and is odd, 
can be embedded in $\s^2\times \s^2$ as a hypersurface of contact type. 

{\em(2)} For any $\Q$-homology ball symplectic filling $W_\delta^\prime$ of 
$(M_\delta^\prime,\xi_{Mil})$, the induced homomorphism $\pi_1(M_\delta^\prime)\rightarrow \pi_1(W_\delta^\prime)$ is always onto. 
\end{corollary}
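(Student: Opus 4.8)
The plan is to read off both statements from Theorem 1.9. For part (1), fix an odd $\delta\geq 5$ (equivalently $\delta\geq 4$ and $\delta$ odd). By Theorem 1.9 the closed symplectic $4$-manifold $X_\delta=W_\delta\cup Z_\delta\cup W_\delta^\prime$ is diffeomorphic to $\s^2\times\s^2$, and $W_\delta$ is glued into $X_\delta$ precisely along its convex contact boundary $(M_\delta,\xi_{can})$ with $M_\delta=L(\delta^2,-\delta-1)$. Since a convex boundary of a compact symplectic piece of a closed symplectic $4$-manifold is exactly a hypersurface of contact type, the separating hypersurface $M_\delta=\partial W_\delta\subset X_\delta\cong\s^2\times\s^2$ realizes $L(\delta^2,-\delta-1)$, with induced contact structure $\xi_{can}$, as a contact-type hypersurface. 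This is all of part (1); nothing is needed beyond unwinding the statement of Theorem 1.9 (the restriction $\delta\geq 5$ is only because we want $\delta$ odd and $\geq 4$).

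For part (2), fix $\delta\geq 4$ and an arbitrary $\Q$-homology ball symplectic filling $W_\delta^\prime$ of $(M_\delta^\prime,\xi_{Mil})$, and form $X_\delta=W_\delta\cup_{M_\delta}Z_\delta\cup_{M_\delta^\prime}W_\delta^\prime$ as in Theorem 1.9; this is a Hirzebruch surface, hence simply connected. The first step is to show that the ``cap side'' $Y:=W_\delta\cup_{M_\delta}Z_\delta$ is simply connected. Indeed $Z_\delta$ is simply connected by Theorem 1.9, and $W_\delta$ is the diffeomorphism-unique $\Q$-homology ball filling of the lens space $M_\delta=L(\delta^2,-\delta-1)$ arising from the rational homology disk smoothing of a cyclic quotient singularity; for such a filling $\pi_1(W_\delta)$ is cyclic and is generated by the image of $\pi_1(M_\delta)=\Z/\delta^2$ (part of the standard description of these smoothings, cf. \cite{W0, L, LeM}). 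Van Kampen applied to $Y=W_\delta\cup_{M_\delta}Z_\delta$ then gives $\pi_1(Y)=\pi_1(W_\delta)/\langle\langle\,\mathrm{im}\,\pi_1(M_\delta)\,\rangle\rangle$, which is trivial since $\mathrm{im}\,\pi_1(M_\delta)=\pi_1(W_\delta)$. So $Y$ is simply connected with $\partial Y=M_\delta^\prime$.

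Now apply van Kampen to $X_\delta=Y\cup_{M_\delta^\prime}W_\delta^\prime$. Since $\pi_1(Y)=1$, the pushout collapses to $\pi_1(X_\delta)=\pi_1(W_\delta^\prime)/\langle\langle\,\mathrm{im}(\pi_1(M_\delta^\prime)\to\pi_1(W_\delta^\prime))\,\rangle\rangle$, and this is trivial because $X_\delta$ is a Hirzebruch surface. Hence $\pi_1(W_\delta^\prime)$ is normally generated by the image of $\pi_1(M_\delta^\prime)$. The remaining point --- and the one I expect to be the main obstacle --- is to upgrade ``normally generated by'' to ``generated by'', since in general a subgroup whose normal closure is the whole group and which even surjects onto the abelianization need not itself be the whole group. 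I would handle this by exploiting structure on both sides: on the one hand $\pi_1(M_\delta^\prime)$ is a Seifert manifold group, so the central cyclic subgroup generated by the regular fiber lies in the image and constrains that image strongly; on the other hand $W_\delta^\prime$ is a $\Q$-homology ball, so $H_1(W_\delta^\prime)$ is finite, $H_2(W_\delta^\prime)$ vanishes in the relevant cases, and the standard long exact sequence forces $\pi_1(M_\delta^\prime)\to H_1(W_\delta^\prime)$ to be onto; combining these with control of how the peripheral torus-bundle collar of $W_\delta^\prime$ sits against $Z_\delta$ should pin the image down to all of $\pi_1(W_\delta^\prime)$. (When $W_\delta^\prime$ is Stein this step is automatic, since a Stein domain carries a handle decomposition with no handles of index $>2$ and hence has $\pi_1$ carried by its boundary; the content of (2) is exactly that Steinness is not assumed.) The same scheme --- van Kampen against a simply connected or $\pi_1$-controlled cap, then a structural upgrade of normal generation --- is the one I would also use for the parallel Corollary 1.8(2), with $N(\tilde C_1\cup\tilde C_2)\subset\C\P^2\#\overline{\C\P^2}$ playing the role of $Y$.
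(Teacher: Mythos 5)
Part (1) is correct and is exactly the reading-off from Theorem 1.9 you describe: for odd $\delta\geq 5$, $X_\delta$ is diffeomorphic to $\s^2\times\s^2$, and $M_\delta=L(\delta^2,-\delta-1)$ sits inside it as the convex contact boundary of $W_\delta$, which is precisely what it means to be a hypersurface of contact type.

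For part (2), however, the van Kampen argument you set up does not prove the stated conclusion, and the difficulty you flag is exactly where it fails. Granting that $Y:=W_\delta\cup_{M_\delta}Z_\delta$ is simply connected (this step is fine: $Z_\delta$ is simply connected by Lemma 9.3, and $W_\delta$ is the Stein rational-blowdown filling so $\pi_1(M_\delta)\to\pi_1(W_\delta)$ is onto), van Kampen applied to $X_\delta=Y\cup_{M_\delta^\prime}W_\delta^\prime$ gives only $1=\pi_1(X_\delta)=\pi_1(W_\delta^\prime)/\langle\langle\,\mathrm{im}\,\pi_1(M_\delta^\prime)\,\rangle\rangle$, i.e., \emph{normal} generation of $\pi_1(W_\delta^\prime)$ by the image. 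Normal generation together with surjection onto the abelianization does not imply generation (a $\Z/5$ in $A_5$ normally generates $A_5$ and hits the trivial abelianization, but is proper); in $4$-manifold terms the gap is precisely the possible presence of $3$-handles in $W_\delta^\prime$, which an arbitrary non-Stein $\Q$-homology ball filling is not a priori known to avoid. The auxiliary observations you list --- centrality of the Seifert fiber class in $\pi_1(M_\delta^\prime)$, finiteness of $H_1(W_\delta^\prime)$, control of the peripheral collar --- are not assembled into any argument that excludes a proper subgroup with full normal closure, so they do not close the gap. As written, your proposal proves normal generation, not the claimed surjectivity; some further geometric input (e.g., realizing $Y$ as a neighborhood of a configuration of symplectic rational curves and invoking a meridians-generate statement, or a Weinstein/planarity argument forcing a $3$-handle-free decomposition of $W_\delta^\prime$) would be needed to complete the proof, and none is supplied.
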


Regarding Corollary 1.13(1), for embeddings of lens spaces as a hypersurface of contact type
in $\C\P^2$,  see \cite{V, EvS, EMPR}. On the other hand, it is unknown whether 
$(M_\delta^\prime,\xi_{Mil})$ has a unique $\Q$-homology ball symplectic filling. Corollary 1.13(2) suggests that any such a filling $W_\delta^\prime$ must be Stein. 

\vspace{2mm}

For the last example  of a small symplectic $4$-manifold via contact gluing, we give 
a construction of $\C\P^2$. To this end, let $C$ denote a degree $3$ cuspidal curve in $\C\P^2$,
and let $C^\prime$ denote a degree $8$ rational unicuspidal curve in $\C\P^2$ with one Puiseux pair 
$(3,22)$. Let $W_C,W_{C^\prime}$ denote the complement of a regular neighborhood of
$C,C^\prime$, which are Stein domains, and each of $W_C,W_{C^\prime}$ is a $\Q$-homology ball, with boundary $M_C,M_{C^\prime}$ where $M_C=M((3,-1),(3,1),(2,-1))$, 
$M_{C^\prime}=M((22,7),(3,2),(2,-1))$ are small Seifert spaces. Moreover, $M_{C^\prime}$ is in fact the link of a weighted homogeneous singularity which admits a rational homology disk smoothing 
(cf. \cite{BS}, Figure 1(f) with $q = 2$). With these understood, we have the following theorem.

\begin{theorem}
There is a relative handlebody $Z$ consisting of one symplectic $2$-handle, which is simply connected, with two concave contact boundaries $(M_C,\xi_{inv})$ and 
$(M_{C^\prime}, \xi_{inv}^\prime)$, where $\xi_{inv}, \xi_{inv}^\prime$ are $\s^1$-invariant, tight contact structures on $M_C$, $M_{C^\prime}$ respectively, symplectically filled by $W_C$ and $W_{C^\prime}$, such that the closed symplectic $4$-manifold 
$X:=W_C\cup Z\cup W_{C^\prime}$ is diffeomorphic to $\C\P^2$. Moreover, $\xi_{inv}^\prime$ is isotopic to the Milnor fillable contact structure on $M_{C^\prime}=M((22,7),(3,2),(2,-1))$. 
\end{theorem}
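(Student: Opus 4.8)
The plan is to realize $Z$ as the relative symplectic handlebody produced by Gay's construction (in the $\s^1$-invariant setting developed in Sections 2--6) applied to the Stein domain $W_C$, to verify that its two ends are the advertised concave contact manifolds, and then to identify the closed symplectic manifold $X=W_C\cup Z\cup W_{C^\prime}$ by recognizing the symplectic cap $W_C\cup Z$ as a regular neighborhood of the degree $8$ curve $C^\prime$, so that re-gluing $W_{C^\prime}=\C\P^2\setminus N(C^\prime)$ literally rebuilds $\C\P^2$.

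First I would fix on $M_C=M((3,-1),(3,1),(2,-1))$ a rational open book with periodic monodromy, compatible with the Seifert $\s^1$-action, whose binding is a single exceptional fiber (the multiplicity-$3$ fiber with $\beta=1$); such open books exist in abundance by Sections 2--4. Let $\xi_{inv}$ be the supported $\s^1$-invariant contact structure. Using the identification of the contactomorphism type of an $\s^1$-invariant contact structure through the locus where it fails to be transverse to the $\s^1$-action, and the resulting tightness criterion, I would check that $\xi_{inv}$ is tight and coincides with the contact structure induced on $\partial W_C$ by the Stein (affine) structure of $W_C=\C\P^2\setminus N(C)$; as the base orbifold $S^2(3,3,2)$ is spherical, only finitely many $\s^1$-invariant tight structures must be distinguished, so this is bookkeeping once the machinery is in place. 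Then I would apply Gay's construction from Section 6: attach one symplectic $2$-handle to $W_C$ along the chosen binding component with a framing $F>0$ relative to the page framing, choosing $F$ so that the Seifert invariants of the resulting concave boundary are exactly those of $M_{C^\prime}=M((22,7),(3,2),(2,-1))$; the new concave contact structure $\xi_{inv}^\prime$ is again $\s^1$-invariant and positively transverse to the fibers, hence tight, and by the classification of such structures it is isotopic to the Milnor fillable contact structure on $M_{C^\prime}$ (cf. \cite{CNP}). The relative handlebody $Z$ is then a collar of $M_C$ together with this single symplectic $2$-handle; it is simply connected because killing the class of the chosen exceptional fiber trivializes the finite group $\pi_1(M_C)$, and both of its boundary components, $(M_C,\xi_{inv})$ (where it meets $W_C$) and $(M_{C^\prime},\xi_{inv}^\prime)$ (the far, concave end of Gay's cap), are concave.

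Now $W_C$ is a strong symplectic filling of $(M_C,\xi_{inv})$, and $W_{C^\prime}$ is affine, hence a Stein filling, whose induced boundary contact structure agrees with $\xi_{inv}^\prime$ (equivalently, with the Milnor fillable structure) by the same argument; contact gluing along these two hypersurfaces produces the closed symplectic $4$-manifold $X=W_C\cup Z\cup W_{C^\prime}$. To pin down $X$ I would show that the symplectic cap $W_C\cup Z$ is diffeomorphic to a regular neighborhood of $C^\prime$: both are simply connected $4$-manifolds with $b_2=1$ and boundary $M_{C^\prime}$ carrying the Milnor fillable contact structure, and one matches their handle decompositions by Kirby moves — the decomposition of $W_C\cup Z$ being explicit (a Legendrian surgery diagram for the affine Stein domain $W_C$ plus one further $2$-handle from Gay's construction), the one of $N(C^\prime)$ being the plumbing dictated by the embedded resolution of the $(3,22)$ cusp. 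Granting this, $X=N(C^\prime)\cup_{M_{C^\prime}}W_{C^\prime}=\C\P^2$ by the very definition of $W_{C^\prime}$; symmetrically one could instead recognize $Z\cup W_{C^\prime}$ as a regular neighborhood of the cuspidal cubic $C$, so that $X=W_C\cup N(C)=\C\P^2$, and either grouping works. As a consistency check, van Kampen (using that $Z$ is simply connected and that $\pi_1(\partial)$ normally generates $\pi_1(W_C)$ and $\pi_1(W_{C^\prime})$, since $N(C),N(C^\prime)$ are simply connected) gives $\pi_1(X)=1$, and a Mayer--Vietoris computation (using that $W_C,W_{C^\prime}$ are $\Q$-homology balls and that any closed symplectic $4$-manifold has $b_2^+\geq 1$) shows $X$ already has the integral homology and intersection form of $\C\P^2$.

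The main obstacle is the combined bookkeeping of the second and third steps: one must choose the rational open book on $M_C$, the binding component, and the positive framing $F$ so that Gay's handle attachment yields precisely $M_{C^\prime}=M((22,7),(3,2),(2,-1))$ with its Milnor fillable contact structure, while simultaneously keeping $Z$ simply connected, and then carry the resulting handle diagram through Kirby moves to the standard picture of a neighborhood of a degree $8$ unicuspidal curve in $\C\P^2$. The contact-geometric input of Sections 2--6 makes the first half essentially mechanical; the Kirby-calculus identification of the cap, though elementary in principle, is the delicate computational core of the proof.
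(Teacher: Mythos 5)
Your overall architecture (Gay handle attachment along a binding component of an $\s^1$-invariant open book on $M_C$, verifying the other concave end is $M_{C'}$ with its Milnor fillable contact structure, then gluing $W_{C'}$) matches the paper. But the final identification of $X$ with $\C\P^2$, which you acknowledge as ``the delicate computational core,'' is handled by the paper in a completely different and much more efficient way, and your substitute is genuinely incomplete as written.

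The paper never attempts to show $W_C\cup Z$ is diffeomorphic to a regular neighborhood of $C'$, nor does it run any Kirby calculus. Instead, it computes $c_1(K_Z)\cdot[\omega_Z]<0$ using Corollary 5.3 (or Lemma 5.2), concludes $c_1(K_X)\cdot[\omega_X]<0$ via Lemma 5.4 (since $W_C,W_{C'}$ are $\Q$-homology balls with exact symplectic forms), and invokes Taubes' classification of minimal symplectic $4$-manifolds to conclude that a symplectic $\Q$-homology $\C\P^2$ with negative $K\cdot\omega$ must be diffeomorphic to $\C\P^2$. This is the whole point of the canonical-class machinery developed in Section 5, which your proposal does not use at all. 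In contrast, your Mayer--Vietoris argument only pins down the integral homology and intersection form of $X$; that is far from determining the diffeomorphism type (this is exactly the exotic-$\C\P^2$ problem), so you really do need the Kirby identification $W_C\cup Z\cong N(C')$ to make your version of the argument close. That identification is not obviously true without doing serious work, and the paper deliberately routes around it.

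A second, smaller issue: your choice of binding is the wrong fiber. The contact structure $\xi_{inv}$ on $M_C$ that $W_C$ fills is non-transverse, with dividing circle encircling the singular point of the fiber with (unnormalized) Seifert invariant $(-3,4)$; in the normalization $M_C=M((3,-1),(3,1),(2,-1))$ this is the $(3,-1)$ fiber (normalized $(3,2)$), not the $(3,1)$ fiber. Since the two multiplicity-$3$ fibers have distinct normalized invariants, Lemma 2.2 shows that an open book whose opposite-oriented binding sits over the $(3,1)$ fiber supports a non-contactomorphic $\s^1$-invariant contact structure, and an open book whose binding has the fiber orientation supports a transverse (Milnor fillable) structure; in neither case do you get the $\xi_{inv}$ filled by $W_C$. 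You would discover this during your proposed ``bookkeeping'' verification, but as written the chosen binding would not reproduce $(M_C,\xi_{inv})$, hence not $(M_{C'},\xi_{inv}')$ at the far end either.
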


In other words, the $\Q$-homology ball Stein domains $W_C,W_{C^\prime}$ can be {\it disjointly} embedded in $\C\P^2$ as a compact domain with convex contact boundary, such that the complement of $W_C\cup W_{C^\prime}$ is a simply connected, relative handlebody of one symplectic $2$-handle. In fact, a more general statement holds true, where $W_C,W_{C^\prime}$ may be replaced by any $\Q$-homology ball symplectic fillings of $(M_C,\xi_{inv})$, $(M_{C^\prime}, \xi_{inv}^\prime)$. 

\vspace{2mm}

Characterization of tightness of a general $\s^1$-invariant, non-transverse contact structure (see Section 2) remains elusive. Currently, a sticking point is whether the assumptions in Theorem 2.3 for its constructive proof are also necessary to ensure the tightness of the $\s^1$-invariant contact structure constructed therein, see Question 2.6. In the context of rational unicuspidal curves, the contact geometry question leads to the following question in algebraic geometry, which may have an independent perspective, or even a solution. We formulate it as a conjecture. 

Given any pair of relatively prime integers $(p,q)$, where $1<p<q$, let $(p^\prime,q^\prime)$ be the pair of positive integers uniquely determined by the equation $pq^\prime+q p^\prime=pq+1$. Let $m_{p,q}$ denote the largest integer which is not greater than the rational number $pq+ \max (\frac{p}{p-p^\prime},\frac{q}{q-q^\prime})$, i.e., 
$$
-1<m_{p,q}-pq-\max (\frac{p}{p-p^\prime},\frac{q}{q-q^\prime})\leq 0.
$$

\begin{conjecture}
Let $X$ be any algebraic surface, $C\subset X$ be a rational unicuspidal curve with one Puiseux pair 
$(p,q)$. Then the self-intersection $C\cdot C\leq m_{p,q}$.
\end{conjecture}

\begin{remark}
(1) If $C$ is a non-singular rational curve, then we do not expect any upper bound on the self-intersection of $C$. However, if $C$ is singular, there should be a priori upper bound on the self-intersection of $C$ which depends on the singularities of $C$. See Hartshorne \cite{Hartsh} for a relevant discussion. With this understood, the bound $m_{p,q}$ in 
Conjecture 1.15 is for a special class of rational cuspidal curves, and is motivated by a specific question from contact geometry. Paul Hacking informed me that he has a proof of Conjecture 1.15 for the case $(p,q)=(2,3)$, which is purely an algebraic geometry proof. 

(2) Borrowing ideas from Hartshorne \cite{Hartsh}, one can show that in Conjecture 1.15, if 
$C\cdot C\geq pq-p-q$,  the algebraic surface $X$ must be rational. Hence one can assume $X$ is a rational surface in Conjecture 1.15 without loss of generality. In particular, note that if the bound is sharp for the curve $C$, i.e.,  $C\cdot C=m_{p,q}$, $X$ must be rational. 

(3) For $X=\C\P^2$, a complete list of rational unicuspidal curves with one Puiseux pair $(p,q)$ 
was obtained (cf. \cite{F}). A direct verification shows that Conjecture 1.15 is true for these curves, 
where the bound $m_{p,q}$ is even sharp in infinitely many cases, i.e., $(p,q)=(d-1,d)$ or $(d/2,2d-1)$ where $d$ is the degree of the curve. See Example 6.7. In particular, the bound $m_{p,q}$ is optimal for $q=p+1$ and $q=4p-1$. On the other hand, one can produce examples of curves in a rational surface which show that $m_{p,q}$ is also optimal for $p=2,3,4$ and $5$. See Examples 6.8 and 6.9. 

(4) We have checked Conjecture 1.15 against examples of rational cuspidal curves in $\C\P^2$ or in 
$\C\P^1\times \C\P^1$ (low degree curves as well as some infinite families of curves) which are not unicuspidal but contain a cuspidal singularity with one Puiseux pair. (See \cite{Moe} for a nice overview, with many examples and references to the literature.) Rational unicuspidal curves with one Puiseux pair can be produced from such examples, and we are not able to find any counterexample to Conjecture 1.15, but rather, we find many examples for which the bound $m_{p,q}$ in Conjecture 1.15 is sharp. 

(5) Let $C$ be a rational unicuspidal curve with one Puiseux pair. Blow up at the singularity of $C$ 
and let $\tilde{C}$ be the proper transform. Assume $\tilde{C}$ is singular, which is also a rational unicuspidal curve with one Puiseux pair. One can show that it is not possible to improve the upper bound in Conjecture 1.15 (but rather, one often relaxes it) by passing from $C$ to its proper transform $\tilde{C}$. See Lemma 6.10 for more details. 

(6) Golla and Starkston introduced an upper bound for the self-intersection of a symplectic singular rational curve in a symplectic $4$-manifold in terms of its singularities (cf. \cite{GS2}, Proposition 6.2). Specializing to the case of unicuspidal curves with one Puiseux pair $(p,q)$, one can show that the bound $m_{p,q}$ in Conjecture 1.15 is strictly smaller than the bound of Golla and Starkston, except for the cases where $(p,q)=(m-1,m)$ or $(p,q)=(m,km-1)$ for some $m\geq 3$ and $k\geq 2$, in which the two bounds coincide.  See Lemma 6.11 for more details.  
\end{remark}

In the next theorem, we will show that for any Puiseux pair $(p,q)$, there is a symplectic rational unicuspidal curve in a rational $4$-manifold which realizes the bound $m_{p,q}$ as its self-intersection. Along the way, it will become clear how Conjecture 1.15 is related to the aforementioned question in contact geometry. However, we shall cast it in a broader perspective so that some future research projects become natural if seen in this light (e.g. \cite{C4}).

In \cite{LM}, Li and Mak considered the problem of embedding a symplectic divisor (i.e., a union of embedded symplectic surfaces with normal crossing) into a closed symplectic $4$-manifold, motivated by the phenomenon of compactification by normal crossing divisors in complex geometry. They took the first step by showing that if the intersection matrix of the divisor is non-singular and not negative-definite, a regular neighborhood of the symplectic divisor, via symplectic plumbing, admits a symplectic structure with a concave contact boundary, where the contact structure depends only on the plumbing graph up to a contactomorphism. Obviously, the symplectic divisor can be embedded in a closed symplectic $4$-manifold if and only if the concave contact boundary is symplectically fillable,
in which case, the symplectic filling can be capped by a divisorial capping, which is the symplectic analog of compactification by normal crossing divisors in complex geometry. 

We shall put this in a slightly different formulation. Let $M$ be an oriented graph manifold. Consider the set of plumbing graphs $\Gamma$ of symplectic plumbings such that the intersection matrix $Q_\Gamma$ is non-singular and not negative-definite, and the manifold $-M$, i.e., $M$ with the opposite orientation, is the boundary of a plumbing according to $\Gamma$. To each such plumbing graph $\Gamma$, one can associate a contact structure to $M$, which we shall refer to as the  
{\bf Li-Mak contact structure of $M$ associated to $\Gamma$}. We remark that the graph $\Gamma$ needs not to be in a normal form as defined by Neumann \cite{N1}, but often for a given manifold $M$, there is a canonical graph or a natural one arising from a specific context. We should point out that the Li-Mak contact structures are somewhat in analogy with the Milnor fillable contact structures, however, unlike in the Milnor fillable case, the Li-Mak contact structures are poorly understood in general; it could be overtwisted, and we do not have much understanding as for which $3$-manifolds $M$, a Li-Mak contact structure of $M$ is fillable. 

With the preceding understood, let $p,q$ be a pair of relatively prime integers, where $1<p<q$, and $m>0$ a positive integer. We let $M_{p,q,m}=-S^3_m(T_{p,q})$, which is the $3$-manifold obtained by a $m$-surgery on the $(p,q)$-torus knot $T_{p,q}$, but given the opposite orientation. Let $U_{p,q,m}$ be a handlebody consisting of one $0$-handle and one $2$-handle, attached along $T_{p,q}$ with framing $m$. Then $U_{p,q,m}$ is the smooth model for a regular neighborhood of a rational unicuspidal curve with one Puiseux pair $(p,q)$ and self-intersection $m$, and $M_{p,q,m}=-\partial U_{p,q,m}$; in particular, $U_{p,q,m}$ contains an obvious such 
cuspidal curve. To relate it to Li-Mak's work in \cite{LM}, note that by a successive blowing up of $U_{p,q,m}$, denoting the resulting manifold by $\tilde{U}_{p,q,m}$, one can resolve the singularity of the cuspidal curve in $U_{p,q,m}$, giving rise to a smooth normal crossing divisor $D_{p,q,m}$ in $\tilde{U}_{p,q,m}$. We observe
 that $\tilde{U}_{p,q,m}$ is a regular neighborhood of $D_{p,q,m}$, and $M_{p,q,m}=-\partial \tilde{U}_{p,q,m}$. Furthermore, we may assume $D_{p,q,m}$ is minimal, so that it is uniquely determined. As $m>0$, the intersection matrix is not negative-definite. By Li-Mak \cite{LM}, $M_{p,q,m}$ is equipped with a Li-Mak contact structure 
 $\xi_{LM}$, such that if $(M_{p,q,m}, \xi_{LM})$ is symplectically fillable, then $D_{p,q,m}$ is embedded in a closed symplectic $4$-manifold as a symplectic divisor. We remark that one can always symplectically blow down a symplectic $D_{p,q,m}$, resulting a symplectic rational unicuspidal curve\footnote{With this terminology it also goes with the technical assumption that near the singularity the curve is analytic and the symplectic form is 
K\"{a}hler.} with one Puiseux pair $(p,q)$ and self-intersection $m$ (see \cite{C2}, Section 4). To sum up, $M_{p,q,m}$ is equipped with a Li-Mak contact structure $\xi_{LM}$, and a symplectic rational unicuspidal curve with one Puiseux pair $(p,q)$ and self-intersection $m>0$ can be embedded in a closed symplectic $4$-manifold if and only if $(M_{p,q,m}, \xi_{LM})$ is fillable. 

Now we state our theorem. But first, note that $M_{p,q,m}$ admits an $\s^1$-action which defines $M_{p,q,m}$
as a small Seifert space or a lens space if $m\neq pq$, and for $m=pq$, the $\s^1$-action has a fixed component, and $M_{p,q,m}$ is the connected sum of two lens spaces. 

\begin{theorem}
(1) For any $m>0$, $U_{p,q,m}$ has a symplectic structure with concave contact boundary, containing a symplectic rational unicuspidal curve with one Puiseux pair $(p,q)$ and  self-intersection $m$ in the interior of $U_{p,q,m}$,
such that $U_{p,q,m}$ is a regular neighborhood of the cuspidal curve. Moreover, the contact structure on the boundary $M_{p,q,m}$, denoted by $\xi_{inv}$, is $\s^1$-invariant, which is transverse when $m\leq pq$, and is non-transverse when $m>pq$.

(2) A compact $4$-manifold $W$ is a symplectic filling of $(M_{p,q,m}, \xi_{LM})$ if and only if it is a
symplectic filling of $(M_{p,q,m}, \xi_{inv})$ (for a different symplectic structure on $W$). In particular, 
$(M_{p,q,m}, \xi_{LM})$ is fillable if and only if $(M_{p,q,m}, \xi_{inv})$ is fillable. 

(3) For $m=m_{p,q}$, $(M_{p,q,m}, \xi_{inv})$ is fillable. Consequently, a symplectic rational unicuspidal curve with one Puiseux pair $(p,q)$ and  self-intersection $m=m_{p,q}$ can be embedded in a
closed symplectic $4$-manifold $X$, where $X$ is in fact a rational $4$-manifold. 
\end{theorem}

We remark that by blowing up one can always decrease the self-intersection of the curve, so part (3) of the theorem is also true for any self-intersection $m<m_{p,q}$. In particular, $(M_{p,q,m}, \xi_{LM})$ (as well as 
$(M_{p,q,m}, \xi_{inv})$) is fillable for any $0<m\leq m_{p,q}$. On the other hand, if 
$(M_{p,q,m}, \xi_{LM})$ is fillable for some $m>m_{p,q}$, i.e., there is a symplectic rational unicuspidal curve with one Puiseux pair $(p,q)$ and self-intersection $m>m_{p,q}$ which can be embedded in a closed symplectic $4$-manifold, then $(M_{p,q,m}, \xi_{inv})$ is fillable for the same $m>m_{p,q}$, which gives a negative answer to the aforementioned question in contact geometry, i.e., Question 2.6. 

\begin{remark}
As we pointed out earlier, the Li-Mak contact structure $\xi_{LM}$ is not well understood in general regarding tightness and fillability, but it has the advantage of being flexible in the sense that up to a contactomorphism, it does not depend on the geometric data of the symplectic divisor involved in the construction. On the other hand, our construction for a symplectic model of the regular neighborhood of a symplectic rational cuspidal curve, or more generally a union of rational curves, which will result an $\s^1$-invariant contact structure $\xi_{inv}$ on the concave boundary of the regular neighborhood, is more sensitive of, and even constrained by, the geometric data involved in the construction (see Remark 6.5 and Lemma 8.3). However, the $\s^1$-invariant contact structure $\xi_{inv}$ has the advantage of being given in a more descriptive form, and having more definite results concerning its tightness and fillability. The strategy of Theorem 1.17 is to make connections between the two contact structures and exploit their distinct advantages. The same idea was also used in the proof of Theorem 1.10, and will be further explored in forthcoming projects (see e.g. \cite{C4}).
\end{remark}

Our work naturally suggests the following question.

\begin{question}
Let $M$ be Seifert fibered, and $\xi_{LM}$ be a Li-Mak contact structure of $M$ associated to
a star-shaped graph. Is there an $\s^1$-invariant contact structure $\xi_{inv}$ on $M$ which is equivalent to 
$\xi_{LM}$ in the following sense: there are topologically trivial symplectic cobordisms between 
$(M,\xi_{LM})$ and $(M,\xi_{inv})$ (in both directions)? 
\end{question}

Note that an affirmative answer to Question 1.19 will bring progress to the problem originally considered by Li and Mak in \cite{LM}, as $(M,\xi_{LM})$ and $(M,\xi_{inv})$ have identical set of symplectic fillings up to diffeomorphisms, so that as far as symplectic filling or capping is concerned, $\xi_{LM}$ may be replaced by
the $\s^1$-invariant contact structure $\xi_{inv}$.

\vspace{2mm}

{\bf Acknowledgements:} This paper is a continuation of the project initiated in \cite{C}. I wish to thank Selman Akbulut for his initial encouragement and continued interest in it. Thanks to both him and Eylem Yildiz for their hospitality during my visit to GGTI where a preliminary version of this work was first reported. I have benefited tremendously from the discussions with them. Many thanks to Paul Hacking for the helpful communications regarding Conjecture 1.15 as well as the relevant work of Hartshorne \cite{Hartsh}, and to Marco Golla for bringing to my attention his work with Starkston \cite{GS2} and other relevant works concerning bound on the self-intersection of rational cuspidal curves. Finally, this work was presented in a seminar during my visit to Center for Geometry and Physics (CGP) in Pohang, South Korea. I am grateful to Yong-Geun Oh for the invitation, as well as for the warm hospitality and the useful discussions with him during my stay at CGP. 

\section{$\s^1$-invariant contact structures and transverse surgery}

Let $M$ be a compact closed, oriented $3$-manifold, which is equipped with a smooth
$\s^1$-action $\varphi_t: M\rightarrow M$, $t\in\s^1$. We denote by $X$ the vector field 
generating the $\s^1$-action $\varphi_t$. In this section, we shall prove some basic results 
concerning positive, co-orientable contact structures $\xi$ on $M$ which are $\s^1$-invariant, 
i.e., $(\varphi_t)_\ast \xi=\xi$, $\forall t\in\s^1$. 

First of all, by a standard argument we observe that there is always an $\s^1$-invariant
contact form $\alpha$ such that $\xi=\ker \alpha$: simply take $\alpha:=\int_{\s^1} \varphi_s^\ast 
\tilde{\alpha} \; ds$ where $\tilde{\alpha}$ is any contact form such that $\xi=\ker\tilde{\alpha}$. 
Secondly, it is important to distinguish the two different cases of $\s^1$-invariant contact structures, 
i.e., transverse and non-transverse. An $\s^1$-invariant contact structure $\xi$ is called {\bf transverse} if in the complement of the fixed-point set of $\varphi_t$, $\xi$ is transversal to the orbits of the $\s^1$-action. Otherwise, $\xi$ is called {\bf non-transverse}.

When the $\s^1$-action $\varphi_t$ is fixed-point free, $M$ is Seifert fibered. In this case, $M$ admits
a transverse $\s^1$-invariant contact structure $\xi$ if and only if the Euler number of the Seifert fibration is negative, i.e., $e(M)<0$ (cf. \cite{PM}). Furthermore, assuming $e(M)<0$, then $M$ is the link of an isolated, weighted homogeneous singularity (cf. \cite{NR}), and the $\s^1$-invariant, transverse contact structure $\xi$ is simply the so-called Milnor fillable contact structure on $M$, which is unique up to a contact isotopy (cf. \cite{CNP}). 

For the general situation where the $\s^1$-action $\varphi_t$ may have a nonempty fixed-point set, which is a link $L$ in $M$, let's fix an $\s^1$-invariant contact form $\alpha$ such that 
$\xi=\ker\alpha$. In order to put $\alpha$ in a canonical form, we observe that the link $L$ is a transverse link. It follows easily that in some neighborhood $U_i$ of each component $L_i$, there are coordinates $(z_i,r_i,\theta_i)$, where $z_i$ parametrizes $L_i$ and $(r_i,\theta_i)$ are the polar coordinates in the fibers of the normal bundle of $L_i$, such that $\alpha=dz_i+r_i^2d\theta_i$. Moreover, the $\s^1$-action is given by translations in the direction of $\theta_i$ or $-\theta_i$. 

With the preceding understood, we note that the $\s^1$-action defines a Seifert fibration $\pi: M\setminus L\rightarrow S$, where $S$ is a punctured oriented $2$-orbifold whose punctures are in one to one correspondence with the components of $L$. Furthermore, we can choose an $\s^1$-invariant $1$-form $\alpha_0$ on $M\setminus L$ such that 
$\alpha_0(X)=1$, and $\alpha_0=d\theta_i$ or $-d\theta_i$ near $L_i$. With this understood, it follows easily that there is a smooth function $f$ and a $1$-form $\beta$ on $S$, where near the puncture of $S$ corresponding to $L_i$, $f=r^2_i$ or $-r_i^2$ and $\beta=dz_i$, such that 
$$
\alpha=f\alpha_0+\beta.
$$
Next we observe that $d\alpha_0=\pi^\ast \kappa$ for some $2$-form $\kappa$
on $S$. Consequently, 
$$
0<\alpha\wedge d\alpha =f\alpha_0\wedge (f\kappa+d\beta)+(\beta\wedge df)\wedge \alpha_0.
$$
We draw some immediate corollaries from the above inequality: for any $p\in f^{-1}(0)$, 
$\beta\wedge df (p)\neq 0$; in particular, $df(p)\neq 0$. It follows 
easily that $f^{-1}(0)$, if nonempty, is a union of circles in $S$ in the complement of the 
singular points. Finally, the $\s^1$-invariant contact structure $\xi$ is transverse if and only if $f^{-1}(0)$ is empty.

The above analysis has a number of immediate consequences for a non-transverse, $\s^1$-invariant  contact structure $\xi$. First, note that for any $p\in  f^{-1}(0)$, the orbit $K$ over $p$ is a Legendrian vertical circle in $(M,\xi)$, which is easily seen to have a zero twisting number. This implies, by a theorem of Wu (cf. \cite{Wu}, Theorem 1.4), that if $M$ is a small Seifert space with $e_0(M)\leq -2$, the contact structure $\xi$ must be overtwisted. On the other hand, assuming the $\s^1$-action is not fixed-point free, we can connect $p$ by an arc $\gamma$ to a puncture of $S$, such that the tangent vector of $\gamma$ at $p$ lies in $\ker\beta(p)$ (note that $\beta(p)\neq 0$). It is easy to see that the orbits over $\gamma$ form an embedded, overtwisted disk in $(M,\xi)$; in particular, $\xi$ is an overtwisted contact structure. We summarize the above discussions in the following lemma.

\begin{lemma} 
If $M$ is a small Seifert space with $e_0(M)\leq  -2$, or the $\s^1$-action on $M$ is not fixed-point free, then 
any $\s^1$-invariant, non-transverse, contact structure on $M$ must be overtwisted.
\end{lemma}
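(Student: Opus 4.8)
The plan is to combine the canonical form $\alpha=f\alpha_0+\beta$ obtained above with the two alternatives in the hypothesis, and in each case to exhibit an explicit witness of overtwistedness built out of $\s^1$-orbits. Recall first that, since $\xi$ is non-transverse, the set $f^{-1}(0)\subset S$ is nonempty, and from
$$
0<\alpha\wedge d\alpha=f\alpha_0\wedge(f\kappa+d\beta)+(\beta\wedge df)\wedge\alpha_0
$$
one gets $df(p)\neq 0$ and $\beta(p)\neq 0$ (indeed $\beta\wedge df(p)\neq 0$) for every $p\in f^{-1}(0)$, so $f^{-1}(0)$ is a disjoint union of embedded circles lying in the regular locus of $S$. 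Fix $p\in f^{-1}(0)$ and let $K:=\pi^{-1}(p)$ be the orbit over $p$. Since $\alpha(X)=f\alpha_0(X)=f$ vanishes along $K$ and the pull-back of $\beta$ restricts trivially to a fiber, we have $\alpha|_K=0$; thus $K$ is an embedded Legendrian circle which is a regular fiber of the Seifert fibration $\pi$, i.e.\ a vertical Legendrian circle.

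\emph{Case 1: $M$ is a small Seifert space with $e_0(M)\leq-2$} (in particular the $\s^1$-action is fixed-point free). I would confirm, as indicated in the discussion preceding the lemma, that $K$ has twisting number $0$ with respect to the framing coming from a nearby regular fiber: using $f(p)=0$, $df(p)\neq 0$, $\beta(p)\neq 0$, the contact planes along $K$ are spanned by the fiber direction $X$ and the horizontal lift of the fixed line $\ker\beta(p)\subset T_pS$, so $\xi|_K$ does not rotate relative to the fibration framing, giving twisting number $0$. Once $K$ is identified as a vertical Legendrian circle of twisting number $0$, overtwistedness of $\xi$ follows at once from H. Wu's theorem (\cite{Wu}, Theorem 1.4), which asserts precisely that a small Seifert space with $e_0\leq-2$ carrying such a Legendrian circle cannot be tight.

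\emph{Case 2: the $\s^1$-action is not fixed-point free}, so the fixed link $L$ is nonempty and $S$ has at least one puncture. I would choose an embedded arc $\gamma\colon[0,1]\to S$ with $\gamma(0)=p$, with $\gamma'(0)\in\ker\beta(p)$ (a well-defined line since $\beta(p)\neq 0$), with $\gamma((0,1])$ avoiding the singular locus and $f^{-1}(0)$, and with $\gamma(s)$ tending radially to a puncture of $S$ in the coordinates $(z_i,r_i)$ of the corresponding end as $s\to 1$. Let $D$ be the union of the orbits $\pi^{-1}(\gamma(s))$, $s\in(0,1]$, compactified by the point of $L$ toward which these fibers collapse. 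Then $D$ is a smoothly embedded disk with $\partial D=K$ (away from the collapse point it is a half-open cylinder fibered over $\gamma$, and the radial approach makes it smooth at that point), it is transverse to $\xi$ in its interior since $\alpha(X)=f\neq 0$ on $\gamma((0,1])$, and along $\partial D=K$ one has $T_xD=\xi_x$, because $T_xD$ is spanned by $X$ (tangent to $K\subset\xi$) and the horizontal lift $v$ of $\gamma'(0)$, for which $\alpha(v)=f(p)\alpha_0(v)+\beta(p)(\gamma'(0))=0$. Hence $D$ is an overtwisted disk, so $\xi$ is overtwisted.

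Since a fixed-point free $\s^1$-action forces $M$ to be Seifert fibered, the two cases together cover the hypothesis, and the argument is complete. I expect the only genuinely delicate points to be, in Case 1, pinning down the twisting number of $K$ as exactly $0$ with respect to the framing convention used in Wu's theorem, and, in Case 2, arranging the arc $\gamma$ so that all the stated properties hold simultaneously and the resulting $D$ is a smooth embedded disk whose characteristic foliation genuinely realizes the standard overtwisted model; both reduce to elementary checks once the canonical form $\alpha=f\alpha_0+\beta$ and the structure of $f^{-1}(0)$ are in hand.
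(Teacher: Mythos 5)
Your proposal follows exactly the paper's argument: in the small-Seifert case with $e_0(M)\leq -2$ you identify the fiber $K$ over $p\in f^{-1}(0)$ as a vertical Legendrian circle with twisting number $0$ and invoke Wu's theorem, and in the non-free case you connect $p$ to a puncture by an arc $\gamma$ with $\gamma'(0)\in\ker\beta(p)$ and take the disk of orbits over $\gamma$; this is precisely the paper's (rather terse) proof, with the ``easily seen'' steps fleshed out. One minor slip: your claim that $D$ is transverse to $\xi$ on all of its interior fails at the point of $L$ where the fibers collapse (there $X=0$ and $T_xD=\xi_x$), but that tangency is in fact the required interior elliptic singularity of the overtwisted disk, so the conclusion is unaffected.
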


For the rest of this section, we shall focus on the case where the $\s^1$-action is fixed-point free, which defines a Seifert fibration $\pi: M\rightarrow S$ over a closed, oriented $2$-orbifold. In analogy with the convex surface theory, we introduce the following terminology. Let $\xi$ be an $\s^1$-invariant, non-transverse contact structure on $M$. Denote by $\Gamma:=\sqcup \Gamma_i $ the set of disjoint circles in $S$ over which the fibers of the Seifert fibration $\pi: M\rightarrow S$ are Legendrian. We shall call $\Gamma$ the {\bf dividing set} of $\xi$. The following lemma shows that the dividing set of an $\s^1$-invariant contact structure determines the contact structure up to an $\s^1$-equivariant contactomorphism (we should point out that here, the contact structures are not assumed to be oriented; for oriented contact structures, the determination is up to a change of orientation). 

\begin{lemma}
Let $\xi_1,\xi_2$ be two given $\s^1$-invariant contact structures on $M$, with dividing sets 
$\Gamma_1$, $\Gamma_2$ respectively. There is an $\s^1$-equivariant diffeomorphism 
$\psi: M\rightarrow M$ such that $\psi_\ast(\xi_1)=\xi_2$ if and only if there is an orientation-preserving diffeomorphism of orbifold $\phi: S\rightarrow S$, which preserves the normalized Seifert invariants at the singular points, such that $\phi(\Gamma_1)=\Gamma_2$. 
\end{lemma}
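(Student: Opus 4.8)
The plan is to reduce the classification of $\s^1$-invariant contact structures to the combinatorial data $(S,\Gamma)$ by putting the invariant contact form into the normal form $\alpha = f\alpha_0 + \beta$ developed above, and then track exactly how much freedom one has in choosing $(f,\beta)$. First I would dispose of the ``if'' direction: given an orbifold diffeomorphism $\phi:S\to S$ preserving the normalized Seifert invariants at the cone points with $\phi(\Gamma_1)=\Gamma_2$, I would lift $\phi$ to an $\s^1$-equivariant diffeomorphism $\psi:M\to M$. Here I must be a little careful, because preserving normalized Seifert invariants is precisely what is needed so that the lift exists as a genuine diffeomorphism of the Seifert-fibered total space (an orbifold diffeomorphism of the base lifts to the total space iff it is compatible with the local fiber data); this is standard Seifert geometry. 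Having lifted $\phi$ to $\psi$, I then need $\psi_\ast \xi_1 = \xi_2$, which is not automatic — $\psi$ only carries the dividing set of $\xi_1$ to that of $\xi_2$ — so the second move is to post-compose $\psi$ with a fiber-preserving isotopy to actually match the contact structures. This is where a uniqueness-up-to-isotopy statement for the normal-form data is needed.

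The technical heart is therefore the following normal-form rigidity claim: on a fixed Seifert fibration $\pi:M\to S$ with a fixed embedded multicurve $\Gamma\subset S$ (avoiding the cone points), the space of $\s^1$-invariant contact structures whose dividing set is exactly $\Gamma$ is connected through $\s^1$-invariant contact structures with dividing set $\Gamma$. To prove this I would argue as in the local model discussion: an $\s^1$-invariant contact structure is $\ker(f\alpha_0+\beta)$ with $f$ vanishing transversally precisely along $\Gamma$ and $\beta\wedge df \neq 0$ along $\Gamma$, subject to the global contact inequality $f\alpha_0\wedge(f\kappa + d\beta) + (\beta\wedge df)\wedge\alpha_0 > 0$. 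Given two such pairs $(f_0,\beta_0)$ and $(f_1,\beta_1)$ with the same zero set $\Gamma$ and the same coorientation/crossing sign along each component of $\Gamma$, I would first isotope so that $f_0 = f_1$ near $\Gamma$ (possible since both vanish transversally along the same curves with the same sign of derivative), then interpolate the $\beta$'s near $\Gamma$ keeping $\beta\wedge df\neq 0$, and finally — away from $\Gamma$, where the contact condition on $f$-nonvanishing regions is an open, convex-in-$(f,\beta)$-after-rescaling condition — run a Moser-type / convexity argument to connect the pairs. One subtlety: the ``normalized Seifert invariant'' hypothesis in the ``only if'' direction is needed because an $\s^1$-equivariant diffeomorphism of $M$ must cover an orbifold diffeomorphism of $S$ that respects the Seifert data, so I would extract $\phi$ simply by passing an equivariant $\psi$ to the quotient and noting it carries Legendrian fibers to Legendrian fibers, hence $\Gamma_1$ to $\Gamma_2$; and it automatically preserves the local fiber invariants because it came from a diffeomorphism of the total space.

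The main obstacle I anticipate is the interpolation/convexity step in the normal-form rigidity claim in the region where $f\neq 0$: there the pointwise contact inequality mixes $f$, $d\beta$, and $\beta\wedge df$ in a way that is not obviously preserved under naive linear interpolation of $(f,\beta)$, so I would likely need to interpolate in two stages (first adjust $f$ by a positive-scaling isotopy to a common ``model'' behavior, stabilizing via a large overall scaling of $\alpha$ if needed so that the $f\kappa$ term dominates, then linearly interpolate $\beta$), and verify that the inequality survives; alternatively, one can phrase the whole thing as a parametric $h$-principle for $\s^1$-invariant contact structures with prescribed dividing set, but the hands-on approach is cleaner here given the explicit normal form. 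A secondary, more bookkeeping-type obstacle is making the lifting of $\phi$ to $\psi$ genuinely $\s^1$-equivariant near the singular fibers and matching it up with the normal form $\alpha_0 = \pm d\theta_i$, which just requires choosing the equivariant tubular neighborhoods compatibly; this is routine once the normalized-Seifert-invariant hypothesis is in force. I do not expect the ``if'' direction beyond this to present difficulties.
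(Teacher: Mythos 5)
Your architecture is the same as the paper's: the ``only if'' direction is immediate; for ``if'' you lift $\phi$ to an $\s^1$-equivariant $\psi$ (which exists precisely because $\phi$ preserves the normalized Seifert invariants), reduce to $\Gamma_1=\Gamma_2$, and then build the contactomorphism by a local Moser argument near $\pi^{-1}(\Gamma)$ followed by a Moser/convexity argument on the transverse complement. Two points in your sketch need tightening, and the paper addresses both explicitly.

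First, you assume $(f_1,\beta_1)$ and $(f_2,\beta_2)$ ``have the same coorientation/crossing sign along each component of $\Gamma$'' after lifting $\phi$. This is not automatic: $\psi$ matches the dividing sets, but the sign patterns of $f_1$ and $\psi^\ast f_2$ on $S\setminus\Gamma$ may be globally opposite, in which case linear interpolation near $\Gamma$ produces a degenerate form. The paper handles this by observing that each $f_j$ changes sign across every component of $\Gamma$, so after possibly reversing the orientation of one of $\xi_1,\xi_2$ --- legitimate here because the lemma is about unoriented contact structures (see the remark preceding its statement) --- one may arrange that $f_1,f_2$ agree in sign on $S\setminus\Gamma$. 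You need this reduction explicitly, otherwise the step fails.

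Second, ``a large overall scaling of $\alpha$ so that the $f\kappa$ term dominates'' is a non-move, since the contact condition is homogeneous in $\alpha$. The operative normalization is to divide by $f$ on each component $S_j$ of $S\setminus\Gamma$, so that the contact forms satisfy $\alpha(X)=1$; the contact condition then reads $\alpha_0\wedge(\kappa+d\beta')>0$, which is affine in $d\beta'$ and hence preserved by linear interpolation of the $\beta'$-parts. That is the convexity you are reaching for, and the paper implements it by invoking Lemma 2.6 of \cite{C1}, using crucially that after the near-$\Gamma$ matching the difference $\alpha_2-\alpha_1$ has compact support in $\pi^{-1}(S_j)$. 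Your two-stage interpolation near $\Gamma$ (match $f$ first, then $\beta$) is a workable alternative to the paper's direct Moser argument on the linear family $\tilde\alpha_t=(1-t)\alpha_1+t\alpha_2$; either way the essential care is to arrange that the Moser vector field vanishes along $\pi^{-1}(\Gamma)$ so that the flow exists and keeps $T_i$ fixed, which the paper achieves by choosing the primitive $H$ so that $\frac{d}{dt}\tilde\alpha_t+dH=0$ along $T_i$.
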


\begin{proof}
Suppose there is an $\s^1$-equivariant diffeomorphism $\psi: M\rightarrow M$ such that 
$\psi_\ast(\xi_1)=\xi_2$. First, since $\psi$ is a contactomorphism, it must be orientation-preserving.
On the other hand, being $\s^1$-equivariant, it induces a diffeomorphism of orbifold 
$\phi: S\rightarrow S$, which must also be orientation-preserving as well as preserving the normalized Seifert invariants at the singular points. Finally, it sends Legendrian fibers to Legendrian fibers, 
hence $\phi(\Gamma_1)=\Gamma_2$. 

For the converse, assume there is an orientation-preserving diffeomorphism of orbifold 
$\phi:S\rightarrow S$ which preserves the normalized Seifert invariants at the singular points, 
such that $\phi(\Gamma_1)=\Gamma_2$. We claim that $\phi$ lifts to an $\s^1$-equivariant diffeomorphism $\psi: M\rightarrow M$. To see this, simply note that the pull-back 
Seifert fibration $\phi^\ast \pi$ is isomorphic to $\pi$ itself, and we can fix an identification between 
$\phi^\ast \pi$ and $\pi$ by such an isomorphism. Now if we replace the contact structure $\xi_2$ by the pull-back via $\psi$, we may reduce the problem at hand to the special case where 
$\Gamma_1=\Gamma_2$, which will be denoted simply by $\Gamma=\sqcup_i \Gamma_i$. 
Furthermore, if $\alpha_1,\alpha_2$ are $\s^1$-invariant contact forms for $\xi_1,\xi_2$, and 
$\alpha_1=f_1\alpha_0+\beta_1$ and $\alpha_2=f_2\alpha_0+\beta_2$, then both $f_1,f_2$, which vanishes on each $\Gamma_i$, will change sign across each $\Gamma_i$. It follows easily that, by reversing the orientation of one of $\xi_1$ and $\xi_2$, we may assume without loss of generality that $f_1$ and 
$f_2$ have the same sign on $S\setminus \Gamma$. Finally, for each component $\Gamma_i$, set 
$T_i:=\pi^{-1}(\Gamma_i)$. 

With the preceding understood, we shall construct the $\s^1$-equivariant contactomorphism 
$\psi$, $\psi_\ast(\xi_1)=\xi_2$, near $T_i$ for each component $\Gamma_i$ first. To this end,
we fix a product structure of the Seifert fibration in a neighborhood $N_i$ of $T_i$, where
$N_i=(-\epsilon,\epsilon)\times \s^1\times \s^1$, with coordinates $(s,\lambda,\theta)$, where
the $\s^1$-action is given by translations in $\theta$ and $T_i$ is given by $s=0$. With this understood, any $\s^1$-invariant contact form on $N_i$ with respect to which the fibers in $T_i$
are Legendrian can be written as
$$
\alpha=f(s,\lambda)d\theta+ a(s,\lambda)ds+b(s,\lambda)d\lambda,
$$
where $f(0,\lambda)=0$ for any $\lambda$. The condition $\alpha\wedge d\alpha>0$ at $s=0$
is equivalent to $-\partial_s f(0,\lambda) b(0,\lambda)>0$. In particular, $b(s,\lambda)\neq 0$
for sufficiently small $|s|>0$. By restricting to a smaller neighborhood, still denoted by $N_i$
for simplicity, we can assume that $\alpha$ takes the following form
$$
\alpha=f(s,\lambda)d\theta+ a(s,\lambda)ds+d\lambda
$$
after replacing $\alpha$ by $b(s,\lambda)^{-1}\alpha$. Note that $\partial_s f(0,\lambda)<0$ for each $\lambda$.

Now let $\alpha_j=f_j(s,\lambda)d\theta+a_j(s,\lambda)+d\lambda$, for $j=1,2$, be a contact form
defining $\xi_j$ on $N_i$. Set $\tilde{\alpha}_t:=(1-t)\alpha_1+t\alpha_2$, $t\in [0,1]$. It is easy to
see that in a small neighborhood of $T_i$, still denoted by $N_i$ for simplicity, each 
$\tilde{\alpha}_t$ is a contact form, with $\tilde{\alpha}_0=\alpha_1$, $\tilde{\alpha}_1=\alpha_2$. 
We note that $\frac{d}{dt}\tilde{\alpha}_t=\alpha_2-\alpha_1=(a_2(0,\lambda)-a_1(0,\lambda))ds$
at $s=0$, i.e., along $T_i$. 

Let $R_{\tilde{\alpha}_t}$ be the Reeb vector field of $\tilde{\alpha}_t$. We define a function $H$
on $N_i$ by
$$
H(s,\lambda,\theta)=\int_{0}^s (a_1(u,\lambda)-a_2(u,\lambda))du.
$$
Then $H=0$ at $s=0$ and $\frac{d}{dt}\tilde{\alpha}_t+dH=0$ at $s=0$. On the other hand, we define
on $N_i$ a function $g_t$ by
$$
g_t=(\frac{d}{dt}\tilde{\alpha}_t+dH)(R_{\tilde{\alpha}_t}), 
$$
and let $Y_t\in \ker\tilde{\alpha}_t$ be the solution of the equation
$$
i_{Y_t} d\tilde{\alpha}_t= g_t\tilde{\alpha}_t - (\frac{d}{dt}\tilde{\alpha}_t+dH). 
$$
Note that $Y_t=0$ along $T_i$ (i.e., at $s=0$). With this understood, we let 
$$X_t=HR_{\tilde{\alpha}_t}+Y_t.$$
We note that $X_t=0$ along $T_i$ as well. Let $\psi_t$, 
$t\in [0,1]$, be the smooth family of open embeddings defined in a neighborhood of $T_i$
which is generated by $X_t$, with $\psi_0=Id$. Then $\psi^\ast_t \tilde{\alpha}_t=f_t\tilde{\alpha}_0$,
$\forall t$. If we let $\psi=\psi_1$, then $\psi_\ast (\xi_1)=\xi_2$ in a neighborhood of $T_i$.
It is clear that $X_t$ is $\s^1$-invariant, hence $\psi$ is $\s^1$-equivariant. Note that
$\psi=Id$ on $T_i$.

We extend $\psi$ from a neighborhood of each $T_i$ to an orientation-preserving, $\s^1$-equivariant diffeomorphism of $M$, still denoted by $\psi: M\rightarrow M$ for simplicity. With this, we can further reduce the problem to the case where we assume $\xi_1=\xi_2$ near each $T_i$, by replacing
$\xi_2$ with the pull-back of $\xi_2$ by $\psi$. With this understood, let $S_j$ be any
connected component of $S\setminus \Gamma$. Then we note that $\xi_1,\xi_2$ are
both transverse to the fibers of the $\s^1$-action over $S_j$, hence there are 
$\s^1$-invariant contact forms $\alpha_1$, $\alpha_2$ on $\pi^{-1}(S_j)$, defining $\xi_1$,
$\xi_2$ respectively, such that $\alpha_1(X)=\alpha_2(X)=1$, where $X$ is the vector field generating the $\s^1$-action. We note that $\alpha_2-\alpha_1$ has a compact support as $\xi_1=\xi_2$ near
each $T_i$. With this understood, by the argument of Lemma 2.6 in \cite{C1}, there is an $\s^1$-equivariant diffeomorphism $\psi: \pi^{-1}(S_j)\rightarrow \pi^{-1}(S_j)$, which is identity outside the
support of $\alpha_2-\alpha_1$, such that $\psi^\ast \alpha_2=\alpha_1$. Putting the pieces together,
we obtain an $\s^1$-equivariant diffeomorphism $\psi: M\rightarrow M$ 
such that $\psi_\ast(\xi_1)=\xi_2$. 

\end{proof}

We end this section with a theorem establishing the existence of a tight $\s^1$-invariant, non-transverse contact structure on a lens space or a small Seifert space, which has a certain 
prescribed dividing set. Furthermore, as a consequence of Lemma 2.2, the existence theorem implies also the tightness of any $\s^1$-invariant contact structures on a lens space or a small Seifert space with such a dividing set. 

Given an $\s^1$-invariant, non-transverse contact structure $\xi$ on a Seifert fibered manifold $M$,
we note that any fiber (regular or singular) which is not a Legendrian vertical circle (i.e., not above
the dividing set $\Gamma$) is a transverse knot. It follows easily that if we perform a transverse 
surgery on such a transverse knot, the $\s^1$-invariance of the contact structure remains unchanged. 
Transverse surgery comes with two flavors: admissible transverse surgery and inadmissible 
transverse surgery (see \cite{BE} or \cite{Con} for a nice introduction on transverse surgery). 
As we will see, an admissible transverse surgery with sufficiently negative surgery coefficient will
preserve the tightness of the contact structure while leaving the dividing set of the contact structure
untouched. On the other hand, an inadmissible transverse surgery may not preserve the tightness
of the $\s^1$-invariant contact structure, although it can leave the dividing set untouched. 
With this understood, the tight, $\s^1$-invariant contact structures in the following theorem are obtained through admissible transverse surgeries with sufficiently negative surgery coefficient (in fact, the contact structures are fillable as well, because the admissible transverse surgeries preserve the fillability). 

\begin{theorem}
{\em(}1{\em)} Let $M$ be a lens space {\em(}including the case of $\s^3${\em)} equipped with a Seifert fibration such that the Euler number $e(M)>0$. Then there exists an $\s^1$-invariant, non-transverse contact structure $\xi$ on $M$ whose dividing set consists of a circle which separates the two singular points in the base orbifold when the Seifert fibration on $M$ has two multiple fibers. Furthermore, $\xi$ is universally tight. As a corollary, any $\s^1$-invariant contact structure on $M$ with such a dividing set is universally tight. 

{\em(}2{\em)} Let $M((\alpha_1,\beta_1), (\alpha_2,\beta_2), (\alpha_3,\beta_3))$ be a Seifert fibered space with at most $3$ singular fibers,  let $p_i$ be the point in the base orbifold which is associated to the fiber of multiplicity 
$\alpha_i>0$, for $i=1,2,3$. (Here $\alpha_i=1$ is allowed.) Assume the Seifert invariants obey the following 
conditions: (i) $\frac{\beta_1}{\alpha_1}+\frac{\beta_2}{\alpha_2}\leq 0$, and (ii)
$\beta_3<0$. Then there exists a tight (in fact fillable) $\s^1$-invariant, non-transverse contact structure $\xi$ on $M$ such that its dividing set consists of a circle separating the points $p_1$ and $p_2$, with $p_3$ lying on either side of the dividing set. As a corollary, any $\s^1$-invariant contact structure on $M$ with such a dividing set 
is tight (in fact fillable). 
\end{theorem}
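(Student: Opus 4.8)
The plan is to construct the contact structure explicitly by starting from the unique transverse $\s^1$-invariant (Milnor fillable) contact structure on an auxiliary Seifert manifold with negative Euler number, and then reversing orientation on one fiber via transverse surgery to create the desired dividing set. More precisely, for part (1): since $e(M)>0$, the Seifert fibration on $M$ with its orientation reversed on the base (equivalently, $M$ with the opposite Seifert orientation, call it $M^-$) has $e(M^-)<0$, so by the remarks in Section 2 it carries a transverse $\s^1$-invariant contact structure $\xi_0$ (the Milnor fillable one), which is universally tight. In the form $\alpha = f\alpha_0 + \beta$, this has $f$ nowhere zero. The idea is to remove a vertical solid torus neighborhood $\nu(K)$ of a regular fiber $K$ lying over a generic point, and reglue so that $f$ changes sign exactly once across a meridional circle $\Gamma$ in the base; since $K$ is a transverse knot in $(M^-,\xi_0)$, this is precisely an inadmissible/admissible transverse surgery. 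Performing an \emph{admissible} transverse surgery on $K$ with sufficiently negative surgery coefficient accomplishes the sign change of $f$ while (i) keeping the resulting manifold diffeomorphic to $M$ with its \emph{original} orientation, after also adjusting the Seifert data at the two multiple fibers, and (ii) preserving universal tightness, by the standard fact (see \cite{BE}, \cite{Con}) that admissible transverse surgery with very negative coefficient preserves tightness and, combined with the fillability of $\xi_0$, preserves universal tightness. The resulting $\xi$ is non-transverse with dividing set the single circle $\Gamma$, which by construction separates the two singular points when there are two multiple fibers.

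For part (2), the strategy is the same but one must bookkeep the Seifert invariants carefully. Condition (ii), $\beta_3/\alpha_3 < 0$, ensures that the fiber of multiplicity $\alpha_3$ can be taken to be a transverse knot on one side of the prospective dividing set in a transverse $\s^1$-invariant model, so that one has freedom to place $p_3$ on either side. Condition (i), $\beta_1/\alpha_1 + \beta_2/\alpha_2 \le 0$, is exactly the numerical condition guaranteeing that after the orientation reversal along a regular fiber separating $p_1$ from $p_2$, the two pieces $\pi^{-1}(S_1)$, $\pi^{-1}(S_2)$ of $M\setminus T$, each a Seifert piece over a disk with one multiple fiber, admit transverse $\s^1$-invariant contact structures (this is the negative-Euler-number condition for each piece, relative to the appropriate framing on the boundary torus); the admissible transverse surgery with sufficiently negative coefficient on a regular fiber realizes the gluing so that the local contact data on the two sides of $T = \pi^{-1}(\Gamma)$ match up as in the local normal form established in the proof of Lemma 2.2. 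Tightness of the resulting $\xi$ follows from the surgery-theoretic argument; the final sentence ("any $\s^1$-invariant contact structure on $M$ with such a dividing set is tight") is then immediate from Lemma 2.2, which says the dividing set together with the normalized Seifert invariants determines $\xi$ up to $\s^1$-equivariant contactomorphism (and change of orientation), and tightness is insensitive to orientation and to equivariant contactomorphism.

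The main obstacle I expect is the quantitative control in the transverse surgery: one must show that there exists a surgery coefficient negative enough to (a) produce the \emph{prescribed} manifold $M$ with the \emph{prescribed} Seifert invariants (so the arithmetic of the surgery slope relative to the fiber framing, together with the continued-fraction expansions governing the multiple fibers, has to work out to the stated $(\alpha_i,\beta_i)$ — this is where conditions (i) and (ii) are genuinely used, not just invoked), and (b) guarantee tightness. For (b) the cleanest route is to exhibit, for the surgered contact manifold, a Stein or at least a symplectic filling: since the transverse model $\xi_0$ on the auxiliary manifold is Milnor fillable, and sufficiently negative admissible transverse surgery corresponds on the filling side to attaching a symplectic handle / doing a symplectic operation that preserves the existence of a filling, one gets fillability and hence tightness of $\xi$, and universal tightness in case (1) by passing to the universal cover (which is $\s^3$ or $\s^2\times\R$, where the induced contact structure, being fillable, is tight by Eliashberg/Gromov). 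The routine parts — writing the local normal form $\alpha = f\,d\theta + a\,ds + d\lambda$ near $T$, checking $\alpha\wedge d\alpha > 0$, and verifying the Seifert-invariant arithmetic — I would relegate to a lemma or to the cited surgery references.
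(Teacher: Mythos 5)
Your core idea has a fundamental topological obstruction, and the direction of the construction is the reverse of what the paper does.

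You propose starting with a \emph{transverse} $\s^1$-invariant contact structure $\xi_0$ (where the function $f$ in $\alpha = f\alpha_0 + \beta$ never vanishes) and performing a transverse surgery on a regular fiber $K$ over a generic point $p_0$ so as to ``flip the sign of $f$'' and create a dividing circle $\Gamma$. But this cannot produce the prescribed dividing set. Since $f$ is, say, positive everywhere before surgery, and the surgery modifies the contact form only in a small neighborhood of $K$ (away from $p_1$ and $p_2$), after surgery one still has $f(p_1)>0$ and $f(p_2)>0$. A single dividing circle separating $p_1$ from $p_2$ would force $f$ to change sign exactly once along any arc from $p_1$ to $p_2$, i.e.\ $f(p_1)$ and $f(p_2)$ would have to have opposite signs. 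This is impossible with surgery localized at a generic point. At best you would create a circle bounding a disk that contains only $p_0$ (with both $p_1,p_2$ on the same side), which is a topologically different dividing set and, by Lemma 2.1 and the discussion around it, very likely forces overtwistedness. Moreover, admissible transverse surgery on a transverse knot in a transverse contact structure leaves the structure transverse; it is only inadmissible transverse surgery that can create a dividing circle, and that construction does not preserve tightness (see Example 2.5 for exactly this phenomenon). In short, transverse surgery on a regular fiber cannot ``switch on'' a dividing set of the stated type, and your plan breaks at step one.

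The paper's construction runs in the opposite direction. The key step, isolated as Lemma 2.4, builds an explicit \emph{non-transverse}, universally tight, $\s^1$-invariant contact structure on $\s^2\times\s^1$ from the standard contact form on $\R^3$, in which the dividing circle $\Gamma = \{z=0,\ r=1\}$ already separates the two poles $p_\pm$. The transverse knots over $p_\pm$ (not a generic point!) lie on opposite sides of $\Gamma$, so one can perform \emph{admissible} transverse surgeries on them with sufficiently negative slope to realize the prescribed lens space or Seifert manifold while leaving $\Gamma$ untouched; tightness is preserved because, by Baldwin--Etnyre (\cite{BE}, Thm.~3.2), sufficiently negative admissible transverse surgery is a Legendrian surgery, which preserves tightness by Wand's theorem \cite{Wa}. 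Universal tightness in part (1) is obtained not by a filling argument but by lifting to the universal cover and re-running Lemma~2.4 there. For part (2), one first applies part (1) to the auxiliary lens space $M((\alpha_1,\beta_1),(\alpha_2,\beta_2))$ (where condition (i) gives $e>0$, or the degenerate case $e=0$ handled via $Y=M((\alpha_1,\beta_1),(\alpha_1,-\beta_1))$), and then realizes $M$ by a $\frac{\alpha_3}{\beta_3}$-surgery on a transverse push-off of a Legendrian regular fiber; Gay's slope computation for such push-offs (\cite{G}, Prop.~1.8) and condition (ii) $\frac{\beta_3}{\alpha_3}<0$ are what make this surgery admissible and equivalent to a Legendrian surgery, rather than your heuristic about $\pi^{-1}(S_1)$ and $\pi^{-1}(S_2)$ separately admitting transverse structures. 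Your final sentence invoking Lemma~2.2 to upgrade to any $\s^1$-invariant contact structure with the same dividing set is correct and matches the paper.
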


The following lemma is the key step for a proof of Theorem 2.3.

\begin{lemma}
Let $\pi: \s^3\rightarrow S$ be a Seifert fibration on $\s^3$ which has a positive Euler number. Then
there is a tight, $\s^1$-invariant (with respect to the Seifert fibration $\pi$), non-transverse contact structure $\xi$ on $\s^3$ such that the dividing set of $\xi$ consists of a circle in $S$ which separates the singular points of $S$ when the Seifert fibration $\pi$ has two multiple fibers. 
\end{lemma}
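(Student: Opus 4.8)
Since every Seifert fibration on $\s^3$ with positive Euler number has at most two multiple fibers, I would organize the proof around the number of multiple fibers, treating the extreme case (the Hopf-type fibration with no multiple fibers) as the base case and then handling one and two multiple fibers by a common surgical construction. The guiding idea is the one flagged in the paragraph preceding the lemma: start from a model $\s^1$-invariant, non-transverse contact structure on a standard model $3$-manifold whose dividing set is a single circle with the prescribed separation property, and then recover an arbitrary positive-Euler-number Seifert $\s^3$ from that model by a sequence of admissible transverse surgeries with sufficiently negative surgery coefficients performed on vertical transverse knots (regular or exceptional fibers lying off the dividing set), invoking that such surgeries preserve both $\s^1$-invariance and tightness and leave the dividing set untouched.

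**Key steps, in order.** First, I would write down an explicit $\s^1$-invariant contact form on $\s^3 = \{r_1^2+r_2^2=1\}\subset\C^2$ of the shape $\alpha = f\,\alpha_0 + \beta$ in the normal form developed in Section 2, choosing $f$ to vanish transversally along a single torus $T = \pi^{-1}(\Gamma)$ and to change sign across it, with the two complementary regions each fibering over a disk orbifold containing one of the (at most two) potential multiple-fiber points; I would verify directly that $\alpha\wedge d\alpha>0$, that the resulting $\xi=\ker\alpha$ is non-transverse with dividing set exactly $\Gamma$, and that it is tight — most cleanly by exhibiting it as the contact boundary of an explicit symplectic (indeed Stein) filling, e.g. as a convex hypersurface in a standard Hirzebruch-type piece, or by identifying it up to contactomorphism with a standard tight lens-space contact structure on the relevant surgery. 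Second, I would set up the Seifert invariants: a Seifert fibration $\pi:\s^3\to S$ over $S^2(\alpha_1,\alpha_2)$ with $e>0$ is obtained from the model by transverse surgeries along vertical fibers; I would arrange all surgery fibers to lie strictly on one side or the other of $\Gamma$ (never on $\Gamma$ itself), so by the discussion preceding the theorem the contact structure stays $\s^1$-invariant, its dividing set stays a single circle with the same separation behavior, and — provided the surgery coefficients are taken sufficiently negative — tightness is preserved. Third, I would track how the surgery coefficients determine the Seifert invariants and check that, by choosing the coefficients negative enough, one can realize \emph{every} $\s^3$-yielding choice with $e>0$ while staying in the "sufficiently negative" regime; then unwind the identifications to conclude that $\xi$ on the surgered manifold is tight, $\s^1$-invariant, non-transverse, with dividing set a single circle separating the singular points when two multiple fibers are present.

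**The main obstacle.** The delicate point is the third step: making the surgical realization \emph{quantitative}, i.e. showing that the constraint "surgery coefficient sufficiently negative to preserve tightness" is compatible with hitting an \emph{arbitrary} positive-Euler-number Seifert structure on $\s^3$. One has competing demands — the surgery coefficients must produce the desired (possibly not very negative) Seifert invariants $(\alpha_i,\beta_i)$ and Euler number, yet must also be negative enough for the admissible-transverse-surgery tightness criterion to apply — and reconciling these may require performing the surgeries in a smarter order, or interleaving them with auxiliary surgeries and inverse surgeries that cancel smoothly, or choosing the model $3$-manifold adaptively rather than fixing one once and for all. A secondary, more routine but still nontrivial, obstacle is verifying tightness of the initial model cleanly; I expect the filling argument (or an identification with a known universally tight contact structure on a lens space) to be the right tool, and I would want the filling to be chosen so that it glues compatibly with the symplectic handle attachments used later in Section 6, but for the purposes of this lemma a self-contained fillability argument suffices.
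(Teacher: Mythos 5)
Your outline matches the paper's strategy: take a model $\s^1$-invariant, non-transverse, tight contact structure whose dividing set is one circle with the right separation property, then pass to an arbitrary Seifert $\s^3$ with $e>0$ via an admissible transverse surgery along a fiber that misses the dividing set, invoking Baldwin--Etnyre and Wand to keep tightness. You also correctly identify the crux: making quantitative the compatibility between the surgery coefficients needed to produce the desired Seifert invariants and the coefficients negative enough for the admissible-transverse-surgery tightness mechanism to apply. That step is precisely what you leave open, and your list of possible workarounds (reordering surgeries, interleaving with cancelling pairs, choosing the model adaptively) shows the right instinct but does not close the argument; as it stands the proposal has a genuine gap at the point you flag as delicate.

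The paper closes it by executing the adaptive-model idea concretely and showing that no iterated or interleaved surgeries are needed --- a single surgery suffices. The model is $\s^2\times\s^1$ realized as the quotient of $\R^3\setminus\{0\}$, with contact form $\alpha=\frac{1}{\sqrt{z^2+r^4}}(dz+r^2 d\theta)$, by the $\Z$-action generated by $(r,\theta,z)\mapsto (e^{2\pi}r,\theta,e^{4\pi}z)$; this presentation gives universal tightness at once, with no need for a filling argument. For one singular fiber of multiplicity $\alpha_1$, a $(-\alpha_1)$-surgery on the fiber over $p_+$ is admissible because $p_+$ has a standard neighborhood of arbitrarily large slope. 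For two singular fibers $\alpha_1<\alpha_2$, the model is replaced by the $\Z_{\alpha_1}$-quotient $Y=M((\alpha_1,\beta_1),(\alpha_1,-\beta_1))$, and the surgery is performed on the \emph{singular} fiber $f_+$ of $Y$, not on a regular fiber. In a longitude--meridian basis $(M,L)$ at $f_+$, the characteristic-foliation slope on the torus over the dividing set is $-\alpha_1'/\alpha_1$, and the required surgery coefficient is $-(\alpha_2\beta_1'+\beta_2\alpha_1')$; their difference is $\alpha_2/\alpha_1>1>0$, so the surgery is admissible, and the coefficient automatically lies below the integer floor of $-\alpha_1'/\alpha_1$, so Theorem~3.2 of Baldwin--Etnyre applies. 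The ``sufficiently negative'' condition is thus not something to be engineered; it comes for free once the model and the surgery fiber are chosen as above, and this is the missing piece your proposal needs.
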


\begin{proof}
We begin by considering the contact form $\alpha=\frac{1}{\sqrt{z^2+r^4}}(dz+r^2 d\theta)$ on $\R^3$, which defines the standard, tight, contact structure on $\R^3$. The $\R$-action $\phi_t: \R^3\rightarrow \R^3$, $t\in\R$, defined by $\phi_t(r,\theta,z)=(e^t r,\theta,e^{2t}z)$, is generated by the vector field
$X=r\partial_r+2z\partial_z$. It is easy to check that $\phi_t^\ast\alpha=\alpha$. The quotient of the free $\Z$-action on $\R^3\setminus\{0\}$ generated by $\phi_{2\pi}$ can be identified with $\s^2\times \s^1$. Note that both $\alpha$, $X$ descend to the quotient $\s^2\times \s^1$, where $X$ generates the $\s^1$-action on it whose orbits are given by $\{p\}\times \s^1$, for $p\in \s^2$, and $\alpha$ is an $\s^1$-invariant contact form on $\s^2\times \s^1$. 

The corresponding contact structure on $\s^2\times \s^1$ is clearly universally tight, hence tight. 
It is convenient to identify the base $\s^2$ with the hypersurface $z^2+r^4=1$. With this understood, 
note that the $\s^1$-invariant contact structure on $\s^2\times \s^1$ is non-transverse, with its dividing set $\Gamma$ given by $z=0, r=1$. Furthermore, there are two special points on the base $\s^2$: 
$p_{\pm}$ which have coordinates $(z,r)=(\pm 1,0)$. We point out that the fiber over $p_{\pm}$,
which is a transverse knot, has a standard neighborhood of arbitrarily large slope for the linear characteristic foliations on the boundary. Moreover, with respect to the framing defined by the product structure of $\s^2\times \s^1$, the characteristic foliation on $\Gamma\times \s^1$ has slope $0$, where $\Gamma$ is the dividing set.

With the preceding understood, we divide the proof into two cases.

\vspace{1.5mm}

{\bf Case 1:} The Seifert fibration on $\s^3$ has at most one singular fiber. Let $\alpha_1\geq 1$ be the multiplicity of the singular fiber, where $\alpha_1=1$ represents the case when there is no singular fiber. With this understood, if we perform a $(-\alpha_1)$-surgery on the fiber over $p_{+}$ in
$\s^2\times\s^1$, we obtain $\s^3$, and moreover, the $\s^1$-action on $\s^2\times \s^1$ induces a
Seifert fibration on $\s^3$, with one singular fiber of multiplicity $\alpha_1$ and Euler number
$\frac{1}{\alpha_1}$, which is exactly the given Seifert fibration on $\s^3$. Now since $p_{+}$ has a standard neighborhood of arbitrarily large slope, the $(-\alpha_1)$-surgery can be done by an admissible transverse surgery, producing an $\s^1$-invariant, non-transverse contact structure $\xi$ on $\s^3$, whose dividing set consists of the same circle $z=0,r=1$. Moreover, by a theorem of Baldwin and Etnyre (cf. \cite{BE}, Theorem 3.2), the admissible transverse $(-\alpha_1)$-surgery is equivalent to a Legendrian surgery on a link, which preserves tightness by a theorem of Wand (cf. \cite{Wa}). This proves that $\xi$ is tight. 

\vspace{1.5mm}

{\bf Case 2:} The Seifert fibration on $\s^3$ has two singular fibers. Let $\alpha_1,\alpha_2$ be the
multiplicities of the singular fibers, where $1<\alpha_1<\alpha_2$. Note that we can always arrange
so that the Seifert invariants take the form $(\alpha_1,\beta_1)$, $(\alpha_2,-\beta_2)$, where
$0<\beta_i<\alpha_i$ for $i=1,2$. Finally, since the Euler number of the Seifert fibration is positive,
we must have $\alpha_2\beta_1-\alpha_1\beta_2=-1$.

With the preceding understood, we observe that there is a natural free $\Z_{\alpha_1}$-action on
$\s^2\times\s^1$ which preserves the product structure and the $\s^1$-invariant contact form $\alpha$ on it, such that if we denote the quotient manifold by $Y$ and the induced contact structure on it 
by $\xi_0$, then $Y$ is the Seifert manifold $M((\alpha_1,\beta_1), (\alpha_1,-\beta_1))$, with the singular fibers given by the quotient of the fibers over $p_{\pm}$ under the $\Z_{\alpha_1}$-action, and $\xi_0$ is $\s^1$-invariant, non-transverse, where the dividing set of $\xi_0$ is given by the quotient of 
$\Gamma$ under the $\Z_{\alpha_1}$-action, hence is also a circle. For simplicity, we denote the corresponding singular points of the base $2$-orbifold of $M((\alpha_1,\beta_1), (\alpha_1,-\beta_1))$ also by $p_{\pm}$. With this understood, we assume without loss of generality that the singular fiber over $p_{+}$, denoted by $f_{+}$, has Seifert invariant $(\alpha_1,-\beta_1)$. 

To proceed further, we recall that according to \cite{N}, the Seifert invariants $(\alpha_1,\beta_1)$, 
$(\alpha_1,-\beta_1)$ correspond to a section $R$ of the Seifert fibration on 
$M((\alpha_1,\beta_1), (\alpha_1,-\beta_1))$ defined over the complement of a regular neighborhood of the singular points $p_{\pm}$ of the base $2$-orbifold. Adapting the notations in \cite{N}, we let $H$ be the class of the regular fiber, $Q$ be the component of $-\partial R$ over the boundary of the disc neighborhood of the singular point $p_{+}$, and $M$ be the meridian of the regular neighborhood of the singular fiber $f_{+}$, we have the relation $M=\alpha_1 Q-\beta_1 H$. 
Now we fix a longitude $L$ of $f_{+}$, and write $L=\alpha_1^\prime Q+\beta_1^\prime H$ for 
some $\alpha_1^\prime$, $\beta_1^\prime$ obeying $\alpha_1\beta_1^\prime+\beta_1\alpha_1^\prime=1$. Then we observe
that a simple closed curve $\gamma=pQ+q H$, with slope $s=p/q$, will have slope $s^\prime$ with
respect to the basis $(M,L)$, where $s^\prime$ and $s$ are related by the equation
$$
s^\prime=\frac{\beta_1^\prime s-\alpha_1^\prime}{\beta_1 s+\alpha_1}.
$$
In particular, the characteristic foliation of $\xi_0$ on the torus $T$ which consists of fibers over the
dividing set of $\xi_0$ has slope $s^\prime=-\alpha_1^\prime/\alpha_1$ with respect to $(M,L)$, as it has slope $s=0$ with respect to the basis $(Q,H)$. It follows easily that the transverse knot $f_{+}$ in $(Y,\xi_0)$ has a standard neighborhood $N$ such that the characteristic foliation on
$\partial N$ has slope $-\alpha_1^\prime/\alpha_1$ with respect to the framing defined by the longitude $L$. 

With the preceding understood, we observe that $\s^3$, together with the given Seifert fibration on it,
is the result of a surgery on $f_{+}$, where if we denote by $\tilde{M}$ the meridian of the solid 
torus in $\s^3$ after the surgery (i.e., the surgery torus), then $\tilde{M}=\alpha_2 Q-\beta_2 H$ in the basis $(Q,H)$. Expressing $\tilde{M}$ in the basis $(M,L)$, we have 
$$
\tilde{M}=(\alpha_2\beta_1^\prime+\beta_2\alpha_1^\prime)M+(\alpha_2\beta_1-\beta_2\alpha_1)L=
(\alpha_2\beta_1^\prime+\beta_2\alpha_1^\prime)M-L
$$
because $\alpha_2\beta_1-\alpha_1\beta_2=-1$. It follows immediately that $\s^3$, together with the given Seifert fibration on it, is the result of a $-(\alpha_2\beta_1^\prime+\beta_2\alpha_1^\prime)$-surgery on $f_{+}$ with respect to the framing defined by the longitude $L$. 

We claim that the $-(\alpha_2\beta_1^\prime+\beta_2\alpha_1^\prime)$-surgery on $f_{+}$ can be
done by an admissible transverse surgery. To see this, we compute the difference of the slopes: 
$$
-\frac{\alpha_1^\prime}{\alpha_1}-(-(\alpha_2\beta_1^\prime+\beta_2\alpha_1^\prime))=\frac{\alpha_2}{\alpha_1}>1>0. 
$$
(In the above calculation, we use the fact that $\alpha_1\beta_1^\prime+\beta_1\alpha_1^\prime=1$
and $\alpha_2\beta_1-\alpha_1\beta_2=-1$.)
In particular, this shows that the $-(\alpha_2\beta_1^\prime+\beta_2\alpha_1^\prime)$-surgery on $f_{+}$ can be done by an admissible transverse surgery, and moreover, if $n\in\Z$ such that 
$n<-\alpha_1^\prime/\alpha_1<n+1$, then  $-(\alpha_2\beta_1^\prime+\beta_2\alpha_1^\prime)<n$,
which implies, by Theorem 3.2 of \cite{BE}, that the admissible transverse 
$-(\alpha_2\beta_1^\prime+\beta_2\alpha_1^\prime)$-surgery on $f_{+}$ is equivalent to a
Legendrian surgery on a link. It follows easily that the surgery on $f_{+}$ produces a tight, 
$\s^1$-invariant, non-transverse contact structure $\xi$ on $\s^3$ with the desired dividing set
(note that the dividing set remains untouched during the transverse surgery). 

\end{proof}

{\bf Proof of Theorem 2.3:}

\vspace{2mm}

The case where $M=\s^3$ is covered in Lemma 2.4. On the other hand, for the case where $M$ is a lens space, the universally tightness of $\xi$ follows from Lemma 2.4 as a corollary, by observing that the lift of $\xi$ to the universal cover of $M$ satisfies the assumptions in Lemma 2.4. To see the existence of such an $\s^1$-invariant, non-transverse contact structure $\xi$ on $M$, we observe that, 
with $(Y,\xi_0)$ being understood as in the proof of Case 2 of Lemma 2.4, $M$ together with the given Seifert fibration on it, is the result of a $(\frac{\alpha_2\beta_1^\prime+\beta_2\alpha_1^\prime}{\alpha_2\beta_1-\alpha_1\beta_2})$-surgery of $Y$ on the multiple fiber $f_{+}$ with respect to the framing defined by the longitude $L$ (in the proof of Lemma 2.4, we are in the special case where 
$\alpha_2\beta_1-\alpha_1\beta_2=-1$). By a similar calculation, i.e.,
$$
-\frac{\alpha_1^\prime}{\alpha_1}-\frac{\alpha_2\beta_1^\prime+\beta_2\alpha_1^\prime}{\alpha_2\beta_1-\alpha_1\beta_2}=-\frac{\alpha_2}{\alpha_1(\alpha_2\beta_1-\alpha_1\beta_2)}>0,
$$
we see that the $(\frac{\alpha_2\beta_1^\prime+\beta_2\alpha_1^\prime}{\alpha_2\beta_1-\alpha_1\beta_2})$-surgery on $f_{+}$ can be done by an admissible transverse surgery which leaves the dividing set untouched. 
As a consequence, it produces an $\s^1$-invariant, non-transverse contact structure on $M$ with the desired dividing set. (Here in the above calculation we have used the fact that $\alpha_2\beta_1-\alpha_1\beta_2<0$ as $e(M)>0$ by assumption.) 

To proceed further, it remains to consider the case where $M$ is a Seifert fibered space with at most $3$ singular fibers. But first of all, we observe that by rearranging the Seifert invariants we may assume without loss of generality that $\alpha_3>1$ and $|\frac{\beta_3}{\alpha_3}|<1$, while preserving the condition (i): $\frac{\beta_1}{\alpha_1}+\frac{\beta_2}{\alpha_2}\leq 0$. 

With the preceding understood, we consider the case where 
$\frac{\beta_1}{\alpha_1}+\frac{\beta_2}{\alpha_2}<0$ first. Under this assumption, we note that the lens space $M((\alpha_1,\beta_1),(\alpha_2,\beta_2))$ admits a universally tight, $\s^1$-invariant and non-transverse contact structure whose dividing set is a circle separating the two points $p_1,p_2$ of multiplicities $\alpha_1,\alpha_2$
respectively (here $\alpha_i=1$, $i=1,2$, is allowed). Furthermore, the Seifert fibered space $M$ is the result 
of a $\frac{\alpha_3}{\beta_3}$-surgery of $M((\alpha_1,\beta_1),(\alpha_2,\beta_2))$ on a regular fiber of the Seifert fibration with respect to the framing determined by the fibration. With this understood, it suffices to show that the $\frac{\alpha_3}{\beta_3}$-surgery can be done by an admissible transverse surgery that is equivalent to a Legendrian surgery on a Legendrian link, which preserves tightness and fillability. 

To this end, we pick a regular fiber $L$ of $M((\alpha_1,\beta_1),(\alpha_2,\beta_2))$ which is a Legendrian vertical circle. Then by a result of Gay (cf. \cite{G}, Proposition 1.8), a transverse, sufficiently close push-off of $L$, to be denoted by $K$, has a standard neighborhood with linear slope $a=-\frac{1}{2}$ on the boundary (with respect to the framing given by the Seifert fibration, which is the contact framing of $L$). In the present situation, we may further assume that $K$ is a regular fiber of the Seifert fibration. Since $-1<a=-\frac{1}{2}<0$, by a theorem of Baldwin and Etnyre (cf. \cite{BE}, Theorem 3.2), any admissible transverse $s$-surgery on $K$, where $-1>s\in\Q$, is equivalent to a Legendrian surgery on a Legendrian link in a neighborhood of $K$. In particular, a transverse 
$\frac{\alpha_3}{\beta_3}$-surgery on $K$, with $-1<\frac{\beta_3}{\alpha_3}<0$, is admissible and is equivalent to a Legendrian surgery on a Legendrian link. This finishes the proof for the case where $\frac{\beta_1}{\alpha_1}+\frac{\beta_2}{\alpha_2}<0$.

Finally, for the case where $\frac{\beta_1}{\alpha_1}+\frac{\beta_2}{\alpha_2}=0$, we must have
$\alpha_2=\alpha_1$ and $\beta_2=-\beta_1$. With this understood, note that the same argument, applied to the Seifert space $M((\alpha_1,\beta_1),(\alpha_1,-\beta_1))$ (which is the $3$-manifold $Y$, with an $\s^1$-invariant, non-transverse, and universally tight contact structure $\xi_0$ in the proof of Lemma 2.4), finishes the proof for this case. The proof of Theorem 2.3 is complete. 

\begin{example}
As Lemma 2.4 is the key step for the proof of Theorem 2.3, a natural question is whether the condition, i.e., the Euler number of the Seifert fibration $\pi: \s^3\rightarrow S$ is positive, is also a necessary 
condition for the tightness of the $\s^1$-invariant, non-transverse contact structure on $\s^3$ 
constructed in Lemma 2.4. In this example, we shall examine a special case where this condition fails, and explain that in this case
the $\s^1$-invariant, non-transverse contact structure on $\s^3$ has to be obtained by an inadmissible 
transverse surgery rather than an admissible transverse surgery we employed in the proof of 
Lemma 2.4. Furthermore, the $\s^1$-invariant contact structure thus obtained must be overtwisted. 

To this end, consider $\s^3$ as the Seifert fibered space $M=M((2,1),(3,-1))$, which has Euler number
$e(M)=-(\frac{1}{2}+\frac{-1}{3})=-\frac{1}{6}<0$. (Note that this is on the contrary of the assumption in Lemma 2.4). We claim that there is an $\s^1$-invariant, non-transverse contact structure on $\s^3$,
whose dividing set is a circle separating the two singular points of multiplicity $2$ and $3$. To see this,
we consider the Seifert fibered space $M=M((2,1), (2,-1))$ with the universally tight, $\s^1$-invariant
contact structure $\xi_0$ as in the Case 2 of the proof of Lemma 2.4. (Here we have 
$(\alpha_1,\beta_1)=(2,1)$, $(\alpha_1^\prime,\beta_1^\prime)=(-1,1)$, and $(\alpha_2,\beta_2)=
(3,1)$.) With this understood, to obtain $\s^3=M((2,1),(3,-1))$, we need to perform a
$(\frac{\alpha_2\beta_1^\prime+\beta_2\alpha_1^\prime}{\alpha_2\beta_1-\alpha_1\beta_2})$-surgery 
on the singular fiber of Seifert invariant $(2,-1)$ in $M=M((2,1), (2,-1))$. But 
$\frac{\alpha_2\beta_1^\prime+\beta_2\alpha_1^\prime}{\alpha_2\beta_1-\alpha_1\beta_2}=2$, 
while the characteristic foliation on the torus over the dividing set has slope $-\frac{\alpha_1^\prime}{\alpha_1}=\frac{1}{2}$. So if we perform an admissible transverse $2$-surgery on the singular fiber
of Seifert invariant $(2,-1)$ in $M=M((2,1), (2,-1))$, we can not leave the dividing set untouched, 
as the slopes $2>\frac{1}{2}$. In order to leave the dividing set untouched, we have to perform an
inadmissible transverse $2$-surgery on the singular fiber. The result is an $\s^1$-invariant, non-transverse contact structure $\xi$ on $\s^3=M((2,1),(3,-1))$, with a dividing set consisting of a circle 
separating the two singular points of multiplicity $3$ and $2$. 

To see that the resulting contact structure $\xi$ must be overtwisted, we appeal to a theorem of 
Ghiggini and Sch\"{o}nenberger (cf. \cite{GSch}, Theorem 4.14) that the small Seifert space 
$M((2,1),(3,-1),(11,-2))$ does not admit any tight contact structures which contain a Legendrian vertical circle with twisting number $0$ (including in particular tight, $\s^1$-invariant non-transverse contact structures).
For if $\xi$ were tight, we could perform an admissible transverse $-\frac{11}{2}$-surgery on a transverse push-off of a Legendrian vertical circle to produce an $\s^1$-invariant, non-transverse tight contact structure on the small Seifert space $M((2,1),(3,-1), (11,-2))$, contradicting the aforementioned theorem of Ghiggini and Sch\"{o}nenberger. Thus $\xi$ must be overtwisted. 

In summary, the above example illustrated the necessity of the assumption in Lemma 2.4 that the
Euler number of the Seifert fibration must be positive. For if otherwise, we are facing a choice:
either perform an admissible transverse surgery but having the dividing set eliminated, or perform 
an inadmissible transverse surgery while keeping the dividing set untouched. The former choice 
results a transverse, tight contact structure, while the latter choice gives rise to a non-transverse 
$\s^1$-invariant contact structure which is not necessarily tight. 
\end{example}

The discussions in Example 2.5 suggest that it is necessary to put in place the assumptions in 
Theorem 2.3, i.e., $e(M)>0$ in part (1) and (i) $\frac{\beta_1}{\alpha_1}+\frac{\beta_2}{\alpha_2}\leq 0$ 
and (ii) $\beta_3<0$ in part (2), in order to ensure that the $\s^1$-invariant, non-transverse 
contact structure constructed therein is tight. Note that in part (2), in the case of a small Seifert space, 
$e_0(M)\geq -1$ is necessary for the existence of a tight, non-transverse $\s^1$-invariant contact structure
(cf. Lemma 2.2). If $e_0(M)\geq 0$, then one can always arrange so that $\beta_i$ are negative for $i=1,2,3$,
which implies that the assumptions in Theorem 2.3 (2) hold true. If $e_0(M)=-1$, then exactly one of 
$\beta_1,\beta_2$ is negative. Suppose the point $p_3$ lies on the same side of the dividing circle with $p_2$. 
We may assume without loss of generality that $\beta_1>0$ and $\beta_2<0$. With this understood, 
Example 2.5 suggests that either $\frac{\beta_1}{\alpha_1}+\frac{\beta_2}{\alpha_2}\leq 0$ and $\beta_3<0$, or $\frac{\beta_1}{\alpha_1}+\frac{\beta_3}{\alpha_3}\leq 0$ and $\beta_2<0$, must be necessarily true in order for the $\s^1$-invariant, non-transverse contact structure to be tight. We are not able to provide a rigorous proof of these beliefs, so we shall formulate the statements in a form as in the following question. 

\begin{question}
\begin{itemize}
\item [{(1)}] In Lemma 2.4, is the Euler number of the Seifert fibration being positive a necessary condition for the 
$\s^1$-invariant, non-transverse contact structure $\xi$ to be tight?
\item [{(2)}] Let $M((\alpha_1,\beta_1), (\alpha_2,\beta_2), (\alpha_3,\beta_3))$ be a small Seifert space with
$e_0(M)=-1$, and let $p_i$ be the singular point associated to the singular fiber of multiplicity $\alpha_i>1$, 
for $i=1,2,3$. Assume the Seifert invariants are arranged to obey the following conditions: $\beta_i<0$, $|\beta_i|<\alpha_i$ for $i=2,3$, and $0<\beta_1<\alpha_1$. Furthermore, assume both 
$\frac{\beta_1}{\alpha_1}+\frac{\beta_2}{\alpha_2}>0$ and $\frac{\beta_1}{\alpha_1}+\frac{\beta_3}{\alpha_3}>0$. 
If $\xi$ is an $\s^1$-invariant, non-transverse contact structure whose dividing set is a circle separating the point 
$p_1$ from the points $p_2,p_3$, must $\xi$ be overtwisted?
\end{itemize}
\end{question}

See Section 6 for some interesting implication of Question 2.6 in the context of rational cuspidal curves.

\section{Rational open books with periodic monodromy}

Let $M$ be a compact closed, oriented $3$-manifold. A rational open book on $M$ consists of an oriented link $B=\sqcup_i B_i$ (called the binding) and a smooth fibration $\pi: M\setminus B\rightarrow \s^1$, which satisfies the following conditions: let $\Sigma:=\pi^{-1}(\lambda)$, 
$\lambda\in\s^1$, denote the pages of the open book, which are canonically oriented by the orientations of $M$ and $\s^1$, then for each $i$, there is a regular neighborhood $N_i$ of $B_i$, such that, setting $(\partial \Sigma)_i:=\Sigma\cap \partial N_i$ and orienting it as a boundary component of $\Sigma\cap (M\setminus \sqcup_i N_i)$, one has (i) each $(\partial \Sigma)_i$ is connected, (ii) $(\partial \Sigma)_i=p_i B_i$ in $H_1(N_i)$ for some integer $p_i>0$. The integer $p_i$ is called the {\bf multiplicity} at $B_i$. Note that when each $p_i=1$, the rational open book $(B,\pi)$ is the usual open book decomposition of $M$. See \cite{BEV} for a more comprehensive discussion. 

We are interested in the case where $\pi: M\setminus \sqcup_i N_i\rightarrow \s^1$ is given by the mapping torus of a periodic diffeomorphism leaving each boundary component invariant; in this case, we say $(B,\pi)$ is of {\bf periodic monodromy}. It is clear that a rational open book $(B,\pi)$ with periodic monodromy gives rise to a natural smooth $\s^1$-action on $M$. The purpose of this section is to relate the data associated to the rational open book $(B,\pi)$ to the corresponding generalized Seifert invariants of $M$ (as discussed in \cite{N}), which enables us to construct rational open books with various properties on a given Seifert manifold. We begin with some technical lemmas. 

\begin{lemma}
Let $S_0$ be a closed, oriented $2$-orbifold with singular points $z_1,z_2,\cdots,z_m$, where we denote the genus of the underlying surface of $S_0$ by $g_0$ and the order of isotropy of $z_i$ by
$n_i$, for $1\leq i\leq m$. Then there exists a closed, oriented surface $\tilde{S}$ together 
with a periodic diffeomorphism $\phi: \tilde{S}\rightarrow \tilde{S}$ of order $n$ such that the quotient 
$\tilde{S}/\langle \phi\rangle=S_0$ {\em(}as orbifolds{\em)} if and only if there are integers $c_1,c_2,\cdots,c_m$ satisfying the following condition:
\begin{itemize}
\item[{(\dag)}] $\forall i, 0<c_i<n_i, gcd(n_i,c_i)=1$, and 
$\frac{c_1}{n_1}+\frac{c_2}{n_2}+\cdots+\frac{c_m}{n_m}\in\Z$.
\end{itemize}
Moreover, $(\tilde{S},\phi)$ uniquely determines the integers $c_1,c_2,\cdots,c_m$, with
$n=lcm(n_1,\cdots,n_m)$ when $g_0=0$. On the other hand, given any integers 
$c_1,c_2,\cdots,c_m$ satisfying $(\dag)$, one can choose $(\tilde{S},\phi)$ such that 
$\phi: \tilde{S}\rightarrow \tilde{S}$ has order $n=lcm(n_1,\cdots,n_m)$.
\end{lemma}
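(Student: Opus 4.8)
The plan is to translate the existence of a branched cyclic cover $(\tilde S,\phi)\to S_0$ into an explicit condition on the rotation numbers at the branch points, using the standard orbifold-fundamental-group description of cyclic covers. First I would set up the orbifold fundamental group $\pi_1^{orb}(S_0)$, which is generated by the usual surface generators $a_1,b_1,\dots,a_{g_0},b_{g_0}$ together with loops $x_1,\dots,x_m$ around the singular points $z_i$, subject to $x_i^{n_i}=1$ and $\prod_j[a_j,b_j]\prod_i x_i=1$. A connected cyclic orbifold cover of order $n$ with quotient $S_0$ (and with $\tilde S$ an honest closed oriented surface, i.e.\ the cover is \emph{unramified as an orbifold} but branched as a map of surfaces) corresponds precisely to a surjection $\rho:\pi_1^{orb}(S_0)\to \Z/n$ whose restriction to each cyclic subgroup $\langle x_i\rangle$ is injective. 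Writing $\rho(x_i)=c_i\in\Z/n_i\subset\Z/n$ after identifying the image of $\langle x_i\rangle$ with $\Z/n_i$ (here one needs $n_i\mid n$; for $g_0=0$ surjectivity will force $n=\mathrm{lcm}(n_i)$, while for $g_0>0$ the extra free generators allow any multiple), the injectivity of $\rho|_{\langle x_i\rangle}$ is exactly $\gcd(n_i,c_i)=1$, and one normalizes $0<c_i<n_i$. The surface relation $\prod[a_j,b_j]\prod x_i=1$ maps under $\rho$ to $\sum_i \rho(x_i)=0$ in $\Z/n$, which after clearing denominators is precisely $\sum_i c_i/n_i\in\Z$. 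Conversely, given $(c_i)$ satisfying $(\dag)$ with $n=\mathrm{lcm}(n_i)$, one builds $\rho$ by sending $x_i\mapsto c_i n/n_i$ and choosing the images of the $a_j,b_j$ (when $g_0>0$) freely so that $\rho$ is onto — using that the $c_i n/n_i$ together with any single primitive element generate $\Z/n$ when $g_0=0$ follows from a gcd computation, and when $g_0>0$ one just uses a free generator to hit a primitive element. The associated cover $\tilde S\to S_0$ then has $\tilde S$ a closed oriented surface (smoothness over the branch points follows from $\gcd(n_i,c_i)=1$), and the deck transformation gives $\phi$ of order $n$.

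The uniqueness statement — that $(\tilde S,\phi)$ determines the $c_i$, with $n=\mathrm{lcm}(n_i)$ when $g_0=0$ — I would prove by noting that the germ of $\phi$ near a point of $\tilde S$ over $z_i$ is a rotation by a primitive $n_i$-th root of unity; the exponent (normalized to lie in $(0,n_i)$ and written additively) is an invariant of the branched cover, and it is exactly $c_i$. That $n=\mathrm{lcm}(n_i)$ in the genus-zero case is forced because the order of $\phi$ must be divisible by each $n_i$ (there is branching of that order), and a cyclic cover of $S^2$ with prescribed branching of orders $n_i$ and connected total space has order exactly $\mathrm{lcm}(n_i)$: any larger $n$ would make $\rho$ non-surjective since the $x_i$ generate $\pi_1^{orb}$ when $g_0=0$, so the image would be the subgroup generated by the $\rho(x_i)$, which has order dividing $\mathrm{lcm}(n_i)$.

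The step I expect to be the main obstacle is the careful bookkeeping in the genus-zero surjectivity argument, i.e.\ checking that the elements $c_i\,n/n_i\in\Z/n$ with $n=\mathrm{lcm}(n_i)$ actually generate $\Z/n$ given condition $(\dag)$ — this is a number-theoretic claim that uses $\gcd(n_i,c_i)=1$ in an essential way and is the place where $(\dag)$ does real work beyond the obvious. I would handle it prime by prime: for a prime power $\ell^a\,\|\,n$, some $n_i$ is divisible by $\ell^a$, and for that $i$ the element $c_i n/n_i$ has $\ell$-adic valuation exactly $v_\ell(n/n_i)=a-v_\ell(n_i)$... wait, one wants valuation $0$ at $\ell$ in the relevant sense, so more precisely $c_i n/n_i$ generates the $\ell$-primary part of $\Z/n$ because $\ell\nmid c_i$ and $n/n_i$ carries exactly the complementary part; summing over primes via CRT gives a generator. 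I would also double-check the boundary case $m=0$ (no singular points), where $(\dag)$ is vacuous, $S_0$ is a closed surface of genus $g_0$, and one needs $g_0\ge 1$ for a nontrivial connected cyclic cover to exist — this is consistent since for $g_0=0$ and $m=0$ there is no such $\phi$ of order $n>1$, and the statement with ``$n=\mathrm{lcm}$ of the empty set'' should be read as excluding that degenerate case or interpreted as $n=1$.
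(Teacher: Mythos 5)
Your approach is essentially the same as the paper's: translate to a surjection $\rho:\pi_1^{orb}(S_0)\to\Z_n$ with kernel $\pi_1(\tilde S)$, read off $c_i$ from the images of the boundary loops, and observe that $\gcd(n_i,c_i)=1$ is exactly the condition that $\tilde S$ has no orbifold points. For the surjectivity in the converse direction the paper takes a shortcut you may prefer: it sets $\rho(a_k)=\rho(b_k)=1$ always (no need to use the genus for surjectivity), observes $\gcd(n/n_1,\dots,n/n_m)=1$, picks $k_i$ with $\sum k_i\,n/n_i=1$ and $c_i'$ with $c_ic_i'\equiv1\pmod{n_i}$, and checks $\rho\bigl(\gamma_1^{k_1c_1'}\cdots\gamma_m^{k_mc_m'}\bigr)=t$; your prime-by-prime/CRT argument reaches the same conclusion, just with more bookkeeping. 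One small slip in your uniqueness discussion: $\phi$ itself does not fix a point $\tilde z$ over $z_i$ unless $n_i=n$; the element that does is $\phi^{n/n_i}$, the generator of the isotropy subgroup of $\tilde z$. And with the paper's normalization $\rho(\gamma_i)=t^{c_i n/n_i}$, it is $\phi^{c_i n/n_i}$ that acts as rotation by $2\pi/n_i$ near $\tilde z$, so the rotation exponent of $\phi^{n/n_i}$ is $c_i^{-1}\pmod{n_i}$, not $c_i$. This does not break your uniqueness argument (the map $c_i\mapsto c_i^{-1}$ is a bijection on $(\Z/n_i)^\times$, so the rotation exponent still determines $c_i$), but the identification should be stated correctly.
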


\begin{proof}
The orbifold fundamental group of $S_0$ has the following presentation
$$
\pi_1^{orb}(S_0)=\{a_k,b_k,\gamma_i |\gamma_i^{n_i}=1, \forall i, \gamma_1\gamma_2\cdots
\gamma_m \prod_{k=1}^{g_0} [a_k,b_k]=1\}.
$$
Suppose there exists a closed, oriented surface $\tilde{S}$ together 
with a periodic diffeomorphism $\phi: \tilde{S}\rightarrow \tilde{S}$ such that $\tilde{S}/\langle \phi\rangle=S_0$.
Then $\pi: \tilde{S}\rightarrow S_0$ is an orbifold covering map with $\phi$ being a generator of the deck
transformations. This gives rise to a surjective homomorphism $\rho: \pi_1^{orb}(S_0)\rightarrow \Z_n$
with $\ker\rho=\pi_1(\tilde{S})$. Let $t\in\Z_n$ be the generator which corresponds to $\phi$. 

For each $i$, let $0<m_i<n$ be the number of points in the orbit of $\phi$ that corresponds to 
$z_i\in S_0$. Then $m_in_i=n$, and $\phi^{m_i}$ is a generator of the isotropy subgroup which fixes
each point in the orbit of $\phi$ that corresponds to $z_i\in S_0$. It follows that there is a unique
$c_i$, satisfying $0<c_i<n_i$, such that $\rho(\gamma_i)=t^{c_im_i}$. Since no nontrivial powers of
$\gamma_i$ lies in $\ker\rho=\pi_1(\tilde{S})$ (as $\tilde{S}$ is nonsingular), we must have $gcd(n_i,c_i)=1$.
Finally, note that $\rho(\gamma_1\gamma_2\cdots\gamma_m)=1$, which implies 
$$
t^{(\frac{c_1}{n_1}+\frac{c_2}{n_2}+\cdots+\frac{c_m}{n_m})n}=1.
$$
It follows that $\frac{c_1}{n_1}+\frac{c_2}{n_2}+\cdots+\frac{c_m}{n_m}\in\Z$ as $t$ has order $n$.
It remains to show that if $g_0=0$, we must have $n=lcm(n_1,\cdots,n_m)$. To see this, note that
$g_0=0$ implies that $\pi_1^{orb}(S_0)$ is generated by $\{\gamma_i\}$. Then the surjectivity of
$\rho$ implies that there are integers $d_1,d_2,\cdots,d_m$ such that
$$
t=\rho(\gamma_1^{d_1}\gamma_2^{d_2}\cdots\gamma_m^{d_m}),
$$
which gives rise to $\sum_{i=1}^m d_ic_im_i\equiv 1 \pmod{n}$. It follows that
$gcd(m_i|1\leq i\leq m)=1$, hence $n=lcm(n_1,\cdots,n_m)$, as $m_in_i=n$ for each $i$. 

Conversely, suppose there are integers $c_1,c_2,\cdots,c_m$ satisfying $(\dag)$. Then setting
$n=lcm(n_1,\cdots,n_m)$, and fixing a generator $t\in \Z_n$, we define a homomorphism
$\rho: \pi_1^{orb}(S_0)\rightarrow\Z_n$ by 
$$
\rho(a_k)=\rho(b_k)=1, \forall k, \mbox{ and } \rho(\gamma_i)=t^{c_in/n_i}, \forall i.
$$
The condition $\frac{c_1}{n_1}+\frac{c_2}{n_2}+\cdots+\frac{c_m}{n_m}\in\Z$ implies that
$\rho(\gamma_1\gamma_2\cdots\gamma_m)=1$, so that $\rho$ is well-defined. To see that
$\rho$ is surjective, we note that $gcd(n/n_1,n/n_2,\cdots,n/n_m)=1$, so that there are integers
$k_1,k_2,\cdots,k_m$ such that $k_1n/n_1+k_2n/n_2+\cdots+k_mn/n_m=1$. Finally, for each $i$,
pick $c_i^\prime$ such that $c_ic_i^\prime\equiv 1 \pmod{n_i}$. Then it follows that
$$
t=\rho(\gamma_1^{k_1c_1^\prime}\gamma_2^{k_2c_2^\prime}\cdots\gamma_m^{k_mc_m^\prime}),
$$
hence $\rho$ is surjective. Consequently, there is an orbifold covering $\pi: \tilde{S}\rightarrow S_0$ with
a cyclic deck transformation group generated by a diffeomorphism $\phi: \tilde{S}\rightarrow \tilde{S}$ of order $n$, such that $\ker\rho=\pi_1^{orb}(\tilde{S})$. It remains to show that $\tilde{S}$ has no singular points, which happens
precisely when for each $i$, there are no integers $0<l_i<n_i$ such that $\rho(\gamma_i^{l_i})=1$. Note
that $\rho(\gamma_i^{l_i})=t^{l_ic_in/n_i}$, so $\rho(\gamma_i^{l_i})=1$ implies that $c_il_i/n_i\in\Z$,
which is a contradiction as $gcd(n_i,c_i)=1$ and $0<l_i<n_i$. This finishes the proof.

\end{proof}

We note from the proof that for each $i$, $\phi^{\frac{n}{n_i}c_i}$ fixes each point in the orbit that corresponds to $z_i\in S_0$ and acts as a rotation by an angle $\frac{2\pi}{n_i}$ in a neighborhood of it. 

\vspace{2mm}

Now suppose we are given with a rational open book $(B,\pi)$ on $M$ which has a periodic monodromy of order $n>1$, and let $\phi:\Sigma\rightarrow\Sigma$ be the periodic diffeomorphism which defines the monodromy of $(B,\pi)$. Then as oriented $3$-manifolds, $M\setminus (\cup_i N_i)$ is identified with the mapping torus of $\phi$, i.e., $\Sigma\times [0,1]/ (x,1)\sim (\phi(x),0)$,  under which $\pi$ sends $(x,t)\in \Sigma\times [0,1]$ to $t$. 

To further describe $\phi:\Sigma\rightarrow\Sigma$, for each $i$ we identify the boundary component 
$(\partial \Sigma)_i$ of $\Sigma$ with $\R/\Z$ such that the orientation of $(\partial \Sigma)_i$ is given by the orientation of $\R$. With this understood, since each $(\partial \Sigma)_i$ is invariant under 
$\phi$, there is an integer $k_i$, where $0<k_i<n$ and $gcd(n,k_i)=1$, such that on 
$(\partial \Sigma)_i=\R/\Z$, $\phi$ is given by the translation $x\mapsto x+\frac{k_i}{n}$. We 
close up $\Sigma$ by adding a disc $D_i$ to the boundary component $(\partial \Sigma)_i$, 
obtaining a closed oriented surface $\tilde{S}$. There is a natural extension of $\phi$ to $\tilde{S}$, which is still denoted by $\phi$. 

Let $S_0$ be the quotient orbifold $\tilde{S}/\langle \phi\rangle$. The singular points of $S_0$ are divided into two groups: for each $i$, we let $z_i$ be the singular point contained in $D_i/\langle \phi\rangle$, which is of order of isotropy $n$, and we denote by $\{z_j\}$ the singular points contained in $\Sigma/\langle \phi\rangle$, where the order of isotropy of $z_j$ is denoted by $n_j$. Then 
by Lemma 3.1, there are uniquely determined integers $\{c_j\}$, $\{c_i\}$, such that
$$
0<c_j<n_j, gcd(n_j,c_j)=1, \forall j, \;\;\; 0<c_i<n, gcd(n,c_i)=1, \forall i, 
$$
and $b:=\sum_j \frac{c_j}{n_j}+\sum_i \frac{c_i}{n}\in \Z$. With this understood, we note from the remark following the proof of Lemma 3.1 that for each $i$, $k_ic_i\equiv -1 \pmod{n}$. We denote by $l_i$ the unique integer such that $nl_i-k_ic_i=1$. Finally, let $g_0$ be the genus of $S_0$.

Next, we set $T_i:=(\partial \Sigma)_i \times [0,1]/ (x,1)\sim (\phi(x),0)$, which is oriented as part of the boundary of the mapping torus $\Sigma\times [0,1]/ (x,1)\sim (\phi(x),0)$. We consider two 
curves on $T_i$: $\gamma_i:=(\partial \Sigma)_i \times \{0\}$ and 
$\tau_i:=\{(\frac{k_i}{n}(1-t),t)| 0\leq t\leq 1\}$, where $\gamma_i$ is oriented as $(\partial \Sigma)_i$
and $\tau_i$ is oriented by $t$. Then note that $(\gamma_i,\tau_i)$ is a positively oriented basis of
$H_1(T_i)$ (i.e., the intersection number $\gamma_i\cdot \tau_i=1$). 

On the other hand, for each $i$, we let $\mu_i$ be the meridian on
$\partial N_i=-T_i$, oriented as the boundary of the normal disc of $B_i$ in $N_i$, where the normal disc is so oriented that it intersects positively with $B_i$. With this understood, we note that for each $i$, 
$$
\mu_i=-p_i^\prime\gamma_i +p_i\tau_i
$$
for some integer $p_i^\prime$ (recall $p_i>0$ is the multiplicity at $B_i$). 

On the mapping torus $\Sigma\times [0,1]/ (x,1)\sim (\phi(x),0)$, the translations in the variable $t$
defines a $\s^1$-action, which can be canonically extended to each solid torus $N_i$ such that the binding component $B_i$ is either an orbit or a fixed-point set component. This defines 
a generalized Seifert fibration structure on the $3$-manifold $M$ in the sense of \cite{N}. The next lemma describes this structure on $M$, where we set for each $i$, 
$$
\alpha_i:=k_ip_i+np^\prime_i,\;\; \beta_i:=l_ip_i+c_i p^\prime_i.
$$

\begin{lemma}
The $\s^1$-action on $M$ inherited from the rational open book $(B,\pi)$ identifies $M$ as
$$
M=M(g_0; (1, -b), \{(\alpha_j,\beta_j)\}, \{(\alpha_i,\beta_i)\}), 
$$
where $b=\sum_j \frac{c_j}{n_j}+\sum_i \frac{c_i}{n}$, $(\alpha_j,\beta_j)=(n_j,c_j)$, and
$(\alpha_i,\beta_i)$ is the generalized Seifert invariant at the binding component $B_i$ subject
to the following interpretation: {\em(}i{\em)} if $\alpha_i<0$, then the Seifert invariant at the fiber is 
$(-\alpha_i,-\beta_i)$ and $B_i$ has the opposite orientation of the corresponding fiber, 
and {\em(}ii{\em)} if $\alpha_i=0$, then $\beta_i=1$, and $B_i$ is a component of the fixed-point set of 
the $\s^1$-action. Finally, the Euler number of the Seifert fibration is given by
$e(M)=-\frac{1}{n} \sum_i \frac{p_i}{\alpha_i}$ when $\alpha_i\neq 0$ for each $i$, where $n$ is the order of the monodromy of $(B,\pi)$ and $p_i$ is the multiplicity at $B_i$. 
\end{lemma}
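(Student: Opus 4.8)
The plan is to cut $M$ open along the binding, writing $M=\bigl(M\setminus\bigcup_i N_i\bigr)\cup\bigcup_i N_i$, and to read off the generalized Seifert data one piece at a time. By hypothesis $M\setminus\bigcup_i N_i$ is the mapping torus of $\phi|_\Sigma$, and since $\Sigma$ is obtained from the closed surface $\tilde S$ by deleting the interiors of the discs $D_i$, this mapping torus is exactly $M_\phi(\tilde S)\setminus\bigcup_i V_i$, where $M_\phi(\tilde S)$ denotes the mapping torus of the extended diffeomorphism $\phi\colon\tilde S\to\tilde S$ and $V_i:=M_\phi(D_i)$ is a solid torus. The translation circle action in the $t$ variable turns $M_\phi(\tilde S)$ into a Seifert manifold over the quotient orbifold $S_0=\tilde S/\langle\phi\rangle$: its base has genus $g_0$, the exceptional fibers over the interior singular points $z_j$ carry the Seifert invariants $(n_j,c_j)$ produced by Lemma~3.1 (via the local rotation data recorded in the remark following that lemma), the exceptional fibers over the disc centers $z_i$ have multiplicity $n$, and the rational obstruction term is $(1,-b)$ with $b=\sum_j\frac{c_j}{n_j}+\sum_i\frac{c_i}{n}\in\Z$, i.e.\ $M_\phi(\tilde S)$ has vanishing Euler number. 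I would obtain this either from the standard dictionary between periodic surface diffeomorphisms and Seifert fibrations (cf.\ \cite{N}), or directly, since $M_\phi(\tilde S)$ is a finite quotient of the Euler number zero circle bundle $\tilde S\times\s^1\to\tilde S$.

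Next I would locate, on each boundary torus $T_i=\partial V_i=-\partial N_i$, two curves relative to the positively oriented basis $(\gamma_i,\tau_i)$ already fixed in the text: the regular fiber $h$ of the circle action, and a section curve $\sigma_i$ adapted to the Seifert structure on the complement. Unwinding the mapping torus identification over $(\partial\Sigma)_i=\R/\Z$, where $\phi$ acts by the translation $x\mapsto x+\frac{k_i}{n}$, one checks that a generic orbit closes up only after $n$ full trips around the $t$ direction, so that $h=k_i\gamma_i+n\tau_i$, while the meridian of $V_i$ is $\mu^V_i=\gamma_i$. Put $\sigma_i:=l_i\gamma_i+c_i\tau_i$; the relation $n l_i-k_i c_i=1$ gives $\sigma_i\cdot h=1$, so $(\sigma_i,h)$ is again a positively oriented basis of $H_1(T_i)$, and moreover $\gamma_i=n\sigma_i-c_i h$, which identifies the exceptional fiber of $M_\phi(\tilde S)$ over $z_i$ with the invariant $(n,c_i)$ and confirms that $\sigma_i$ is the correct section curve.

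The key computation is then to rewrite the meridian $\mu_i=-p_i'\gamma_i+p_i\tau_i$ of $N_i$ in the basis $(\sigma_i,h)$. Using $\gamma_i=n\sigma_i-c_i h$ and $\tau_i=-k_i\sigma_i+l_i h$ one gets $\mu_i=-(k_i p_i+n p_i')\sigma_i+(l_i p_i+c_i p_i')h=-\alpha_i\sigma_i+\beta_i h$, so gluing in $N_i$ amounts to gluing in a Seifert-fibered solid torus with invariant $(\alpha_i,\beta_i)$ when $\alpha_i>0$; when $\alpha_i<0$ the core $B_i$ is still a genuine multiple fiber but carries the reversed fiber orientation, hence the normalized invariant $(-\alpha_i,-\beta_i)$; and when $\alpha_i=0$ the relation $\mu_i=\beta_i h$ forces $|\beta_i|=1$, so one may take $\beta_i=1$ and $B_i$ is a component of the fixed-point set of the circle action. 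These are exactly the three cases in the statement, and assembling the complement with the $N_i$'s yields $M=M(g_0;(1,-b),\{(n_j,c_j)\},\{(\alpha_i,\beta_i)\})$. For the Euler number I would invoke Neumann's formula $e=-\bigl(b_0+\sum\beta/\alpha\bigr)$ with $b_0=-b$: since $c_i\alpha_i-n\beta_i=p_i(c_i k_i-n l_i)=-p_i$, the contributions over the $z_j$ cancel against the matching terms in $b$, leaving $e(M)=\sum_i\bigl(\frac{c_i}{n}-\frac{\beta_i}{\alpha_i}\bigr)=-\frac1n\sum_i\frac{p_i}{\alpha_i}$, which is valid whenever every $\alpha_i\neq 0$.

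I expect the main obstacle to be bookkeeping rather than any single hard step: making all the orientation and normalization conventions mutually consistent (the ``positively oriented'' basis $(\gamma_i,\tau_i)$, the meridian and longitude conventions on $T_i$, and Neumann's sign conventions for the generalized Seifert invariants and for the Euler number), and, in particular, handling the two degenerate cases $\alpha_i\le 0$ cleanly. The other point that needs genuine care is the claim that $M_\phi(\tilde S)$ contributes precisely the term $(1,-b)$, i.e.\ has vanishing Euler number; this is where Lemma~3.1 together with the local rotation number analysis is really being used.
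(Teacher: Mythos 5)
Your proposal is correct and follows essentially the same route as the paper's own proof: both pass to the closed mapping torus $M_\phi(\tilde S)$ (which the paper calls $Y=\tilde S\times\s^1/\Z_n$), observe it has Euler number zero and read off the Seifert data $(g_0;(1,-b),\{(n_j,c_j)\},\{(n,c_i)\})$ from Lemma 3.1, identify the fiber class $h=k_i\gamma_i+n\tau_i$ on the boundary torus, and then express the meridian $\mu_i$ of $N_i$ in a fiber/section basis to extract $(\alpha_i,\beta_i)$, finishing with the same Euler number computation. The only difference is cosmetic: you compute by change of basis from $(\gamma_i,\tau_i)$ to $(\sigma_i,h)$ while the paper computes the intersection numbers $H_i\cdot\mu_i$ and $Q_i\cdot\mu_i$ directly, and your $\sigma_i$ is $-Q_i$ in the paper's notation (correspondingly your meridian $\gamma_i$ of $V_i$ is the paper's $-M_i$), but the conventions are internally consistent and the results agree.
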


\begin{proof}
We consider the Seifert manifold $Y$ over the $2$-orbifold $S_0$, where 
$$
Y:=\tilde{S}\times \s^1/(x,t)\sim (\phi(x), \exp(-2\pi i/n) t).
$$
Since the Euler number $e(Y)=0$, it follows easily that $Y$ is the Seifert manifold 
$$
Y=M(g_0; (1,-b), \{(n_j,c_j)\}, \{(n,c_i)\}),
$$
where $(n_j,c_j)$, $(n,c_i)$ is the normalized Seifert invariant of the fiber over $z_j,z_i\in S_0$ respectively. Moreover, the choice of the Seifert invariants $(1,-b), \{(n_j,c_j)\}, \{(n,c_i)\}$ corresponds to a section $R$ of the Seifert fibration on $Y$ defined in the complement of a regular neighborhood of the corresponding fibers (cf. \cite{N}). Adapting the notations in \cite{N}, we let $Q_i$ be the component of $-\partial R$ on the boundary of a regular neighborhood of the singular fiber over $z_i\in S_0$, and let $H_i$ be the fiber class on the boundary. With this understood, we observe that the meridian $M_i$ is given by $-\gamma_i$, and $\tau_i$ is a longitude which we will fix to be our choice of $L_i$. Furthermore, we observe that on $T_i$, fixing an $x_0\in (\partial \Sigma)_i=\R/\Z$, a fiber of the Seifert fibration of $Y$ is given by 
$$
\{(x_0+\frac{m}{n}, t)| m=0,1,\cdots, n-1, 0\leq t\leq 1\}/(x,1)\sim (x+\frac{k_i}{n},0).
$$
It follows easily that $H_i=k_i\gamma_i+n\tau_i=-k_i M_i+n L_i$. On the other hand, the Seifert invariant (of the Seifert fibration of $Y$) at the singular fiber over $z_i\in S_0$ is $(n,c_i)$, and with
the relation $nl_i-k_ic_i=1$, it follows easily that $Q_i=l_i M_i-c_i L_i=-l_i\gamma_i-c_i\tau_i$.

To determine the Seifert invariants of the generalized Seifert fibration on $M$, we recall that the 
meridian of the solid torus $N_i$ is $\mu_i=-p^\prime_i\gamma_i+p_i\tau_i$. Moreover, 
we will use the orientation of $T_i$, so that $H_i\cdot Q_i=L_i\cdot M_i=\tau_i\cdot (-\gamma_i)=1$.
It follows that
$$
H_i\cdot\mu_i=k_i\gamma_i\cdot\mu_i+ n\tau_i\cdot\mu_i=k_ip_i+np^\prime_i=\alpha_i,
$$
and
$$
Q_i\cdot \mu_i=-l_i\gamma_i\cdot\mu_i-c_i\tau_i\cdot\mu_i=-l_ip_i-c_i p^\prime_i=-\beta_i.
$$
This proves immediately that when $\alpha_i>0$, the binding component $B_i$ is a regular 
or singular fiber of the Seifert fibration on $M$ with Seifert invariant $(\alpha_i,\beta_i)$, and when
$\alpha_i<0$, $-B_i$ is a fiber with Seifert invariant $(-\alpha_i,-\beta_i)$. Furthermore, if
$\alpha_i=0$, one must have $n=p_i$, $k_i=-p^\prime_i$, so that $\beta_i=l_ip_i+c_ip^\prime_i=
l_in-c_ik_i=1$. In this case, $H_i=\mu_i$, from which it follows easily that $B_i$ is fixed under the
$\s^1$-action on $M$. Finally, assuming $\alpha_i\neq 0$ for each $i$, the Euler number of $M$ is given by 
$$
e(M)= -(-b+\sum_j \frac{c_j}{n_j}+\sum_i \frac{\beta_i}{\alpha_i})=-\sum_i (\frac{\beta_i}{\alpha_i}-
\frac{c_i}{n})=-\frac{1}{n} \sum_i \frac{p_i}{\alpha_i}.
$$
\end{proof}

By reverse engineering, we obtain existence results for rational open books compatible with a
given generalized Seifert fibration. If we require the binding of the open book be connected, there is a
mild condition one needs to impose. 

\begin{theorem}
Let $M=M(g_0; (\alpha,\beta), \{(\alpha_j,\beta_j)\})$ be a generalized Seifert fibration, where for each $j$, $0<\beta_j<\alpha_j$ {\em(}here $\alpha=1$ or $(\alpha,\beta)=(0,1)$ is allowed, but we assume 
$\alpha\geq 0${\em)}, and $e(M)\neq 0$ when $\alpha\neq 0$. Let 
$n=lcm(\alpha_j)$, and $m$ be the integer such that $\sum_j \frac{\beta_j}{\alpha_j}=\frac{m}{n}$.
Then there is a compatible rational open book of periodic monodromy $(B,\pi)$ on $M$ such that 
$B$ or $-B$ is the fiber with Seifert invariant $(\alpha,\beta)$ if and only if 
\begin{itemize}
\item [{(*)}] $gcd(n,m)=1$.
\end{itemize}
Moreover, when such a rational open book $(B,\pi)$ exists, the order of its monodromy must equal 
$n=lcm(\alpha_j)$, and $B$ has the same orientation as the fiber if and only if $e(M)<0$. Finally, we note that the multiplicity $p$ at $B$ is given by $p=|n\alpha \cdot e(M)|$.
\end{theorem}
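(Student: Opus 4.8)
\emph{Strategy.} The plan is to prove both directions by ``inverting'' Lemma 3.3: from the prescribed generalized Seifert invariants one reconstructs the combinatorial data of a rational open book of periodic monodromy with connected binding, and conversely Lemma 3.3 pins down exactly which such data can occur. Since the binding $B$ is to be a single fiber, the data beyond the orbifold base $S_0$ is a triple $(k,c,l)$ at the unique binding boundary of the page --- the monodromy translation $k/n$, the orbifold datum $c$ at the corresponding cone point of $S_0$, and $l$ with $nl-kc=1$ --- together with the multiplicity $p>0$ and the integer $p'$ of Lemma 3.3. The key observation is that, once $c,k$ are chosen, the matrix $\left(\begin{smallmatrix}k&n\\l&c\end{smallmatrix}\right)$ is unimodular ($\det=kc-nl=-1$), so the relations $kp+np'=\alpha_1$, $lp+cp'=\beta_1$ of Lemma 3.3 have a unique integer solution $(p,p')$; this is the engine of the argument.

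\emph{Existence.} Assume $(*)$. Let $c\in(0,n)$ be the residue with $c\equiv -m\pmod{n}$; since $gcd(n,c)=gcd(n,m)$, condition $(*)$ is exactly what makes $c$ coprime to $n$. Let $k\in(0,n)$ solve $kc\equiv -1\pmod{n}$, and put $l:=(kc+1)/n$, $b:=(m+c)/n$, both integers. I would then apply Lemma 3.1 to the genus-$g_0$ orbifold $S_0$ with cone points of orders $n$ (for $B$) and $\alpha_j$ and cone data $c$, $\beta_j$: condition $(\dag)$ holds since $gcd(n,c)=1$, $gcd(\alpha_j,\beta_j)=1$, and $\tfrac{c}{n}+\sum_j\tfrac{\beta_j}{\alpha_j}=\tfrac{m+c}{n}=b\in\Z$. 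This produces a closed surface $\tilde S$ with a periodic $\phi$ of order $n=lcm(\{\alpha_j\}\cup\{n\})=lcm(\alpha_j)$ and $\tilde S/\langle\phi\rangle=S_0$; the cone point for $B$ is totally ramified, hence lifts to a single point, and removing a $\phi$-invariant disc about it gives a page $\Sigma$ with one boundary circle on which $\phi$ acts by $x\mapsto x+k/n$ (the sign consistent with the convention $k_1c_1\equiv -1$ of Lemma 3.3). Solving the unimodular system with $(\alpha_1,\beta_1):=(\alpha,\beta+b\alpha)$ yields $p=n\beta+m\alpha=-n\alpha\,e(M)$, positive precisely when $e(M)<0$; taking the mapping torus of $\phi|_\Sigma$ and regluing a solid torus with meridian $\mu_1=-p'\gamma_1+p\,\tau_1$ then gives, by Lemma 3.3, a rational open book of periodic monodromy whose Seifert manifold is $M(g_0;(1,-b),\{(\alpha_j,\beta_j)\},(\alpha,\beta+b\alpha))=M(g_0;(\alpha,\beta),\{(\alpha_j,\beta_j)\})$, with binding $B$ the fiber $(\alpha,\beta)$ and $p=|n\alpha\,e(M)|$. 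When $e(M)>0$ one runs the same construction with $(\alpha_1,\beta_1):=(-\alpha,-(b\alpha+\beta))$, getting $p=-(n\beta+m\alpha)>0$ and, by interpretation (i) of Lemma 3.3, $-B$ equal to the fiber --- whence the orientation statement; the degenerate slots $\alpha\in\{0,1\}$ are checked directly (for $\alpha=0$, interpretation (ii) gives $p=n$), and the hypothesis $e(M)\neq 0$ when $\alpha\neq 0$ is exactly what excludes the illegal value $p=0$.

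\emph{Necessity.} Conversely, suppose a compatible rational open book of periodic monodromy of order $N$ with connected binding $B$ (with $B$ or $-B$ the fiber $(\alpha,\beta)$) is given. By Lemma 3.3 the multiple fibers other than $B$ are exactly those over the cone points $z_j$ of $\tilde S/\langle\phi\rangle$ lying in $\Sigma/\langle\phi\rangle$, so $\{(\alpha_j,\beta_j)\}=\{(n_j,c_j)\}$ up to reordering; hence each $\alpha_j\mid N$ and $n=lcm(\alpha_j)\mid N$. The crucial point: the cone point $z_1$ coming from $B$ has order $N$, and --- since $B$ is connected there is just one such point --- the integrality of $b=\tfrac{c_1}{N}+\sum_j\tfrac{\beta_j}{\alpha_j}$ forces $c_1\equiv -mN/n\pmod{N}$; writing $N=nd$ gives $gcd(N,c_1)=gcd(nd,md)=d\cdot gcd(n,m)$. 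As $\tilde S$ is a genuine non-singular surface, Lemma 3.3 forces $gcd(N,c_1)=1$, so $d=1$ (i.e.\ $N=n$) and $gcd(n,m)=1$ (i.e.\ $(*)$) simultaneously. The orientation claim and $p=|n\alpha\,e(M)|$ then follow from $e(M)=-\tfrac{1}{n}\tfrac{p}{\alpha_1}$ (Lemma 3.3) with $\alpha_1=\pm\alpha$ and interpretation (i).

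\emph{Main obstacle.} The delicate step is the necessity of $N=n$ and $(*)$: it rests on correctly extracting $c_1\equiv -mN/n\pmod{N}$ from the integrality of $b$, on $gcd(nd,md)=d\cdot gcd(n,m)$, and --- essentially --- on genuinely using that $B$ is connected, which is what yields a single totally ramified cone point of order exactly $N$. Secondary care is needed for the orientation/sign bookkeeping when $e(M)>0$ (correctly merging the $(1,-b)$ summand and invoking interpretation (i)), and for checking that Lemma 3.1 returns a cover realizing the prescribed genus $g_0$ and cone data, not merely some orbifold cover of $S_0$.
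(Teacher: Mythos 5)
Your proof is correct and follows essentially the same route as the paper: reverse-engineer a branched cover via Lemma 3.1 from the data $(n, c, k, l, b)$ extracted from the given Seifert invariants, and invert the linear relations of Lemma 3.2 to produce $(p, p')$; for necessity, use integrality of $b$ together with $\gcd(N,c_1)=1$ to pin down $N=n$ and $\gcd(n,m)=1$. Your packaging of the necessity step through $\gcd(nd,md)=d\cdot\gcd(n,m)$, and your explicit sign-flip $(\alpha_1,\beta_1)=(-\alpha,-(\beta+b\alpha))$ when $e(M)>0$, are just slightly more explicit renderings of what the paper does by passing to $p=|n\alpha\cdot e(M)|$.
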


\begin{proof}
First, assume that there is such a rational open book $(B,\pi)$. Then if we continue to use the notations in Lemma 3.2, we have $(\alpha_j,\beta_j)=(n_j,c_j)$ for each $j$, and with $B=B_i$, we have $\alpha=\alpha_i$ and $\beta=\beta_i-b\alpha_i$. Furthermore, we note that the order of the monodromy of $(B,\pi)$ equals the order of isotropy at $z_i\in S_0$, which is denoted by $n_i$. 
With this understood, note that $b=\sum_j \frac{c_j}{n_j}+\frac{c_i}{n_i}\in \Z$ implies that 
$\frac{m}{n}+\frac{c_i}{n_i}\in\Z$. With $gcd(n_i,c_i)=1$, it follows that $n_i|n$. On the other hand, $n=lcm(\alpha_j)=lcm(n_j)$, which must be a divisor of the order of the monodromy $n_i$. Hence $n=n_i$ must be true. It follows easily that $gcd(n,m)=1$. 

Conversely, assume $gcd(n,m)=1$. To construct the rational open book $(B,\pi)$, we first observe that there is an integer $c$ such that $0<c<n$, $gcd(n,c)=1$, such that 
$$
b:=\sum_j \frac{\beta_j}{\alpha_j}+\frac{c}{n}\in\Z.
$$
Let $S_0$ be an oriented $2$-orbifold whose underlying surface has genus $g_0$, with a singular set
$\{z_j,z\}$, where the order of isotropy at $z_j$ is $\alpha_j$, and the order of isotropy at $z$ is $n$.
By Lemma 3.1, there is a closed, oriented surface $\tilde{S}$ together with a periodic diffeomorphism 
$\phi: \tilde{S}\rightarrow \tilde{S}$ of order $n$, such that $\tilde{S}/\langle\phi\rangle=S_0$ as orbifolds. The pre-image of $z\in S_0$ in $\tilde{S}$ is a fixed point of $\phi$; let $\Sigma$ be the complement of a small, $\phi$-invariant disc containing the fixed-point. 

Let $k,l$ be the integers such that $nl-kc=1$, where $0<k<n$. Set $\tilde{\alpha}=\alpha$, 
$\tilde{\beta}=\beta+b\alpha$. Let $p,p^\prime\in\Z$ be the unique solutions to the equations
$$
\tilde{\alpha}=kp+np^\prime,\;\; \tilde{\beta}=lp+cp^\prime.
$$
Then by Lemma 3.2, $M$ is diffeomorphic to the $3$-manifold obtained by gluing a solid torus $N$ to
the mapping torus of $\phi: \Sigma\rightarrow \Sigma$ with the meridian of $N$ being 
$\mu=-p^\prime\gamma+p\tau$. This proves the existence of the compatible open book $(B,\pi)$.
Finally, we note that if $(\alpha,\beta)=(0,1)$, we have $(\tilde{\alpha},\tilde{\beta})=(0,1)$ as well,
and $p=n>0$, $p^\prime=-k$. If $\alpha>0$, we note that
$$
e(M)=-(\sum_j\frac{\beta_j}{\alpha_j}+\frac{\beta}{\alpha})=-(b+\frac{\beta}{\alpha}-\frac{c}{n})=
-(\frac{\tilde{\beta}}{\tilde{\alpha}}-\frac{c}{n})=-\frac{p}{n\alpha},
$$
which shows that $p>0$ if and only if $e(M)<0$, and $p=|n\alpha \cdot e(M)|$.

\end{proof}

More generally, we have the following theorem by the same argument. 

\begin{theorem}
Let $M=M(g_0; \{(\alpha_i,\beta_i)\},\{(\alpha_j,\beta_j)\})$, where $0<\beta_j<\alpha_j$ for each $j$, and $\alpha_i\geq 0$ for each $i$. Then for any given common multiple $n$ of $\{\alpha_j\}$, any given integers $c_i$ satisfying the conditions $0<c_i<n$ and $gcd(n,c_i)=1$ for each $i$, and 
$\sum_j\frac{\beta_j}{\alpha_j}+\sum_i \frac{c_i}{n}\in\Z$, and for any given integers $b_i$ such that 
$\sum_i b_i=\sum_j\frac{\beta_j}{\alpha_j}+\sum_i \frac{c_i}{n}$, where for each $i$ with 
$\alpha_i\neq 0$, $\frac{\beta_i}{\alpha_i}+b_i-\frac{c_i}{n}\neq 0$, there is a rational open book 
$(B,\pi)$ of periodic monodromy of order $n$ on $M$, such that {\em(}1{\em)} $B=\sqcup_i B_i$, 
where $B_i$ or $-B_i$ is the fiber with Seifert invariant $(\alpha_i,\beta_i)$,  and {\em(}2{\em)} for each $i$ with
$\alpha_i\neq 0$, $B_i$ has the same orientation as the fiber if and only if 
$\frac{\beta_i}{\alpha_i}+b_i-\frac{c_i}{n}>0$.
\end{theorem}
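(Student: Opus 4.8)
The plan is to run the construction from the proof of Theorem 3.4 simultaneously at every binding component, letting the prescribed integers $c_i$ and $b_i$ play the roles of the forced quantities $c$ and $b$ there. First I would assemble the base orbifold. Let $S_0$ be the closed oriented $2$-orbifold of genus $g_0$ whose singular points are $z_j$ of isotropy order $\alpha_j$ (one for each retained Seifert pair $(\alpha_j,\beta_j)$, where $\gcd(\alpha_j,\beta_j)=1$ is part of the normalization) together with one extra singular point $\hat z_i$ of isotropy order $n$ for each desired binding component. Putting $c_j:=\beta_j$, the combined data satisfies condition $(\dagger)$ of Lemma 3.1, using $0<\beta_j<\alpha_j$, $0<c_i<n$, $\gcd(n,c_i)=1$, and the hypothesis $\sum_j\frac{\beta_j}{\alpha_j}+\sum_i\frac{c_i}{n}\in\Z$. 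Since $n$ is a common multiple of the $\alpha_j$ and there is at least one $\hat z_i$, the least common multiple of all the isotropy orders equals $n$, so Lemma 3.1 produces a closed oriented surface $\tilde S$ and a periodic diffeomorphism $\phi$ of order exactly $n$ with $\tilde S/\langle\phi\rangle=S_0$. Each $\hat z_i$ carries the full isotropy order $n$, hence has a single $\phi$-fixed preimage $\tilde z_i$; let $\Sigma$ be $\tilde S$ with small disjoint $\phi$-invariant discs around the $\tilde z_i$ removed, so that $\phi|_\Sigma$ is periodic of order $n$ and acts on $(\partial\Sigma)_i\cong\R/\Z$ by the translation $x\mapsto x+\frac{k_i}{n}$, where $k_i$ is determined by $k_ic_i\equiv-1\pmod n$, $0<k_i<n$ (the remark following Lemma 3.1); set $l_i:=\frac{1+k_ic_i}{n}$, so $nl_i-k_ic_i=1$.

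Next I would prescribe the gluings. For an index with $\alpha_i>0$, set $\epsilon_i:=\mathrm{sign}\bigl(\frac{\beta_i}{\alpha_i}+b_i-\frac{c_i}{n}\bigr)\in\{+1,-1\}$ (nonzero by hypothesis), $\tilde\alpha_i:=\epsilon_i\alpha_i$, $\tilde\beta_i:=\epsilon_i(\beta_i+b_i\alpha_i)$, and let $(p_i,p_i')$ be the unique integer solution of $\tilde\alpha_i=k_ip_i+np_i'$, $\tilde\beta_i=l_ip_i+c_ip_i'$, unique because $\det\begin{pmatrix}k_i&n\\ l_i&c_i\end{pmatrix}=-1$. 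For an index with $\alpha_i=0$ (so $(\alpha_i,\beta_i)=(0,1)$) take instead $p_i:=n$, $p_i':=-k_i$, exactly as in the closing remark of the proof of Theorem 3.4. In either case glue a solid torus $N_i$ to the mapping torus of $\phi|_\Sigma$ along $T_i=(\partial\Sigma)_i\times[0,1]/\!\sim$ with meridian $\mu_i=-p_i'\gamma_i+p_i\tau_i$ in the basis $(\gamma_i,\tau_i)$ of $H_1(T_i)$ introduced in Section 3; this defines a rational open book $(B,\pi)$ with $B=\sqcup_iB_i$, $B_i$ the core of $N_i$ of multiplicity $p_i$, and monodromy $\phi|_\Sigma$ of order $n$. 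The algebraic identity $n\tilde\beta_i-c_i\tilde\alpha_i=(nl_i-c_ik_i)p_i=p_i$ then gives $p_i=n\alpha_i\bigl|\frac{\beta_i}{\alpha_i}+b_i-\frac{c_i}{n}\bigr|>0$ when $\alpha_i>0$, while $p_i=n>0$ when $\alpha_i=0$, so all the multiplicities are positive as required.

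Finally I would identify the manifold by applying Lemma 3.2 to $(B,\pi)$: its underlying Seifert manifold is $M(g_0;(1,-b),\{(n_j,c_j)\},\{(\tilde\alpha_i,\tilde\beta_i)\})$ with $b=\sum_j\frac{c_j}{n_j}+\sum_i\frac{c_i}{n}=\sum_j\frac{\beta_j}{\alpha_j}+\sum_i\frac{c_i}{n}=\sum_i b_i$. Since $(1,-b)$ is an integral Seifert invariant and $b=\sum_i b_i$, I redistribute it one summand $(1,-b_i)$ at a time: for $\alpha_i>0$, absorbing $(1,-b_i)$ into the fiber $(\tilde\alpha_i,\tilde\beta_i)$ yields $(\tilde\alpha_i,\tilde\beta_i-b_i\tilde\alpha_i)=(\epsilon_i\alpha_i,\epsilon_i\beta_i)$, which under the orientation conventions of Lemma 3.2(i) is precisely the fiber with Seifert invariant $(\alpha_i,\beta_i)$, with $B_i$ having the same orientation as that fiber exactly when $\epsilon_i=+1$, i.e.\ when $\frac{\beta_i}{\alpha_i}+b_i-\frac{c_i}{n}>0$; for $\alpha_i=0$, $B_i$ is a fixed-point component by Lemma 3.2(ii) (using $nl_i-k_ic_i=1$), and the corresponding integral contribution is absorbed as in the closing remark of the proof of Theorem 3.4 without affecting the diffeomorphism type. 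Thus the Seifert manifold is $M(g_0;\{(\alpha_j,\beta_j)\},\{(\alpha_i,\beta_i)\})=M$, and conclusions (1) and (2) follow. Since the theorem is obtained "by the same argument" as Theorem 3.4, the only real obstacle is bookkeeping: tracking the sign and orientation conventions in clauses (i) and (ii) of Lemma 3.2 correctly through the redistribution of $(1,-b)$, and verifying $(\dagger)$ together with the positivity of the $p_i$ along the way.
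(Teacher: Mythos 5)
Your proof is correct and carries out exactly what the paper intends when it dispatches Theorem 3.4 with ``by the same argument'' (i.e.\ the argument for Theorem 3.3): apply Lemma 3.1 with $c_j:=\beta_j$ and the given $c_i$, build the mapping torus, solve the linear system $\tilde\alpha_i=k_ip_i+np_i'$, $\tilde\beta_i=l_ip_i+c_ip_i'$ for each binding, glue in solid tori with meridian $\mu_i=-p_i'\gamma_i+p_i\tau_i$, and identify the result via Lemma 3.2; your sign $\epsilon_i$ makes explicit the orientation and positivity bookkeeping that the Theorem~3.3 proof leaves to a final remark, and the identity $p_i=n\tilde\beta_i-c_i\tilde\alpha_i$ correctly recovers the multiplicity formula of Remark~3.5. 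One small slip: the citation to ``the closing remark of the proof of Theorem 3.4'' for the $\alpha_i=0$ case is circular (you mean Theorem~3.3), and that remark does not actually dispose of the residual $(1,-\sum_{\alpha_i=0}b_i)$ factor left after your redistribution; what makes it harmless is that a $(0,1)$-fiber (the filling $\mu_{i_0}=H_{i_0}$) kills the regular fiber class, so the global Euler obstruction term $(1,b_0)$ ceases to be an invariant of the underlying manifold and can simply be dropped --- it would be cleaner to say this directly.
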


\begin{remark}
We point out that for the rational open book $(B,\pi)$ in Theorem 3.4, the multiplicity $p_i>0$ at the binding component $B_i$ is given by the expression 
$$
p_i=|n\beta_i+n\alpha_i b_i-\alpha_i c_i|,
$$
which includes the case of $\alpha_i=0$, where $p_i=n$. (Note that $\alpha_i\geq 0$ in Theorem 3.4.)
\end{remark}

\section{Relative handlebodies of symplectic $2$-handles on a Seifert manifold}
This section is devoted to a study of relative handlebodies of symplectic $2$-handles built on a
(generalized) Seifert manifold equipped with a certain $\s^1$-invariant contact structure. The construction of such relative handlebodies is based on the technique introduced by Gay in \cite{G}; 
in particular, the two boundary components of the relative handlebody are both concave contact boundaries. It turns out that the other boundary component is also a (generalized) Seifert manifold equipped with a certain $\s^1$-invariant contact structure. Among the results of this section, we give a recipe to determine what are the possible values of the Seifert invariants and the possible $\s^1$-invariant contact structures on the other boundary component which can be realized (see Remark 4.5). We also give a recipe to compute the framings of the symplectic $2$-handles using the relevant Seifert invariants from the two boundary components (cf. Lemma 4.6). As a consequence,
the relative handlebody is completely determined by its two boundary components; in particular, it does not depend on the rational open book used in its construction (see Remark 4.7). 

We begin by describing the $\s^1$-invariant contact structures on the (generalized) Seifert manifolds. 
Let $(B,\pi)$ be a rational open book on $M$, where the pages of $(B,\pi)$ are denoted by $\Sigma$.
Recall that a contact structure $\xi$ on $M$ is said to be supported by $(B,\pi)$ if there exists a contact form $\alpha$ of $\xi$ such that (i) $d\alpha>0$ on $\Sigma$, (ii) $\alpha>0$ on $B$. It is known that such a contact structure $\xi$ always exists and is uniquely determined up to contact isotopy (cf. \cite{BEV}). The following theorem is concerned with the special case where $(B,\pi)$ is of a periodic monodromy.

\begin{theorem}
Let $(B,\pi)$ be a rational open book on $M$ which has periodic monodromy. With $M$ given the natural $\s^1$-action from $(B,\pi)$, there is an $\s^1$-invariant contact structure $\xi$ on $M$ which
is supported by $(B,\pi)$. Furthermore, $\xi$ is a transverse contact structure if and only if all the binding components $B_i$ which are not a fixed component of the $\s^1$-action have the same orientation as the orbit of the $\s^1$-action. Assuming the $\s^1$-action on $M$ is fixed-point free and $\xi$ is non-transverse, then there is a one to one correspondence between the components of the dividing set of $\xi$ and the binding components $B_i$ which have the opposite orientation, such that the component of the dividing set is the boundary of a disk neighborhood of the point over which the corresponding binding component lies. 
\end{theorem}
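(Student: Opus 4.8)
The plan is to write down explicitly, adapted to the decomposition $M=(M\setminus\sqcup_i N_i)\cup(\sqcup_i N_i)$ of Section 3, an $\s^1$-invariant $1$-form $\alpha$ which is a contact form supported by $(B,\pi)$, and then to deduce every assertion from the locus where $\alpha$ vanishes on the vector field $X$ generating the $\s^1$-action. First I would build the form on the mapping torus part. Recall that $M\setminus\sqcup_i N_i$ is the mapping torus $\Sigma\times[0,1]/(x,1)\sim(\phi(x),0)$ of the periodic monodromy $\phi$, with $\pi$ the projection to the $[0,1]=\s^1$ factor and $X=\partial_t$. I would pick a $1$-form $\lambda$ on $\Sigma$ with $d\lambda>0$ which near each boundary circle $(\partial\Sigma)_i$ is the standard translation-invariant model, hence $\phi$-invariant there since $\phi$ acts by a translation near $\partial\Sigma$; replacing $\lambda$ by the group average $\bar\lambda:=\frac1n\sum_{k=0}^{n-1}(\phi^k)^\ast\lambda$, where $n$ is the order of $\phi$, produces a $\phi$-invariant $1$-form, still with $d\bar\lambda>0$ because $\phi$ preserves the orientation of $\Sigma$, and still equal to the standard model near $\partial\Sigma$. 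Then $\alpha_{\mathrm{out}}:=\bar\lambda+C\,dt$ descends to the mapping torus for any constant $C$, is $\s^1$-invariant, satisfies $\alpha_{\mathrm{out}}\wedge d\alpha_{\mathrm{out}}=C\,dt\wedge d\bar\lambda>0$ and $d\alpha_{\mathrm{out}}>0$ on each page, for any $C>0$, and has $\alpha_{\mathrm{out}}(X)=C>0$ throughout this part.

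Next I would install a local model on each solid torus $N_i$. Using the coordinates of Section 3 --- with $B_i$ the core, the page winding $p_i$ times around $B_i$, and the appropriate rotation data when $B_i$ is a singular fiber --- I would take $\alpha_i$ of the usual Lutz-twist / transverse-surgery shape $g_1(r)\,d\psi_1+g_2(r)\,d\psi_2$ in suitable angular coordinates $(\psi_1,\psi_2)$, choosing the profile functions $g_1,g_2$ so that: (i) $\alpha_i$ agrees with $\alpha_{\mathrm{out}}$ up to a positive factor near $\partial N_i$; (ii) $d\alpha_i>0$ on the portion of a page inside $N_i$; and (iii) $\alpha_i>0$ on $B_i$ with its prescribed orientation. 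If $B_i$ is a fixed component, the model $dz+r^2d\theta$ of Section 2 already does this. If $B_i$ is not fixed and is oriented \emph{compatibly} with the orbit, the $g_j$ can be chosen with $\alpha_i(X)>0$ on all of $N_i$; if $B_i$ is oriented \emph{oppositely}, then (iii) forces $\alpha_i(X)<0$ near $B_i$, which together with $\alpha_i(X)>0$ near $\partial N_i$ from (i) forces $\alpha_i(X)$ --- a radial function on $N_i$ --- to vanish, and I would arrange the $g_j$ so that it does so transversally along exactly one torus $\{r=r_i\}$. Gluing $\alpha_{\mathrm{out}}$ to the $\alpha_i$ gives a global $\s^1$-invariant contact form $\alpha$, and by (i)--(iii) the contact structure $\xi:=\ker\alpha$ is supported by $(B,\pi)$; this proves the existence statement.

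To obtain the transversality criterion I would use the presentation $\alpha=f\alpha_0+\beta$ of Section 2 on the complement of the fixed link, for which $\alpha(X)=f$, so $\xi$ is transverse exactly when $f$ has no zero away from the fixed set. By the construction, $\alpha(X)=C>0$ on the mapping torus part and on each $N_i$ with $B_i$ fixed or compatibly oriented, while $\alpha(X)$ has a zero in $N_i$ precisely when $B_i$ is non-fixed and oppositely oriented; hence $\xi$ is transverse if and only if every non-fixed binding component has the same orientation as the orbit, which is the first assertion. For the dividing set, assume the $\s^1$-action is fixed-point free and $\xi$ is non-transverse, so the set of oppositely oriented $B_i$ is nonempty and consists of honest (regular or singular) fibers. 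For each such $B_i$ the torus $\{r=r_i\}$ on which $f$ vanishes projects, under the Seifert fibration, to a circle $\Gamma_i$ in the base orbifold which is the boundary of a disk neighborhood of the point over which $B_i$ lies; over $\Gamma_i$ the regular fiber is Legendrian because $X\in\ker\alpha$ there. Since $f$ is nonzero everywhere else, the dividing set of $\xi$ is exactly $\sqcup_i\Gamma_i$ over the oppositely oriented binding components, which is the asserted one-to-one correspondence; Lemma 2.2 then identifies $\xi$ up to an $\s^1$-equivariant contactomorphism through this dividing set.

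The part I expect to be the main obstacle is the second step: producing on each $N_i$ a single $\s^1$-invariant profile that simultaneously matches $\alpha_{\mathrm{out}}$ along $\partial N_i$, is positive on the page and on the oriented binding, and --- in the oppositely oriented case --- has $\alpha(X)$ vanish on exactly one torus, all while correctly incorporating the rational open book's multiplicity $p_i$ and, when $B_i$ is a singular fiber, its Seifert data; this is precisely where the ``rational'' and ``periodic'' features of $(B,\pi)$ interact, and where the identifications of Section 3 between the open-book data and the generalized Seifert invariants have to be invoked with care. Everything else --- the averaging trick, the verification of the contact and support conditions, and the bookkeeping of where $\alpha(X)$ vanishes --- is routine once that local model is in place.
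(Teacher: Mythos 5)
Your proposal is correct and follows essentially the same route as the paper's proof: build a $\phi$-invariant Liouville primitive on the page (the paper asserts its existence with prescribed boundary behavior; your group-averaging trick is a clean way to produce it), extend by $C\,dt$ to the mapping torus, plug in a rotationally symmetric profile $g_1(r)\,d\psi_1+g_2(r)\,d\psi_2$ on each solid torus $N_i$, and read the transversality and dividing-set assertions off the zero locus of $\alpha(X)$, which is a radial function on each $N_i$. The one place you defer detail — constructing the profile so that $\alpha(X)$ vanishes along exactly one torus in the oppositely-oriented case while meeting the matching, page-positivity and binding-positivity constraints — is exactly the point where the paper spends its effort: it picks a specific longitude so that $q<0$ and $kq+nq'$ has the right sign, then shows the slope $g/f$ is strictly increasing, which yields the unique zero torus in the opposite case, no zero torus in the fixed case ($kp+np'=0$), and no zero at all in the same-orientation case. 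Your outline of that step is accurate; filling it in would reproduce the paper's argument.
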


We remark that a special case of Theorem 4.1 has appeared in \cite{CH}, 
where $(B,\pi)$ is an integral open book and all the binding components $B_i$ have the fiber
orientation. 

On the other hand, we observe the following immediate corollary of Lemma 2.2.

\begin{corollary}
Let $(B,\pi)$, $(\tilde{B},\tilde{\pi})$ be two rational open books with periodic monodromy, which give rise to the same Seifert fibration structure on $M$. If the binding components of $(B,\pi)$, 
$(\tilde{B},\tilde{\pi})$ which have the opposite orientation of the fibers of the Seifert fibration coincide,
then the contact structures supported by $(B,\pi)$, $(\tilde{B},\tilde{\pi})$ are contactomorphic. 
\end{corollary}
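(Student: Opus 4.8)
The plan is to reduce everything to Lemma 2.2. First I would use Theorem 4.1 to produce, for each of the two rational open books, an $\s^1$-invariant contact structure supported by it: write $\xi$ for one supported by $(B,\pi)$ and $\tilde\xi$ for one supported by $(\tilde B,\tilde\pi)$, both with respect to the common $\s^1$-action underlying the given Seifert fibration $\pi\colon M\to S$. Since a contact structure supported by a prescribed rational open book is unique up to contact isotopy (cf.\ \cite{BEV}), it is enough to prove that $\xi$ and $\tilde\xi$ are contactomorphic. Throughout I assume, as the reference to a genuine Seifert fibration in the statement implicitly does, that the $\s^1$-action is fixed-point free, so that Lemma 2.2 and the dividing-set part of Theorem 4.1 apply directly.

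Next I would compare dividing sets. By Theorem 4.1, $\xi$ is transverse exactly when $(B,\pi)$ has no binding component of the opposite orientation, and likewise for $\tilde\xi$ and $(\tilde B,\tilde\pi)$; since by hypothesis these two collections of opposite-orientation binding components coincide, $\xi$ and $\tilde\xi$ are simultaneously transverse or simultaneously non-transverse. In the non-transverse case, Theorem 4.1 identifies the dividing set $\Gamma$ of $\xi$ with a disjoint union of circles indexed by the points $p_i\in S$ over which the opposite-orientation binding components of $(B,\pi)$ lie, the circle for $p_i$ bounding a disk neighborhood of $p_i$; the same holds for the dividing set $\tilde\Gamma$ of $\tilde\xi$, with the same finite set of points $\{p_i\}$, because the opposite-orientation binding components coincide and each such component is a single fiber lying over a single point of $S$. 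Hence $\Gamma$ and $\tilde\Gamma$ are isotopic in $S$ through an isotopy supported in small neighborhoods of the $p_i$; its time-one map is an orientation-preserving diffeomorphism of the orbifold $\phi\colon S\to S$ which preserves the normalized Seifert invariants at all singular points and satisfies $\phi(\Gamma)=\tilde\Gamma$. When $\xi$ and $\tilde\xi$ are both transverse the dividing sets are empty and one takes $\phi=\mathrm{id}$.

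Finally, applying Lemma 2.2 to $\xi$, $\tilde\xi$ with this $\phi$ produces an $\s^1$-equivariant diffeomorphism $\psi\colon M\to M$ with $\psi_\ast\xi=\tilde\xi$, so $\xi$ and $\tilde\xi$ are contactomorphic, which is the assertion. The one point I expect to need genuine care --- the main (if minor) obstacle --- is that Lemma 2.2 a priori determines the contact structure only up to a reversal of co-orientation, whereas here no reversal occurs: $\xi$ and $\tilde\xi$ are co-oriented by their supporting open books compatibly with the orbit orientation of the common $\s^1$-action, so that in the normal form $\alpha=f\alpha_0+\beta$ the sign of $f$ on each component of $S\setminus\Gamma$ is governed by which binding components carry the fiber orientation and which carry the opposite one --- data shared by $(B,\pi)$ and $(\tilde B,\tilde\pi)$ --- and hence the orientation-reversal step in the proof of Lemma 2.2 is not needed.
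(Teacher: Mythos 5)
Your argument is correct and follows the same route the paper intends: Theorem 4.1 identifies the supported contact structures with $\s^1$-invariant ones whose dividing sets are circles around exactly the points over which the opposite-orientation binding components lie, and since those collections of binding components coincide, Lemma 2.2 produces an $\s^1$-equivariant contactomorphism. The extra care you take with co-orientations (observing that the sign pattern of $f$ on $S\setminus\Gamma$ is pinned by which bindings carry the opposite orientation, so the reversal step in the proof of Lemma 2.2 is never invoked) is valid and a nice refinement, though the corollary as stated only asks for a contactomorphism of unoriented contact structures, for which Lemma 2.2 suffices as is.
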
 

\vspace{2mm}

{\bf Proof of Theorem 4.1:}

\vspace{2mm}

We shall construct a specific $\s^1$-invariant contact form $\alpha$ obeying the conditions
(i) $d\alpha>0$ on each page $\Sigma$, (ii) $\alpha>0$ on the binding $B$. Theorem 4.1 follows immediately 
from such a contact form. On the other hand, Gay's construction in \cite{G} requires the existence of a certain contact form such that, with respect to some local coordinate system near each binding component $B_i$, both the contact form and the fibration $\pi$ from the rational open book are well-behaved in the sense of Definition 4.5 in \cite{G}. The $\s^1$-invariant contact form $\alpha$ we construct in the proof fulfills this requirement (see Remark 4.3 for more details). 

We will continue to use the notations introduced in the proof of Lemma 3.2 regarding the rational open book $(B,\pi)$, except for the notation $\phi$ of the monodromy map being replaced by $h$. Furthermore, for simplicity,
we shall assume that there is only one binding component without loss of generality. Hence in all the notations we shall drop the index $i$.

We begin by identifying $M$ with the mapping torus $\Sigma\times [0,2\pi]/(x,2\pi)\sim (h(x),0)$ with a
solid torus $N$ glued to the boundary. We shall first define $\alpha$ on the mapping torus, and then extend it to the solid torus $N$. 

Let $t^\prime$ denote the coordinate on the interval $[0,2\pi]$, 
and let a neighborhood of $\partial\Sigma$ in 
$\Sigma$ be given with a coordinate system $(s,\phi^\prime)$ where $s\in (1-\epsilon,1]$ and 
$\phi^\prime\in\R/2\pi\Z$. With this understood, we pick a smooth $1$-form $\beta$ on $\Sigma$
such that (1) $h^\ast \beta=\beta$, (2) $d\beta>0$ on $\Sigma$, and (3) $\beta=sd\phi^\prime$ in
a neighborhood of $\partial\Sigma$. Then we set
$$
\alpha:=Cdt^\prime+\beta \mbox{ (for some $0<C\in\R$ to be determined) }
$$
which descends to a $1$-form on the mapping torus $\Sigma\times [0,2\pi]/(x,2\pi)\sim (h(x),0)$, because 
of the condition $h^\ast \beta=\beta$. Furthermore, it is clear that $\alpha$ is a contact form, and
is invariant under the translations in $t^\prime$, thus is $\s^1$-invariant. 

Before we extend $\alpha$ to the solid torus $N$, we shall make some coordinate change near the
boundary of the mapping torus. First, observe that the mapping torus is defined by taking quotient 
of $\Sigma\times \R$ under the $\Z$-action generated by $\bar{h}(x,t^\prime)=(h(x),t^\prime-2\pi)$,
where on the boundary $\partial\Sigma\times \R$, it is given by 
$$
\bar{h}(\phi^\prime,t^\prime)=(\phi^\prime+\frac{2\pi k}{n},t^\prime-2\pi). 
$$
With this understood, we shall make a change of coordinates by 
$\phi=\phi^\prime+\frac{k}{n}t^\prime$, $t=t^\prime$. It is easy to see that in the new coordinates 
$(\phi,t)$, the $\Z$-action on the boundary is generated by 
$$
\tilde{h}(\phi,t)=(\phi,t-2\pi).
$$

The advantage of $(\phi,t)$ over the old coordinates is that the basis $(\gamma,\tau)$ is given by
$(\frac{\partial}{\partial\phi}, \frac{\partial}{\partial t})$. With this understood, we note that the vector field $X$ generating the $\s^1$-action is given near the boundary by
$$
X=n\frac{\partial}{\partial t^\prime}=n\frac{\partial}{\partial t}+k\frac{\partial}{\partial\phi},
$$
and the contact form $\alpha$ is given by the expression 
$$
\alpha=(C-s\frac{k}{n})dt+sd\phi.
$$

Now we pick a longitude-meridian pair on $\partial N=-T$, denoted by $(\lambda,\mu)$. Then 
$$
\gamma=p\lambda+q\mu,\;\, \tau=p^\prime\lambda+q^\prime\mu, 
$$
where $pq^\prime-qp^\prime=1$. Note that $(p,p^\prime)$ is determined by the rational open book 
$(B,\pi)$, as $\mu=-p^\prime\gamma+p\tau$, but $(q,q^\prime)$ depends on the choice of the longitude 
$\lambda$. Furthermore, if we replace $\lambda$ by $\lambda+m\mu$, the pair $(q,q^\prime)$ 
will be transformed to $(q-mp,q^\prime-mp^\prime)$. 

The choice of a longitude $\lambda$ determines an identification of $N$ with the product 
$\s^1\times D^2$. We shall let $z$ be the coordinate on $\s^1=\R/2\pi\Z$, and let $(r,\theta)$ be the polar coordinates on $D^2$, where $r\leq 1$. Note that the basis $(\lambda,\mu)$ is given by 
$(\frac{\partial}{\partial z}, \frac{\partial}{\partial \theta})$. With this understood, the relation between
$(\lambda,\mu)$ and $(\gamma,\tau)$ implies that $(\phi,t)$ and $(z,\theta)$ are related by the
following equations
$$
\phi=q^\prime z-p^\prime \theta,\;\; t=-qz+p\theta.
$$

{\bf Case 1:} $kp+np^\prime \leq 0$ (i.e., $B$ is either a fixed component of the $\s^1$-action or $B$ has the opposite orientation of the $\s^1$-orbits). In this case, we claim that one can choose a longitude $\lambda$ such that the following conditions are satisfied:
$$
q<0 \mbox{ and } kq+nq^\prime>0.
$$
To see this, recall that if we change a given longitude $\lambda$ to $\lambda+m\mu$, the pair
$(q,q^\prime)$ is changed to $(q-mp,q^\prime-mp^\prime)$. With this understood, it is easy to see that $kq+nq^\prime$ is changed to $kq+nq^\prime-m(kp+np^\prime)$. It follows easily that when 
$kp+np^\prime<0$, we can arrange so that $q-mp<0$ and $kq+nq^\prime-m(kp+np^\prime)>0$
by taking $m>0$ sufficiently large. If $kp+np^\prime=0$, we have $p=n$ and $p^\prime=-k$. In this case, 
$$
kq+nq^\prime-m(kp+np^\prime)=-p^\prime q+pq^\prime=1,
$$
which is greater than $0$ for any choice of $m$. Hence the claim.

With the preceding understood, we identify the coordinates $s$ and $r$ by the equation $s=2-r$.
Then the contact form $\alpha$ on $N=\s^1\times D^2$, which is defined near the boundary $r=1$,
is given in the coordinates $(z,r,\theta)$ by the following expression
$$
\alpha=(-qC+(2-r)\frac{kq+nq^\prime}{n})dz+(Cp+(r-2)\frac{kp+np^\prime}{n})d\theta.
$$
We also observe that in the coordinates $(z,r,\theta)$, the vector field $X$ which generates the
$\s^1$-action is given by the following expression
$$
X=(kp+np^\prime)\frac{\partial}{\partial z}+(kq+nq^\prime)\frac{\partial}{\partial \theta}.
$$
Finally, the tangent planes of the pages of $(B,\pi)$ in the solid torus $N$ are
spanned by the pair of vector fields $(p\frac{\partial}{\partial z}+q\frac{\partial}{\partial \theta},
\frac{\partial}{\partial r})$ as a positively oriented basis.  

With the preceding understood, we shall extend $\alpha$ to the rest of $N$ by an expression 
$$
\alpha=f(r)dz+g(r)d\theta
$$
for some appropriate choice of smooth functions $f(r),g(r)$, for $0\leq r\leq 1$, such that near $r=1$,
$$
f(r)=-qC+(2-r)\frac{kq+nq^\prime}{n},\; g(r)=Cp+(r-2)\frac{kp+np^\prime}{n}.
$$
It is easy to check that, for any $\alpha=f(r)dz+g(r)d\theta$, $L_X\alpha=0$ so that $\alpha$ is 
$\s^1$-invariant. 

The constraints on the pair of functions $(f(r),g(r))$ are given below:
\begin{itemize}
\item[{(i)}] $(\frac{g}{f})^\prime>0$, $f(0)>0$, $g(0)=g^\prime(0)=0$, and $g^\prime{^\prime}(0)>0$, which imply that $\alpha$ defines a positive contact structure on $N$ and $\alpha>0$ on $B$.
\item[{(ii)}] $f^\prime p+g^\prime q<0$, which is equivalent to $d\alpha>0$ on the pages. 
\end{itemize}

Note that geometrically, condition (i) means that the curve $(x,y)=(f(r),g(r))$ in the $xy$-plane will
rotates strictly counterclockwise as $r$ increases from $0$ towards $1$, starting at some point on the positive $x$-axis. On the other hand, condition (ii) means that the angle between the tangent vector 
$(f^\prime,g^\prime)$ of the curve and the vector $(-p,-q)$ is less than $\pi/2$. By the assumption
that $p>0$ and $q<0$, this holds true as long as $f^\prime<0$ and $g^\prime\geq 0$. If $g^\prime<0$,
we need to require that the slope of the vector $(f^\prime,g^\prime)$ is bounded by $\frac{p}{-q}$, i.e.,
$$
\frac{g^\prime}{f^\prime}<\frac{p}{-q}. 
$$
For this, note that near $r=1$, $f^\prime=-(kq+nq^\prime)$ and $g^\prime=kp+np^\prime$, and the following inequality holds
$$
0<\frac{kp+np^\prime}{-(kq+nq^\prime)}<\frac{p}{-q}. 
$$
It is easy to see that such a curve $(x,y)=(f(r),g(r))$ exists in the $xy$-plane. This finishes the extension of the contact form $\alpha$ over the solid torus $N$. Finally, we note that the contact structure 
$\xi:=\ker\alpha$ fails to be transverse precisely when there is a $0<r_0<1$ such that
$$
\alpha(X)(r_0)=f(r_0)(kp+np^\prime)+g(r_0)(kq+nq^\prime)=0, 
$$
or equivalently, the slope $\frac{g}{f}$ at $r_0$ equals $-\frac{kp+np^\prime}{kq+nq^\prime}$. For this
we observe that
$$
\frac{kp+np^\prime}{-(kq+nq^\prime)}<\frac{g(1)}{f(1)}<\frac{p}{-q}. 
$$
Since the slope $\frac{g}{f}$ is strictly increasing from $0$ at $r=0$ to $\frac{g(1)}{f(1)}$ at $r=1$,
it is easy to see that if $kp+np^\prime=0$, then $\frac{g}{f}>0$ for all $0<r\leq 1$, and if 
$kp+np^\prime<0$, then there must be one and a unique $0<r_0<1$ such that 
$\frac{g}{f}(r_0)=-\frac{kp+np^\prime}{kq+nq^\prime}$. This shows that the dividing set of $\xi$ is precisely the circle $r=r_0$. 

\vspace{2mm}

{\bf Case 2:} $kp+np^\prime >0$ (i.e., $B$ is an $\s^1$-orbit with the same orientation). By a similar argument, we can show that in this case, one can choose a longitude $\lambda$ such that the following is true
$$
q<0 \mbox{ and } kq+nq^\prime<0.
$$
With this understood, we continue to identify $s$ and $r$ by the equation $s=2-r$, and with that 
the contact form $\alpha$ takes the following expression near $r=1$:
$$
\alpha=(-qC+(2-r)\frac{kq+nq^\prime}{n})dz+(Cp+(r-2)\frac{kp+np^\prime}{n})d\theta.
$$
Now we choose a sufficiently large $C>0$ in the definition of the contact form 
$\alpha=Cdt^\prime+\beta$; in particular, $\frac{Cnp}{kp+np^\prime}>1$. Then 
we extend $\alpha$ to the solid torus $N$ by $\alpha=f(r)dz+g(r)d\theta$, where
$$
f(r)=-qC-\rho(r)\frac{kq+nq^\prime}{n} \mbox{ and } g(r)= Cp+\rho(r)\frac{kp+np^\prime}{n}
$$
for some smooth function $\rho(r)$, for $0\leq r\leq 1$, where 
$$
\rho(r)=-\frac{Cnp}{kp+np^\prime}+r^2 \mbox{ near } r=0, \mbox{ and } 
\rho(r)=-2+r  \mbox{ near } r=1,
$$
and $\rho^\prime>0$ for $0\leq r\leq 1$. Note that the last condition is possible because of the assumption $\frac{Cnp}{kp+np^\prime}>1$. 

Near $r=0$, $f(r)=\frac{Cn}{kp+np^\prime}-r^2 \frac{kq+nq^\prime}{n}$ and 
$g(r)=r^2\frac{kp+np^\prime}{n}$, which obeys $f(0)>0$, $g(0)=g^\prime(0)=0$, and $g^\prime{^\prime}(0)>0$ in condition (i) (see Case 1). To see that $(\frac{g}{f})^\prime>0$, i.e., the slope $\frac{g}{f}$ is strictly increasing, we note that the curve $(x,y)=(f(r),g(r))$ in the $xy$-plane is the straight line 
passing through the point $(-qC,pC)$ and having a slope $\frac{kp+np^\prime}{-(kq+nq^\prime)}>0$. 
On the other hand, in the present case we have the inequality 
$$
\frac{kp+np^\prime}{-(kq+nq^\prime)}>\frac{p}{-q}>0,
$$
from which it follows easily that the slope $\frac{g}{f}$ is strictly increasing, as both
$f$ and $g$ are strictly increasing. Thus condition (i) is verified. As for (ii), we note that the
tangent vector $(f^\prime,g^\prime)$ has a constant slope $\frac{kp+np^\prime}{-(kq+nq^\prime)}$
and pointing upward. It is easy to see that it forms an angle $<\pi/2$ with the vector $(-p,-q)$
because of the inequality $\frac{kp+np^\prime}{-(kq+nq^\prime)}>\frac{p}{-q}$ and the fact that
$p>0$ and $q<0$. Hence (ii) is verified as well. Finally, we note that the slope $\frac{g}{f}$ is always 
less than $-\frac{kp+np^\prime}{kq+nq^\prime}$, which implies that the contact structure 
$\xi:=\ker\alpha$ is transverse in this case. In fact, a simple calculation shows that in Case 2,
one has $\alpha(X)=Cn>0$, a positive constant. 

The proof of Theorem 4.1 is complete. 

\begin{remark}
We note that in our extension of the contact form $\alpha$ to the solid torus $N$, it takes the form 
$\alpha=f(r)dz+g(r)d\theta$, where $f(r)=E+Fr^2$, $g(r)=Gr^2$ for some constants $E,F,G$. Furthermore, 
in both Case 1 and Case 2, $E>0$ and $G>0$, while $F<0$ in Case 1 and $F>0$ in Case 2. 
In the terminology of Gay (cf. \cite{G}, Definition 4.5), such a contact form $\alpha$ is said to be {\bf well-bahaved} with respect to the coordinates $(r,\theta,z)$, as the Reeb vector field $R_\alpha$ takes the form
$R_\alpha=A\frac{\partial}{\partial \theta}+B\frac{\partial}{\partial z}$, where $A=-\frac{F}{EG}$ and 
$B=\frac{1}{E}$, with $B>0$. On the other hand, the closed $1$-form $d\pi$ on $M\setminus  B$, where $\pi: M\setminus B\rightarrow \s^1$ is the fibration from the rational open book, takes the form $d\pi=p d\theta-qd z$ where $p>0$ is the multiplicity. It is easy to see that the closed $1$-form $\alpha^0:=cd\pi=C d\theta+D dz$, for a sufficiently large $c>0$, is also {\bf well-behaved} in the sense of Definition 4.5 in \cite{G}, as $C=cp>0$ and 
$\alpha^0(R_\alpha)=AC+BD>1$. The existence of such a pair of forms $(\alpha,\alpha^0)$ is
required in order to perform Gay's construction in \cite{G}. The quadruple $(A,B,C,D)$ is called the {\bf structural data} for $(\alpha,\alpha^0)$. If furthermore, we also have $A=B$, $D>0$ and $AD>1$, then the pair 
$(\alpha,\alpha^0)$ is called {\bf prepared for surgery} with respect to the coordinates $(r,\theta,z)$. 
\end{remark}

As a consequence, we can apply the construction of Gay in \cite{G} to the present situation. 
More concretely, assume we are given a rational open book $(B,\pi)$ with periodic monodromy on $M$. Let $\alpha$ be the contact form and $\xi=\ker\alpha$ the contact structure constructed in Theorem 4.1, and we equip the product $[0,1]\times M$ with the symplectic form $d(e^t\alpha)$. 
Then according to \cite{G}, if one attaches\footnote{Strictly speaking, one needs to first apply an enlargement procedure so that the contact boundary is under the condition of {\bf ``prepared for surgery"}, see \cite{G}, Corollary 4.8. But we shall suppress this technical detail as it does not effect our discussions here, which are topological in nature.}
to $\{1\}\times M$ a symplectic $2$-handle along each binding component $B_i$ with a framing $F_i>0$ relative to the page framing from the open book $(B,\pi)$ (in general $F_i\in\Q$), 
then the resulting $4$-manifold $Z$ is symplectic with two concave contact boundary components,
one being $(M,\xi)$ (here $M$ is identified with $\{0\}\times M$), where in the other, denoted by 
$(M^\prime,\xi^\prime)$, the contact structure $\xi^\prime$ is supported by a rational open book 
$(B^\prime,\pi^\prime)$ such that $\pi^\prime: M^\prime\setminus B^\prime \rightarrow \s^1$ is diffeomorphic to $\pi: M\setminus B \rightarrow \s^1$ but the pages of $\pi^\prime,\pi$ have the opposite orientations. In particular, $(B^\prime,\pi^\prime)$ is also of periodic monodromy. Moreover, the binding $B^\prime$ is the union of the ascending spheres of the $2$-handles attached to 
$\{1\}\times M$ (cf.  \cite{G}, Addendum 5.1). So $Z$ is a relative handlebody of $2$-handles built on
$M$. (We remark that since the contact boundaries of $Z$ are concave, $\partial Z=-M\sqcup -M^\prime$.)

Our goal in the next theorem is to describe the generalized Seifert fibration structure on $M^\prime$ determined by $(B^\prime,\pi^\prime)$. For simplicity, we restrict ourselves to the case where $M$
is a Seifert manifold. 

\begin{theorem}
Assume $M$ is a Seifert manifold, i.e., the $\s^1$-action induced by the rational open book 
$(B,\pi)$ is fixed-point free, and as a Seifert manifold, $M$ is given by 
$$
M=M(g_0; \{(\alpha_i,\beta_i)\}, \{(\alpha_j,\beta_j)\}),
$$
where $0<\beta_j<\alpha_j$ for each $j$, and $\{(\alpha_i,\beta_i)\}$ are the Seifert invariants at
the binding components $B_i$, subject to the following convention: if $\alpha_i<0$ for some $i$, 
then $B_i$ has the opposite orientation of the fiber.  Let $n$ be the order of the monodromy of 
$(B,\pi)$ and let $p_i$ be the multiplicity at the binding component $B_i$. 
Then $M^\prime$ is given by
$$
M^\prime=M(g_0; \{(\bar{\alpha}_i,\bar{\beta}_i)\}, \{(\alpha_j,-\beta_j)\}),
$$
where $\bar{\alpha}_i=\frac{n}{p_i}-F_i\alpha_i$ and 
$\bar{\beta}_i=F_i\beta_i+\frac{1}{\alpha_i}-\frac{n\beta_i}{p_i\alpha_i}$, subject to the convention that
if $\bar{\alpha}_i<0$, then the corresponding binding component $B_i^\prime$ of the rational open 
book $(B^\prime,\pi^\prime)$ has the opposite orientation of the fiber. Finally, $\bar{\alpha}_i=0$
is also allowed here.  
\end{theorem}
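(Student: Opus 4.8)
The strategy is to apply Lemma 3.2 to the rational open book $(B^\prime,\pi^\prime)$ on $M^\prime$ produced by Gay's construction, and simply read off the generalized Seifert invariants of $M^\prime$ from the combinatorial data of $(B^\prime,\pi^\prime)$. Several features of $(B^\prime,\pi^\prime)$ are already in hand: by Theorem 4.1 and \cite{G}, Addendum 5.1, it has periodic monodromy, its pages are the pages of $(B,\pi)$ with the reversed orientation, and its binding components $B_i^\prime$ are the belt circles (ascending spheres) of the symplectic $2$-handles attached along the $B_i$. Closing up a reversed page and passing to the quotient orbifold, the singular points coming from $\Sigma^\prime/\langle h^\prime\rangle$ are the same points $z_j$ as for $(B,\pi)$, but the local rotation datum is conjugated by the orientation reversal, so the associated normalized Seifert invariant changes from $(\alpha_j,\beta_j)$ to $(\alpha_j,\alpha_j-\beta_j)$, i.e. to $(\alpha_j,-\beta_j)$ after absorbing the resulting integral shift into the genus-carrying term as in Lemma 3.2; the genus $g_0$ is unchanged and the order of the monodromy is still $n$. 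This already accounts for the $\{(\alpha_j,-\beta_j)\}$ in the statement, so it remains to compute the generalized Seifert invariant $(\bar\alpha_i,\bar\beta_i)$ at each new binding component $B_i^\prime$.

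Fix $i$ and use the coordinate conventions from the proofs of Lemma 3.2 and Theorem 4.1. On $T_i=\partial N_i$ we have the basis $(\gamma_i,\tau_i)$ with $\gamma_i\cdot\tau_i=1$, the meridian $\mu_i=-p_i^\prime\gamma_i+p_i\tau_i$, the fiber class $H_i=k_i\gamma_i+n\tau_i$, and the class $Q_i=-l_i\gamma_i-c_i\tau_i$, where $nl_i-k_ic_i=1$; Lemma 3.2 gives $\alpha_i=H_i\cdot\mu_i=k_ip_i+np_i^\prime$ and $\beta_i=-Q_i\cdot\mu_i=l_ip_i+c_ip_i^\prime$, and a short computation yields the useful identity $p_i=n\beta_i-c_i\alpha_i$. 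The symplectic $2$-handle is attached along $B_i$ with framing $F_i$ measured relative to the page framing, which for the rational open book $(B,\pi)$ is the rational slope carried by $\gamma_i=(\partial\Sigma)_i$, normalized by the multiplicity $p_i$. In Gay's model (cf. \cite{G}, Sections 4--5, together with Remark 4.3 above, which guarantees the contact form is well-behaved for the surgery) the handle is attached along a curve on (a parallel copy of) $T_i$ whose class $a_i\in H_1(T_i)$ is determined by $\gamma_i,\tau_i,p_i,p_i^\prime$ and $F_i$; since the belt solid torus $N_i^\prime$ has its meridian equal to the attaching circle, $\mu_i^\prime=a_i$. I will record $a_i$, and the new page and section curves $(\gamma_i^\prime,\tau_i^\prime)$ of $(B^\prime,\pi^\prime)$ on $T_i^\prime$, in terms of $(\gamma_i,\tau_i)$ directly from Gay's construction, carrying along the orientation reversal of the pages.

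With $(\gamma_i^\prime,\tau_i^\prime)$ and $\mu_i^\prime$ in hand, I then apply the recipe of Lemma 3.2 to $(B^\prime,\pi^\prime)$: the new puncture has order of isotropy $n$ with rotation datum $\bar c_i$ satisfying $\bar k_i\bar c_i\equiv -1\ (\mathrm{mod}\ n)$, where $\bar k_i$ is the monodromy translation number on $(\partial\Sigma^\prime)_i$; fix $\bar l_i$ with $n\bar l_i-\bar k_i\bar c_i=1$, set $H_i^\prime=\bar k_i\gamma_i^\prime+n\tau_i^\prime$ and $Q_i^\prime=-\bar l_i\gamma_i^\prime-\bar c_i\tau_i^\prime$, and compute $\bar\alpha_i=H_i^\prime\cdot\mu_i^\prime$ and $\bar\beta_i=-Q_i^\prime\cdot\mu_i^\prime$. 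Substituting the expressions for $\mu_i^\prime,\gamma_i^\prime,\tau_i^\prime$ and simplifying with the identities $\alpha_i=k_ip_i+np_i^\prime$, $\beta_i=l_ip_i+c_ip_i^\prime$, $nl_i-k_ic_i=1$ and $p_i=n\beta_i-c_i\alpha_i$ then yields $\bar\alpha_i=\frac{n}{p_i}-F_i\alpha_i$ and $\bar\beta_i=F_i\beta_i+\frac{1}{\alpha_i}-\frac{n\beta_i}{p_i\alpha_i}$ (equivalently $\bar\beta_i=F_i\beta_i-\frac{c_i}{p_i}$), with the stated sign convention when $\bar\alpha_i<0$, and, when $\bar\alpha_i=0$, the interpretation that $B_i^\prime$ becomes a fixed circle of the induced $\s^1$-action, so that one passes to a generalized Seifert fibration in the sense of \cite{N}.

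I expect the main obstacle to be the second paragraph: translating Gay's handle attachment into an honest homological identity on $T_i$, in particular making precise the phrase ``framing $F_i$ relative to the page framing'' for a rational (rather than integral) open book and nailing down all of the orientation conventions so that the signs in $\bar\alpha_i$ and $\bar\beta_i$ emerge exactly as stated. Once the triple $(\gamma_i^\prime,\tau_i^\prime;\mu_i^\prime)$ is correctly identified, the remaining work is a routine, if slightly lengthy, manipulation of the linear relations above.
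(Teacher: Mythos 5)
Your proposal takes the same route as the paper: apply Lemma 3.2 to the page-reversed rational open book $(B',\pi')$ and read off $(\bar\alpha_i,\bar\beta_i)$ as intersection numbers of the new fiber/section classes against the belt meridian $\mu_i'$. The one step you leave unwritten is precisely the one the paper carries out in detail, namely the identification of the triple $(\bar\gamma_i,\bar\tau_i;\bar\mu_i)$: the paper fixes a longitude $\lambda_i$, records $F_i=f_i-\tfrac{q_i}{p_i}$ where $f_i$ is the $(\lambda_i,\mu_i)$-framing, uses the standard surgery relations $\mu_i'=-\lambda_i-f_i\mu_i$, $\lambda_i'=\mu_i$, and then accounts for the page orientation reversal by setting $\bar\mu_i=-\mu_i'$, $\bar\lambda_i=-\lambda_i'$ and $(\bar\gamma_i,\bar\tau_i)=(-\gamma_i,\tau_i+\gamma_i)$, also flipping the sign of the intersection form on $T_i$; from this $\bar p_i=p_iF_i$ and $\bar p_i'$ are computed and the algebra yields the stated formulas. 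Your shortcut $\bar\beta_i=F_i\beta_i-\tfrac{c_i}{p_i}$ (with $\beta_i=\beta_i'=l_ip_i+c_ip_i'$) is consistent with the theorem because the identity $p_i=n\beta_i'-c_i\alpha_i$ holds and the formula for $\bar\beta_i$ is equivariant under the normalization shift $\beta_i\mapsto\beta_i+m\alpha_i$, $\bar\beta_i\mapsto\bar\beta_i-m\bar\alpha_i$; so once the triple is pinned down your computation would close the argument exactly as the paper does.
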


\begin{proof}
The proof is simply an application of Lemma 3.2.

To this end, we begin by recalling the standard description of $(B,\pi)$ used in the proof of Lemma 3.2. First, the periodic monodromy map $\phi: \Sigma\rightarrow \Sigma$ determines a
set of integers $\{(n_j,c_j)\}$, $\{(n,c_i)\}$, where $0<c_j<n_j$, $gcd(n_j,c_j)=1$, 
$0<c_i<n$, $gcd(n,c_i)=1$, and moreover, for each $i$, there are integers $k_i,l_i$, uniquely determined by $c_i$ by the conditions $0<k_i<n$, $nl_i-k_ic_i=1$. Finally, 
$$b:=\sum_j\frac{c_j}{n_j}+\sum_i\frac{c_i}{n}\in\Z.$$

To describe the gluing of the regular neighborhood $N_i$ of $B_i$ to the mapping torus 
$\Sigma\times [0,1]/ (x,1)\sim (\phi(x),0)$, we fix a canonical, positively oriented basis 
$(\gamma_i,\tau_i)$ of $H_1(T_i)$, where $T_i:=(\partial \Sigma)_i \times [0,1]/ (x,1)\sim (\phi(x),0)$,
and $\gamma_i:=(\partial \Sigma)_i \times \{0\}$. With this understood, the meridian $\mu_i$ of
$\partial N_i=-T_i$ is given by $\mu_i=-p_i^\prime\gamma_i +p_i\tau_i$ for a pair of integers 
$(p_i,p_i^\prime)$, where $p_i>0$ is the multiplicity at $B_i$.

With $(p_i,p_i^\prime)$ understood, we note that $\alpha_i=k_ip_i+np_i^\prime$. On the other hand,
we set 
$$
\beta_i^\prime:=l_ip_i+c_ip_i^\prime.
$$
Then by Lemma 3.2, $M=M(g_0; (1,-b), \{(\alpha_j,\beta_j)\}, \{(\alpha_i,\beta_i^\prime)\})$
where $(\alpha_j,\beta_j)=(n_j,c_j)$ for each $j$.  Observe that $\sum_i\frac{\beta_i}{\alpha_i}=\sum_i\frac{\beta_i^\prime}{\alpha_i}-b$.

With the preceding understood, we shall apply Lemma 3.2 to the rational open book 
$(B^\prime,\pi^\prime)$ to obtain the generalized Seifert fibration structure on $M^\prime$. To this end,
we identify $\pi^\prime: M^\prime\setminus B^\prime \rightarrow \s^1$ with $\pi: M\setminus B \rightarrow \s^1$ but with the orientation of the pages $\Sigma$ reversed. We shall describe the rational open book $(B^\prime,\pi^\prime)$ in the same fashion, using the bar-version of the notations. Since the page orientation is reversed, the set of integers $\{(n_j, \bar{c}_j)\}$, $\{(n,\bar{c}_i)\}$ determined by the monodromy map are given by $\bar{c}_j=n_j-c_j$, $\bar{c}_i=n-c_i$. 
Consequently, $\bar{k}_i=n-k_i$, $\bar{l}_i=n+l_i-k_i-c_i$, and 
$$
\bar{b}=\sum_j\frac{\bar{c}_j}{n_j}+\sum_i\frac{\bar{c}_i}{n}=x+y-b,
$$
where $x,y$ are the number of indices of $j$ and $i$ respectively. Finally, we note that the corresponding basis $(\bar{\gamma}_i,\bar{\tau}_i)$ is related to $(\gamma_i,\tau_i)$ by the 
equations $(\bar{\gamma}_i,\bar{\tau}_i)=(-\gamma_i, \tau_i+\gamma_i)$. 

In order to obtain the generalized Seifert invariants of $M^\prime$, it remains to determine the
corresponding pair of integers $(\bar{p}_i,\bar{p}_i^\prime)$. To this end, 
we fix a longitude $\lambda_i$ along $B_i$, and write $\gamma_i=p_i\lambda_i+q_i\mu_i$, 
$\tau_i=p_i^\prime\lambda_i+q^\prime_i\mu_i$ for some $q_i,q_i^\prime$ such that 
$p_iq_i^\prime-p_i^\prime q_i=1$ (note that $\lambda_i\cdot\mu_i=1$). 
Suppose with respect to $(\lambda_i,\mu_i)$, the framing of the $2$-handle attached along $B_i$
is $f_i$. Then the framing $f_i$ and the framing relative to the page framing $F_i$ are related by
the equation $f_i=F_i+\frac{q_i}{p_i}$. On the other hand, as a boundary component of $Z$, 
$-M^\prime$ is obtained by removing the neighborhood $N_i$ of $B_i$ in $\{1\}\times M$ and then
gluing back a $D^2\times \s^1$ along $\partial N_i$ such that 
$$
\mu_i^\prime=-\lambda_i-f_i\mu_i,\;\; \lambda_i^\prime=\mu_i,
$$
where $(\mu_i^\prime,\lambda_i^\prime)$ is the meridian-longitude pair of the solid torus 
$D^2\times \s^1$. 

With the preceding understood, we observe that the reversing of the orientation of the pages $\Sigma$ 
results a change of the orientations of $(\mu_i^\prime,\lambda_i^\prime)$ to the opposite. Consequently, if we let $\bar{\mu}_i=-\mu_i^\prime$ and $\bar{\lambda}_i=-\lambda_i^\prime$, 
then we shall determine the pair $(\bar{p}_i,\bar{p}_i^\prime)$ through the equation
$\bar{\mu}_i=-\bar{p}_i ^\prime\bar{\gamma}_i+\bar{p}_i \bar{\tau}_i$. On the other hand, 
the reversing of the orientation of the pages $\Sigma$ also results a change of the orientation of
$M$ to the opposite, so that the intersection numbers $\gamma_i\cdot\tau_i$ and 
$\lambda_i\cdot \mu_i$ are changed by a sign, i.e., we now have 
$\gamma_i\cdot\tau_i=\lambda_i\cdot \mu_i=-1$. (Note that $\bar{\gamma}_i\cdot \bar{\tau}_i=1$
is unchanged.) Finally, note that $\mu_i^\prime=-\lambda_i-f_i\mu_i$ implies that 
$\bar{\mu}_i=\lambda_i+f_i\mu_i$. With this understood, and appealing to the relation
$(\bar{\gamma}_i,\bar{\tau}_i)=(-\gamma_i, \tau_i+\gamma_i)$, we compute $\bar{p}_i$, 
$\bar{p}_i^\prime$ through intersection numbers:
$$
\bar{p}_i=\bar{\gamma}_i\cdot \bar{\mu}_i=p_if_i-q_i, \;\;\; 
\bar{p}_i^\prime=-\bar{\mu}_i\cdot \bar{\tau}_i=(q_i+q_i^\prime)-f_i(p_i+p_i^\prime).
$$
Note that $\bar{p}_i=p_i F_i$. In particular, $F_i>0$ implies $\bar{p}_i>0$.

Set $\bar{\alpha}_i:=\bar{k}_i\bar{p}_i+n\bar{p}_i^\prime$, 
$\bar{\beta}_i^\prime:=\bar{l}_i\bar{p}_i+\bar{c}_i\bar{p}_i^\prime$. Then with the relations
$$
F_i=f_i-\frac{q_i}{p_i},\;\; p_iq_i^\prime-p_i^\prime q_i=1, \;\; \alpha_i=k_ip_i+np_i^\prime, \;\;\beta_i^\prime=l_ip_i+c_ip_i^\prime, 
$$
a straightforward calculation shows that
$$
\bar{\alpha}_i=\frac{n}{p_i}-F_i\alpha_i, \;\;\;
\bar{\beta}_i^\prime=F_i\beta_i^\prime+\frac{1}{\alpha_i}
-\frac{n\beta_i^\prime}{p_i\alpha_i}+\bar{\alpha}_i.
$$
Now if we write $\beta_i^\prime=\beta_i+b_i\alpha_i$, then
$$
\bar{\beta}_i^\prime=F_i\beta_i+\frac{1}{\alpha_i}
-\frac{n\beta_i}{p_i\alpha_i}+ (1-b_i)\bar{\alpha}_i.
$$
With $\bar{b}=x+y-b$, $\sum_ib_i=b$, and $(n_j,c_j)=(\alpha_j,\beta_j)$, it follows from Lemma 3.2 that
$$
M^\prime=M(g_0; (1,-\bar{b}), \{(n_j,\bar{c}_j)\}, \{(\bar{\alpha}_i,\bar{\beta}_i^\prime)\})=
M(g_0; \{(\bar{\alpha}_i,\bar{\beta}_i)\}, \{(\alpha_j,-\beta_j)\}),
$$
where $\bar{\beta}_i=F_i\beta_i+\frac{1}{\alpha_i}-\frac{n\beta_i}{p_i\alpha_i}$. This finishes the proof of Theorem 4.4.

\end{proof}

\begin{remark}
An important question is: given $M=M(g_0; \{(\alpha_i,\beta_i)\}, \{(\alpha_j,\beta_j)\})$ and $(B,\pi)$, what are the possible values of $\{(\bar{\alpha}_i,\bar{\beta}_i)\}$ that can be realized by choosing
appropriate framings $\{F_i>0\}$? To this end, we observe that $\{(\bar{\alpha}_i,\bar{\beta}_i)\}$ and
the original pairs $\{(\alpha_i,\beta_i)\}$ obey the equations
$$
\bar{\alpha}_i\beta_i+\alpha_i\bar{\beta}_i=1. 
$$
Furthermore, since $\alpha_i\neq 0$, $\bar{\beta}_i$ is uniquely determined by $\bar{\alpha}_i$ and the pair $(\alpha_i,\beta_i)$. On the other hand, observe that 
$F_i=\frac{1}{\alpha_i}(\frac{n}{p_i}-\bar{\alpha}_i)$, where $p_i>0$ is the multiplicity at $B_i$. It follows easily that the condition $F_i>0$ is
equivalent to (i) and (ii) below:
\begin{itemize}
\item [{(i)}] If $\alpha_i>0$, then $\bar{\alpha}_i<\frac{n}{p_i}$.
\item [{(ii)}] If $\alpha_i<0$, then $\bar{\alpha}_i>\frac{n}{p_i}$. 
\end{itemize}

Note that in the context of Theorem 3.3, we have $e(M)=-\frac{p}{n\alpha}$. It is easy to see that
in the special case where the binding $B$ is connected,  
(i) is equivalent to either $\bar{\alpha}\leq 0$ or $e(M)>-\frac{1}{\alpha\bar{\alpha}}$, and (ii) is
equivalent to $e(M)>-\frac{1}{\alpha\bar{\alpha}}$. Similarly in the context of Theorem 3.4 
(with Remark 3.5), (i) and (ii) 
are equivalent to the following conditions: if $\alpha_i>0$, then either $\bar{\alpha}_i\leq 0$ or 
$\frac{\beta_i}{\alpha_i}+b_i-\frac{c_i}{n}=\frac{p_i}{\alpha_i n}<\frac{1}{\alpha_i\bar{\alpha}_i}$, and if $\alpha_i<0$,
$\frac{\beta_i}{\alpha_i}+b_i-\frac{c_i}{n}=\frac{p_i}{\alpha_i n}<\frac{1}{\alpha_i\bar{\alpha}_i}$. 
These are the only constraints on $\bar{\alpha}_i$, derived from $F_i>0$. 
\end{remark}

In the following lemma, we shall express the framing of the $2$-handle attached to the binding component $B_i$ in Theorem 4.4 in a more convenient way, not in terms of the page framing $F_i$.
Instead, we express it in terms of the slope of the orbits of the $\s^1$-action on $M$, the multiplicity
$\alpha_i$ of $B_i$ (the descending sphere) as a fiber in the Seifert fibration, and the multiplicity 
$\bar{\alpha}_i$ of the ascending sphere as a fiber in the Seifert fibration of $M^\prime$. We should point out that the convention for the signs of the multiplicities $\alpha_i$ and $\bar{\alpha}_i$ is the same as specified in Theorem 4.4.

\begin{lemma}
Suppose $(\lambda_i,\mu_i)$ is a longitude-meridian pair for a neighborhood of the binding component $B_i$ with respect to which the orbits of the $\s^1$-action have slope $s_i$. Then
with respect to $(\lambda_i,\mu_i)$, the framing of the $2$-handle attached to $B_i$ is 
$s_i-\frac{\bar{\alpha}_i}{\alpha_i}$. 
\end{lemma}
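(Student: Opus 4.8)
The plan is to read off the asserted framing from the data already assembled in the proof of Theorem 4.4, combined with the expression for the page framing recorded in Remark 4.5. I would fix the longitude $\lambda_i$ of the statement and, exactly as in the proof of Theorem 4.4, write $\gamma_i=p_i\lambda_i+q_i\mu_i$ and $\tau_i=p_i^\prime\lambda_i+q_i^\prime\mu_i$ with $p_iq_i^\prime-p_i^\prime q_i=1$ (since $\lambda_i\cdot\mu_i=1$). From that proof the framing $f_i$ of the $2$-handle along $B_i$, measured with respect to $(\lambda_i,\mu_i)$, satisfies $f_i=F_i+\tfrac{q_i}{p_i}$, while Remark 4.5 gives $F_i=\tfrac{1}{\alpha_i}\bigl(\tfrac{n}{p_i}-\bar{\alpha}_i\bigr)$. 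Substituting yields
$$
f_i=\frac{n}{p_i\alpha_i}-\frac{\bar{\alpha}_i}{\alpha_i}+\frac{q_i}{p_i},
$$
so the task reduces to showing $s_i=\tfrac{n}{p_i\alpha_i}+\tfrac{q_i}{p_i}$.

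Next I would compute $s_i$. The orbit of the $\s^1$-action on the boundary torus $T_i=\partial N_i$ is the class $k_i\gamma_i+n\tau_i$: this is exactly the regular fiber $H_i$ of the Seifert fibration, as identified in the proofs of Theorems 4.1 and 4.4 (and $\gcd(k_i,n)=1$ ensures it is primitive, hence a single orbit). Rewriting $H_i$ in the basis $(\lambda_i,\mu_i)$ and using $\alpha_i=k_ip_i+np_i^\prime$,
$$
H_i=k_i\gamma_i+n\tau_i=(k_ip_i+np_i^\prime)\lambda_i+(k_iq_i+nq_i^\prime)\mu_i=\alpha_i\lambda_i+(k_iq_i+nq_i^\prime)\mu_i,
$$
whence $s_i=\tfrac{k_iq_i+nq_i^\prime}{\alpha_i}$.

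It then remains to verify the arithmetic identity $\tfrac{n}{p_i\alpha_i}+\tfrac{q_i}{p_i}=\tfrac{k_iq_i+nq_i^\prime}{\alpha_i}$, i.e. $n+\alpha_iq_i=p_i(k_iq_i+nq_i^\prime)$; substituting $\alpha_i=k_ip_i+np_i^\prime$ this collapses to $n(p_iq_i^\prime-p_i^\prime q_i)=n$, which is the normalization $p_iq_i^\prime-p_i^\prime q_i=1$ already in hand. Together with the displayed formula for $f_i$, this gives $f_i=s_i-\tfrac{\bar{\alpha}_i}{\alpha_i}$. The computation is pure bookkeeping; the only genuinely delicate point is keeping all conventions consistent — working with $\partial N_i=-T_i$, the oriented basis $(\gamma_i,\tau_i)$ and the sign conventions for $\alpha_i,\bar{\alpha}_i$ fixed in Theorem 4.4, and the corresponding orientation of the intersection pairing — so that the slope $s_i$, the framing $f_i$, and the integers $q_i,q_i^\prime$ are all read off compatibly; note also that $\alpha_i\neq 0$ since the $\s^1$-action is fixed-point free, so $\bar{\alpha}_i/\alpha_i$ makes sense even when $\bar{\alpha}_i=0$.
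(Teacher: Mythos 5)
Your proposal is correct and follows essentially the same route as the paper's own proof: both write $\gamma_i,\tau_i$ in the $(\lambda_i,\mu_i)$ basis, identify the orbit class $H_i=k_i\gamma_i+n\tau_i$ and read off the slope $s_i$, then combine $f_i=F_i+\tfrac{q_i}{p_i}$ with $F_i=\tfrac{1}{\alpha_i}(\tfrac{n}{p_i}-\bar{\alpha}_i)$; the only difference is that the paper simplifies the slope expression directly while you set up the final identity as a verification, which reduces to the same use of $p_iq_i'-p_i'q_i=1$.
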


\begin{proof}
Suppose $\gamma_i=p_i\lambda_i+q_i\mu_i$, $\tau_i=p_i^\prime\lambda_i+q^\prime_i\mu_i$ for
some $q_i,q_i^\prime$ such that $p_iq_i^\prime-p_i^\prime q_i=1$. Then from the proof of 
Lemma 3.2, the orbits of the $\s^1$-action are given by 
$$
H_i=k_i\gamma_i+n\tau_i=(k_ip_i+np_i^\prime)\lambda_i+(k_iq_i+nq_i^\prime)\mu_i,
$$
which implies that the slope 
$$
s_i=\frac{1}{\alpha_i}(k_iq_i+nq_i^\prime)
=\frac{1}{\alpha_ip_i}((\alpha_i-np_i^\prime)q_i+n(1+p_i^\prime q_i))=\frac{q_i}{p_i}
+\frac{n}{\alpha_i p_i}, 
$$
as $\alpha_i=k_ip_i+np_i^\prime$. On the other hand, with respect to $(\lambda_i,\mu_i)$,
the framing of the $2$-handle is given by $f_i=F_i+\frac{q_i}{p_i}$. With 
$F_i=\frac{1}{\alpha_i}(\frac{n}{p_i}-\bar{\alpha}_i)$, we obtain $f_i=s_i-\frac{\bar{\alpha}_i}{\alpha_i}$.

\end{proof}

\begin{remark}
The construction of the symplectic $4$-manifold $Z$ requires a choice of a rational open book 
on $M$, which is not necessarily unique even if we fix the binding components $B_i$. However, 
we shall point out that as long as the other boundary component of $Z$, i.e. $M^\prime$, is 
specified, the diffeomorphism class of $Z$ does not depend on the choice of the rational open book (except its binding being fixed), because by Lemma 4.6, the framings of the $2$-handles, relative to the framing of the regular fibers of the Seifert fibration on $M$, are completely determined by $M^\prime$. With this understood, we shall also point out that the symplectic structure on 
$Z$ (from Gay's construction) does depend on the choice of the rational open book on $M$, as we shall see
in the next section (cf. Lemma 5.1).

\end{remark}

\section{The sign of the canonical class}
Let $Z$ be a symplectic $4$-manifold with concave contact boundaries $(M,\xi)$ and 
$(M^\prime,\xi^\prime)$, which is a relative handlebody built on $M$, constructed using Gay's construction in \cite{G}. Here we do not require $M$ to be a Seifert manifold, nor $(B,\pi)$ to be of periodic monodromy, but we assume both $M$ and $M^\prime$ are $\Q$-homology $3$-spheres.
We denote by $\omega_Z$ the symplectic structure on $Z$, and $c_1(K_Z)$ the corresponding canonical class. Since $M$ and $M^\prime$ are $\Q$-homology $3$-spheres, we note that 
$c_1(K_Z)$ is naturally a class in $H^2(Z,\partial Z,\R)$. On the other hand, the de Rham cohomology class $[\omega_Z]$ lies in $H^2(Z,\R)$. With this understood, one has the pairing $c_1(K_Z)\cdot [\omega_Z]\in H^4(Z,\partial Z,\R)=\R$, where we identify $H^4(Z,\partial Z,\R)$ with $\R$ by integrating over the fundamental class $[Z]\in H_4(Z,\partial Z,\Z)$. We are interested in determining the sign of $c_1(K_Z)\cdot [\omega_Z]$ due to its topological significance.

To this end, let $B_i$ be the binding components of the rational open book $(B,\pi)$ on $M$, and let
$D_i$ be the core disk of the symplectic $2$-handle attached to $\{1\}\times M$ along $B_i$. Note that each $D_i$ is symplectic, thus is naturally oriented and defines a class $[D_i]\in H_2(Z,\partial Z,\Z)$.

\begin{lemma}
Under the Poincar\'{e} durality $H^2(Z,\R)=H_2(Z,\partial Z,\R)$, 
$$
[\omega_Z]=\Omega\cdot \sum_i p_i [D_i]
$$
for some constant $\Omega>0$, where $p_i$ is the multiplicity of $(B,\pi)$ at $B_i$.
\end{lemma}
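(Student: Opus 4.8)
The plan is to use that the symplectic form $\omega_Z$ is \emph{exact} on the symplectization collar $[0,1]\times M\subset Z$, where $\omega_Z=d(e^t\alpha)$, so that $[\omega_Z]$ is carried entirely by the $2$‑handles, and then to pin down the handle‑wise coefficients using the explicit data of the construction. First I would record the topological normalization. Since $M$ and $M^\prime$ are $\Q$-homology $3$-spheres, $H_1(\partial Z;\R)=H_2(\partial Z;\R)=0$, and the long exact sequences give $H^2(Z;\R)\cong H^2(Z,[0,1]\times M;\R)$ and $H_2(Z,\partial Z;\R)\cong H_2(Z,[0,1]\times M;\R)$; by excision the latter is $\bigoplus_i\R\,[D_i]$ (once $B_i=\partial D_i$ is pushed along the collar to $\{0\}\times M$, $[D_i]$ is a genuine relative cycle). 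Write $PD[\omega_Z]=\sum_i\Omega_i[D_i]$; the task is to prove $\Omega_i=\Omega p_i$ with a common $\Omega>0$.

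To evaluate the $\Omega_i$ I would remove the cocore disks $D_i^\ast$ of the handles. The complement $Z\setminus\bigcup_i D_i^\ast$ deformation retracts onto $[0,1]\times M$, on which $[\omega_Z]$ vanishes because $\omega_Z$ is exact there; hence $[\omega_Z]$ is the image of a class in $H^2\!\big(Z,Z\setminus\bigcup_iD_i^\ast;\R\big)=\bigoplus_i\R\cdot\mathrm{Th}(\nu D_i^\ast)$, say $[\omega_Z]=\sum_i\lambda_i\,\mathrm{Th}(\nu D_i^\ast)$, so that $PD[\omega_Z]=\sum_i\lambda_i[D_i^\ast]$ by Poincar\'e--Lefschetz duality. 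I would then compute $\lambda_i$ by pairing against the closed surface $\widehat\Sigma_i:=m_iD_i\cup F_i$, where $F_i\subset M$ is a rational Seifert surface with $\partial F_i=m_iB_i$: the cocore $D_i^\ast$ meets $D_j$ once when $i=j$ and not at all otherwise, so $m_i\lambda_i=\int_{\widehat\Sigma_i}\omega_Z$, and Stokes together with $\omega_Z|_{\{t\}\times M}=e^t\,d\alpha$ turns the right‑hand side into an expression in the core‑disk area $\int_{D_i}\omega_Z$ and the binding length $\int_{B_i}\alpha$. Both are positive --- $\int_{D_i}\omega_Z>0$ since $D_i$ is a symplectic disk, $\int_{B_i}\alpha>0$ since $\alpha>0$ on the binding --- which will yield $\lambda_i>0$, but the precise signs and normalizations are exactly the delicate point (see below).

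It then remains to rewrite $\sum_i\lambda_i[D_i^\ast]$ in the basis $\{[D_i]\}$. This change of basis is governed by the intersection (``linking'') matrix of the $2$-handles, whose off‑diagonal entries are the pairwise linking numbers $\mathrm{lk}(B_i,B_j)$ of the binding components in $M$ and whose diagonal entries involve the handle framings; these are precisely the quantities tabulated through the normalized Seifert invariants and the framing formula of Lemma~4.6 (cf.\ Theorem~4.4). Substituting and simplifying, the defining relation $(\partial\Sigma)_i=p_iB_i$ for the multiplicity forces the factor $p_i$ to appear, giving $PD[\omega_Z]=\Omega\sum_ip_i[D_i]$. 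Equivalently, one can phrase the whole argument in terms of the closed surface $\widehat\Sigma$ obtained from a page of $(B,\pi)$ together with the handle caps --- which is symplectic because $d\alpha>0$ on the pages and the core disks are symplectic --- and verify that $\int_c\omega_Z=\Omega\,(c\cdot\widehat\Sigma)$ for every closed surface $c$ in $Z$; the positivity of $\Omega$ then follows from $\int_{\widehat\Sigma}\omega_Z>0$ once one knows $[\widehat\Sigma]^2>0$, which in turn reflects the positivity of the framings $F_i$.

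The formal reductions above --- the identification of $H_2(Z,\partial Z;\R)$, the localization of $[\omega_Z]$ onto the cocores, and the pairing with $\widehat\Sigma_i$ --- are routine. The genuine obstacle is the bookkeeping in the last paragraph: one must keep straight the page framing, the rational Seifert framing, and the longitudes used in Theorem~4.4, and check that the combination of binding lengths $\int_{B_i}\alpha$, core‑disk areas $\int_{D_i}\omega_Z$, and linking numbers is \emph{exactly} proportional to $(p_i)$ with a positive constant. I expect this to be a finite but fussy computation, resting on the explicit normal forms of the $\s^1$-invariant contact form near the binding from the proof of Theorem~4.1 (and Remark~4.3) and on Lemma~4.6.
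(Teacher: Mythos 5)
Your localization idea is sound, and the reduction to $H_2(Z,\partial Z;\R)$ being spanned by the cores $[D_i]$ is correct, but you have set yourself up for a change of basis that the paper avoids entirely, and that change of basis is precisely the step you flag but never carry out. By localizing $[\omega_Z]$ at the co-cores you obtain $PD[\omega_Z]=\sum_i\lambda_i[D_i^\ast]$ with the $\lambda_i$ computed by pairing against the capped \emph{cores} $m_iD_i\cup F_i$; but the target expansion is in the basis $\{[D_i]\}$, and $[D_i^\ast]\neq[D_i]$ in $H_2(Z,\partial Z;\R)$ in general. Writing $[D_i^\ast]=\sum_j a_{ij}[D_j]$, the matrix $(a_{ij})$ is (up to the $m_j$'s) the intersection matrix of $Z$, which depends on the linking numbers of the $B_i$ and on the handle framings $F_i$. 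Since $\lambda_i$ itself also depends on $F_i$ through the core-disk area (in Gay's model $\int_{D_i}\omega_Z=\pi|\epsilon_1|$ with $\epsilon_1=\tfrac{2}{A}-2D$, and $D$ encodes the chosen longitude, hence the framing), one would need nontrivial framing-dependent cancellations in $\sum_i\lambda_i a_{ij}$ to land on $\Omega p_j$. Nothing in your sketch addresses this, so as written the argument does not establish the claim.

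The paper sidesteps all of this by pairing against the capped \emph{co-cores} $m_iD_i'\cup C_i$ (with $C_i$ a rational Seifert surface of the ascending circle $B_i'$ in $M'$), which are the basis of $H_2(Z;\R)$ \emph{dual} to $\{[D_i]\}$ under the intersection pairing, since $D_j\cdot D_i'=\delta_{ij}$ and the caps sit in $\partial Z$. This reads off the coefficient $c_i=\int_{D_i'}\omega_Z+\int_{B_i'}\alpha'$ with no linking or framing data. The explicit handle model then gives $\int_{D_i'}\omega_Z=\pi\epsilon_2$ and $\int_{B_i'}\alpha'=-\pi\epsilon_2+2\pi C$, with $C=c\,p_i$ coming from $\alpha^0=c\,d\pi$ and $d\pi=p_id\mu-q_id\lambda$, so $c_i=2\pi c\,p_i$ --- the framing dependence cancels within each handle already. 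In short: your choice of dual surfaces (capped cores) is mismatched with the target basis (cores), and while your approach is not unsalvageable, salvaging it means doing exactly the intersection-matrix bookkeeping that you identify as "the genuine obstacle," whereas choosing the capped co-cores makes that bookkeeping unnecessary.
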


\begin{proof}
The classes $[D_i]$ form a basis of $H_2(Z,\partial Z,\Z)$, hence 
$[\omega_Z]=\sum_i c_i [D_i]$ for some $c_i\in \R$. It remains to show that for some constant 
$\Omega>0$, $c_i=\Omega p_i$ for each $i$.

To this end, let $D_i^\prime$ be the co-core disk of the symplectic $2$-handle attached to 
$\{1\}\times M$ along $B_i$, which is also symplectic and therefore naturally oriented. The boundary
$\partial D_i^\prime$ is the ascending sphere of the $2$-handle, which is $-B_i^\prime$ where $B_i^\prime$ is the corresponding binding component of the rational open book 
$(B^\prime,\pi^\prime)$ on $M^\prime$ supporting the contact structure $\xi^\prime$. Since $M^\prime$ is a $\Q$-homology $3$-sphere, there exists an integer $m_i>0$ such that $m_i B_i^\prime$ bounds an oriented surface $C_i$ in $M^\prime$. It follows easily that $m_i D_i^\prime\cup C_i$ defines a homology class 
$[m_i D_i^\prime\cup C_i]\in H_2(Z,\Z)$. With this understood, we note that 
$$
[\omega_Z]\cdot [m_i D_i^\prime\cup C_i]=(\sum_j c_j [D_j])\cdot [m_i D_i^\prime\cup C_i]=c_im_i,
$$
as $D_i$ and $D_i^\prime$ intersect transversely and positively at a single point, $D_j$ and $D_i^\prime$ are disjoint if $j\neq i$, and $D_j$ and $C_i$ are disjoint for any $j$. On the other hand, 
$$
[\omega_Z]\cdot [m_i D_i^\prime\cup C_i]=m_i \int_{D_i^\prime}\omega_Z+\int_{C_i} \omega_Z=
m_i(\int_{D_i^\prime}\omega_Z+ \int_{B_i^\prime} \alpha^\prime), 
$$ 
where $\alpha^\prime$ is some contact form on $M^\prime$ defining $\xi^\prime$ such that
$\omega_Z|_{M^\prime}=d\alpha^\prime$, so that $\int_{C_i} \omega_Z=m_i  \int_{B_i^\prime} \alpha^\prime$ by Stokes' theorem. It follows easily that $c_i=\int_{D_i^\prime}\omega_Z+ \int_{B_i^\prime} \alpha^\prime$.

In order to further evaluate $c_i$, we recall some relevant details about the symplectic 
$2$-handles in Gay's construction, see the proof of Proposition 4.6 in \cite{G} for a complete discussion. First, a contact pair $(\alpha^{+},\alpha^{-})$ on $M$ is given, where $\alpha^{+}$ 
is the contact form on $M$ and $\alpha^0:=\alpha^{+}+\alpha^{-}$ is given by $c\cdot d\pi$, with 
$\pi: M\setminus B\rightarrow\s^1$ being the fibration from the rational open book $(B,\pi)$ 
and $c>0$ a sufficiently large constant (see the proof of Theorem 1.2 in \cite{G})). Fixing
attention at a binding component $B_i$, there is a neighborhood of $B_i$ with coordinates 
$(r,\mu,\lambda)$, where $(\mu,\lambda)$ is a meridian-longitude pair, such that the Reeb 
vector field $R_{\alpha^{+}}=A\partial_\mu+ B\partial_\lambda$ and $\alpha^0=Cd\mu+D d\lambda$, for some constants $A,B,C,D$ (called {\bf structural data}) obeying $B,C>0$. It is easy to see that $B,C$ do not depend on the choice of the longitude $\lambda$ (i.e., the framing of $B_i$). Furthermore, one can always arrange such that $A=B$, $D>0$ and $AD>1$ hold true for the structural data, in which case $B_i$ is said to be ``prepared for surgery" (cf. Corollary 4.8 in \cite{G}). 

It should be noted that the notion of contact pair is associated with the so-called {\bf dilation-contraction pair} of vector fields $(V^{+},V^{-})$ of a symplectic structure $\omega$, where $V^{+},V^{-}$ obey
$$
L_{V^{\pm}}\omega =\pm\omega, \;\; \omega(V^{+},V^{-})=0. 
$$
More precisely, given $\omega$ and $(V^{+},V^{-})$, one can canonically produce a contact pair 
$(\alpha^{+},\alpha^{-})$, where $\alpha^{\pm}:= i_{V^{\pm}}\omega$.

With the preceding understood, we now describe the model for a symplectic $2$-handle in Gay's construction. Let $\omega_0:=r_1 dr_1\wedge d\theta_1+ r_2 dr_2\wedge d\theta_2$ be the
standard symplectic form on $\R^4$, in polar coordinates $(r_i,\theta_i)$, $i=1,2$, and let
$f=-r_1^2+r_2^2$. To specify the $2$-handle, one needs to fix a pair of parameters $\epsilon_1,
\epsilon_2$, where $\epsilon_1<0<\epsilon_2$, such that $\epsilon_1$ is determined by 
the structural data by the equation $\epsilon_1=\frac{2}{A}-2D$, and $\epsilon_2$ is chosen to be sufficiently small. With this understood, part of the boundary of the $2$-handle is formed by the level surfaces $f^{-1}(\epsilon_1)$ and $f^{-1}(\epsilon_2)$. In particular, the co-core disc $D_i^\prime$ is given by $r_1=0$, $r_2\leq \sqrt{\epsilon_2}$, and the binding component $B_i^\prime\subset 
f^{-1}(\epsilon_2)$ is given by $r_1=0$, $r_2=\sqrt{\epsilon_2}$, and $B_i^\prime$ is oriented 
by $-d\theta_2$. From this description, it follows easily that in the equation
$c_i=\int_{D_i^\prime}\omega_Z+\int_{B_i^\prime}\alpha^\prime$, one has 
$\int_{D_i^\prime}\omega_Z=\pi\epsilon_2$.

To evaluate $\int_{B_i^\prime}\alpha^\prime$, we note that the contact form 
$\alpha^\prime=-i_{V^{-}}\omega_0$, where $V^{-}$ is the contracting vector field in a 
dilation-contraction pair and is given by the formula
$$
V^{-}= -\frac{1}{2} r_1\partial_{r_1}-(\frac{1}{2} r_2-\frac{C}{r_2})\partial_{r_2}.
$$
(Here in the expression of $V^{-}$, $C$ is the constant from the structural data.) 
It follows easily that $\alpha^\prime=\frac{1}{2} r_1^2 d\theta_1+(\frac{1}{2} r_2^2 -C)d\theta_2$, 
which implies that $\int_{B_i^\prime}\alpha^\prime=-\pi\epsilon_2+2\pi C$. Finally, to relate $C$ with
the multiplicity $p_i$, we observe that, if with respect to the meridian-longitude pair $(\mu,\lambda)$,
the boundary component $(\partial \Sigma)_i$ of the page of the rational open book is given by 
$\gamma_i=p_i\lambda+q_i\mu$ for some $q_i\in\Z$, then $d\pi=p_i d\mu-q_i d\lambda$. It follows easily from this observation that for each $i$, the constant $c_i=2\pi c p_i$,
where $c>0$ is the constant in $\alpha^0=c \cdot d\pi$. This finishes the proof of the lemma.

\end{proof}

As a consequence, we note that $c_1(K_Z)\cdot [\omega_Z]=\Omega \cdot \sum_i c_1(K_Z)[p_i D_i]$.
The next lemma gives a formula for $\sum_i c_1(K_Z)[p_iD_i]$. 

\begin{lemma}
Let $\Sigma$ be the pages of the rational open book $(B,\pi)$ on $M$, and let $p_i>0$ be the multiplicity of $(B,\pi)$ at the binding component $B_i$. Let $F_i>0$ be the framing of the $2$-handle 
attached along $B_i$ relative to the page framing. Then 
$$
\sum_i c_1(K_Z)[p_iD_i]= -\chi(\Sigma)-\sum_i p_i (1+F_i).
$$
\end{lemma}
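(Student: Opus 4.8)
The plan is to compute $c_1(K_Z)\cdot[p_iD_i]$ one handle at a time, using the fact that the core disks $D_i$ generate $H_2(Z,\partial Z,\Z)$ and that $K_Z$ can be represented by a section of the canonical bundle which is adapted to both the symplectization piece $[0,1]\times M$ and to the handle model. First I would choose a compatible almost complex structure $J$ on $Z$ so that the pages $\Sigma$ of $(B,\pi)$ on $\{1\}\times M$ are $J$-holomorphic and so that near each binding $B_i$ the standard model $(\R^4,\omega_0,J_0)$ of Gay's $2$-handle is used; then $c_1(K_Z)=-c_1(TZ,J)$ evaluated on a relative $2$-cycle built from $D_i$ and a capping surface in $\partial Z$ reduces to counting zeros of a section of $\det_{\C}(TZ,J)^{*}$ along that surface, i.e. to a relative Euler number / Chern–Weil computation. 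The natural capping surface is a page: the binding $B=\sqcup B_i$ together with the cores $D_i$ and a single page $\Sigma\subset\{1\}\times M$ fits together, after suitable orientation bookkeeping, into closed surfaces whose $K_Z$-pairing is accessible.

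The key steps, in order, are: (1) Represent $\sum_i p_iD_i$ by a surface $\widehat{\Sigma}$ obtained by capping the page $\Sigma$ (which has boundary $\sqcup_i (\partial\Sigma)_i$, with $(\partial\Sigma)_i=p_iB_i$ in homology) by $p_i$-fold covers, or more cleanly by the meridian-disk construction, of the core disks $D_i$; verify $[\widehat\Sigma]=\sum_i p_i[D_i]$ in $H_2(Z,\partial Z,\Z)$ using that $[D_i]$ is dual to the co-core $D_i'$. (2) Compute $c_1(K_Z)\cdot[\widehat\Sigma]=-\langle c_1(TZ,J),[\widehat\Sigma]\rangle$ by the adjunction-type splitting $c_1(TZ,J)[\widehat\Sigma]= \chi(\widehat\Sigma)+[\widehat\Sigma]\cdot[\widehat\Sigma]$, so the task becomes evaluating the Euler characteristic of $\widehat\Sigma$ and its self-intersection. (3) Identify $\chi(\widehat\Sigma)$: capping $\Sigma$ contributes $\chi(\Sigma)$ plus a disk ($\chi=1$) for each of the $\sum_i p_i$ boundary circles that get filled, hence $\chi(\widehat\Sigma)=\chi(\Sigma)+\sum_i p_i$ — wait, more care is needed, since the page framing and the chosen $2$-handle framing enter; the correct statement is that the page framing accounts for $\chi(\Sigma)$ and the relative framing $F_i$ shifts the self-intersection. (4) Compute the self-intersection $[\widehat\Sigma]\cdot[\widehat\Sigma]$: pushing $\widehat\Sigma$ off itself, the normal data along each $D_i$ is measured by the framing $F_i$ of the handle relative to the page framing, contributing $-p_i^2 F_i$ (with the sign from the concave-boundary orientation), while the self-linking inside the page framing contributes $-p_i$ from the $p_i$-fold wrapping of $(\partial\Sigma)_i$ around $B_i$. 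Assembling, $c_1(TZ,J)[\widehat\Sigma]=\chi(\Sigma)+\sum_i p_i(1+F_i)$, and negating gives the claimed formula $\sum_i c_1(K_Z)[p_iD_i]=-\chi(\Sigma)-\sum_i p_i(1+F_i)$.

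The main obstacle I expect is step (4), the careful bookkeeping of the self-intersection number: one must track how the three framings in play — the page framing of $(B,\pi)$, the surgery framing $F_i$ used to attach the handle, and the framing induced on $B_i'$ by the handle model in $\R^4$ (i.e. the parameter $\epsilon_1=\tfrac{2}{A}-2D$ appearing in Lemma 5.1's proof) — interact, and to get every sign consistent with the convention that $\partial Z=-M\sqcup -M'$ (so the boundary orientations, and hence the direction in which capping disks are attached, are reversed from the naive expectation). A clean way to organize this is to work in the explicit coordinates $(r_1,\theta_1,r_2,\theta_2)$ from Gay's model, where the core disk is $r_2=0$, the page near the binding is $f=-r_1^2+r_2^2=\text{const}$, and $K_Z$ is trivialized by $dz_1\wedge dz_2$; then $c_1(K_Z)[\widehat\Sigma]$ is literally a winding-number count of this $2$-form's restriction along $\partial\widehat\Sigma$, and the relative framing $F_i$ enters through the attaching map of the handle, giving the $p_i(1+F_i)$ term by a direct local computation. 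The remaining global contribution $\chi(\Sigma)$ comes from the part of $\widehat\Sigma$ lying in $\{1\}\times M$, where the canonical bundle restricted to the $J$-holomorphic page is $K_\Sigma$, whose degree against the (capped) page is $-\chi(\Sigma)$ by the ordinary adjunction formula on the symplectization piece.
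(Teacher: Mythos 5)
Your proposal takes a genuinely different route from the paper, and as written it has significant gaps. The paper never invokes the adjunction formula or builds a capped closed surface. Instead, it works directly with the line bundle $-mK_Z$ and its trivialization over $M$, using (a) the canonical identification $-K_Z|_M\cong\xi$ as oriented plane bundles, (b) the fact that since $d\alpha>0$ on $\Sigma$ all singularities of the characteristic foliation are positive, giving $e(\xi)(\Sigma,f_3)=\chi(\Sigma)$ for the page-framing trivialization $f_3$, and (c) explicit annuli $A_i$ in $M$ connecting $(\partial\Sigma)_i$ to $B_i$ to transport framings and extract the $q_i$ (and hence $F_i=f_i-q_i/p_i$) contribution. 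All the framing comparisons happen in $\partial Z$, so no self-intersection number of any $2$-cycle is ever computed. Your route, by contrast, must confront $[\widehat\Sigma]^2$ head-on.

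The gaps in your proposal are concrete. First, the capped surface $\widehat\Sigma$ is problematic: $(\partial\Sigma)_i$ wraps $p_i$ times around $B_i$, so capping with ``$p_i$-fold covers of $D_i$'' produces a surface with a branch singularity of order $p_i$ at the center of each core disk whenever $p_i>1$. The clean adjunction equality $c_1(TZ)[C]=\chi(C)+[C]^2$ applies to \emph{embedded} $J$-holomorphic curves; for a singular or merely immersed curve one needs $c_1(TZ)[C]=\chi(\hat C)+[C]^2-2\delta(C)$ with a $\delta$-invariant correction that you never introduce, and here $\delta$ would depend on the local model at each branch point. Second, you need $\widehat\Sigma$ to be $J$-holomorphic (not just symplectic) for some compatible $J$ on all of $Z$ simultaneously; for a page of an open book supporting a contact structure this is not automatic, and you only assert, not construct, such a $J$. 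Third, even granting all of this, the self-intersection $[\widehat\Sigma]^2=\bigl(\sum_ip_i[D_i]\bigr)^2$ involves not just diagonal terms $p_i^2[D_i]^2$ but also cross-terms $p_ip_j[D_i]\cdot[D_j]$ coming from the linking of the $B_i$ in $M$; the final formula has no such linking numbers, so they must cancel against either $\chi(\widehat\Sigma)$ or the $\delta$-correction, and this cancellation is nowhere demonstrated. Your step (3) in fact stalls mid-sentence on exactly the Euler-characteristic bookkeeping that would be required here. The idea of using a capped page and adjunction is a reasonable instinct, but carrying it to completion would require roughly as much framing bookkeeping as the paper's argument, plus the extra burden of analyzing the singularities of $\widehat\Sigma$ and choosing an adapted $J$ — burdens the paper's relative-Chern-class approach sidesteps entirely.
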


\begin{proof}
First, there exists an $m>0$ such that $-mK_Z|_M$ is trivial. Moreover, the trivialization of 
$-mK_Z|_M$
is unique as $[M,\s^1]=H^1(M,\Z)=0$. We shall denote by $f_0$ the trivialization of $-mK_Z|_M$,
and compute each $c_1(K_Z)[D_i]$ via $c_1(-mK_Z)(D_i, f_0|_{\partial D_i})$, by the formula
$$
c_1(K_Z)[D_i]=-\frac{1}{m} c_1(-mK_Z)(D_i, f_0|_{\partial D_i}).
$$

Next, there is a canonical identification of $-K_Z|_M$ with the contact structure $\xi$ as oriented 
rank $2$ bundles, where $\xi$ is oriented by $d\alpha$ for a contact form $\alpha$ such that
$d\alpha|_{\Sigma}>0$. This is because near $M$, the symplectic structure $\omega_Z$ on $Z$ 
takes the form of $e^t(dt\wedge \alpha+d\alpha)$, where $\partial_t$ is an inward-pointing normal vector field along $M$. With this understood, there is a $\omega_Z$-compatible almost complex structure $J$ such that $\xi$ is $J$-invariant, and $J(\partial_t)=R_\alpha$, the Reeb vector field associated to $\alpha$. This splits the complex vector bundle $TZ|_M$ as a direct sum of the trivial complex line bundle $Span_\R\{\partial_t,R_\alpha\}$ with the complex line bundle $\xi$, i.e.,
$$
TZ|_M=Span_\R\{\partial_t,R_\alpha\}\oplus \xi, 
$$
and as a consequence, it identifies $-K_Z|_M =\det (TZ|_M)=\xi$. 

We shall be working with $\xi$ instead of $-K_Z|_M$ over $M$. With this understood, it is useful
to observe that if $L$ is a transverse link in $M$ which is oriented by $\alpha$, then $\xi|_L$
is naturally identified with the normal bundle of $L$ in $M$ as oriented rank $2$ bundles. 

With the preceding understood, we observe that $TD_i|_{\partial D_i}=Span_\R\{\partial_t,R_\alpha\}$,
so that the normal bundle $\nu_{D_i}$ of $D_i$ identifies itself with $\xi$ along $\partial D_i$
as oriented rank $2$ bundles. With this understood, let $f_{1,i}$ be the trivialization of $\nu_{D_i}|_{\partial D_i}=\xi|_{\partial D_i}=-K_Z|_{\partial D_i}$ which extends over to a trivialization of 
$\nu_{D_i}$ over $D_i$. Then we have
$$
c_1(-K_Z)(D_i, f_{1,i})=c_1(TZ)(D_i, (\partial_t,f_{1,i}))=e(TD_i)(D_i,\partial_t) + e(\nu_{D_i})(D_i,f_{1,i})=1.
$$
Denoting by $f_{1,i}^m$ the corresponding trivialization of $-mK_Z|_{\partial D_i}$, we obtain 
$$
c_1(-mK_Z)(D_i, f_0|_{\partial D_i})=(f_0-f_{1,i}^m)|_{\partial D_i}+m.
$$

Next, we shall compute the difference of the framings $(f_0-f_{1,i}^m)|_{\partial D_i}\in\Z$. To this end,
we fix a trivialization, denoted by $f_{2,i}$, of the normal bundle of $B_i$ in $M$, which is identified to
$\xi|_{B_i}$, and let $(\lambda_i,\mu_i)$ be the corresponding longitude-meridian pair. Then 
$\gamma_i=(\partial \Sigma)_i=p_i\lambda_i+q_i\mu_i$ for some $q_i\in \Z$, where
$p_i>0$ is the multiplicity of the rational open book $(B,\pi)$ at $B_i$. 
Note that if we denote by $f_i\in\Z$ the framing of the $2$-handle attached along $B_i$ relative to the trivialization $f_{2,i}$ 
of the normal bundle of $B_i$, then the page framing $F_i=f_i-\frac{q_i}{p_i}$. 

To proceed further, we note that $\partial \Sigma=\sqcup_i (\partial \Sigma)_i$ is a positively oriented transverse link, thus $\xi|_{\partial \Sigma}$ is identified with the oriented normal bundle. Let $f_3$ be the trivialization of $\xi|_{\partial \Sigma}$ which corresponds to the zero-framing relative to the Seifert surface $\Sigma$.
Then 
$$
e(\xi)(\Sigma,f_3)=e(T\Sigma)(\Sigma,f_3)=\chi(\Sigma),
$$
because all the singular points of the characteristic foliation of $\xi$ along $\Sigma$ have the $+$ sign, 
as $d\alpha>0$ on $\Sigma$. Denoting by $f_3^m$ the corresponding trivialization of 
$m\xi|_{\partial \Sigma}$, it follows easily that 
$$
(f_0-f_3^m)|_{\partial \Sigma}=e(m\xi)(\Sigma,f_0|_{\partial \Sigma})-e(m\xi)(\Sigma, f_3^m)=-m\chi(\Sigma).
$$

On the other hand, we note that for each $i$, there is an annulus $A_i$ in $M$, with one of the two boundary components being $(\partial \Sigma)_i$ while the other being a $p_i$-fold covering of $B_i$, such that the characteristic foliation of $\xi$ along $A_i$ contains no singular points. 
We parametrize $A_i$ by $[0,1]\times \s^1$, sending $\{0\}\times \s^1$ onto $B_i$ as a 
$p_i$-fold covering, and pull back the bundle $\xi$ to $A_i=[0,1]\times \s^1$. With this understood, note that $(f_0-f_{2,i}^m)|_{\{0\}\times \s^1}=p_i (f_0-f_{2,i}^m)|_{B_i}$ and
$(f_0-f_3^m)|_{(\partial \Sigma)_i}=(f_0-f_3^m)|_{\{1\}\times \s^1}$. Moreover, 
$$
(f_0-f_{2,i}^m)|_{\{0\}\times \s^1}-(f_0-f_3^m)|_{\{1\}\times \s^1}=
m e(\xi)([0,1]\times \s^1,f_{2,i}|_{\{0\}\times \s^1}, f_3|_{\{1\}\times \s^1}).
$$
Since the characteristic foliation of $\xi$ along $A_i$ contains no singular points, it follows easily
that 
$$
e(\xi)([0,1]\times \s^1,f_{2,i}|_{\{0\}\times \s^1}, f_3|_{\{1\}\times \s^1})
=(f_{3,i}-f_{2,i})|_{\{0\}\times \s^1}=q_i,
$$
where $f_{3,i}$ is the lifting of the page framing along $B_i$ under the $p_i$-fold covering map 
$\{0\}\times \s^1\rightarrow B_i$. Consequently,
$$
\sum_i p_i (f_0-f_{2,i}^m)|_{B_i}=-m \chi(\Sigma)+m\sum_i q_i.
$$
Finally, with $\partial D_i=-B_i$, we obtain
$$
\sum_i p_i(f_0-f_{1,i}^m)|_{\partial D_i}
=\sum_i p_i (f_{1,i}^m-f_{2,i}^m)|_{B_i}-\sum_i p_i (f_0-f_{2,i}^m)|_{B_i}=mp_i f_i+m\chi(\Sigma)-
m\sum_i q_i.
$$

Putting things together, we have
$$
\sum_i c_1(K_Z)[p_i D_i]=\sum_i -\frac{p_i}{m} c_1(-mK_Z)(D_i, f_0|_{\partial D_i})
=\sum_i -\frac{1}{m} (p_i(f_0-f_{1,i}^m)|_{\partial D_i}+p_im), 
$$
and with $F_i=f_i-\frac{q_i}{p_i}$, we obtain 
$\sum_i c_1(K_Z)[p_iD_i]= -\chi(\Sigma)-\sum_i p_i (1+F_i)$.

\end{proof}

Now we return to the special case where $M$ is a Seifert manifold and $(B,\pi)$ is of periodic monodromy, as in Theorem 4.4, with $g_0=0$ (as both $M$, $M^\prime$
are $\Q$-homology $3$-spheres). We observe, as a corollary of Lemmas 5.1 and 5.2,  the following formula of $c_1(K_Z)\cdot [\omega_Z]$
which does not involve explicitly the page framings $F_i$. 

\begin{corollary}
For some constant $\Omega>0$, 
$$
c_1(K_Z)\cdot [\omega_Z]=\Omega(n(s+r-2-\sum_{i=1}^s\frac{1}{\alpha_i}-\sum_{j=1}^r\frac{1}{\alpha_j})
-\sum_{i=1}^s p_i(1-\frac{\bar{\alpha}_i}{\alpha_i})),
$$
where $s,r$ are the number of indices of $i,j$ respectively, $n$ is the order of the monodromy of the
rational open book $(B,\pi)$, $p_i$ is the multiplicity at the binding component $B_i$, and $\alpha_i,
\bar{\alpha}_i$ are the multiplicities of the binding components $B_i,B_i^\prime$ as fibers of the
Seifert fibrations on $M,M^\prime$ respectively {\em(}with the sign convention specified in Theorem 4.4{\em)}. 
\end{corollary}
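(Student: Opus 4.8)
The plan is to combine Lemmas 5.1 and 5.2 and then carry out two purely arithmetic reductions. By Lemma 5.1 we have $[\omega_Z]=\Omega\sum_i p_i[D_i]$ with $\Omega>0$, so pairing with $c_1(K_Z)$ gives $c_1(K_Z)\cdot[\omega_Z]=\Omega\sum_i c_1(K_Z)[p_iD_i]$ (this is the observation already recorded just before Lemma 5.2). Feeding in Lemma 5.2, it therefore suffices to establish the identity
$$-\chi(\Sigma)-\sum_i p_i(1+F_i)=n\Bigl(s+r-2-\sum_{i=1}^s\tfrac{1}{\alpha_i}-\sum_{j=1}^r\tfrac{1}{\alpha_j}\Bigr)-\sum_{i=1}^s p_i\bigl(1-\tfrac{\bar{\alpha}_i}{\alpha_i}\bigr),$$
and then multiply through by $\Omega$.

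The first reduction computes $\chi(\Sigma)$. Recall from the setup of Section 3 that the closed surface $\tilde{S}$ is obtained from the page $\Sigma$ by capping each of the $s$ boundary circles $(\partial\Sigma)_i$ with a disc $D_i$, so that $\chi(\Sigma)=\chi(\tilde{S})-s$; and that $S_0=\tilde{S}/\langle\phi\rangle$ has underlying genus $g_0=0$ (since $M$, $M'$ are $\Q$-homology spheres) with exactly $s$ cone points of order $n$ — the images of the capping discs — together with $r$ cone points of orders $n_j=\alpha_j$. Multiplicativity of the orbifold Euler characteristic under the degree-$n$ orbifold cover $\tilde{S}\to S_0$ then gives $\chi(\tilde{S})=n\bigl(2-\sum_{i=1}^s(1-\tfrac{1}{n})-\sum_{j=1}^r(1-\tfrac{1}{\alpha_j})\bigr)$, hence $\chi(\Sigma)=n\bigl(2-s-r+\sum_{j=1}^r\tfrac{1}{\alpha_j}\bigr)$, i.e.\ $-\chi(\Sigma)=n\bigl(s+r-2-\sum_{j=1}^r\tfrac{1}{\alpha_j}\bigr)$.

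The second reduction eliminates the page framings $F_i$ in favour of the Seifert data. Theorem 4.4 gives $\bar{\alpha}_i=\tfrac{n}{p_i}-F_i\alpha_i$, hence $p_iF_i=\tfrac{n}{\alpha_i}-\tfrac{p_i\bar{\alpha}_i}{\alpha_i}$ and therefore $p_i(1+F_i)=\tfrac{n}{\alpha_i}+p_i\bigl(1-\tfrac{\bar{\alpha}_i}{\alpha_i}\bigr)$ for each $i$. Summing over $i$ and adding the formula for $-\chi(\Sigma)$ produces exactly the displayed identity, and the corollary follows.

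I do not expect a genuine obstacle: the argument is bookkeeping built on results already in hand. The one point requiring care is the orbifold Euler-characteristic step — one must use that $g_0=0$, correctly register that the $s$ binding cone points of $S_0$ all have order $n$ (not $\alpha_i$), and not forget the $-s$ coming from the capping discs — together with keeping track of the sign conventions on $\alpha_i$ and $\bar{\alpha}_i$ inherited from Theorem 4.4, which enter the intermediate expressions but wash out of the final formula.
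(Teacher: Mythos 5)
Your proposal is correct and follows exactly the route the paper intends: the paper states Corollary 5.3 as an immediate consequence of Lemmas 5.1 and 5.2 without writing out the arithmetic, and your computation — the orbifold Euler characteristic identity $\chi(\Sigma)=n\bigl(2-s-r+\sum_j\tfrac{1}{\alpha_j}\bigr)$ for the degree-$n$ branched cover minus the $s$ capping discs, followed by substituting $F_i=\tfrac{1}{\alpha_i}\bigl(\tfrac{n}{p_i}-\bar\alpha_i\bigr)$ from Theorem 4.4 — is precisely the bookkeeping being left implicit. Both reductions check out.
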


Finally, suppose $W$ and $W^\prime$ are symplectic fillings of $(M,\xi)$ and $(M^\prime,\xi^\prime)$
respectively. Let $X:=W\cup Z\cup W^\prime$ be the closed symplectic $4$-manifold formed by taking the union. Given a symplectic structure on $W$ and $W^\prime$, there is a natural symplectic structure $\omega_X$ on $X$.

\begin{lemma}
If the symplectic structures on $W$ and $W^\prime$ are exact, then 
$$
c_1(K_X)\cdot [\omega_X]=c_1(K_Z)\cdot [\omega_Z]. 
$$
\end{lemma}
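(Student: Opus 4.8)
The plan is to compute $c_1(K_X)\cdot[\omega_X]$ by de Rham theory: fix a closed $2$-form $\kappa$ on $X$ representing $c_1(K_X)$, split $\int_X\kappa\wedge\omega_X$ over the three pieces $W$, $Z$, $W^\prime$, and exploit exactness of the symplectic forms on the two fillings to turn the $W$- and $W^\prime$-contributions into boundary integrals. First I would record two facts coming from the construction of $\omega_X$: since $Z$ is a codimension-zero piece, $TX|_Z=TZ$, so (choosing a compatible almost complex structure on $X$ extending one on $Z$) $K_X|_Z=K_Z$ and $\kappa|_Z$ represents the image in $H^2(Z,\R)$ of the relative class $c_1(K_Z)\in H^2(Z,\partial Z,\R)$; and $\omega_X$ restricts to $\omega_Z$ on $Z$ — any rescaling needed to match Gay's construction to a filling can be absorbed into $\omega_W$ and $\omega_{W^\prime}$, which is possible precisely because the fillings are exact. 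Writing $\omega_X|_W=d\lambda_W$ and $\omega_X|_{W^\prime}=d\lambda_{W^\prime}$ with global primitives (these exist by exactness), Stokes' theorem gives $\int_W\kappa\wedge\omega_X=\int_M\kappa\wedge\lambda_W$ and $\int_{W^\prime}\kappa\wedge\omega_X=\int_{M^\prime}\kappa\wedge\lambda_{W^\prime}$, where $M=\partial W$, $M^\prime=\partial W^\prime$ carry their orientations as boundaries of the fillings.

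Next I would handle the middle piece, where the relative canonical class must be brought in. Pick a closed $2$-form $\kappa_Z$ on $Z$ vanishing near $\partial Z$ and representing $c_1(K_Z)\in H^2(Z,\partial Z,\R)$, so that $c_1(K_Z)\cdot[\omega_Z]=\int_Z\kappa_Z\wedge\omega_Z$ by definition. Since $\kappa|_Z$ and $\kappa_Z$ represent the same class in $H^2(Z,\R)$, write $\kappa|_Z=\kappa_Z+d\sigma$; then $\int_Z\kappa\wedge\omega_Z=c_1(K_Z)\cdot[\omega_Z]+\int_{\partial Z}\sigma\wedge\omega_Z$. Summing the three contributions and using $\partial Z=-M\sqcup -M^\prime$ yields
$$
c_1(K_X)\cdot[\omega_X]=c_1(K_Z)\cdot[\omega_Z]+\int_M\big(\kappa\wedge\lambda_W-\sigma\wedge\omega_Z\big)+\int_{M^\prime}\big(\kappa\wedge\lambda_{W^\prime}-\sigma\wedge\omega_Z\big),
$$
so everything reduces to showing the two boundary terms vanish.

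That is the crux of the argument. On $M$, computing $\omega_X|_M$ from the $W$-side gives $\omega_Z|_M=\omega_X|_M=d(\lambda_W|_M)$, while $\kappa|_M=d(\sigma|_M)$ because $\kappa_Z$ vanishes near $\partial Z$ (equivalently $H^2(M,\R)=0$ since $M$ is a $\Q$-homology sphere). Hence on $M$ the integrand equals
$$
d(\sigma|_M)\wedge(\lambda_W|_M)-(\sigma|_M)\wedge d(\lambda_W|_M)=d\big((\sigma|_M)\wedge(\lambda_W|_M)\big),
$$
which integrates to zero over the closed $3$-manifold $M$; the same applies verbatim to $M^\prime$. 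This gives $c_1(K_X)\cdot[\omega_X]=c_1(K_Z)\cdot[\omega_Z]$.

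I expect no conceptual obstacle: the whole proof is an organized application of Stokes' theorem. The delicate points are purely bookkeeping — keeping the boundary orientations consistent (so that the Stokes contributions of the fillings cancel against the $\partial Z$-term produced by the comparison between the relative and absolute canonical classes on $Z$), and making sure the construction of $\omega_X$ really restricts to $\omega_Z$ rather than to a positive multiple of it on $Z$, which is exactly where the hypothesis that the fillings are exact is used.
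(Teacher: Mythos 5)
Your proof is correct, but it reaches the conclusion by a more explicit, de Rham-theoretic route than the paper does. The paper's argument is essentially one line: since $M,M^\prime$ are $\Q$-homology $3$-spheres, the classes $c_1(K_W)$, $c_1(K_{W^\prime})$ lift uniquely to $H^2(W,M,\R)$ and $H^2(W^\prime,M^\prime,\R)$, so the additivity formula
$$
c_1(K_X)\cdot [\omega_X]=c_1(K_W)\cdot [\omega_W]+c_1(K_Z)\cdot [\omega_Z]+
c_1(K_{W^\prime})\cdot [\omega_{W^\prime}]
$$
makes sense term by term, and since exactness gives $[\omega_W]=0\in H^2(W,\R)$ and $[\omega_{W^\prime}]=0\in H^2(W^\prime,\R)$ the first and third terms vanish. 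The paper treats the additivity formula itself as standard. What you do differently is prove that decomposition and its collapse simultaneously by choosing explicit representatives (a closed $\kappa$ for $c_1(K_X)$, a compactly supported $\kappa_Z$ for the relative class on $Z$, and primitives $\lambda_W,\lambda_{W^\prime}$), pushing everything through Stokes, and then observing that the residual boundary integrands $d(\sigma|_M)\wedge\lambda_W-\sigma|_M\wedge d(\lambda_W|_M)=d(\sigma|_M\wedge\lambda_W|_M)$ are exact $3$-forms on the closed manifolds $M,M^\prime$. This buys a fully self-contained argument, including a proof of the additivity formula the paper invokes, at the cost of more orientation and representative-choice bookkeeping. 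One small remark: in your closing paragraph you locate the use of exactness in ensuring $\omega_X|_Z=\omega_Z$ on the nose rather than a positive multiple, but that is not really where the hypothesis bites --- a rescaling of a filling can always be absorbed regardless of exactness. The essential and only place exactness enters your argument is in supplying the global primitives $\lambda_W,\lambda_{W^\prime}$ that make the Stokes computation on $W$ and $W^\prime$ possible, which you already identify correctly earlier; in the paper's phrasing the same hypothesis appears more cleanly as the single statement $[\omega_W]=[\omega_{W^\prime}]=0$.
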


\begin{proof}
Since $M,M^\prime$ are $\Q$-homology $3$-spheres, $c_1(K_W)$, $c_1(K_{W^\prime})$ lift
uniquely to a class in $H^2(W, M,\R)$ and $H^2(W^\prime, M^\prime,\R)$ respectively. With this
understood, we have 
$$
c_1(K_X)\cdot [\omega_X]=c_1(K_W)\cdot [\omega_W]+c_1(K_Z)\cdot [\omega_Z]+
c_1(K_{W^\prime})\cdot [\omega_{W^\prime}],
$$
where $\omega_W$, $\omega_{W^\prime}$ are the symplectic structures on $W$ and $W^\prime$
respectively. Note that $c_1(K_W)\cdot [\omega_W]\in H^4(W,M,\R)=\R$ and 
$c_1(K_{W^\prime})\cdot [\omega_{W^\prime}]\in H^4(W^\prime,M^\prime,\R)=\R$. Now if 
$\omega_W$, $\omega_{W^\prime}$ are exact $2$-forms, we have $[\omega_W]=0\in H^2(W,\R)$,
$[\omega_{W^\prime}]=0 \in H^2(W^\prime,\R)$, which implies that 
$c_1(K_X)\cdot [\omega_X]=c_1(K_Z)\cdot [\omega_Z]$.

\end{proof}

We remark that the condition in Lemma 5.4 is fulfilled when $W,W^\prime$ are either a Stein domain or a $\Q$-homology ball. Moreover, by work of Taubes \cite{T}, when $X$ has a small topology,
or $X$ is minimal, one can determine the symplectic Kodaira dimension of $X$ by the sign of
$c_1(K_X)\cdot [\omega_X]$ (cf. \cite{Li}). 

\section{Rational unicuspidal curves with one Puiseux pair}

In this section, we consider symplectic rational unicuspidal curves with one Puiseux pair in a symplectic $4$-manifold. Employing the techniques developed in Sections 2 to 5, we give a proof of Theorem 1.17, in the course of which the contact geometry perspective for Conjecture 1.15 should become clear. We shall also provide the details, with examples and technical lemmas, for the discussions in Remark 1.16. Finally, we point out that as a consequence of Theorem 1.17, the list of rational unicuspidal curves with one Puiseux pair in $\C\P^2$ (cf. \cite{F}) 
gives rise to infinitely many $\Q$-homology $3$-spheres, each equipped with an $\s^1$-invariant contact structure, 
which are filled by a symplectic $\Q$-homology ball (cf. Example 6.7). 

By a symplectic curve in a symplectic $4$-manifold, we always assume that the curve is analytic near the singularities and the symplectic form is K\"{a}hler with respect to the local complex structure near the singularities. Thus we can speak of a symplectic rational unicuspidal curve with one Puiseux pair $(p,q)$, which is simply a singular symplectic sphere with a unique cusp singularity whose link is a $(p,q)$-torus knot $T_{p,q}$, where $1<p<q$ are relative prime. 

We begin with some technical lemmas. Assume $p,q$ such that $1\leq p<q$ and $gcd(p,q)=1$ 
(note that we allow $p=1$ here). Let $C_0$ be the curve in $\C^2$ defined by $z_1=z^p$, 
$z_2=cz^q$. Introducing $f(z_1,z_2)=c^pz_1^q-z_2^p$, it is clear that $C_0=f^{-1}(0)$. 
The curve $C_0$ is invariant under the following $\C^\ast$-action on $\C^2$:
$$
\lambda\cdot (z_1,z_2)=(\lambda^p z_1,\lambda^q z_2), \;\; \lambda\in\C^\ast,
$$
as $f(\lambda\cdot (z_1,z_2))=\lambda^{pq}f(z_1,z_2)$.

The corresponding $\s^1$-action is Hamiltonian with respect to the standard symplectic form
$\omega_0:=\frac{i}{2}(dz_1\wedge d\bar{z}_1+dz_2\wedge d\bar{z}_2)$. If we let $(r_i,\theta_i)$ be the polar coordinates for $z_i$, then the vector field generating 
the $\s^1$-action is given by
$$
X:=p\frac{\partial}{\partial \theta_1}+q\frac{\partial}{\partial \theta_2},
$$
and $i_X\omega_0=-dH$, where the Hamiltonian $H:=\frac{1}{2}(pr_1^2+qr_2^2)$.

\begin{lemma}
There exists a $1$-form $\tau$ on $\C^2$, such that $\omega_0=d\tau$. Moreover, for each
$\lambda>0$, we set $\tau_\lambda:=\frac{1}{\lambda}\tau|_{H^{-1}(\lambda)}$. Then 
\begin{itemize}
\item [{(i)}] $i\tau_\lambda$ is a connection $1$-form for the orbifold principal $\s^1$-bundle 
$H^{-1}(\lambda)\rightarrow H^{-1}(\lambda)/\s^1$, for each $\lambda>0$, and 
\item [{(ii)}] $\tau_\lambda$ is a contact form on $H^{-1}(\lambda)$ for each $\lambda>0$.
\end{itemize}
\end{lemma}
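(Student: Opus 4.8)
The plan is to write down an explicit primitive of $\omega_0$ and then deduce (i) and (ii) by short computations together with the standard ``hypersurface of contact type'' criterion. Concretely, I would take
$$
\tau := \tfrac12\big(r_1^2\,d\theta_1 + r_2^2\,d\theta_2\big),
$$
which in Cartesian coordinates $z_j=x_j+iy_j$ equals $\tfrac12\sum_j(x_j\,dy_j-y_j\,dx_j)$, hence is smooth and globally defined on $\C^2$; a one-line computation gives $d\tau = r_1\,dr_1\wedge d\theta_1+r_2\,dr_2\wedge d\theta_2=\omega_0$, which is the first assertion. Along the way I would record the two identities that drive the rest: since $X=p\,\partial_{\theta_1}+q\,\partial_{\theta_2}$, one has $\tau(X)=\tfrac12(pr_1^2+qr_2^2)=H$, and $i_X\omega_0=-pr_1\,dr_1-qr_2\,dr_2=-dH$ (as already noted in the text).

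For part (i), first observe that $H^{-1}(\lambda)=\{\,\tfrac12(pr_1^2+qr_2^2)=\lambda\,\}$ is a smooth ellipsoid, hence a closed $3$-manifold diffeomorphic to $\s^3$, and the $\s^1$-action on it is locally free: the stabilizer of a point lies in $\Z_p$ on the circle $\{z_2=0\}$, in $\Z_q$ on $\{z_1=0\}$, and is trivial elsewhere (this is where $\gcd(p,q)=1$ enters). Thus $H^{-1}(\lambda)\to H^{-1}(\lambda)/\s^1$ is an orbifold principal $\s^1$-bundle over a $2$-orbifold (a spindle $\s^2(p,q)$). Now $\tau$ is manifestly $\s^1$-invariant, being built from the invariant functions $r_j$ and the invariant forms $d\theta_j$, so $\tau_\lambda=\tfrac1\lambda\tau|_{H^{-1}(\lambda)}$ is invariant, and $\tau_\lambda(X)=\tfrac1\lambda\,H|_{H^{-1}(\lambda)}=1$. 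Invariance together with the normalization $\tau_\lambda(X)\equiv 1$ is precisely the statement that $i\tau_\lambda$ is a connection $1$-form (in the orbifold sense), which proves (i).

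For part (ii), I would invoke the radial Liouville field $V:=\tfrac12\big(r_1\,\partial_{r_1}+r_2\,\partial_{r_2}\big)$. One checks $i_V\omega_0=\tfrac12 r_1^2\,d\theta_1+\tfrac12 r_2^2\,d\theta_2=\tau$, whence $\mathcal L_V\omega_0=d\,i_V\omega_0=\omega_0$, so $V$ is a global dilation (Liouville) vector field; moreover $V(H)=\tfrac12(pr_1^2+qr_2^2)=H$, so along $H^{-1}(\lambda)$ one has $V(H)=\lambda>0$, i.e.\ $V$ is transverse to $H^{-1}(\lambda)$ and points out of the star-shaped domain $\{H\le\lambda\}$. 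Since $H^{-1}(\lambda)=\partial\{H\le\lambda\}$, the restriction $(i_V\omega_0)|_{H^{-1}(\lambda)}=\tau|_{H^{-1}(\lambda)}$ is therefore a positive contact form, and rescaling by $1/\lambda>0$ preserves this; hence $\tau_\lambda$ is a contact form on $H^{-1}(\lambda)$. (Equivalently, consistent with the previous paragraph: $d\tau_\lambda=\tfrac1\lambda\,\omega_0|_{H^{-1}(\lambda)}$, and since $i_X\omega_0=-dH$ vanishes on $TH^{-1}(\lambda)$ the characteristic line field of $\omega_0|_{H^{-1}(\lambda)}$ is spanned by $X$; as $\tau_\lambda(X)=1$, $d\tau_\lambda$ is nondegenerate on $\ker\tau_\lambda$, so $\tau_\lambda\wedge d\tau_\lambda\neq0$.)

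There is no real obstacle here: the argument is entirely formal once the primitive $\tau$ is chosen correctly. The only points needing a little care are conventions — choosing the orientation of $H^{-1}(\lambda)$ (as $\partial\{H\le\lambda\}$) so that $\tau_\lambda$ comes out as a \emph{positive} contact form, and interpreting ``connection $1$-form'' in the orbifold sense since for $p>1$ or $q>1$ the base has genuine orbifold points. Both are handled cleanly by the identities $\tau(X)=H$ and $i_V\omega_0=\tau$.
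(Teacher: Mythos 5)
Your proposal is correct, and your choice of $\tau$ is in fact the same as the paper's: the paper introduces the auxiliary coordinate $r^2=pr_1^2+qr_2^2$ and writes $\tau=\tfrac12\bigl(r_1^2(d\theta_1-\tfrac{p}{q}d\theta_2)+\tfrac{1}{q}r^2\,d\theta_2\bigr)$, which expands to precisely $\tfrac12(r_1^2\,d\theta_1+r_2^2\,d\theta_2)$. The verification of (i) is also identical in substance (invariance plus $\tau_\lambda(X)=1$). Where you differ is in part (ii): the paper verifies the contact condition by a direct three-line computation of $\tau_\lambda\wedge d\tau_\lambda=\tfrac{1}{\lambda q}\,r_1\,dr_1\wedge d\theta_1\wedge d\theta_2>0$ in the adapted coordinates $(r_1,\theta_1,r^2,\theta_2)$, whereas you invoke the Liouville vector field $V=\tfrac12(r_1\partial_{r_1}+r_2\partial_{r_2})$ together with $i_V\omega_0=\tau$, $V(H)=H$, and the standard ``boundary of a star-shaped domain is of contact type'' criterion. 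Your route is a bit more conceptual and makes the sign/orientation conventions transparent without computation; the paper's route yields the explicit volume form, which is occasionally useful downstream. Both are valid and yield the same conclusion; your alternate remark about the characteristic line field being spanned by $X$ is also a correct reformulation.
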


\begin{proof}
Note that $\omega_0=\frac{1}{2} (dr_1^2\wedge d\theta_1+dr^2_2\wedge d\theta_2)$. Introducing
$r^2=pr_1^2+qr_2^2$, we get 
$$
\omega_0=\frac{1}{2} (dr_1^2\wedge (d\theta_1-\frac{p}{q}d\theta_2)+\frac{1}{q} dr^2\wedge d\theta_2).
$$
With this understood, we simply set $\tau=\frac{1}{2}(r_1^2(d\theta_1-\frac{p}{q}d\theta_2)+\frac{1}{q} r^2d\theta_2)$.

Now for each $\lambda>0$, we have 
$\tau_\lambda=\frac{1}{2\lambda} r_1^2(d\theta_1-\frac{p}{q}d\theta_2)+\frac{1}{q}d\theta_2$.
One can check easily that $\tau_\lambda(X)=1$, $L_X\tau_\lambda=0$, and 
$\tau_\lambda\wedge d\tau_\lambda=\frac{1}{\lambda q} r_1 dr_1\wedge d\theta_1\wedge d\theta_2>0$. Hence the lemma.

\end{proof}

For each $\lambda>0$, we define an $\s^1$-equivariant diffeomorphism 
$\psi_\lambda: \C^2\rightarrow \C^2$ by
$$
\psi_\lambda(r_1,\theta_1,r_2,\theta_2)=(\frac{r_1}{\sqrt{\lambda}},\theta_1, \frac{r_2}{\sqrt{\lambda}},
\theta_2).
$$
We observe that $\psi_\lambda$ maps $H^{-1}(\lambda)$ diffeomorphically onto $H^{-1}(1)$, and
moreover, 
$$
\tau_\lambda=\psi^\ast_\lambda \tau_1.
$$
As a consequence, if we identify $H^{-1}([a,b])$ with $[a,b]\times H^{-1}(1)$ via $\psi_\lambda$,
then the symplectic form $\omega_0=d(\lambda \tau_1)$, $\lambda\in [a,b]$. 

For the next lemma, for simplicity we continue to denote by $C_0$ the part of $C_0$ that is contained in $H^{-1}([a,b])$. We shall consider a symplectic structure $\omega$ on $H^{-1}([a,b])$, which has the following form:
identifying $H^{-1}([a,b])$ with $[a,b]\times H^{-1}(1)$ via $\psi_\lambda$, then $\omega=d(\lambda\alpha)$, where 
$\lambda\in [a,b]$ and $\alpha$ is a contact form on $H^{-1}(1)$ and $i\alpha$ is a connection $1$-form for the orbifold principal $\s^1$-bundle $H^{-1}(1)\rightarrow H^{-1}(1)/\s^1$. For example, for the standard symplectic form 
$\omega_0$ on $H^{-1}([a,b])$, $\omega_0=d(\lambda \tau_1)$.

With this understood, it is easy to check that $C_0$ is a symplectic annulus in $H^{-1}([a,b])$ with respect to 
$\omega$, with two boundary components denoted by $F_a\subset H^{-1}(a)$, $F_b\subset H^{-1}(b)$. Note that both $F_a,F_b$ are a regular fiber of the corresponding Seifert fibration on $H^{-1}(a)$, $H^{-1}(b)$ respectively.

\begin{lemma}
Fix $\omega$ as the symplectic structure on $H^{-1}([a,b])$, and 
let $\tilde{F_a}\subset H^{-1}(a)$ be a regular fiber nearby $F_a$. Then there exists a symplectic 
annulus $\tilde{C}_0$ in $H^{-1}([a,b])$ with the following significance:
\begin{itemize}
\item [{(i)}] the boundary component of $\tilde{C}_0$ in $H^{-1}(a)$ is  $\tilde{F_a}$,
\item [{(ii)}] $\tilde{C}_0=C_0$ in  $H^{-1}([\frac{a+b}{2},b])$, and 
\item [{(iii)}] $\tilde{C}_0$ is smoothly isotopic to $C_0$ in $H^{-1}([a,b])$. 
\end{itemize}
\end{lemma}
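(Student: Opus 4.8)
The plan is to build $\tilde C_0$ as a ``vertical annulus'' over a suitably chosen arc in the base of the Seifert fibration, using the identification $H^{-1}([a,b])\cong[a,b]\times H^{-1}(1)$ from the paragraph preceding the lemma, under which $\omega_0=d(\lambda\tau_1)$ and $H^{-1}(\lambda)=\{\lambda\}\times H^{-1}(1)$. Write $\pi\colon H^{-1}(1)\rightarrow S:=H^{-1}(1)/\s^1$ for the Seifert quotient; by Lemma 6.3, $i\tau_1$ is a connection $1$-form for this orbifold $\s^1$-bundle, $X$ is simultaneously the generator of the $\s^1$-action and the Reeb field of $\tau_1$, and the (at most two) singular points of $S$ are the images of the fibers $\{r_1=0\}$ and $\{r_2=0\}$. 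Since $C_0\setminus\{0\}$ is a single free orbit of the $\C^\ast$-action $\lambda\cdot(z_1,z_2)=(\lambda^pz_1,\lambda^qz_2)$ and meets each $H^{-1}(\lambda)$, $\lambda\in[a,b]$, in a single regular fiber lying off $\{r_1=0\}\cup\{r_2=0\}$, the curve $C_0$ corresponds to the vertical annulus $\Gamma_{\sigma_0}:=\{(\lambda,y):\lambda\in[a,b],\ y\in\pi^{-1}(\sigma_0(\lambda))\}$ over the arc $\sigma_0(\lambda):=\pi(\psi_\lambda(C_0\cap H^{-1}(\lambda)))$ in $S$; a short computation (the ratio $r_1/r_2$ is strictly monotone in $\lambda$ along $C_0$) shows $\sigma_0$ is embedded and avoids the singular points.

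The key observation is that for \emph{every} smooth arc $\sigma\colon[a,b]\rightarrow S$ missing the singular points, $\Gamma_\sigma:=\{(\lambda,y):y\in\pi^{-1}(\sigma(\lambda))\}$ is a properly embedded annulus in $[a,b]\times H^{-1}(1)$, with $\partial\Gamma_\sigma\subset H^{-1}(a)\sqcup H^{-1}(b)$, and it is automatically symplectic. Indeed, at any point of $\Gamma_\sigma$ the tangent plane is spanned by $X$ and a horizontal lift $v=\partial_\lambda+w$ of $\partial_\lambda$, where $w\in\ker\tau_1$ and $\pi_\ast w=\dot\sigma$; using $\omega_0=d\lambda\wedge\tau_1+\lambda\,d\tau_1$ together with $\tau_1(X)=1$, $d\lambda(X)=0$ and $i_X\,d\tau_1=0$ (the curvature $d\tau_1$ of the connection $i\tau_1$ is basic, by Lemma 6.3), one gets $\omega_0(X,v)=-1$. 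So $\Gamma_\sigma$ is symplectic with the orientation making $\omega_0(v,X)>0$, and for $\sigma=\sigma_0$ this agrees with the complex orientation on $C_0$.

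With this in hand, set $\tilde p_a:=\pi(\psi_a(\tilde F_a))\in S$; since $\tilde F_a$ is a regular fiber close to $F_a$, $\tilde p_a$ is close to $p_a:=\sigma_0(a)$, so there is a smooth family of embedded arcs $\sigma_t\colon[a,b]\rightarrow S$, $t\in[0,1]$, missing the singular points, with $\sigma_t=\sigma_0$ on $[\tfrac{a+b}{2},b]$ for all $t$, $\sigma_0$ the arc above, and $\sigma_1(a)=\tilde p_a$; one may take the $\sigma_t$ inside a fixed tubular neighbourhood of $\sigma_0$. Define $\tilde C_0:=\Gamma_{\sigma_1}$. By the previous paragraph it is a symplectic annulus. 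Its boundary component in $H^{-1}(a)$ is $\{a\}\times\pi^{-1}(\tilde p_a)=\tilde F_a$, which gives (i); the relation $\sigma_1=\sigma_0$ on $[\tfrac{a+b}{2},b]$ gives $\tilde C_0\cap H^{-1}([\tfrac{a+b}{2},b])=C_0\cap H^{-1}([\tfrac{a+b}{2},b])$, which is (ii); and $t\mapsto\Gamma_{\sigma_t}$ is a smooth path of properly embedded annuli from $C_0$ to $\tilde C_0$, i.e.\ a proper isotopy in $H^{-1}([a,b])$, which extends to an ambient isotopy by the isotopy extension theorem, giving (iii).

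The symplecticity of $\Gamma_\sigma$ is a one-line computation, and (i), (ii) are immediate from the construction, so the only real work is the soft point-set input of the first and third paragraphs: showing that $\sigma_0$ is an honest embedded arc in $S$ disjoint from the (at most two) singular points, and constructing the family $\sigma_t$ with all the required boundary behaviour while staying in a neighbourhood of $\sigma_0$ where everything is controlled --- this is where the hypothesis that $\tilde F_a$ be nearby $F_a$ enters. I expect the embeddedness of $\sigma_0$, which reduces to the monotonicity of $r_1/r_2$ along $C_0$, to be the main (if elementary) point to pin down.
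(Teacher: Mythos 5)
Your proof is correct and follows essentially the same route as the paper: identify $H^{-1}([a,b])\cong[a,b]\times H^{-1}(1)$ so that $\omega_0=d(\lambda\tau_1)$, view $C_0$ as the vertical annulus over a path $\lambda\mapsto\gamma(\lambda)$ in the orbit space avoiding the singular points, deform the path near $\lambda=a$ to hit the point under $\tilde F_a$ while leaving it fixed on $[\tfrac{a+b}{2},b]$, and check that any such vertical annulus is symplectic. (One small remark: the embeddedness of $\sigma_0$ as an arc in $S$, which you flag as the main point to pin down, is not actually needed --- $\Gamma_\sigma$ is a graph over $[a,b]$ in a fiber-preserving sense and hence automatically an embedded annulus for any smooth $\sigma$ missing the singular points.)
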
 

\begin{proof}
First, note that the intersection $C_0\cap H^{-1}(\lambda)$ is a regular fiber of the Seifert fibration on
$H^{-1}(\lambda)$, $\forall \lambda\in [a,b]$. With this understood, we identify  $H^{-1}([a,b])$ with 
$[a,b]\times H^{-1}(1)$ via $\psi_\lambda$. Then there is a smooth path $\gamma(\lambda)$,
$\lambda\in [a,b]$, in the $2$-orbifold $H^{-1}(1)/\s^1$ which lies in the complement of the singular points, such that $C_0$ may be regarded as the image of the following embedding
$\phi: [a,b]\times \s^1\rightarrow [a,b]\times \s^1\times U$, sending $(\lambda,\theta)$ to
$(\lambda,\theta,\gamma(\lambda))$. Here $U$ is a regular neighborhood of the path $\gamma$
in $H^{-1}(1)/\s^1$, over which the Seifert fibration is identified with the product $\s^1\times U$,
and without loss of generality, we assume the fiber $\tilde{F_a}$ is over a point $x\in U$. 

With the preceding understood, we choose a smooth path $\tilde{\gamma}(\lambda)$ lying in $U$,
such that $\tilde{\gamma}(a)=x$ and $\tilde{\gamma}(\lambda)=\gamma(\lambda)$ for any
$\lambda\in [\frac{a+b}{2},b]$. Then we simply let $\tilde{C}_0$ be the image of the following
embedding $\tilde{\phi}: [a,b]\times \s^1\rightarrow [a,b]\times \s^1\times U$, sending 
$(\lambda,\theta)$ to $(\lambda,\theta,\tilde{\gamma}(\lambda))$. With 
$\omega=d(\lambda\alpha)$, it is easy to check that $\tilde{C}_0$ is symplectic. Other
properties (i)-(iii) are clear from the construction of $\tilde{C}_0$.

\end{proof}

\begin{remark}
In complete analogy, if $\tilde{F_b}\subset H^{-1}(b)$ is a regular fiber nearby $F_b$, then there exists a 
symplectic annulus $\tilde{C}_0$ in $H^{-1}([a,b])$ with the following significance:
\begin{itemize}
\item [{(i)}] the boundary component of $\tilde{C}_0$ in $H^{-1}(b)$ is  $\tilde{F_b}$,
\item [{(ii)}] $\tilde{C}_0=C_0$ in  $H^{-1}([a, \frac{a+b}{2}])$, and 
\item [{(iii)}] $\tilde{C}_0$ is smoothly isotopic to $C_0$ in $H^{-1}([a,b])$. 
\end{itemize}
\end{remark}

In the next lemma, we will show that given a symplectic rational unicuspidal curve $C$ with one Puiseux pair
$(p,q)$ in a $4$-dimensional symplectic manifold $(X,\omega)$ (not necessarily compact closed), one can always perturb the symplectic form near the singularity and perturb the curve $C$ through a small isotopy, such that
near the singularity, the symplectic structure and the cuspidal curve are given by the standard model $\omega_0$
and $C_0$. 

\begin{lemma}
Let $C$ be a symplectic rational unicuspidal curve with one Puiseux pair $(p,q)$ in a $4$-dimensional symplectic manifold $(X,\omega)$. There exist a holomorphic coordinate system 
$(z_1,z_2)$ centered at the singularity and a sufficiently small  $\epsilon_0>0$, with the following significance:
\begin{itemize}
\item [{(1)}] there is a symplectic structure $\omega^\prime$ on $X$ such that $\omega^\prime=\omega$
outside the ball $|z_1|^2+|z_2|^2<(3\epsilon_0)^2$ and $\omega^\prime=\omega_0:=\frac{i}{2}(dz_1\wedge d\bar{z}_1+dz_2\wedge d\bar{z}_2)$ inside the ball $|z_1|^2+|z_2|^2\leq (2\epsilon_0)^2$, and 
\item [{(2)}] there is a symplectic rational unicuspidal curve $C^\prime$, such that $C^\prime=C$ outside the ball 
$|z_1|^2+|z_2|^2<(2\epsilon_0)^2$, and $C^\prime=C_0$ (i.e., given by $z_1=z^p$, $z_2=c z^q$) in the ball 
$|z_1|^2+|z_2|^2\leq \epsilon_0^2$, and is smoothly embedded and symplectic with respect to 
$\omega^\prime$ on $\epsilon_0^2\leq |z_1|^2+|z_2|^2\leq (3\epsilon_0)^2$, and
\item [{(3)}] $C^\prime$ is isotopic to $C$ through a smooth family of symplectic rational unicuspidal curves
$C^{(s)}$, where $s\in [0,1]$, such that $C^{(s)}$ have the same singularity and $C^{(0)}=C^\prime$, $C^{(1)} =C$.
\end{itemize}
\end{lemma}

\begin{proof}
First of all, we pick a holomorphic coordinate system $(z_1,z_2)$ over which
the K\"{a}hler form $\omega$ equals the standard form $\omega_0$ up to order $1$, i.e., 
$$
\omega=\omega_0+\sum_{i,j} g_{ij} dz_i\wedge d\bar{z}_j, 
$$
where $\omega_0=\frac{i}{2}(dz_1\wedge d\bar{z}_1+dz_2\wedge d\bar{z}_2)$ and $g_{ij}(0)=0$. 
On the other hand, since the singularity has one Puiseux pair $(p,q)$, where $p<q$, we may assume 
$C$ is given in the coordinate system $(z_1,z_2)$ by a Puiseux parametrization (cf. \cite{BK})
$$
z_1=z^p, \; z_2=\sum_{i=1}^\infty c_i z^{q_i}, \;\; z\in D\subset \C,
$$
where $\sum_{i=1}^\infty c_i z^{q_i}$ is a convergent power series, $c_i\neq 0$, $q_i<q_j$ for
$i<j$, and $q_1=q$. 

With the preceding understood, we shall first construct the symplectic form $\omega^\prime$. 
To this end, we first note that there is a $1$-form $\beta$ which vanishes at $(0,0)$ up to order $2$
such that $\omega=\omega_0+d\beta$. To see this, we note that there is a $1$-form $\gamma$
such that $d\gamma=\omega-\omega_0$. Let $\{x_j\}$ be the corresponding real coordinates, and 
write $\gamma=\sum_j\gamma_j dx_j$. Then 
$d\gamma=\sum_{k<j}(\frac{\partial \gamma_j}{\partial x_k}-\frac{\partial \gamma_k}{\partial x_j})
dx_k\wedge dx_j$, and with $g_{ij}(0)=0$, we get
$$
\frac{\partial \gamma_j}{\partial x_k}(0)-\frac{\partial \gamma_k}{\partial x_j}(0)=0. 
$$
Now set $\beta_j:=\gamma_j-(\gamma_j(0)+\sum_k \frac{\partial \gamma_j}{\partial x_k}(0)x_k)$. 
Then $\beta_j(0)=0$, $\frac{\partial \beta_j}{\partial x_k}(0)=0$, $\forall k,j$, and
$d\beta=d\gamma$ where $\beta=\sum_j \beta_jdx_j$.

With this understood, we choose a cut-off function $\rho(t)$ such that $\rho(t)=0$ for 
$t\leq 2\epsilon_0$ and $\rho(t)=1$ for $t\geq 3\epsilon_0$. Note that $|\rho^\prime(t)|\leq C_1\epsilon_0^{-1}$ for some constant $C_1>0$. Setting $r:=\sqrt{|z_1|^2+|z_2|^2}$, we
define
$$
\omega^\prime=\omega_0+d(\rho(r)\beta). 
$$
It remains to show that $\omega^\prime$ is non-degenerate when $\epsilon_0>0$ is chosen
sufficiently small. For this, we have 
$$
d(\rho(r)\beta)=d\rho \wedge \beta+\rho d\beta=\rho^\prime \sum_{k,j} \frac{\partial r}{\partial x_k}\beta_jdx_k\wedge dx_j+\rho \sum_{k,j}\frac{\partial \beta_j}{\partial x_k} dx_k\wedge dx_j.
$$
With $|\frac{\partial r}{\partial x_k}|\leq 1$, $|\rho^\prime(t)|\leq C_1\epsilon_0^{-1}$,
$|\beta_j|\leq C_2\epsilon_0^2$ and $|\frac{\partial \beta_j}{\partial x_k}|\leq C_3\epsilon_0$
on $|z_1|^2+|z_2|^2\leq (3\epsilon_0)^2$, it follows easily that 
$$
d(\rho(r)\beta)=\sum_{k,j} a_{kj} dx_k\wedge dx_j, 
$$
where $|a_{kj}|\leq C_4 \epsilon_0$ for some $C_4>0$. When $\epsilon_0>0$ is chosen
sufficiently small, we have $$\omega^\prime\wedge \omega^\prime>0.$$

Before we construct $C^\prime$, we observe that $\omega^\prime|_C>0$ on 
$2\epsilon_0\leq \sqrt{|z_1|^2+|z_2|^2}\leq 3\epsilon_0$. To see this, recall that $C$ is given by
$z_1=z^p$, $z_2=\sum_{i=1}^\infty c_i z^{q_i}$. With this understood, if we write $z=s+it$, then
on $2\epsilon_0\leq \sqrt{|z_1|^2+|z_2|^2}\leq 3\epsilon_0$,
$$
\omega_0|_C\geq \frac{i}{2} dz_1\wedge d\bar{z}_1|_{C}=p^2 (s^2+t^2)^{p-1} ds\wedge dt\geq 2^{2p-2}p^2
\epsilon_0^{2p-2} ds\wedge dt.
$$
On the other hand, the real coordinates $x_j|C$ are power series in $s,t$ of order $\geq p$, from 
which it follows easily that 
$$
|a_{kj} dx_k\wedge dx_j|_C|\leq C_5 |a_{kj}| (s^2+t^2)^{p-1} ds\wedge dt\leq 3^{2p-2} C_5 |a_{kj}| \epsilon_0^{2p-2} ds\wedge dt.
$$
With $|a_{kj}|\leq C_4 \epsilon_0$, it follows easily that $\omega^\prime|C>0$ for sufficiently small
$\epsilon_0>0$.

Now we construct $C^\prime$. We fix a cut-off function $h(t)$ such that $h(t)=0$ for 
$t\leq \delta$ and $h(t)=1$ for $t\geq 2^{1/2p}\delta$, where $\delta=\epsilon_0^{1/p}$.
We assume $\epsilon_0>0$ is sufficiently small so that if $|z|\leq 2^{1/2p}\delta$, 
$$
|c_1z^{q_1}|<\frac{1}{2}|z|^p \mbox{ and } |\sum_{i=2}^\infty c_i z^{q_i}|<\frac{1}{2}|z|^p.
$$
With this understood, we define $C^\prime$ by the equations
$$
z_1=z^p,\;\; z_2=c_1z^{q_1} +h(|z|) \sum_{i=2}^\infty c_i z^{q_i}.
$$
On $|z_1|^2+|z_2|^2\leq \epsilon_0^2$, $|z^p|=|z_1|_{C^\prime}|\leq \epsilon_0$, so that 
$|z|\leq\delta$, and hence $h(|z|)=0$. It follows immediately that $C^\prime$ is given by 
$z_1=z^p$, $z_2=c z^q$ in the ball $|z_1|^2+|z_2|^2\leq \epsilon_0^2$, where $c=c_1$ 
and $q=q_1$. On the other hand, if $|z|<2^{1/2p}\delta$, it is easy to see that 
$|z_2|_{C^\prime}|< |z|^p=|z_1|_{C^\prime}|$, so that 
$$
|z_1|_{C^\prime}|^2+|z_2|_{C^\prime}|^2<2|z_1|_{C^\prime}|^2=2|z|^{2p}<4\epsilon_0^2.
$$
Thus outside the ball $|z_1|^2+|z_2|^2<(2\epsilon_0)^2$, we must have $h(|z|)=1$, so that
$C^\prime=C$. 

It remains to show that on $\epsilon_0\leq \sqrt{|z_1|^2+|z_2|^2}\leq 2\epsilon_0$, $C^\prime$ is
smoothly embedded and $\omega_0|_{C^\prime}>0$ (as $\omega^\prime=\omega_0$ over here).
To this end, we observe that the $z_1$-component, i.e., $z_1=z^p$, is an immersion for $z\neq 0$, so that 
$C^\prime$ is smoothly embedded if the map $z\mapsto (z_1,z_2)$, where 
$$
z_1=z^p,\;\; z_2=c_1z^{q_1} +h(|z|) \sum_{i=2}^\infty c_i z^{q_i},
$$
is one to one. To see this, we assume $z_1(z)=z_1(z^\prime)$. Then $z^\prime=\lambda z$ for some
$\lambda\in\C$ such that $\lambda^p=1$. With this understood, we note that
$$
z_2(z^\prime)=c_1\lambda^{q_1} z^{q_1}+h(|z|) \sum_{i=2}^\infty c_i \lambda^{q_i} z^{q_i},
$$
so that when $\epsilon_0>0$ is sufficiently small, $|z|\leq (2\epsilon_0)^{1/p}$ will be small enough, and 
$$
z_2(z^\prime)-z_2(z)=c_1(\lambda^{q_1}-1)z^{q_1}
+h(|z|) \sum_{i=2}^\infty c_i (\lambda^{q_i}-1) z^{q_i}\neq 0 \mbox{ if } z\neq 0, \lambda\neq 1.
$$
(Here we use the fact that $p,q$ are relatively prime.) Hence $C^\prime$ is smoothly embedded.

To see that $\omega_0|_{C^\prime}>0$, we note that, writing $z=s+it$, 
$$
\frac{i}{2} dz_1\wedge d\bar{z}_1|_{C^\prime}=p^2 (s^2+t^2)^{p-1} ds\wedge dt\geq C_6\delta^{2p-2}
ds\wedge dt.
$$ 
On the other hand, with $|h^\prime(|z|)|\leq C_7 \delta^{-1}$ for some $C_7>0$, it is easy to see
that 
$$
|\frac{i}{2} dz_2\wedge d\bar{z}_2|_{C^\prime}|\leq C_8 \delta^{2q_1-2} ds\wedge dt.
$$
It follows easily that $\omega_0|_{C^\prime}>0$ when $\epsilon_0=\delta^p$ is small enough,
as $p<q=q_1$.

Finally, to see $C^\prime$ is isotopic to $C$, we choose a smooth family of cut-off functions $h^{(s)}(t)$, where 
$s\in [0,1]$, such that $h^{(s)}(t)=s$ for $t\leq \delta$ and $h^{(s)}(t)=1$ for $t\geq 2^{1/2p}\delta$, 
where $\delta=\epsilon_0^{1/p}$. Define $C^{(s)}$ to be the curve 
$$
z_1=z^p,\;\; z_2=c_1z^{q_1} +h^{(s)}(|z|) \sum_{i=2}^\infty c_i z^{q_i}, \mbox{ where } s\in [0,1]. 
$$
Then each $C^{(s)}$ is a symplectic rational unicuspidal curve, having the same singular point at $(0,0)$, and
$C^{(0)}=C^\prime$ and $C^{(1)} =C$.

\end{proof}

\vspace{3mm}

{\bf Proof of Theorem 1.17:}

\vspace{2mm}

We begin with a proof of part (1) of the theorem. To this end, we fix a $\lambda_0>0$ which is sufficiently small. 
Let $F_{\lambda_0}:=C_0\cap H^{-1}(\lambda_0)$ which is a regular fiber of the Seifert fibration on 
$H^{-1}(\lambda_0)$ associated to the $\s^1$-action. With this understood, we let $B$ be the 
image of $F_{\lambda_0}$ in $H^{-1}(1)$ under the $\s^1$-equivariant diffeomorphism 
$\psi_{\lambda_0}$, which is also a regular fiber of the Seifert fibration on $H^{-1}(1)$. Finally, it is easy to see
that under the Seifert fibration, $H^{-1}(1)=M((1,-1), (p,p^\prime),(q,q^\prime))$ as a Seifert manifold, where $p^\prime,
q^\prime>0$ are integers uniquely determined by $pq^\prime+qp^\prime=pq+1$.

With the preceding understood, it follows easily from Theorem 3.3 that there is a rational open book with a periodic monodromy of order $n=pq>1$, which is compatible with the Seifert fibration on $H^{-1}(1)$ and has $B$ as its binding with fiber orientation. Furthermore, following through the proof it is easy to see that $k=1$ (as $c=n-1$), $p=1$ (the multiplicity at $B$), so with the multiplicity of $B$ being $1$ ($B$ is a regular fiber), we also have
$p^\prime=0$. With this understood, the integer $q$ in the equation $pq^\prime-p^\prime q=1$ can be chosen arbitrarily, with $q^\prime=1$ always. 

Next, we shall follow through the proof of Theorem 4.1 to construct a specific $\s^1$-invariant contact form. Note that we are in Case 2, so we shall choose a $q<0$ such that $kq+nq^\prime<0$. It is easy to see that we can let
$q=-(n+1)$, with which $kq+nq^\prime=-1$.

Next we shall explain the choice of the constant $C>0$ and the $1$-form $\beta$ on $\Sigma$ used in
the construction of the $\s^1$-invariant contact form. To this end, we fix a sufficiently small $\epsilon>0$, 
and take a small disc $D_\epsilon$ centered at the point over which the regular fiber $B$ is lying, such that the integral of $d\tau_1$ over the complement of the disc $D_\epsilon$ equals 
$\frac{2\pi}{n}(1-\epsilon)$ (here $\tau_1$ is the $1$-form $\tau_\lambda$ from Lemma 6.1 
with $\lambda=1$),  i.e.,
$$
\int_{(H^{-1}(1)/\s^1)\setminus D_\epsilon} d\tau_1=\frac{2\pi}{n}(1-\epsilon).
$$
(Note that $\int_{H^{-1}(1)/\s^1} d\tau_1=\frac{2\pi}{pq}$ by Chern-Weil theory as the Euler number of the Seifert fibration on $H^{-1}(1)$ equals $-\frac{1}{pq}$, and $n=pq$.)

With this understood, note that $\Sigma$ is a $n$-fold branch covering over 
$(H^{-1}(1)/\s^1)\setminus D_\epsilon$, and the pull-back of the Seifert fibration over $\Sigma$ is
trivial. As a consequence, we can write the pull-back of $\tau_1$ as $d\theta+\beta_1$ for
some $1$-form $\beta_1$ on $\Sigma$. With this understood, we require the $1$-form $\beta$ to be equal to $\frac{1}{1-\epsilon}\beta_1$ in the complement of a small neighborhood of the boundary 
$\partial\Sigma$ (near $\partial \Sigma$ we require $\beta$ to take the standard form as 
in the proof of Theorem 4.1\footnote{To see this, note that we may assume $d\tau_1=rdr\wedge d\theta$ near $\partial \Sigma$ where $(r,\theta)$ are polar coordinates on a slightly larger disk containing $D_\epsilon$. Then $\beta_1-\frac{r^2}{2} d\theta=\kappa d\theta+ df$ for some 
constant $\kappa$. Simply let $\beta=\frac{1}{1-\epsilon}(\frac{r^2}{2} d\theta+\kappa d\theta+
d(\rho f)$) near $\partial \Sigma$ where $\rho$ is a cut-off function equaling zero near $\partial \Sigma$.}). 
Finally, we take the constant $C=\frac{1}{(1-\epsilon)n}$ so that 
$\frac{Cnp}{kp+np^\prime}=Cn>1$ is satisfied. Denote by $\alpha$ the resulting contact form on $H^{-1}(1)$.

With the preceding understood, we set $\alpha_1:=(1-\epsilon)\alpha$. Then it is easy to see that
$\alpha_1$ is a positive contact form on $H^{-1}(1)$, such that $i\alpha_1$ is a connection $1$-form 
for the Seifert fibration on $H^{-1}(1)$, and moreover, $\alpha_1=\tau_1$ in the complement of a
small neighborhood of the binding $B$. We note that the Reeb vector field $R_{\alpha_1}$ is given 
near the binding $B$, in the coordinates $(r,\theta,z)$ from Theorem 4.1, by
$$
R_{\alpha_1}=-\frac{\partial}{\partial \theta}+\frac{\partial}{\partial z}.
$$
On the other hand, in the coordinates $(r,\theta,z)$, $d\pi=pd\theta-q dz=d\theta+(n+1) dz$, where 
$\pi: H^{-1}(1)\setminus B\rightarrow \s^1$ is the fibration associated to the rational open book on $H^{-1}(1)$.

To proceed further, we note that by Lemma 2.6 in \cite{C1}, there exists an $\s^1$-equivariant diffeomorphism 
$\psi: H^{-1}(1)\rightarrow H^{-1}(1)$ such that $\psi^\ast \tau_1=\alpha_1$. We remark that in the present case, since $\alpha_1=\tau_1$ in the complement of a small neighborhood of $B$, $\psi$ is actually supported in a small neighborhood of $B$. With this understood, we fix a small $\delta>0$. It is easy to see that, via the diffeomorphisms 
$\psi_\lambda$, $\psi: H^{-1}(1)\rightarrow H^{-1}(1)$ gives rise to an $\s^1$-equivariant diffeomorphism 
$\Psi: H^{-1}([\lambda_0-\delta,\lambda_0])\rightarrow H^{-1}([\lambda_0-\delta,\lambda_0])$ such that 
$\Psi^\ast \omega_0=\tilde{\omega}$, i.e., $\Psi$ is a symplectomorphism with respect to the symplectic structures 
$\omega_0=d(\lambda\tau_1)$ and $\tilde{\omega}=d(\lambda\alpha_1)$ on 
$H^{-1}([\lambda_0-\delta,\lambda_0])$. The regular fiber $F_{\lambda_0}$ in $H^{-1}(\lambda_0)$ is sent to a nearby regular fiber, denoted by $F_{\lambda_0}^\prime$, under $\Psi$. With this understood, we observe that 
by the version of Lemma 6.2 stated in Remark 6.3, 
$F_{\lambda_0}^\prime$ bounds a symplectic unicuspidal disc $D_0$ in the $4$-ball $H^{-1}([0,\lambda_0])$, 
which is a small perturbation of the standard model curve $C_0$ and equals $C_0$ near the singularity. 

Next we shall apply Gay's construction in \cite{G} to attach a symplectic $2$-handle to the $4$-ball 
$H^{-1}([0,\lambda_0])$, which is equipped with the standard symplectic structure $\omega_0$, along 
$F_{\lambda_0}^\prime\subset H^{-1}(\lambda_0)$ with framing $m>0$ (relative to the zero-framing, which is the same as the page-framing because there is only one binding component and the multiplicity $p=1$). The resulting manifold topologically can be identified with the handlebody $U_{p,q,m}$, but the construction will give rise to an
$\s^1$-invariant contact structure $\xi_{inv}$ on the concave boundary $M_{p,q,m}$.
Note that there is a natural symplectic rational unicuspidal curve with self-intersection $m$ in the interior of
$U_{p,q,m}$, i.e., $D\cup D_0$, where $D$ denotes the core disc of the symplectic $2$-handle. 

For simplicity, however, we shall work with the symplectic structure $\tilde{\omega}=d(\lambda\alpha_1)$ and the
contact form $\lambda_0\alpha_1$ on the boundary $H^{-1}(\lambda_0)$ via an identification by the $\s^1$-equivariant diffeomorphism $\Psi: H^{-1}([\lambda_0-\delta,\lambda_0])\rightarrow H^{-1}([\lambda_0-\delta,\lambda_0])$. With this understood, recall that near the binding there is a local coordinate system $(r,\theta,z)$ such that the Reeb vector field $R_{\alpha_1}=-\frac{\partial}{\partial \theta}+\frac{\partial}{\partial z}$ and $d\pi=d\theta+(n+1) dz$, where $\pi$ is the fibration associated to the rational open book. It follows 
easily that the Reeb vector field associated to the contact form $\lambda_0\alpha_1$ is 
$$
R_{\lambda_0\alpha_1}=-\frac{1}{\lambda_0}\frac{\partial}{\partial \theta}+\frac{1}{\lambda_0}\frac{\partial}{\partial z}.
$$
With this understood, we change the framing of the coordinate system, still denoted by $(r,\theta,z)$ but now
$\frac{\partial}{\partial z}$ gives the framing of the $2$-handle to be attached to the binding. With respect to the
new framing, $d\pi=d\theta+mdz$ and the Reeb vector field is given by 
$$
R_{\lambda_0\alpha_1}=-\frac{1}{\lambda_0}(m-n)\frac{\partial}{\partial \theta}+\frac{1}{\lambda_0}\frac{\partial}{\partial z}.
$$

Now we choose a constant $C>0$ and let $\alpha^0:=Cd\pi=Cd\theta+Ddz$, where we require $D=Cm$ obeys
the condition $D\cdot \frac{1}{\lambda_0}>1$. Under these assumptions, according to Lemma 4.7 in \cite{G},
one can perform an enlargement procedure, by considering the graph of a small function $h$ which is supported 
in an arbitrarily small neighborhood of the binding, we may assume for a new contact form, denoted by
$\alpha^{+}$, the Reeb vector field $R_{\alpha^{+}}=A\frac{\partial}{\partial \theta}+B\frac{\partial}{\partial z}$, while we still have $\alpha^0=Cd\theta+Ddz$, such that $A=B>0$, $C>0$, $AD>1$, where $A$ is a constant chosen to obey $\frac{1}{D}<A< \frac{1}{\lambda_0}$, which is possible because $D\cdot \frac{1}{\lambda_0}>1$. In Gay's terminology \cite{G}, the pair $(\alpha^{+},\alpha^0)$ is under the condition of ``prepared for surgery", with which
a symplectic $2$-handle can be attached along the binding. See Proposition 4.6 in \cite{G} for more details, in particular, for the explicit description of the model for the symplectic $2$-handle. 

With the preceding understood, there are two parameters $\epsilon_1,\epsilon_2$ in Gay's construction , 
with $\epsilon_1<0<\epsilon_2$, that are specially relevant in our consideration here. Roughly speaking, $\epsilon_1,\epsilon_2$ determine the ``size" and ``shape" of the $2$-handle, and they are constrained to the structural data $(A,B,C,D)$ by the following relations
$$
\epsilon_1=\frac{2}{A}-2D, \mbox{ and } 0<\epsilon_2<2C. 
$$
More specifically, the constant $\epsilon_1$ is related to the symplectic area of the core disk 
of the $2$-handle, which equals $\pi|\epsilon_1|$, and $\epsilon_2>0$ can be taken to be sufficiently small, 
to make the $2$-handle arbitrarily ``skinny". With this understood, we shall also make the following important
observation: note that by Stokes's theorem, the area of the cuspidal disc $D_0$ in the $4$-ball equals 
$\int_B \alpha^{+}=\frac{2\pi}{A}$ where $B$ is the binding component. On the other hand, the area of the 
core disc of the $2$-handle equals $\pi |\epsilon_1|= 2\pi(D-\frac{1}{A})$. It follows that the area of the rational unicuspidal curve formed by the union of $D_0$ and the core disc of the $2$-handle equals $2\pi D$. 

Thus we have finished constructing a symplectic structure on the handlebody $U_{p,q,m}$ which has a concave contact boundary $(M_{p,q,m},\xi_{inv})$. There are three parameters the symplectic structure depends on:
$\lambda_0>0$, which gives the ``size" of the $4$-ball, $\epsilon_2>0$ which determines how ``skinny" the $2$-handle is, and the constant $D$ which gives the area of the rational cuspidal curve inside $U_{p,q,m}$. We note 
that the constant $D$ actually has an intrinsic meaning, which is closely related to the rational open book and 
the framing $F$ as follows. Recall that the $1$-form $\alpha^0=cd\pi=Cd\theta+D dz$ for some constant $c>0$, 
with $C=cp$,  where $p$ is the multiplicity of the binding and $\frac{D}{C}=F>0$ is the framing of the $2$-handle relative to the page framing. With this understood, note that $D=cpF$ where $c>0$ is the constant in 
$\alpha^0=c d\pi$. 

Before we proceed to exhibit $M_{p,q,m}$ as a Seifert manifold and describe the $\s^1$-invariant contact
structure $\xi_{inv}$, we make the following remark for a future reference. 

\begin{remark}
Imagine we construct a symplectic structure on a handlebody where there are two $2$-handles. We need to employ 
a rational open book with two binding components $B_1,B_2$, and there are two rational cuspidal curves $C_1,C_2$ inside the handlebody. If we let $p_i$ be the multiplicity at $B_i$ and $F_i>0$ be the framing (relative to the page framing) of the $2$-handle attached along $B_i$, for $i=1,2$, then we observe that
$$
\frac{Area(C_1)}{Area(C_2)}=\frac{p_1 F_1}{p_2 F_2}\in \Q. 
$$
To put it differently, suppose we want to build a symplectic model for the regular neighborhood of a union of two rational cuspidal curves $C_1,C_2$, then a necessary condition is that the ratio of the areas of $C_1$ and
$C_2$ must be a rational number, and we have to construct a rational open book such that 
$\frac{Area(C_1)}{Area(C_2)}=\frac{p_1 F_1}{p_2 F_2}$ holds true. This is considerably more complicated than
the one cuspidal curve case. 

We shall also compare the discussions here with the formula of $[\omega_Z]$ in Lemma 5.1. To this end, note 
that $\bar{p}_i:=p_i F_i$ is actually the multiplicity at the binding component $B_i^\prime$ of the rational open book $(B^\prime,\pi^\prime)$ on the other boundary component $M^\prime$, see the proof of Theorem 4.4. With this 
said, the areas of the rational cuspidal curves $C_1,C_2$ can be written as $Area(C_1)=c\bar{p}_1$
and $Area(C_2)=c\bar{p}_2$ for some constant $c>0$. 

\end{remark}

Now going back to the proof of Theorem 1.17, we apply Theorem 4.4 to describe $M_{p,q,m}$ as a Seifert manifold, by computing the Seifert invariant $(\bar{\alpha},\bar{\beta})$. By the formula $F=\frac{1}{\alpha}(\frac{n}{p}-\bar{\alpha})$, it is easy to see that in this case, one has $\bar{\alpha}=pq-m$. The formula for $\bar{\beta}$ in 
Theorem 4.4 gives $\bar{\beta}=pq-m+1$. It follows easily that
\begin{itemize}
\item if $m<pq$, then $M_{p,q,m}=M((pq-m, pq-m+1), (p,-p^\prime), (q,-q^\prime))$, and $\xi_{inv}$ is
an $\s^1$-invariant, transverse contact structure, 
\item if $m=pq$, then the $\s^1$-action on $M_{p,q,m}$ has a fixed component, and $M_{p,q,m}$ is the connected sum of two lens spaces, $M_{p,q,m}=L(p,p^\prime)\# L(q,q^\prime)$ (cf. \cite{N}). The contact structure 
$\xi_{inv}$ is transverse to the $\s^1$-orbits in the complement of the fixed component, and 
\item if $m>pq$, then $M_{p,q,m}=M((m-pq, m-pq-1), (p,-p^\prime), (q,-q^\prime))$, and $\xi_{inv}$ is
an $\s^1$-invariant, non-transverse contact structure, with a dividing set consisting of a circle separating the singular point of multiplicity $m-pq$ from the singular points of multiplicities $p$ and $q$. 
\end{itemize}

This completes the proof of part (1) of Theorem 1.17. Next, we consider part (3) of the theorem first. 
Observe that if $m=m_{p,q}$, then $m>pq$, so that $\xi_{inv}$ is non-transverse with a dividing set 
consisting of a circle separating the singular point of multiplicity $m-pq$ from the singular points of 
multiplicities $p$ and $q$. Moreover, one of the inequalities 
$m\leq pq+\frac{p}{p-p^\prime}$ or $m\leq pq+\frac{q}{q-q^\prime}$ must be true. Equivalently, either 
$\frac{pq-m-1}{m-pq}+\frac{-p^\prime}{p}\leq 0$, or one must have
$\frac{pq-m-1}{m-pq}+\frac{-q^\prime}{q}\leq 0$. By Theorem 2.3, the contact structure $\xi_{inv}$ is
tight, which is in fact fillable. Consequently, let $W$ be a symplectic filling of $(M_{p,q,m},\xi_{inv})$, then the union $X:=W\cup U_{p,q,m}$ is a closed symplectic 
$4$-manifold, containing an embedded symplectic rational unicuspidal curve with one Puiseux pair $(p,q)$ and 
with self-intersection $m=m_{p,q}$. To see that $X$ must be a rational $4$-manifold, we note that a successive 
blowing up of $X$ contains the symplectic divisor $D_{p,q,m}$. With $m>pq$, the component of $D_{p,q,m}$
which is the proper transform of the cuspidal curve is an embedded symplectic two-sphere of positive 
self-intersection. This proves that $X$ must be a rational $4$-manifold. 

Note that it is clear from the above discussion in connection to Theorem 2.3, that if there is an integer $m>m_{p,q}$ such that the contact structure $\xi_{inv}$ on $M_{p,q,m}$ is tight, then we have an example which provides a negative answer to Question 2.6.

It remains to prove part (2) of Theorem 1.17. The following is the key lemma.

\begin{lemma}
Let $(X,\omega)$ be a $4$-dimensional symplectic manifold (not necessarily compact closed), and let $C$ be a
symplectic rational unicuspidal curve with one Puiseux pair $(p,q)$ and with self-intersection $m>0$ in the interior of
$X$. Then after perturbing $\omega$ slightly near the singularity to a symplectic form $\omega^\prime$, there is a symplectic rational unicuspidal curve $C^\prime$, which is a small perturbation of $C$ and having the same singularity, such that a small regular neighborhood of $C^\prime$ in the interior of $(X,\omega^\prime)$ has a concave contact boundary $(M_{p,q,m},\xi_{inv})$.
\end{lemma}

\begin{proof}
First, we apply Lemma 6.4 to perturb $\omega$ and $C$ near the singularity so that they are given by the standard models. We denote the new symplectic form by $\omega^\prime$, but still use the notation $C$ for the cuspidal curve for simplicity. With this understood, we fix a $0<\lambda_0<\epsilon_0$, where $\epsilon_0$ is from 
Lemma 6.4, and consider the convex $4$-ball neighborhood $H^{-1}([0,\lambda_0])$ of the singularity of $C$. 

Recall from the construction of the symplectic structure on the handlebody $U_{p,q,m}$, there is a regular fiber
$F^\prime_{\lambda_0}\subset H^{-1}(\lambda_0)$, where we perform an enlargement procedure by considering
the graph of a small function $h$ supported in a small neighborhood of $F^\prime_{\lambda_0}$. If we still denote
its image in the graph of $h$ by $F^\prime_{\lambda_0}$ for simplicity, then $F^\prime_{\lambda_0}$ bounds a
symplectic unicuspidal disc $D_0$ inside the enlarged $4$-ball neighborhood of the singularity, and $D_0$ is a small perturbation of the part of $C$ inside the enlarged $4$-ball neighborhood. On the other hand, by Lemma 6.2,
there is an embedded symplectic disc $D^\prime$, bounded by $F^\prime_{\lambda_0}$, which is a small perturbation of the part of $C$ that lies outside the enlarged $4$-ball neighborhood. With this understood, the union $C^\prime:=D^\prime\cup D_0$ is a symplectic rational unicuspidal curve, having the same
singularity, which is a small perturbation of $C$. 

Now we choose a symplectic structure on the handlebody $U_{p,q,m}$ which we built earlier, such that the parameter $\lambda_0$ is the same above, and the unicuspidal curve inside $U_{p,q,m}$ has the same area of
the unicuspidal curve $C$ in $(X,\omega^\prime)$, which is the same as the area of $C^\prime$ in 
$(X,\omega^\prime)$ as $C$ and $C^\prime$ are obviously homologous. Now the crucial observation is that the core disc $D$ of the symplectic $2$-handle in $U_{p,q,m}$ has the same area as the symplectic disc $D^\prime\subset C^\prime$.

By the standard Moser's argument, we obtain an area-preserving diffeomorphism $\phi: D\rightarrow D^\prime$, 
which then can be extended to a symplectomorphism $\Phi$ between a neighborhood of $D$ and a neighborhood of $D^\prime$ by a relative version of Weinstein's neighborhood theorem. 
Now we choose a smaller parameter $\epsilon_2>0$ 
to make the symplectic $2$-handle in $U_{p,q,m}$ more ``skinny", so that with the new symplectic model on $U_{p,q,m}$, it is symplectically embedded under $\Phi$ into a neighborhood of $C^\prime$ in the interior of 
$(X,\omega^\prime)$. This finishes the proof of the lemma.

\end{proof}

As an immediate corollary, it follows easily that there exists a symplectic cobordism from 
$(M_{p,q,m},\xi_{LM})$ to $(M_{p,q,m},\xi_{inv})$ which is topologically a product. Given any symplectic filling $W$ of $(M_{p,q,m},\xi_{LM})$, we add the topologically trivial symplectic cobordism from 
$(M_{p,q,m},\xi_{LM})$ to $(M_{p,q,m},\xi_{inv})$ to the symplectic filling $W$. The resulting $4$-manifold, which is diffeomorphic to $W$, is a symplectic filling of $(M_{p,q,m},\xi_{inv})$.

Conversely, suppose $W$ is a symplectic filling of $(M_{p,q,m},\xi_{inv})$. The union 
$X:=W\cup U_{p,q,m}$ is a closed symplectic $4$-manifold which contains a symplectic rational unicuspidal curve
$C$ with one Puiseux pair $(p,q)$ and with self-intersection $m>0$. By a successive blowing up we obtain a
symplectic $4$-manifold $\tilde{X}$ which contains the symplectic divisor $D_{p,q,m}$
(see \cite{C5}, Lemma 3.1, for more details), such that the complement of any regular neighborhood of $D_{p,q,m}$ in $\tilde{X}$ is diffeomorphic to $W$. Now by a theorem of Li and Mak \cite{LM}, after changing the symplectic structure on $\tilde{X}$ through deformations if necessary while keeping the divisor $D_{p,q,m}$ symplectic, a sufficiently small regular neighborhood of $D_{p,q,m}$ has a concave contact boundary $(M_{p,q,m},\xi_{LM})$. It follows easily that $W$ is a symplectic filling of $(M_{p,q,m},\xi_{LM})$. This finishes the proof of part (2), and the proof of Theorem 1.17 is complete.

\vspace{2mm}

In the remaining part of this section, we shall examine some natural examples of rational cuspidal curves in
an algebraic surface in connection with Theorem 1.17 or Conjecture 1.15, and prove some technical lemmas 
concerning the bound $m_{p,q}$ as we discussed in Remark 1.16.

\begin{example}
Let $(p,q,d)$ be a triple where there is a rational unicuspidal curve $C$ of degree $d$ in $\C\P^2$,
whose singular point has one Puiseux pair $(p,q)$. The list of such triples are classified in \cite{F}.
Note that by Theorem 1.17, for any such a triple $(p,q,d)$, the $3$-manifold $M_{p,q,m}$, where $m=d^2$, admits
an $\s^1$-invariant contact structure $\xi_{inv}$ such that $(M_{p,q,m},\xi_{inv})$ is symplectic
fillable by a $\Q$-homology $4$-ball, the complement of the corresponding rational cuspidal curve $C$ in $\C\P^2$.

The following is the list of triples $(p,q,d)$ from \cite{F}.


(1) $(p,q)=(d-1,d)$ where $d\geq 3$ is the degree of $C$. In this case, we have $(p^\prime,q^\prime)=
(1,d-1)$. With $m=d^2>pq=(d-1)d$, we have $\bar{\alpha}=-d$. Hence 
$$
M_{p,q,m}=M((d,d-1), (d-1,-1), (d, 1-d)). 
$$
Note that $m_{d-1,d}=(d-1)d+d=d^2=m$ in this case. In other words, the bound $m_{p,q}$ is sharp for 
$(p,q)=(d-1,d)$. 

(2) $(p,q)=(d/2,2d-1)$ where the degree $d$ is even. Introducing $\delta=d/2$, we have 
$(p^\prime,q^\prime)=(\delta-1, 4)$. With $m=4\delta^2>pq=\delta(4\delta-1)$, we have 
$\bar{\alpha}=-\delta$. Hence 
$$
M_{p,q,m}=M((\delta,\delta-1), (\delta, 1-\delta), (4\delta-1, -4)), \mbox{ where } \delta\geq 2. 
$$
Note that $m_{d/2,2d-1}=(d/2)(2d-1)+d/2=d^2=m$, hence the bound $m_{p,q}$ is also sharp for $(p,q)=(d/2,2d-1)$. 

\vspace{2mm}

Let $\{\varphi_j\}_{j\geq 0}$ be the Fibonacci numbers, $\varphi_0=0$, $\varphi_1=1$,
$\varphi_{j+2}=\varphi_{j+1}+\varphi_j$.

\vspace{2mm}

(3) $(p,q)=(\varphi_{j-2}^2,\varphi_j^2)$ and $d=\varphi_{j-1}^2+1=\varphi_{j-2}\varphi_j$, where 
$j\geq 5$ and is odd. In this case, the $\s^1$-action on $M_{p,q,m}$ has a fixed component,  and 
$M_{p,q,m}$ is the connected sum of two lens spaces. 

(4) $(p,q)=(\varphi_{j-2}, \varphi_{j+2})$ and $d=\varphi_j$ where $j\geq 5$ and is odd. In this case,
with the relation $\varphi_j^2+1=\varphi_{j-2}\varphi_{j+2}$, it follows that 
$(\bar{\alpha}, \bar{\beta})=(1,2)$. It is easy to see that $M_{p,q,m}$ is a lens space in this case.

(5) $(p,q)=(\varphi_4,\varphi_8+1)=(3,22)$ and $d=\varphi_6=8$. In this case, 
$(p^\prime,q^\prime)=(1, 15)$, and $d^2=64<pq=66$. It follows easily that 
$$
M_{p,q,m}=M((2,-1), (3,2), (22,7)).
$$
We note that $M_{p,q,m}$ is actually the link of a weighted homogeneous singularity which admits a rational homology disk smoothing (see \cite{BS}, Figure 1(f) with $q=2$, also in \cite{W} as $M(0,q,0)$ with $q=2$), 
and the $\s^1$-invariant contact structure $\xi_{inv}$ on $M_{p,q,m}$ is
simply the Milnor fillable contact structure. 

(6) $(p,q)=(2\varphi_4,2\varphi_8+1)=(6, 43)$ and $d=2\varphi_6=16$. In this case, 
$(p^\prime,q^\prime)=(1, 36)$, and $d^2=256<pq=258$. It follows easily that 
$$
M_{p,q,m}=M((2,-1), (6,5), (43,7)).
$$
Note that in this case, $M_{p,q,m}$ is also the link of a weighted homogeneous singularity which admits a rational homology disk smoothing (see \cite{BS}, Figure 1(j) with $q=4$, also in \cite{W} as $C_3^3(p)$ with $p=4$), and 
the $\s^1$-invariant contact structure $\xi_{inv}$ on $M_{p,q,m}$ is simply the Milnor fillable contact structure. 
\end{example}

\begin{example}
For $d\geq 4$, $a\geq b\geq 1$ with $a+b=d-2$, there is a unique degree $d$ rational cuspidal curve $C_{d,a}$ in
$\C\P^2$, which has $3$ singular points, each with one Puiseux pair, i.e., $(d-2,d-1)$, $(2, 2a+1)$, and 
$(2,2b+1)$ respectively (cf. Theorem 3.5 in \cite{FZ}). On the other hand, a simple calculation shows that 
$m_{k-1,k}=k^2$ for $k\geq 3$ and $m_{2,2k+1}=4k+4$ for $k\geq 2$ (and $m_{2,3}=9$). With this understood, 
if we resolve any two of the three singularities of $C_{d,a}$, the proper transform of $C_{d,a}$ is a rational unicuspidal curve $C$ with one Puiseux pair. It is easy to check that the self-intersection of $C$ obeys the upper bound in Conjecture 1.15. In particular, focusing on the case where the singular point of type 
$(d-2,d-1)$ is resolved, we observe that for any $k,l \geq 2$, there is a rational surface $X_l$ which is a $\C\P^2$ blown-up at $l$ points, and a rational unicuspidal curve $C_{k}\subset X_l$ with one Puiseux pair $(2, 2k+1)$, 
such that $C_{k}\cdot C_{k}=4k+4=m_{2,2k+1}$. It follows easily from this example that the bound $m_{p,q}$ is optimal for $p=2$. 
\end{example}

\begin{example}
In this example we consider some simple rational bicuspidal curves, i.e., curves with exactly two cuspidal singularities. 

Let $1<p<q-1$ and $gcd(p,q)=1$, and let $C$ be the rational cuspidal curve in $\C\P^2$ given by the parametrization $[z^p: z^q:1]$, $z\in\C$. It is of degree $q$ and has two singular points, one at $z=0$ with one Puiseux pair 
$(p,q)$, and the other at $z=\infty$, with one Puiseux pair $(q-p,q)$. With this understood, consider specially 
the case where $p=3$. Then $q=3k+1$ or $q=3k+2$ for some $k>0$. If $q=3k+1$, then a simple calculation 
shows that $m_{p,q}=9k+6$. On the other hand, if we resolve the singular point at $z=\infty$, the proper 
transform of $C$ has self-intersection 
$$
(3k+1)^2-(3k-2)^2-9(k-1)=9k+6=m_{p,q},
$$
which shows the bound is sharp. If $q=3k+2$, then $m_{p,q}=9k+9$. However, the self-intersection of the proper transform of $C$ equals 
 $$
 (3k+2)^2-(3k-1)^2-9(k-1)-4=9k+8<m_{p,q}.
 $$
Conjecture 1.15 holds true, but the bound is not sharp when $q=3k+2$.

To get a curve for which the bound is sharp when $q=3k+2$, we consider the following rational bicuspidal curve
in $\C\P^1\times \C\P^1$ (cf. \cite{Moe}): given $1<p<q$ with $gcd(p,q)=1$, we fix an $\alpha>0$ such that
$q<\alpha p$, and let $C$ be the curve in $\C\P^1\times \C\P^1$ given by parametrization 
$([z^p:1], [z^q+z^{\alpha p}:1])$, $z\in \C$. It is of bi-degree $(p,\alpha p)$, and has two singular points, one 
at $z=0$ with one Puiseux pair $(p,q)$, the other at $z=\infty$ with one Puiseux pair $(p,2\alpha p-q)$.
Specializing at the case where $p=3$, $q=3k+2$, and $\alpha=k+1$, we obtain a rational bicuspidal curve $C$
in $\C\P^1\times \C\P^1$ of bi-degree $(3, 3k+3)$, each singularity is of one Puiseux pair, which is 
$(3,3k+2)$ and $(3, 3k+4)$ respectively. Resolving the singularity with one Puiseux pair $(3, 3k+4)$, we note that
the proper transform of $C$ has self-intersection
$$
2\times 3\times (3k+3)- 9\times (k+1)=9k+9=m_{p,q}.
$$
It follows easily that the bound $m_{p,q}$ is optimal for $p=3$. By a similar argument, one can show that
the bound $m_{p,q}$ is optimal for $p=4,5$ as well. 

No counterexamples to Conjecture 1.15 emerged from the bicuspidal curves considered here. In fact, in all the cases we computed, the self-intersection differs from the bound only by $1$ if the bound is not sharp. 

\end{example}

To proceed further, let $C$ be a rational unicuspidal curve with one Puiseux pair $(p,q)$. We blow up at the 
singular point of $C$ and let $\tilde{C}$ be the proper transform. We assume $\tilde{C}$ continue to be singular. 
Then it is easy to see that $\tilde{C}$ is a rational unicuspidal curve with one Puiseux pair $(\tilde{p},\tilde{q})$
or $(\tilde{q},\tilde{p})$, where $\tilde{p}=p$ and $\tilde{q}=q-p$. Furthermore, we observe that $\tilde{C}\cdot \tilde{C}=C\cdot C-p^2$.

\begin{lemma}
One has $m_{p,q}-p^2\leq m_{\tilde{p},\tilde{q}}$. In particular, one cannot improve the upper bound in 
Conjecture 1.15 by passing from $C$ to its proper transform $\tilde{C}$. 
\end{lemma}

\begin{proof}
It suffices to show that $\max (\frac{p}{p-p^\prime},\frac{q}{q-q^\prime})\leq 
\max (\frac{\tilde{p}}{\tilde{p}-\tilde{p}^\prime},\frac{\tilde{q}}{\tilde{q}-\tilde{q}^\prime})$. To this end, we note that
$pq^\prime+q p^\prime=pq+1$ implies $p(q^\prime-p+p^\prime) +\tilde{q} p^\prime=p\tilde{q}+1$. With
$0<p^\prime<p$, and the assumption that $\tilde{C}$ is singular, i.e., $\tilde{q}>1$, it follows easily that
$\tilde{p}^\prime=p^\prime$ and $\tilde{q}^\prime=q^\prime-p+p^\prime$. With this understood, one has 
$\frac{\tilde{q}}{\tilde{q}-\tilde{q}^\prime}=\frac{q-p}{q-q^\prime-p^\prime}>\frac{q}{q-q^\prime}$, from which the lemma follows.

\end{proof}

Let $x\in C$ be a germ of cuspidal singularity. Let $M(x)$ be the sum of the squares of the entries in the multiplicity sequence associated to the minimal resolution of $x\in C$, and let $\ell(x)>1$ be the last entry in the multiplicity sequence. A bound on the self-intersection of a singular symplectic rational curve was introduced by Golla and 
Starkston which involves a term $M(x)+2\ell(x)+1$ as the contribution from a cuspidal singularity $x$. 
See \cite{GS2}, Section 6,  for more details. We clarify the relation between the bound $m_{p,q}$ and the Golla-Starkston bound in the following lemma. 

\begin{lemma}
Suppose $x\in C$ is of one Puiseux pair $(p,q)$. Then 
$$
m_{p,q}\leq M(x)+2\ell(x)+1, 
$$
with ``$=$" if and only if $(p,q)=(m-1,m)$ or $(p,q)=(m, km-1)$ for some $m\geq 3$ and $k\geq 2$. 
\end{lemma}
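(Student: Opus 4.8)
The plan is to translate both sides of the inequality into closed-form expressions in $p$ and $q$, and then compare them directly. First I would recall the standard description of the minimal embedded resolution of a $(p,q)$-torus knot singularity $x\in C$: the multiplicity sequence is obtained from the continued fraction expansion of $q/p$ (equivalently, from the Euclidean algorithm applied to the pair $(p,q)$), and the classical formula gives $M(x)=\sum m_i^2 = pq - 1$ together with the well-known identity $\sum m_i = p+q-1$; the last multiplicity $\ell(x)$ is the last nonzero remainder before $1$ appears, which one checks equals $\min(p-p^\prime, q-q^\prime)$ after unwinding the definition of $(p^\prime,q^\prime)$ via $pq^\prime+qp^\prime=pq+1$. (Here I am using that $(p^\prime,q^\prime)$ is precisely the pair that records the second-to-last step of the Euclidean algorithm.) Granting these, the Golla--Starkston contribution becomes $M(x)+2\ell(x)+1 = pq + 2\min(p-p^\prime,\,q-q^\prime)$.

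Next I would expand $m_{p,q} = pq + \max\!\left(\frac{p}{p-p^\prime},\frac{q}{q-q^\prime}\right)$ and reduce the claimed inequality to
$$
\max\!\left(\frac{p}{p-p^\prime},\,\frac{q}{q-q^\prime}\right) \;\le\; 2\min(p-p^\prime,\,q-q^\prime).
$$
Using $pq^\prime+qp^\prime = pq+1$ one has $p(q-q^\prime) = q(p-p^\prime) - 1$, so $p-p^\prime$ and $q-q^\prime$ are tightly linked; in particular $\frac{p}{p-p^\prime}$ and $\frac{q}{q-q^\prime}$ differ by a controlled amount, and I would show $\frac{q}{q-q^\prime} - \frac{p}{p-p^\prime} = \frac{p}{(p-p^\prime)(q-q^\prime)} \cdot \frac{1}{\text{(something positive)}}$ up to bookkeeping, so that the maximum is attained by the term with the larger denominator's ``complementary'' index. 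The inequality $\frac{p}{p-p^\prime}\le 2(q-q^\prime)$ (and symmetrically) then follows from $p < q$ and $p-p^\prime \ge 1$, $q-q^\prime\ge 1$, after clearing denominators and invoking $pq^\prime+qp^\prime=pq+1$ to eliminate one variable; the arithmetic is elementary but needs care with the two cases of which term realizes the max.

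Finally, for the equality characterization I would track exactly when each of the several inequalities used above is tight. Equality forces $\min(p-p^\prime,q-q^\prime)$ to be as small as possible relative to $p$ or $q$, i.e.\ $p-p^\prime=1$ or a comparably extremal relation; solving $pq^\prime+qp^\prime=pq+1$ under $p-p^\prime=1$ gives $q^\prime = q-1$ and hence $(p,q)$ with $q \equiv -1 \pmod p$, which is the family $(p,q)=(m,km-1)$; the borderline sub-case where \emph{both} denominators are $1$ collapses to $(p,q)=(m-1,m)$. I would then check conversely that for these two families the resolution data indeed makes $m_{p,q}$ and $M(x)+2\ell(x)+1$ coincide, by direct computation (the multiplicity sequence of $(m-1,m)$ is $(m-1,1,1,\dots,1)$ and that of $(m,km-1)$ is $(m,m,\dots,m,m-1,1,\dots,1)$, both easy to sum). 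The main obstacle I anticipate is not any single hard step but the careful case analysis: pinning down $\ell(x) = \min(p-p^\prime,q-q^\prime)$ rigorously from the Euclidean-algorithm description, and then handling the two cases of the $\max$ (and their overlap) cleanly enough to get the ``if and only if'' without gaps.
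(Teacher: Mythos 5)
Your plan rests on two closed-form identities, and both are incorrect. Take $(p,q)=(2,3)$: the multiplicity sequence of the $(2,3)$-cusp is $[2]$, so $M(x)=4$ and $\ell(x)=2$, whereas your formulas give $M(x)=pq-1=5$ and $\ell(x)=\min(p-p',q-q')=\min(1,1)=1$ (the latter even contradicts the standing hypothesis $\ell(x)>1$). The error propagates: the inequality you reduce to, $\max\bigl(\tfrac{p}{p-p'},\tfrac{q}{q-q'}\bigr)\le 2\min(p-p',q-q')$, is false for $(2,3)$ (LHS $=3$, RHS $=2$), so the arithmetic reduction could not be carried through even in principle. The correct identity behind the scenes is that the sum of squares over the \emph{full} multiplicity sequence, trailing $1$'s included, equals $pq$; truncating at the last entry $>1$ subtracts the number of those trailing $1$'s, which equals the last partial quotient in the continued fraction of $q/p$ and is in general $\ge 2$ and not equal to $1$.

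By contrast, the paper avoids closed-form formulas entirely. It uses the blowup recursion: if $\tilde{x}\in\tilde{C}$ is the proper transform after one blowup, then $M(x)=M(\tilde{x})+p^2$ and $\ell(x)=\ell(\tilde{x})$, while Lemma~6.8 gives $m_{p,q}\le m_{\tilde{p},\tilde{q}}+p^2$ with strictness preserved. This turns the lemma into an induction on the length of the Euclidean algorithm, reducing to a handful of explicit base cases (essentially $(m-1,m)$, $(m,2m-1)$, $(m,km-1)$, and the family one blowup above $(km-1,m)$), each checked by direct computation. Your approach of finding and comparing closed forms in $(p,q)$ could in principle work, but you would first need the correct formulas for $M(x)$ and $\ell(x)$ in terms of the continued-fraction data of $q/p$, and the comparison with $m_{p,q}$ would then be considerably less clean than the inductive bookkeeping.
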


\begin{proof}
Let $\tilde{x}\in \tilde{C}$ be the proper transform of $x\in C$ in the blowing-up at $x$, and assume $\tilde{x}\in \tilde{C}$ continues to be singular. Then $\tilde{x}\in \tilde{C}$ is of one Puiseux pair $(\tilde{p},\tilde{q})$
or $(\tilde{q},\tilde{p})$, where $\tilde{p}=p$ and $\tilde{q}=q-p$. With this understood, note that 
$M(x)=M(\tilde{x})+p^2$ and $\ell(x)=\ell(\tilde{x})$, which implies that the bound $M(x)+2\ell(x)+1$ behaves like the 
self-intersection $C\cdot C$, i.e., 
$$
M(\tilde{x})+2\ell(\tilde{x})+1=M(x)+2\ell(x)+1-p^2.
$$
On the other hand, by Lemma 6.10, we have $m_{p,q}-p^2\leq m_{\tilde{p},\tilde{q}}$. This allows us to run an inductive argument, i.e., $m_{\tilde{p},\tilde{q}}\leq M(\tilde{x})+2\ell(\tilde{x})+1$ implies 
$m_{p,q}\leq M(x)+2\ell(x)+1$, and furthermore, the former inequality is strict implies the latter is strict as well. 

With the preceding understood, it suffices to consider the following cases:

Case (i): assume $(p,q)=(m-1,m)$ for some $m\geq 3$. In this case, the multiplicity sequence consists of a single entry $m-1$, so that $M(x)=(m-1)^2$ and $\ell(x)=m-1$. It follows that $M(x)+ 2\ell(x)+1=m^2$. On the other hand,
for $(p,q)=(m-1,m)$, $(p^\prime,q^\prime)=(1,m-1)$. It follows easily that $\max (\frac{p}{p-p^\prime},\frac{q}{q-q^\prime})=\frac{q}{q-q^\prime}=m$, so that $m_{p,q}=m^2$ as well. Hence $m_{p,q}= M(x)+2\ell(x)+1$
in this case. 

Case (ii): assume $(\tilde{p},\tilde{q})=(m-1,m)$ or $(\tilde{p},\tilde{q})=(m,m-1)$ for some $m\geq 3$. In the former
case, $(p,q)=(m-1, 2m-1)$, and in the latter case, $(p,q)=(m,2m-1)$. With this understood, note that from the calculation in Case (i), we have $M(\tilde{x})+ 2\ell(\tilde{x})+1=m^2$. Now suppose $(p,q)=(m-1, 2m-1)$.
It is easy to check that $(p^\prime,q^\prime)=(1,2m-3)$, from which it follows that
$\max (\frac{p}{p-p^\prime},\frac{q}{q-q^\prime})=\frac{q}{q-q^\prime}=m-\frac{1}{2}$. With $M(x)=M(\tilde{x})+(m-1)^2$ and $pq=2m^2-3m+1$, we have 
$$
M(x)+2\ell(x)+1=2m^2-2m+1, \mbox{ and } m_{p,q}=2m^2-2m. 
$$
Consequently, $m_{p,q}< M(x)+2\ell(x)+1$ if $(p,q)=(m-1, 2m-1)$. It remains to consider the case where 
$(p,q)=(m,2m-1)$. It is easy to check that in this case, $(p^\prime,q^\prime)=(m-1, 2)$, and consequently, 
$\max (\frac{p}{p-p^\prime},\frac{q}{q-q^\prime})=\frac{p}{p-p^\prime}=m$. Furthermore, note that in this case,
$M(x)=M(\tilde{x})+m^2$ and $pq=2m^2-m$, which implies that $m_{p,q}=2m^2=M(x)+2\ell(x)+1$. 

Case (iii): assume $(p,q)=(m, km-1)$ for some $m\geq 3$ and $k\geq 2$. In this case, it is easy to check that
$(p^\prime,q^\prime)=(m-1, k)$, from which it follows that $\frac{p}{p-p^\prime}=m$ and 
$\frac{q}{q-q^\prime}=\frac{km-1}{km-1-k}<2$. Consequently, $\max (\frac{p}{p-p^\prime},\frac{q}{q-q^\prime})=m$. With $pq=km^2-m$, it implies that $m_{p,q}=km^2$. On the other hand, inductively starting with $k=2$, one can easily show that $M(x)+2\ell(x)+1=km^2$. Hence $m_{p,q}=M(x)+2\ell(x)+1$ in this case. 

Case (iv): assume $(\tilde{p},\tilde{q})=(km-1,m)$ for some $m\geq 3$ and $k\geq 2$, and 
$(p,q)=(km-1,(k+1)m-1)$. First of all, 
$$
M(x)+2\ell(x)+1=M(\tilde{x})+2\ell(\tilde{x})+1+ (km-1)^2=(k^2+k)m^2-2km+1.
$$
Secondly, from the proof of Lemma 6.10, we have $\frac{p}{p-p^\prime}=\frac{\tilde{p}}{\tilde{p}-\tilde{p}^\prime}=\frac{km-1}{km-1-k}$ and $\frac{q}{q-q^\prime}<\frac{\tilde{q}}{\tilde{q}-\tilde{q}^\prime}=m$, 
so that $\max (\frac{p}{p-p^\prime},\frac{q}{q-q^\prime})<m$. With $pq=(k^2+k)m^2-(2k+1)m+1$, it follows easily
that $m_{p,q}< M(x)+2\ell(x)+1$ in this case. 

Lemma 6.11 follows readily from the above analysis. 

\end{proof}

\section{Embeddings of singular Lagrangian $\R\P^2$s}

This section is devoted to discussions concerning embeddings of the compact domain $W_0$ and singular Lagrangian $\R\P^2$'s, and contains the proof of Theorems 1.1, 1.5, 1.8 and 1.9, as well as the proof of 
Proposition 1.6.

Let $Q_8$ denote the finite group of order $8$ generated by the quaternions $i,j,k$, which acts complex linearly on $\C^2$ by identifying the group of unit quaternions canonically with $SU(2)$.
In particular, the induced action on the unit sphere $\s^3\subset \C^2$, which is free, preserves the Hopf fibration. Let $M_0=\s^3/Q_8$ be the quotient manifold, which inherits a Seifert fibration 
$\pi: M_0\rightarrow S$ where $S$ is an orbifold $2$-sphere with three singular points of multiplicity $2$. We give $M_0$ the non-standard orientation, so that as a Seifert manifold, $M_0=M((2,-1),(2,-1),(2,1))$; note that $e(M_0)=\frac{1}{2}$ and $e_0(M_0)=-1$. The following lemma is straightforward; we leave its proof to the reader. 

\begin{lemma}
{\em (1)} The three singular fibers of $\pi: M_0\rightarrow S$ represent the classes of 
$i,j,k$ in $\pi_1(M_0)$, which is canonically identified with $Q_8$. Moreover, the regular fiber represents the class $-1\in Q_8$.

{\em (2)} $H_1(M_0,\Z)=\Z_2\times \Z_2$, where the three involutions in $H_1(M_0,\Z)$ are given by the classes of $i,j,k$. In particular, the regular fiber, which represents the class of $-1\in Q_8$, 
is null-homologous in $M_0$. 
\end{lemma}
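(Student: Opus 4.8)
The plan is to exploit the defining covering $q\colon\s^3\to M_0$ with deck group $Q_8$ acting freely and complex-linearly: since $\s^3$ is simply connected this identifies $\pi_1(M_0)$ with $Q_8$ (a chosen lift of the basepoint fixes the identification), and since $Q_8\subset SU(2)$ preserves the Hopf fibration, the Seifert fibration $\pi\colon M_0\to S$ is exactly the one induced by Hopf. So I would compute the homotopy class of a fiber by lifting a loop traversing it to an arc in the corresponding Hopf circle and reading off the endpoint.

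First I would set up the picture on $\C\P^1$: the $Q_8$-action on $\s^3$ descends through the Hopf map to an action on $\C\P^1$ that factors through $Q_8/\{\pm1\}\cong\Z_2\times\Z_2$, because $-\mathrm{Id}$ acts trivially there. The three nontrivial elements — the images of $i,j,k$ — each act on $\s^2\cong\C\P^1$ as a half-rotation about an axis; as they pairwise commute the three axes are mutually orthogonal, and the six resulting fixed points descend to the three multiplicity-$2$ points $p_1,p_2,p_3$ of the base orbifold $S$. Next, the regular fiber: for $p\in\C\P^1$ with trivial $\Z_2\times\Z_2$-stabilizer, the subgroup of $Q_8$ preserving the Hopf circle $F_p$ setwise is exactly $\{\pm1\}$, and $-\mathrm{Id}$ acts on $F_p$ as the time-$\pi$ map of the Hopf flow, i.e.\ a half-rotation; hence $q(F_p)$ is a circle double-covered by $F_p$, and a loop traversing $q(F_p)$ once lifts, from a chosen lift $x_0$, to an arc in $F_p$ ending at $(-\mathrm{Id})x_0$, so its class in $\pi_1(M_0)=Q_8$ is $-1$. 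Finally, the singular fiber over $p_1$: choosing coordinates so that $i=\mathrm{diag}(\sqrt{-1},-\sqrt{-1})$ and $p_1=[1:0]$, the subgroup preserving $F_{[1:0]}=\{(z,0):|z|=1\}$ setwise is $\langle i\rangle\cong\Z_4$, acting by the quarter-rotation $z\mapsto\sqrt{-1}\,z$, so $q(F_{[1:0]})$ is a circle $4$-to-$1$ covered by $F_{[1:0]}$ and a loop around it lifts to an arc from $x_0$ to $i\cdot x_0$, giving class $i$; the same computation over the other two axes yields $j$ and $k$. This establishes (1), with the usual caveat that the class of an unoriented Seifert fiber is only well-defined up to conjugacy and inversion, which is harmless here since $-1$ is central and each of $i,j,k$ is conjugate precisely to its own inverse.

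As a shortcut one may instead invoke the standard presentation of $\pi_1$ of a genus-$0$ Seifert manifold attached to the invariants $M_0=M((2,-1),(2,-1),(2,1))$: with $h$ a regular fiber (central) and $x_1,x_2,x_3$ the natural generators about the three singular fibers, the relations read (for a suitable sign convention) $x_1^2=x_2^2=h$, $x_3^2=h^{-1}$, $x_1x_2x_3=1$, and the assignment $h\mapsto-1$, $x_1\mapsto i$, $x_2\mapsto j$, $x_3\mapsto ji=-k$ is immediately seen to satisfy every relation; since both groups have order $8$ it is an isomorphism, recovering the same conclusion (the singular fibers being $i,j,k$ up to the conjugacy/inversion ambiguity, the regular fiber being $-1$). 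The only care needed here is the bookkeeping between the orientation/sign conventions in the Seifert presentation and a fixed complex-linear embedding $Q_8\hookrightarrow SU(2)$.

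Part (2) is then immediate: $[Q_8,Q_8]=\{\pm1\}$, so $H_1(M_0;\Z)=Q_8^{\mathrm{ab}}=Q_8/\{\pm1\}\cong\Z_2\times\Z_2$, under which $i,j,k$ map to the three distinct order-two elements and $-1$ maps to $0$; in particular the regular fiber is null-homologous. I do not anticipate any genuine obstacle: the content is purely a matter of organizing the covering-space computation (or, on the algebraic side, tracking sign conventions), which is why the statement is left to the reader.
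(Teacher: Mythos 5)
Your proof is correct. The paper explicitly declines to prove this lemma (``The following lemma is straightforward; we leave its proof to the reader''), so there is no in-text argument to compare against; both of the routes you give --- the direct covering-space computation lifting fiber loops through the Hopf fibration $\s^3\to\C\P^1$, and the explicit homomorphism from the Seifert presentation of $\pi_1(M((2,-1),(2,-1),(2,1)))$ onto $Q_8$ sending $h\mapsto -1$, $x_1\mapsto i$, $x_2\mapsto j$, $x_3\mapsto -k$ --- are valid, and you correctly flag and dispose of the only subtlety, namely the conjugacy/inversion ambiguity in the class of an unoriented fiber, which is harmless here since $-1$ is central and each of $i,j,k$ is conjugate to its inverse.
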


It is also known that $M_0$ is the total space of a non-orientable circle bundle over $\R\P^2$ with
Euler number $-2$, $pr: M_0\rightarrow \R\P^2$, and $W_0$ is the associated disk bundle. We remark 
that there is a canonically embedded $\R\P^2$ in $W_0$, i.e., the zero-section. The following lemma will be used later in this section. 

\begin{lemma}
There is an involution $\tau: M_0\rightarrow M_0$ which preserves both $\s^1$-fibrations 
$\pi: M_0\rightarrow S$ and $pr: M_0\rightarrow \R\P^2$, in particular, extending naturally to
an involution $\tau: W_0\rightarrow W_0$, such that $\tau: M_0\rightarrow M_0$ switches two of the singular fibers of $\pi: M_0\rightarrow S$ and leaves the third singular fiber invariant. Moreover,
the singular fiber which is invariant under $\tau$ bounds a smoothly embedded disk in $W_0$
which intersects the embedded $\R\P^2$ transversely in a single point. 
\end{lemma}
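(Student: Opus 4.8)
The plan is to construct $\tau$ explicitly upstairs on $\s^3 \subset \C^2 = \C\langle 1,j\rangle$ (viewing $\C^2$ as the quaternions $\h$ with the left $SU(2)=\{\text{unit quaternions}\}$ action), and then check it descends to $M_0 = \s^3/Q_8$ and has the stated behavior on the two fibrations. First I would recall that the Hopf fibration $\s^3 \to \s^2 = \C\P^1$ is the orbit map of the right circle action $v \mapsto v e^{i\theta}$, and that $Q_8 \subset SU(2)$ acts on the left and commutes with this right action, so $\pi: M_0 \to S$ is the induced Seifert fibration with the three exceptional fibers over $[\pm i], [\pm j], [\pm k]$ (the images of the coordinate circles). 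The involution I would use is right multiplication by a suitable unit quaternion, say $\sigma: v \mapsto v\,\omega$ with $\omega = \tfrac{1}{\sqrt 2}(i+k)$ (so $\omega^2 = -1$, i.e.\ $\sigma^2 = $ right mult.\ by $-1$, which is a Hopf rotation — harmless, and one can instead take $\omega$ with $\omega^2=1$ among the purely real $\pm 1$, so I would actually conjugate: set $\tau(v) = \omega^{-1} v \omega$? — no: that's a left-right combination). More carefully: I want an isometry of $\s^3$ that (a) commutes with $Q_8$ up to inner automorphisms so it descends to $M_0$, (b) preserves the Hopf fibration, and (c) on the base $\s^2$ swaps two of the three pairs of exceptional points and fixes the third. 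Conjugation $v \mapsto g v g^{-1}$ for $g = \tfrac{1}{\sqrt2}(i+k)$ does exactly this: it is an isometry of $\s^3$, it normalizes $Q_8$ (conjugation by $g$ permutes $\{\pm i, \pm j, \pm k\}$, swapping the $i$- and $k$-axes, negating $j$), hence descends to an involution $\tau$ of $M_0$; it commutes with the right Hopf action (conjugation is $v \mapsto gvg^{-1}$, and the Hopf action is right multiplication, which do not commute in general) — so I may instead need $\tau(v) = g v$ or $\tau(v) = v g$. I would work out which of $\{$left mult.\ by $g$, right mult.\ by $g$, conjugation by $g\}$ simultaneously (i) descends to $M_0 = Q_8\backslash \s^3$, (ii) preserves the right Hopf action, and settle on that; the natural candidate is right multiplication $\tau(v) = vg$ with $g^2 = 1$ in the relevant quotient, which trivially commutes with the left $Q_8$-action and descends, but does \emph{not} preserve the Hopf fibration — so the honest choice is conjugation, and I will need to verify the Hopf fibration is preserved because conjugation normalizes the \emph{maximal torus up to Weyl action}: $\pi(gvg^{-1})$ depends only on $\pi(v)$ precisely because conjugation by $g$ is an automorphism of $SU(2)$ sending the right-$\mathbb T$ cosets to right-$(g\mathbb T g^{-1})$ cosets, and $g\mathbb T g^{-1}$ is \emph{another} Hopf structure — so strictly I get a map between two possibly-different Hopf fibrations. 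To avoid all this I would instead pick $g$ \emph{in the normalizer of the Hopf torus} $\mathbb T = \{e^{i\theta}\}$ that conjugates $Q_8$ nontrivially: $g = j$ works — conjugation by $j$ sends $i \mapsto -i$, $k \mapsto -k$, fixes $j$, and $j$ normalizes $\mathbb T$ (indeed $j e^{i\theta} j^{-1} = e^{-i\theta}$), so conjugation by $j$ sends Hopf fibers to Hopf fibers (orientation-reversingly on each fiber and on the base, but that is fine). This $\tau$ descends to $M_0$, preserves $\pi$, fixes the $j$-fiber setwise and swaps the $i$- and $k$-fibers.

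Second, I would verify $\tau$ also preserves the non-orientable $\s^1$-bundle $pr: M_0 \to \R\P^2$. The key input is that this bundle structure is \emph{canonical} — $W_0$ is \emph{defined} as the Euler-number-$(-2)$ disk bundle over $\R\P^2$, equivalently $M_0$ is the unit normal bundle. I would identify $pr$ concretely: the $\s^1$-action on $M_0$ realizing this bundle is the one whose generic orbit is the regular Hopf fiber but which, because $-1 \in Q_8$ is the class of the regular fiber (Lemma 7.1(1)), descends further; more precisely $M_0 \to \R\P^2$ is the quotient by the residual $\s^1 = \mathbb T/\{\pm1\}$ acting on $\s^3/Q_8$, with base $\s^2/(Q_8/\{\pm1\}) = \s^2/(\Z_2\times\Z_2)$ — but that has underlying space $\s^2$, not $\R\P^2$, so the correct description is: $\R\P^2$ arises from the \emph{antipodal-covered} picture, and I would instead realize $pr$ via the double cover $\s^3 \to \s^3/\{\pm1\} = \R\P^3 = \s^2 \text{-bundle}$... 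This is the genuinely delicate bookkeeping step. The clean way: $\R\P^2 = \s^2/(v \mapsto -v)$ and $M_0 \to \R\P^2$ has monodromy $\Z_2$; since $\tau$ (conjugation by $j$) commutes with $-1$ and with the right Hopf action, it descends compatibly through every quotient in sight, so it preserves $pr$ too, and on $\R\P^2$ it induces an involution $\kappa$ whose fixed set I would read off from the fixed set of conjugation-by-$j$ on $\s^2$: that fixed set is $\{[\pm j]\} \sqcup (\text{the circle } \{[v] : v \perp j\})$, two components, which after passing to $\R\P^2$ becomes an isolated point together with an embedded circle — matching the parenthetical claim in the paper about $\kappa$. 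Extending $\tau$ over the disk bundle $W_0$ is then automatic: fiberwise-linearly, using that $\tau$ covers $\kappa$ and acts on the $\R\P^2$ of zero-sections by $\kappa$.

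Third and finally, I would establish the last sentence: the $\tau$-invariant singular fiber $f$ (the $j$-fiber) bounds a smoothly embedded disk $D \subset W_0$ meeting the zero-section $\R\P^2$ transversely in one point. For this I would use that $f$ represents $j \in Q_8 = \pi_1(M_0)$ (Lemma 7.1(1)) and is \emph{not} null-homologous but its double is, and more to the point: $W_0$ deformation retracts to $\R\P^2$, so $\pi_1(W_0) = \Z_2$ and the map $Q_8 = \pi_1(M_0) \to \pi_1(W_0) = \Z_2$ kills $-1$ and sends $i,j,k$ to the nontrivial element (consistent with Lemma 7.1(2)). A loop representing $j$ is homotopic in $W_0$ to a fiber of $pr$ that double-covers... no — rather, the singular fiber $f$, sitting over one of the $\Z_2$-orbifold points $p$ of $S$, is \emph{twice} a curve in the orbifold sense; under $pr: M_0 \to \R\P^2$ the point-$p$ data corresponds to a point of $\R\P^2$ over which the fiber of $pr$ is an $\s^1$ double-covering $f$, i.e.\ $f$ is \emph{homotopic in $M_0$} to the $pr$-fiber traversed twice — equivalently $f$ bounds in the total space an immersed disk, and one computes the normal framing/self-intersection from $e(W_0 \to \R\P^2) = -2$. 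The cleanest argument: take the $pr$-fiber $F$ over the image point; $F$ bounds the normal disk $D_0$ in $W_0$ meeting the zero-section once; now $f$ and $F$ cobound in $M_0$ an annulus-or-Möbius band (coming from the $\s^1$-bundle restricted to an arc in $\R\P^2$), and since $e = -2$ the relevant band closes $f$ up to a disk with one more boundary-parallel twist — I would push this to an honest embedded disk $D$ with $D \cap \R\P^2 = \{1 \text{ pt}\}$ transverse by a standard immersed-disk-resolution argument in the $D^2$-bundle, using that $W_0$ is a plumbing / disk bundle so everything is visible in a handle picture.

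The main obstacle I expect is precisely this last step combined with the middle one: keeping straight the two different circle fibrations on $M_0$ (the Seifert fibration $\pi$ with three $\Z_2$-fibers, versus the non-orientable $pr$ over $\R\P^2$) and exhibiting a single involution that respects \emph{both} simultaneously, and then producing the embedded disk with the correct intersection number with the zero-section. The existence and naturality of $\tau$ at the level of $\s^3$ is easy; the work is in checking compatibility with $pr$ and in the normal-bundle / framing computation that forces the bounding disk to be embedded and to hit $\R\P^2$ exactly once — there the Euler number $-2$ of $W_0 \to \R\P^2$ enters essentially, and I would want to double-check signs and the Möbius-versus-annulus dichotomy carefully rather than hand-wave them.
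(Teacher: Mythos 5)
Your plan (work on $\s^3$ with the quaternionic $Q_8$-action) is a reasonable one, but the specific involution you ultimately settle on is wrong, and the two places you flag as ``delicate'' are in fact left unresolved. Those are the parts where the proof has to actually happen.

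On the first point: conjugation by $j$ on $\s^3$ descends to $M_0 = Q_8\backslash\s^3$, but since $j\in Q_8$ one has $[jvj^{-1}]=[vj^{-1}]$, i.e.\ on $M_0$ this map \emph{is} right multiplication by $j^{-1}$. On $\C\P^1$ conjugation by $j$ is complex conjugation $[z_1:z_2]\mapsto[\bar z_1:\bar z_2]$, whose fixed set is $\R\P^1$ and contains all four real points $[1:0],[0:1],[1:1],[1:-1]$; it swaps the pair $[1:\pm i]$, which lie in a single $\Z_2\times\Z_2$-orbit. So the induced involution on the base orbifold $S=\C\P^1/(\Z_2\times\Z_2)$ fixes \emph{all three} cone points, hence all three singular fibers of $\pi$ are preserved setwise --- this does not satisfy the lemma, which requires two singular fibers to be switched. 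The option you should have kept is \emph{left} multiplication by an element of the normalizer of $Q_8$ not in $Q_8$, such as $\omega=\tfrac{1}{\sqrt2}(i+k)$. You did bring up $\omega$, but only as a candidate for right multiplication or conjugation; left multiplication by $\omega$ descends to $M_0$ because $\omega$ normalizes $Q_8$, gives an involution of $M_0$ because $\omega^2=-1\in Q_8$, preserves the Hopf fibration trivially (left and right multiplications commute), and on $\C\P^1$ the Möbius map $\tfrac{1}{\sqrt2}\begin{pmatrix}i&-i\\-i&-i\end{pmatrix}$ swaps $\{[1:0],[0:1]\}$ with $\{[1:1],[1:-1]\}$ and fixes $[1:\pm i]$, which after quotienting by $\Z_2\times\Z_2$ is precisely ``switch two cone points, fix the third.''

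On the second point: even with the corrected involution, your argument for compatibility with the non-orientable circle fibration $pr:M_0\to\R\P^2$ and the existence of the embedded disk meeting the zero-section in a single point is essentially a promissory note (``I would push this to an honest embedded disk\ldots by a standard immersed-disk-resolution argument''). This is the content of the lemma; in particular the claim about the longitude framing of the invariant fiber and the Euler number $-2$ needs an actual computation. The paper avoids both difficulties by a completely different route: it builds $M_0$ explicitly as $S(T^\ast(Mb))\cup_\partial(D^2\times\s^1)$, mirroring the decomposition $\R\P^2=Mb\cup D^2$, and defines $\tau$ piecewise --- on $S(T^\ast(Mb))$ as the lift of a reflection of $Mb$, and on $D^2\times\s^1$ as a linear conjugation. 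In that model the two circle fibrations $\pi$ and $pr$ are both manifest, the swap of the two singular fibers inside $S(T^\ast(Mb))$ and the invariance of the core of $D^2\times\s^1$ are visible by inspection, and the bounding disk meeting $\R\P^2$ once is literally a fiber disk, with the longitude $\bar l=Q$ identifying the framing. So the paper's approach trades the elegance of the quaternionic picture for a decomposition in which the two hardest verifications become tautological; if you want to pursue the quaternionic route, you still owe a careful translation of the $\R\P^2$-bundle structure and the framing calculation into those coordinates.
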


\begin{proof}
We shall first describe $M_0$ as a union of $S(T^\ast (Mb))$ with $D^2\times \s^1$, 
where $S(T^\ast (Mb))$ is the associated non-orientable circle bundle of the cotangent bundle 
$T^\ast (Mb)$ of the M\"{o}bius band $Mb$, which gives naturally a structure 
of $M_0$ as a non-orientable circle bundle over $\R\P^2$, to be denoted by 
$pr: M_0\rightarrow \R\P^2$. This is accomplished by describing the corresponding $\s^1$-action on 
$S(T^\ast (Mb))$ and explaining how to extend it over $D^2\times \s^1$, so that the union of 
$S(T^\ast (Mb))$ with $D^2\times \s^1$ is identified with the Seifert space 
$M_0=M((2,-1),(2,-1),(2,1))$. 

We begin by identify the M\"{o}bius band $Mb$ with the quotient of the annulus $A=\{(x,y,z)|x^2+y^2+z^2=1, |z|\leq 1/2\}$ under the antipodal map $(x,y,z)\mapsto (-x,-y,-z)$. Furthermore, we equip $Mb$ with the $\s^1$-action which is induced by the $\s^1$-action on $A$ given by the rotations about the $z$-axis. The $\s^1$-action naturally lifts to an $\s^1$-action on $S(T^\ast (Mb))$. In order to understand it, we note that the $\s^1$-action on $A$ identifies $S(T^\ast A)$ with the product
$\s^1\times ([0,1]\times \s^1)$, where the orbits of the $\s^1$-action are given by the first $\s^1$-factor,
and the $\s^1$-factor in $[0,1]\times \s^1$ corresponds to the fibers of the projection 
$pr: S(T^\ast (A))\rightarrow A$. With this understood, the antipodal map induces a fiber-preserving, free involution on $\s^1\times ([0,1]\times \s^1)$. The induced involution on the base $[0,1]\times \s^1$, which is orientation-preserving, swaps the two boundary components but reverses the orientation. So it must have two fixed points in its interior by the Lefschetz fixed point theorem. The two fixed points represent the two singular fibers of the Seifert fibration on $S(T^\ast (Mb))$, which can be seen as follows. Notice that the pre-image of
$pr: S(T^\ast (Mb))\rightarrow Mb$ over the core of $Mb$, i.e., the circle given by $z=0$, is a Klein bottle. If we identify the pre-image of $pr: S(T^\ast (A))\rightarrow A$ over the core $z=0$ with 
$\s^1\times \s^1$, with the core $z=0$ given by the first $\s^1$-factor, then the Klein bottle is the quotient of $\s^1\times \s^1$ by the involution $(u,v)\mapsto (-u,\bar{v})$. With this understood, it follows easily that the two singular fibers are the images of $(u,\pm 1)$ under the involution 
$(u,v)\mapsto (-u,\bar{v})$. Finally, we note that the Seifert fibration on $S(T^\ast (Mb))$ comes with a natural trivialization on the boundary, which gives rise to an identification of the boundary with 
$\s^1\times \s^1$, such that the first $\s^1$-factor parametrizes the orbits of the $\s^1$-action and the second 
$\s^1$-factor parametrizes the fibers of $pr: S(T^\ast (Mb))|_{\partial Mb}\rightarrow \partial Mb$. Moreover, with this trivialization over the boundary, the Seifert invariants of the two singular fibers are $(2,-1)$ and $(2,1)$
respectively. Adapting the notations from \cite{N}, we let $H$ be the class of the $\s^1$-orbits and $Q$ be the boundary of the base $2$-orbifold but given with the opposite orientation, then $(H,Q)$ forms a positively oriented basis on the boundary of  $S(T^\ast (Mb))$, i.e., $H\cdot Q=1$. 

With the preceding understood, we shall attach a solid torus $D^2\times \s^1$ to $S(T^\ast (Mb))$
along the boundaries as follows: let $(\bar{m},\bar{l})$ be the standard meridian-longitude pair for 
$D^2\times \s^1$, then the gluing map is uniquely determined by the following relations:
$$
\bar{m}=2Q-H, \;\; \bar{l}=Q.
$$
It is easy to see that the resulting $3$-manifold is $M_0=M((2,-1), (2,-1), (2,1))$. 
Moreover, it also naturally exhibits $M_0$ as a non-orientable circle bundle over $\R\P^2$ with Euler
number $-2$, denoted by $pr: M_0\rightarrow \R\P^2$. 

We proceed further by describing the involution $\tau: M_0\rightarrow M_0$. We begin by considering 
the involution $\tau: Mb\rightarrow Mb$ induced by the involution on $A$ given by $(x,y,z)\mapsto (x,-y,z)$. There is an induced involution on $S(T^\ast (Mb))$, also denoted by $\tau$ for simplicity, which preserves the Seifert fibration on $S(T^\ast (Mb))$ (as $\tau: Mb\rightarrow Mb$ commutes with the $\s^1$-action on $Mb$). To see that $\tau: S(T^\ast (Mb))\rightarrow S(T^\ast (Mb))$ switches the two singular fibers, we simply note that $\tau: Mb\rightarrow Mb$ acts as a reflection on the core of $Mb$,
and the two singular fibers are the images of $(u,\pm 1)$ under the involution 
$(u,v)\mapsto (-u,\bar{v})$, where $\pm 1$ represent the unit cotangent vectors lying in the cotangent bundle of the core. 
To understand $\tau$ on the boundary of $S(T^\ast (Mb))$, we note that $\tau: Mb\rightarrow Mb$ acts as a reflection on $\partial Mb$, and consequently, the action of $\tau$ on the boundary $\s^1\times \s^1$ of $S(T^\ast (Mb))$ is given by the diagonal conjugation, i.e., $(z,w)\mapsto (\bar{z},\bar{w})$,
which extends linearly over $D^2\times \s^1$, as the gluing map given by the relations 
$\bar{m}=2Q-H, \;\; \bar{l}=Q$ is equivariant with respect to the diagonal conjugation on the boundary 
$\s^1\times \s^1$ of $D^2\times \s^1$. Note that the resulting involution $\tau: M_0\rightarrow M_0$
switches the two singular fibers lying in $S(T^\ast (Mb))$ and leaves the core of $D^2\times \s^1$
invariant, which is the third singular fiber of $M_0=M((2,-1), (2,-1), (2,1))$. Finally, note that 
the relation $\bar{l}=Q$ shows that $Q$, which is a fiber of $pr: S(T^\ast (Mb))\rightarrow Mb$,
is a longitude in $D^2\times \s^1$. It follows easily that the core of $D^2\times \s^1$ bounds a 
smoothly embedded disk in $W_0$, intersecting the zero-section (i.e., the embedded $\R\P^2$ in $W_0$) transversely at a single point. This finishes the proof of the lemma. 

\end{proof}

We remark that $\tau: M_0\rightarrow M_0$ is a lifting of an involution $\kappa: \R\P^2\rightarrow \R\P^2$ under
$pr: M_0\rightarrow \R\P^2$. Moreover, the fixed-point set of $\kappa$ consists of an isolated point and a circle,
so it can be identified with the involution induced by the reflection $(x,y,z)\mapsto (x,-y,z)$ on $\s^2\subset \R^3$.

We shall fix the following notations throughout this section: denote by $F_0$ the singular fiber invariant under 
$\tau: M_0\rightarrow M_0$ and by $F_1,F_2$ the remaining two singular fibers swapped under $\tau$. 

\vspace{2mm}

Next we consider tight contact structures on $M_0$. It is an important fact that there is a unique oriented positive tight contact structure on $M_0$ up to a contactomorphism (cf. \cite{GLS}). 
On the other hand, by Theorem 2.3(2) there is a tight $\s^1$-invariant, non-transverse contact 
structure on $M_0$, to be denoted by $\xi_{inv}$, whose dividing set is a circle surrounding one of the singular points in the base orbifold $S$. The uniqueness from \cite{GLS} implies that any tight contact structure on $M_0$ is contactomorphic to $\xi_{inv}$. 

In order to be more precise, we introduce the following notations: let $z_1,z_2,z_3$
be the singular points of $S$, and let $f_1,f_2,f_3$ be the fibers over them. Then we
assume the dividing set of $\xi_{inv}$ is a circle bounding a disk neighborhood of $z_1$. 
Furthermore, we shall fix an orientation of $\xi_{inv}$ such that the fiber $f_1$ is a negative 
transversal to $\xi_{inv}$. As a corollary of Lemma 2.2, we note the following lemma. 

\begin{lemma}
There is an $\s^1$-equivariant diffeomorphism $\psi: M_0\rightarrow M_0$ which switches the fibers $f_2,f_3$ and
leaves the fiber $f_1$ invariant, such that $\psi_\ast(\xi_{inv})=\xi_{inv}$.
\end{lemma}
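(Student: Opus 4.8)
The idea is to reduce the construction of $\psi$ to a statement about the base orbifold and then invoke Lemma 2.2. Recall that as a Seifert manifold $M_0=M((2,-1),(2,-1),(2,1))$, so its base orbifold $S$ is a $2$-sphere carrying three cone points $z_1,z_2,z_3$, each of order $2$; at every one of them the normalized Seifert invariant is $(2,1)$, since $-1\equiv 1\pmod 2$. By Lemma 2.2 it therefore suffices to produce an orientation-preserving diffeomorphism of orbifolds $\phi\colon S\to S$ with three properties: (a) $\phi$ fixes $z_1$ and interchanges $z_2$ and $z_3$; (b) $\phi$ preserves the normalized Seifert invariants at the cone points, which here is automatic because all three coincide; and (c) $\phi(\Gamma)=\Gamma$, where $\Gamma$ is the dividing set of $\xi_{inv}$, a circle bounding a disk neighborhood of $z_1$.

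To build $\phi$, identify $S$ with the round $2$-sphere by a diffeomorphism sending $z_1$ to the north pole, $z_2$ and $z_3$ to the points of longitude $0$ and $\pi$ on the equator, and $\Gamma$ to a circle of constant latitude near the north pole; such an identification exists because $\Gamma$ bounds a disk neighborhood of $z_1$ disjoint from $z_2$ and $z_3$. Now let $\phi$ be rotation of $S$ by angle $\pi$ about the polar axis. This map is orientation-preserving; it fixes the north pole $z_1$; it carries the longitude-$0$ point to the longitude-$\pi$ point, hence interchanges $z_2$ and $z_3$; and it preserves every latitude circle, so $\phi(\Gamma)=\Gamma$. Thus $\phi$ satisfies (a)--(c).

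Applying Lemma 2.2 with $\xi_1=\xi_2=\xi_{inv}$, the orbifold diffeomorphism $\phi$ lifts to an $\s^1$-equivariant diffeomorphism $\psi\colon M_0\to M_0$ with $\psi_\ast(\xi_{inv})=\xi_{inv}$. Since $\psi$ is $\s^1$-equivariant and covers $\phi$, it permutes the fibers according to the permutation of the cone points induced by $\phi$: it leaves $f_1$ invariant and interchanges $f_2$ and $f_3$. Finally, $\psi$ is orientation-preserving on $M_0$ and restricts on $f_1$ to an $\s^1$-equivariant, hence orientation-preserving, self-map; therefore the normalization we fixed for $\xi_{inv}$ (that $f_1$ is a negative transversal) is respected, and $\psi$ may be taken so that $\psi_\ast(\xi_{inv})=\xi_{inv}$ as oriented contact structures.

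\textbf{Main obstacle.} There is no deep difficulty here; the only point requiring care is the simultaneous compatibility of the requirements on $\phi$. An orientation-reversing reflection would also interchange $z_2$ and $z_3$ but is excluded by Lemma 2.2, so one must notice that a rotation by $\pi$ about the axis through $z_1$ accomplishes the interchange while remaining orientation-preserving and fixing the dividing circle around $z_1$. Everything else is a routine bookkeeping of the lift provided by Lemma 2.2.
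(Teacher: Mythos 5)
Your proof is correct and takes the same route the paper intends: reduce to producing an orientation-preserving orbifold diffeomorphism of the base $S$ that fixes $z_1$, swaps $z_2,z_3$, and preserves the dividing circle $\Gamma$, and then invoke Lemma~2.2. The rotation by $\pi$ about the axis through $z_1$ is exactly the right model map, and your observation that all three normalized Seifert invariants are $(2,1)$ (so condition (b) is automatic) is the key reason the hypothesis of Lemma~2.2 is satisfied. Your final paragraph correctly handles the co-orientation ambiguity noted in the parenthetical remark before Lemma~2.2, by checking that $\psi$ preserves the sign of the transversal $f_1$, so the equality $\psi_*(\xi_{inv})=\xi_{inv}$ holds with the orientation the paper fixed. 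One point worth tightening: the $\psi$ produced by the proof of Lemma~2.2 is the composition of a lift $\psi_0$ of $\phi$ with further $\s^1$-equivariant corrections supported near the torus $\pi^{-1}(\Gamma)$ and over each piece $S_j$ of $S\setminus\Gamma$, so strictly speaking $\psi$ covers $\phi$ composed with corrections $\phi_1$; you should note that those corrections necessarily fix the cone points (an $\s^1$-invariant Moser vector field descends to a vector field on the orbifold base, which must vanish at each cone point of order $\geq 2$), so the induced permutation of the singular fibers is still the one prescribed by $\phi$. With that one-line remark, the argument is complete, and matches the paper's intent, which simply states the lemma as a corollary of Lemma~2.2 without spelling these details out.
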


With the preceding understood, we proceed to construct the symplectic cap $V_0$. We start with 
the Hamiltonian $\s^1$-action on 
$(\C^2,\omega_0)$, defined by $\lambda\cdot (z_1,z_2)=(\lambda z_1,\lambda^2 z_2)$, $\forall \lambda\in\s^1$, with Hamiltonian function $H=\frac{1}{2}(|z_1|^2+2|z_2|^2)$. It defines a Seifert
fibration on $H^{-1}(1)$, which is diffeomorphic to $\s^3$, with one singular fiber of multiplicity $2$
given by $z_1=0$. We let $K_1$, $K_2$ be the two copies of unknot in $H^{-1}(1)$ defined by
$z_2=0$ and $z_2=\epsilon z_1^2$ respectively. We note that $K_1,K_2$ are both regular fibers
of the Seifert fibration on $H^{-1}(1)$, and the linking number $lk(K_1,K_2)=2$ with fiber
orientation. 

\begin{lemma}
There is a relative handlebody $Z$ of two symplectic $2$-handles built on $H^{-1}(1)$, where the
two $2$-handles are attached along $K_1$ and $K_2$ with framings $4$ and $0$ relative to the zero-framing, such that $Z$ is symplectic with two concave contact boundaries $(H^{-1}(1), \xi_{std})$ and
$(M_0,\xi_{inv})$. Here $\xi_{std}$ is the standard tight contact structure on $H^{-1}(1)\cong \s^3$. 
\end{lemma}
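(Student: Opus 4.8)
The plan is to obtain $Z$ directly from Gay's construction as packaged in Sections~3--5, built on $H^{-1}(1)$ with its weighted Seifert fibration. First I would identify the data on $H^{-1}(1)$: the Hamiltonian action $\lambda\cdot(z_1,z_2)=(\lambda z_1,\lambda^2 z_2)$ restricts to a fixed-point-free circle action on $H^{-1}(1)\cong\s^3$ whose only multiple fiber is the orbit $\{z_1=0\}$, of multiplicity $2$; as a Seifert manifold $H^{-1}(1)=M(0;(1,0),(1,0),(2,1))$, with $e(H^{-1}(1))=-\tfrac12<0$, and since the standard contact form $\sum_j r_j^2\,d\theta_j$ is positive on the orbit vector field $\partial_{\theta_1}+2\partial_{\theta_2}$, the unique transverse $\s^1$-invariant (hence Milnor fillable, hence tight) contact structure on $H^{-1}(1)$ is $\xi_{std}$ (Section~2). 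The curves $K_1=\{z_2=0\}$ and $K_2=\{z_2=\epsilon z_1^2\}$ are two regular fibers, with $K_2$ a fiber-parallel push-off of $K_1$; a short linking computation (the disk $\{z_2=0,\,|z_1|\le R\}$ meets $\{z_2=\epsilon z_1^2\}$ at $z_1=0$ with multiplicity $2$) gives $lk(K_1,K_2)=2$, and likewise that the $\s^1$-orbit near $K_i$ has slope $s_i=2$ relative to the zero-framing of the unknot $K_i$.

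Next I would produce a compatible rational open book with periodic monodromy. Applying Theorem~3.4 with binding $B=K_1\sqcup K_2$, internal multiplicity $2$, and monodromy order $n$ a common multiple of $2$, the positivity constraint of Remark~4.5 will force $n\ge 4$; take $n=4$, $c_1=c_2=3$, $b_1=b_2=1$ (and the binding Seifert invariants $(1,0)$). One checks the hypotheses of Theorem~3.4 hold --- in particular $\tfrac12+\tfrac{c_1+c_2}{4}=2\in\Z$ and $b_i=1>\tfrac{c_i}{n}=\tfrac34$, so both $K_1$ and $K_2$ carry the fiber orientation --- and the multiplicities come out $p_1=p_2=|4-3|=1$. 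By Theorem~4.1 the supported $\s^1$-invariant contact structure $\xi$ is transverse, hence $\xi=\xi_{std}$ by the previous paragraph. Now I would apply Gay's construction (Theorem~4.4): attach a symplectic $2$-handle along each $K_i$ with page framing $F_i>0$, producing a symplectic relative handlebody $Z$ on $H^{-1}(1)$ with two concave contact boundaries $(H^{-1}(1),\xi)=(\s^3,\xi_{std})$ and $(M',\xi')$. By Lemma~4.6 the framing of the $i$-th handle relative to the zero-framing of $K_i$ is $s_i-\bar\alpha_i/\alpha_i=2-\bar\alpha_i$; choosing $\bar\alpha_1=-2$, $\bar\alpha_2=2$ makes these equal to $4$ and $0$ as required, and then $F_1=\tfrac{n}{p_1}-\bar\alpha_1=6>0$ and $F_2=\tfrac{n}{p_2}-\bar\alpha_2=2>0$. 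Theorem~4.4 (together with $\bar\alpha_i\beta_i+\alpha_i\bar\beta_i=1$, giving $\bar\beta_1=\bar\beta_2=1$) then identifies $M'=M(0;(-2,1),(2,1),(2,-1))$, which, read with the sign convention for $\bar\alpha_1<0$, is $M((2,-1),(2,-1),(2,1))=M_0$.

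Finally I would identify $\xi'$. Exactly one binding component of the induced rational open book on $M'$ is anti-fiber-oriented ($\bar\alpha_1<0$), so by Theorem~4.1 the contact structure $\xi'$ is non-transverse with dividing set a single circle bounding a disk neighborhood of one of the three order-$2$ singular points of $M_0$. The Seifert data of $M_0$ satisfies the hypotheses of Theorem~2.3(2) for such a dividing set (with $\tfrac{-1}{2}+\tfrac{1}{2}\le 0$ and $\tfrac{-1}{2}<0$), so $\xi'$ is tight; as $M_0$ carries a unique tight contact structure up to contactomorphism, and by Lemma~2.2 together with Lemma~7.3 the choice of singular point is immaterial, $\xi'$ is contactomorphic to $\xi_{inv}$, completing the proof. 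The main obstacle is the bookkeeping in the middle step: one must simultaneously arrange that the hypotheses of Theorem~3.4 hold, that both $K_i$ are fiber-oriented (to pin $\xi=\xi_{std}$), that $F_i>0$, and that the handle framings --- translated from page framings to the zero-framings of $K_1,K_2$ via Lemma~4.6 --- are exactly $4$ and $0$ while $M'$ is exactly $M_0$ with its non-standard orientation. Each of these is a brief computation, but reconciling all the orientation conventions (the sign of $\bar\alpha_i$, the orientation of $\s^3=H^{-1}(1)$, and the orientation of $M_0$) at once is the delicate point.
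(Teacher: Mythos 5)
Your proposal is correct and follows essentially the same route as the paper: regard $H^{-1}(1)=M((1,0),(1,0),(2,1))$, invoke Theorem 3.4 with $n=4$, $c_1=c_2=3$, $b_1=b_2=1$ to get the rational open book with $p_1=p_2=1$ supporting $\xi_{std}$, attach two symplectic $2$-handles via Gay's construction with page framings $F_1=6$, $F_2=2$, read off $M'=M((-2,1),(2,1),(2,-1))=M_0$ from Theorem 4.4, convert to zero-framings $4$ and $0$ via Lemma 4.6 using the slope $2$ of the Seifert fibers, and identify $\xi'$ with $\xi_{inv}$ from Theorem 4.1. The only cosmetic difference is the order of the last two computations --- you fix $\bar\alpha_1=-2$, $\bar\alpha_2=2$ to force the framings $4,0$ and deduce $F_i>0$, while the paper chooses $F_1=6$, $F_2=2$ first and verifies the framings afterward --- and your identification of $\xi'$ is slightly more verbose (re-deriving tightness from Theorem 2.3(2) and citing uniqueness of tight contact structures on $M_0$, whereas the paper already packaged this into the definition of $\xi_{inv}$ in Section 7).
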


\begin{proof}
We begin by exhibit a rational open book on $H^{-1}(1)$ with binding components $K_1,K_2$
using Theorem 3.4. To this end, we regard $H^{-1}(1)$ as $M((1,0),(1,0),(2,1))$, with $(\alpha_i,\beta_i)=(1,0)$ for $i=1,2$ and $(\alpha_j,\beta_j)=(2,1)$. With this understood, let $n=4$,
$c_1=c_2=3$ and $b_1=b_2=1$ in Theorem 3.4. One can check easily that the conditions in Theorem 3.4
are satisfied, and moreover, $\frac{\beta_i}{\alpha_i}+b_i-\frac{c_i}{n}=\frac{1}{4}>0$ for both
$i=1,2$. This proves the existence of the desired rational open book on $H^{-1}(1)$. It is
easy to check that the multiplicities at $K_1,K_2$ are both equal to $1$, i.e., $p_1=p_2=1$, using the formula in
Remark 3.5. Finally, we note that the contact structure supported by the rational open book, being $\s^1$-invariant and transverse,  is the standard tight contact structure $\xi_{std}$ on $H^{-1}(1)\cong \s^3$. 

With the preceding understood, if we attach a symplectic $2$-handle $H_1$ along $K_1$ with 
framing $F_1=6$ and attach a symplectic $2$-handle $H_2$ along $K_2$ with framing 
$F_2=2$, both relative to the page framing, then by Gay \cite{G} the resulting relative handlebody $Z$ is symplectic with two concave contact boundaries, one being $(H^{-1}(1), \xi_{std})$. By Theorem 4.4, it is easy to see that the other component of $-\partial Z$ is $M((-2,1),(2,1),(2,-1))$, which is $M_0$. Moreover, note that one of the Seifert invariants is $(-2,1)$, with $-2<0$, it follows from Theorem 4.1 that the contact structure on $M_0$ is $\xi_{inv}$. 

It remains to show that the $2$-handles $H_1,H_2$ have framings $4$ and $0$ respectively 
relative to the zero-framing. To see this, we observe that respect to the zero-framing of $K_1,K_2$,
the slope of the Seifert fibration on $H^{-1}(1)$ equals $2$, because both $K_1,K_2$ are regular
fibers and $lk(K_1,K_2)=2$. With this understood, it follows easily from Lemma 4.6 that the framing of
$K_1$ equals $2-\frac{-2}{1}=4$, and the framing of $K_2$ equals $2-\frac{2}{1}=0$.
This finishes the proof. 
\end{proof}

We remark that by identifying one of the contact boundary of $Z$ with $(M_0,\xi_{inv})$, we implicitly
identified the ascending sphere of the $2$-handle $H_1$ with the singular fiber $-f_1$ (i.e., $f_1$ 
with the opposite orientation) of $M_0$. To fix the notation and without loss of generality, we shall identify
the ascending sphere of the $2$-handle $H_2$ with the singular fiber $f_2$. 

With the proceeding understood, the symplectic cap $V_0$ is simply the union of $H^{-1}([0,1])$ and $Z$, which is symplectic with concave contact boundary $(M_0,\xi_{inv})$. It remains to show that $V_0$ contains a pair of embedded symplectic spheres $S_1,S_2$ with self-intersection $4$ and $0$, such that $S_1,S_2$
intersect at a single point with a tangency of order $2$. To this end, we need to explain how to choose 
an $\s^1$-invariant contact form on $H^{-1}(1)$ in the construction of $Z$. Recall in Lemma 6.1,
we showed that there is a $1$-form $\tau$ such that the standard symplectic structure $\omega_0$ 
on $H^{-1}([0,1])$ is given by $\omega_0=d\tau$. Moreover, the restriction of $\tau$ on $H^{-1}(1)$, 
denoted by $\tau_1$, is a contact form such that $i\tau_1$ is a connection $1$-form for the Seifert 
fibration on $H^{-1}(1)$. With this understood, we shall construct an $\s^1$-invariant contact form $\alpha_1$ 
on $H^{-1}(1)$ as we did in the proof of Theorem 1.17, such that $i \alpha_1$ is a connection $1$-form, and moreover, $\alpha_1$ and $\tau_1$ agree in the complement of a sufficiently small neighborhood of
$K_1, K_2$. (This can be done by the same argument as in Section 6.) Then as we argued in the proof of 
Theorem 1.17, by Lemma 2.6 in \cite{C1}, there is an $\s^1$-equivariant diffeomorphism 
$\phi: H^{-1}(1)\rightarrow H^{-1}(1)$, supported in the neighborhood of 
$K_1,K_2$, such that $\phi^\ast \tau_1=\alpha_1$. Let $K_1^\prime, K_2^\prime$ be the image
of $K_1,K_2$ under $\phi$. Then by the version of Lemma 6.2 as stated in Remark 6.3, $K_1^\prime, K_2^\prime$
bound embedded symplectic disks $D_1^\prime$, $D_2^\prime$ in $H^{-1}([0,1])$, such that
near $0\in H^{-1}([0,1])$, $D_1^\prime$ is given by $z_2=0$ and $D_2^\prime$ is given by
$z_2=\epsilon z_1^2$. With this understood, we let $V_0=Z\cup_\phi H^{-1}([0,1])$. Denote by
$D_1,D_2$ the core disk of the $2$-handles $H_1,H_2$ respectively. Then $S_1:=D_1\cup_\phi 
D_1^\prime$ is an embedded symplectic sphere of self-intersection $4$ in $V_0$,
$S_2:=D_2\cup_\phi D_2^\prime$ is an embedded symplectic sphere of self-intersection $0$ in $V_0$, and $S_1,S_2$ intersect only at $0\in H^{-1}([0,1])$ with a tangency of order $2$. This completes the construction of the symplectic cap $V_0$.

\vspace{2mm}

Now let $\omega$ be any symplectic structure on $W_0$ which has a convex contact boundary,
and denote by $\xi_{fil}$ the corresponding fillable contact structure on $M_0=\partial W_0$. 
The uniqueness of tight contact structures on $M_0$ (cf. \cite{GLS}) implies that there is a contactomorphism $\Phi: (M_0,\xi_{inv})\rightarrow (M_0,\xi_{fil})$. The corresponding symplectic 
$4$-manifold $X:=V_0\cup_\Phi W_0$ is diffeomorphic to a Hirzebruch surface, because the existence
of the symplectic sphere $S_2\subset V_0$ of self-intersection $0$ (note that this also follows independently from Lemma 5.4). In order to determine the homeomorphism type of $X$, we need to analyze the smooth isotopy class of the contactomorphism $\Phi$. 

To this end, we recall that the mapping class group $\pi_0(Diff(M_0))$, i.e., the group of orientation-preserving self-diffeomorphisms of $M_0$ modulo smooth isotopy, is isomorphic to $S_3$ (cf. \cite{P}), which can be recognized by its action on $H_1(M_0,\Z)$, i.e,, as permutations of the three involutions in $H_1(M_0,\Z)$. In particular, it acts transitively on the three nontrivial elements and each nonzero element of $H_1(M_0,\Z)$ is fixed by a unique order $2$ element of the mapping class group $\pi_0(Diff(M_0))$. On the other hand, $\pi_0(Diff(M_0))$ acts naturally on the set of isomorphism classes of $spin^c$ structures on $M_0$, which admits a natural free and transitive action by 
$H_1(M_0,\Z)=H^2(M_0,\Z)$. It follows easily that the action of $\pi_0(Diff(M_0))$ on the set of isomorphism classes of $spin^c$ structures has two orbits; one $spin^c$ structure is fixed by the entire group $\pi_0(Diff(M_0))$, and the other three $spin^c$ structures are acted upon transitively by 
$\pi_0(Diff(M_0))$. In particular, each of these three $spin^c$ structures is fixed by a unique order $2$ element of $\pi_0(Diff(M_0))$. Finally, there are exactly three tight contact structures on $M_0$ up to a contact isotopy (cf. \cite{GLS}, Theorem 1.1, see also \cite{Ma}, Claim 3.1). Since two tight contact structures on $M_0$ are contact isotopic if and only if the corresponding $spin^c$ structures are isomorphic, it is easy to see that the natural action of $\pi_0(Diff(M_0))$ on the set of isotopy classes of tight contact structures of $M_0$ is transitive, and each isotopy class of tight contact structures is fixed 
by a unique order $2$ element of $\pi_0(Diff(M_0))$.

To proceed further, note that $H_1(M_0,\Z)=\Z_2\times \Z_2$ and $H_1(W_0,\Z)=\Z_2$, and $i_\ast: H_1(M_),\Z)\rightarrow H_1(W_0,\Z)$ is onto. It follows easily that the kernel of
$i_\ast$ consists of a single non-zero element of $H_1(M_0,\Z)$. In particular, there is a 
unique mapping class of order $2$ in $\pi_0(Diff(M_0))$ which preserves the kernel of 
$i_\ast: H_1(M_0,\Z)\rightarrow H_1(W_0,\Z)$. (It is easy to see that this order $2$ element is represented by the involution $\tau$ from Lemma 7.2.) On the other hand, there is the exact sequence
$$
0 \stackrel{j^\ast}{\longrightarrow} H^2(W_0,\Z)\stackrel{i^\ast}{\longrightarrow}
H^2(M_0,\Z)\stackrel{\delta}{\longrightarrow} H^3(W_0, M_0,\Z)\stackrel{j^\ast}{\longrightarrow} 0,
$$
where we use the fact that $H^2(W_0,M_0,\Z)=H_2(W_0,\Z)=H_2(\R\P^2,\Z)=0$ and $H^3(W_0,\Z)=H^3(\R\P^2,\Z)=0$. Note that $H^3(W_0,M_0,\Z)=H_1(W_0,\Z)=\Z_2$. We conclude 
that $H^2(W_0,\Z)=\Z_2$, which is mapped injectively to $H^2(M_0,\Z)$, and its image in 
$H^2(M_0,\Z)$ corresponds to the kernel of $i_\ast: H_1(M_0,\Z)\rightarrow H_1(W_0,\Z)$ under Poincar\'{e} duality.

With the preceding understood, we denote by $\delta_0$ the non-zero element in the kernel of
$i_\ast: H_1(M_0,\Z)\rightarrow H_1(W_0,\Z)$ (note that $\delta_0$ is the homology class of the
singular fiber $F_0$ as it bounds an embedded disk in $W_0$, cf. Lemma 7.2), and by $\delta_1$, 
$\delta_2=\delta_1+\delta_0$, the other two non-zero elements in $H_1(M_0,\Z)$. Furthermore, we denote by $s$ the $spin^c$-structure of $M_0$ which is fixed by the entire mapping class group 
$\pi_0(Diff(M_0))$. Then the other three
$spin^c$-structures of $M_0$ can be conveniently denoted by $\delta_0\cdot s$, $\delta_1\cdot s$, 
and $\delta_2\cdot s$. Note that with these notations, the action of $\pi_0(Diff(M_0))$ on the set of
$spin^c$-structures coincides with its action on $H_1(M_0,\Z)$. 

The set of $spin^c$-structures of $W_0$ consists of two elements, which is acted upon by 
$H^2(W_0,\Z)$ freely and transitively. Consequently, as $i^\ast: H^2(W_0,\Z)\rightarrow H^2(M_0,\Z)$ is injective, there are precisely two $spin^c$-structures on $M_0$ which are induced by a $spin^c$-structure on $W_0$. It is easy to see that there are only two possibilities for the set of the two induced $spin^c$-structures of $M_0$, either $\{s,\delta_0\cdot s\}$ or $\{\delta_1\cdot s,\delta_2\cdot s\}$, because it is invariant under the action of $\delta_0$. We point out that the two possibilities can be distinguished as follows: let $\psi$ be a self-diffeomorphism of $M_0$ which can be extended to a self-diffeomorphism of $W_0$ (e.g. the involution $\tau$ in Lemma 7.2), then $\psi$ fixes each element in $\{s,\delta_0\cdot s\}$ and interchanges the two elements in $\{\delta_1\cdot s,\delta_2\cdot s\}$. 

\begin{lemma}
The set of $spin^c$-structures of $M_0$ induced from $W_0$ is $\{\delta_1\cdot s,\delta_2\cdot s\}$.
\end{lemma}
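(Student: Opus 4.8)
\emph{The plan.} The discussion preceding the lemma has already reduced matters to a dichotomy: the image of the (injective) restriction map $i^{\ast}\colon \{\text{$spin^c$-structures of }W_0\}\hookrightarrow\{\text{$spin^c$-structures of }M_0\}$ is one of the two cosets $\{s,\delta_0\cdot s\}$ or $\{\delta_1\cdot s,\delta_2\cdot s\}$ of $\mathrm{im}(i^{\ast})=\{0,\delta_0\}$, and these two possibilities are told apart by the behaviour of a self-diffeomorphism of $M_0$ that extends over $W_0$ (it fixes the first coset pointwise and swaps the two elements of the second). The first step I would take is to push this into a statement purely about $W_0$. Since $i^{\ast}$ is injective and equivariant for the action of the involution $\tau$ of Lemma 7.2, and since $\tau|_{M_0}$ represents exactly the order-$2$ mapping class that fixes $\ker(i_{\ast})$, a one-line diagram chase gives: the induced coset is $\{\delta_1\cdot s,\delta_2\cdot s\}$ \emph{if and only if} $\tau$ acts freely on the two-element set of $spin^c$-structures of $W_0$, i.e.\ iff $\tau^{\ast}\mathfrak t\ne\mathfrak t$; indeed, if $\tau^{\ast}\mathfrak t=\mathfrak t$ then $\tau|_{M_0}$ fixes $i^{\ast}\mathfrak t$, forcing $i^{\ast}\mathfrak t\in\{s,\delta_0\cdot s\}$, while if the coset is $\{\delta_1\cdot s,\delta_2\cdot s\}$ then $\tau|_{M_0}$ swaps the two restrictions and hence (by injectivity) swaps the two $spin^c$-structures upstairs. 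So everything comes down to showing that $\tau$ does \emph{not} preserve any $spin^c$-structure on $W_0$. One should also note here that $W_0$ is spin: otherwise $c_1(\mathfrak t)$ would be the non-zero element of $H^2(W_0,\Z)=\Z_2$ for every $\mathfrak t$, so $i^{\ast}c_1(\mathfrak t)\ne 0$ by injectivity of $i^{\ast}$, contradicting the fact that every $spin^c$-structure on the parallelizable $3$-manifold $M_0$ has $c_1=0$ (as $2H^2(M_0,\Z)=0$); thus the two $spin^c$-structures of $W_0$ are precisely the two spin structures, and the problem is whether $\tau$ interchanges them.

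To prove that $\tau$ acts freely, I would compute with an explicit $\tau$-adapted model. The most natural option is to use the Stein structure on $W_0$ exhibited by Nemirovski--Siegel (\cite{NS}, Example~5.5), whose singular point is modeled on the cone over a Legendrian unknot $L_u\subset(\s^3,\xi_{std})$ with $tb(L_u)=-2$ and $rot(L_u)=\pm 1$: the two admissible rotation numbers produce two Stein structures $J_{\pm}$, and one would check, using the description of $\tau$ in Lemma 7.2 as a lift of the reflection $\kappa$ of $\R\P^2$, that $\tau$ carries the $rot=+1$ umbrella point to the $rot=-1$ one, so that $\tau^{\ast}J_{+}$ carries the same $spin^c$-structure as $J_{-}$, which is distinct from that of $J_{+}$. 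A more hands-on alternative is to write a Kirby diagram for $W_0$ (one $1$-handle and one $2$-handle realizing the $\R\P^2$-bundle of Euler number $-2$), read off the restriction map on $spin^c$-structures directly, and identify the image coset via $\{\text{$spin^c(M_0)$}\}/\mathrm{im}(i^{\ast})\cong H_1(W_0,\Z)=\Z_2$ together with the classes $[F_0],[F_1],[F_2]$ of the three singular fibers; yet another is to glue $W_0$ to the cap $V_0$ built earlier in this section and exploit that $V_0$ has even intersection form (hence is spin), so that $V_0\cup_{\Phi}W_0$ is spin exactly for the gluings whose induced $spin^c$-structure on $M_0$ bounds over $W_0$.

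The real obstacle is precisely this last computation, and it is delicate because everything lives in $2$-torsion: the first Chern classes of the two $spin^c$-structures of $W_0$ necessarily agree (the paper records, just after the lemma, that $J$ and $\tau^{\ast}J$ are \emph{not} distinguished by their first Chern classes), so neither the distinction between them nor $\tau$'s action on it is visible at the level of Chern classes. One has to track a genuinely finer invariant --- the difference class in $H^2(W_0,\Z)$, equivalently spin-framing/rotation-number data along the attaching circle of the $2$-handle, equivalently a parity of a self-intersection in a closed-up picture --- and verify that $\tau$ flips its value; this is the only non-formal step, and I would expect it to occupy the bulk of the argument, with all the homological bookkeeping above being routine once this is in place.
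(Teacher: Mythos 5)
Your formal reduction is correct and mirrors the setup the paper establishes just before the lemma: since $i^{\ast}\colon H^2(W_0,\Z)\hookrightarrow H^2(M_0,\Z)$ is injective, the induced coset is either $\{s,\delta_0\cdot s\}$ or $\{\delta_1\cdot s,\delta_2\cdot s\}$, and $\tau$ fixes the first coset pointwise while swapping the two elements of the second, so the question becomes whether $\tau$ acts freely on the two $spin^c$-structures of $W_0$. But you stop precisely where the real work begins and propose three ways to finish without executing any of them. The first two (a $\tau$-equivariant computation in the Nemirovski--Siegel Legendrian-cone model, or a Kirby-diagram bookkeeping of $2$-torsion data) are exactly the kind of direct local computation that the paper itself warns against: the remark immediately after the lemma states that it is not clear how to settle this ambiguity without appealing to the existence of the symplectic cap $V_0$. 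Your third alternative does point to the right geometric ingredient, namely $V_0$, but the spin-theoretic argument you sketch for it does not close: both $\s^2\times\s^2$ (spin) and $\C\P^2\#\overline{\C\P^2}$ (non-spin) arise as $V_0\cup_{\Phi}W_0$ for different isotopy classes of contactomorphism $\Phi$, so spin-ness of $X$ is not the discriminating quantity, and spin-ness of $V_0$ and $W_0$ separately does not in any case force spin-ness of their union.

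The paper's actual argument is a contradiction with considerably more structure than any of your three sketches. Supposing the induced coset were $\{s,\delta_0\cdot s\}$, the contactomorphism $\Phi$ is forced to be smoothly isotopic to a fibration-preserving map $\phi$ with $\phi(f_1)=F_0$ (because $\Phi^{-1}\circ\tau\circ\Phi$ must be isotopic to $\psi$ of Lemma~7.3, and $\tau$ fixes $F_0$ while $\psi$ fixes $f_1$). Then the disk in $W_0$ bounded by $F_0$ (Lemma~7.2) and the co-core of the $2$-handle $H_1\subset V_0$ bounded by $f_1$ glue to an embedded sphere $S\subset X:=V_0\cup_\phi W_0$ with $S\cdot S=0$ (a framing computation via Lemma~4.6). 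Since $S$ meets the symplectic sphere $S_1$ transversely once, $S_1$ is primitive; with $S_1\cdot S_1=4$ this forces $X\cong\s^2\times\s^2$, and in a standard basis with $c_1(K_X)=-2e_1-2e_2$, $S_1=2e_1+e_2$, $S_2=e_2$, the relations $S\cdot S=0$ and $S\cdot S_1=\pm1$ force $S=\pm e_1$, contradicting $S\cap S_2=\emptyset$. This uses the precise homology positions of $S_1$, $S_2$, and the core/co-core disk configuration in $V_0$, not merely its spin type. That is the content your proposal leaves unsupplied.
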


\begin{proof}
Suppose to the contrary that this is not true. Then the involution $\tau: M_0\rightarrow M_0$
from Lemma 7.2 fixes each of the induced $spin^c$-structures of $M_0$. In particular, it fixes
the $spin^c$-structure associated to the fillable contact structure $\xi_{fil}$, and consequently
$\tau_\ast(\xi_{fil})$ is contact isotopic to $\xi_{fil}$ (cf. \cite{GLS}). Let  
$\Phi: (M_0,\xi_{inv})\rightarrow (M_0,\xi_{fil})$ be a contactomorphism which exists by the uniqueness
of tight contact structures on $M_0$. Then the involution $\Phi^{-1}\circ \tau \circ \Phi$ preserves
$\xi_{inv}$ up to contact isotopy. On the other hand, by Lemma 7.3, there is a diffeomorphism
$\psi: M_0\rightarrow M_0$ such that $\psi_\ast (\xi_{inv})=\xi_{inv}$. It follows that 
$\Phi^{-1}\circ \tau \circ \Phi$ and $\psi$ must be smoothly isotopic. Since $\tau$ switches the
singular fibers $F_1,F_2$ and leaves $F_0$ invariant, and $\psi$ switches the singular fibers
$f_2,f_3$ and leaves $f_1$ invariant, it follows easily that $\Phi$ must be smoothly isotopic 
to a fibration-preserving diffeomorphism $\phi: M_0\rightarrow M_0$ such that $\phi(f_1)=F_0$. 
Finally, note that $X:=V_0\cup_\Phi W_0$ is diffeomorphic to $V_0\cup_\phi W_0$, which we shall identify it to, i.e., $X=V_0\cup_\phi W_0$.

With the preceding understood, we derive a contradiction as follows. Note that by Lemma 7.2, 
$F_0$ bounds an embedded disk $D$ in $W_0$. On the other hand, $f_1$ bounds the co-core disk
$D^\prime$ of the $2$-handle $H_1$ in $V_0$. The union $D^\prime\cup_\phi D$ is an embedded
sphere $S$ in $X$. We claim $S$ has self-intersection $0$. To see this, we go back to the proof of
Lemma 7.2, and recall that $M_0$ is the result of attaching a solid torus $D^2\times \s^1$ to 
$S(T^\ast (Mb))$ along the boundaries with a gluing map determined by the relations
$\bar{m}=2Q-H$ and $\bar{l}=Q$, where $(\bar{m},\bar{l})$ is the standard meridian-longitude 
pair for $D^2\times \s^1$, $Q$ is a fiber of $pr: M_0\rightarrow \R\P^2$ and $H$ is a regular
fiber of the Seifert fibration on $M_0$. It follows easily that the longitude $\bar{l}$ defines the zero-framing of $F_0$ and the slope of the regular fibers of the Seifert fibration on $M_0$ equals 
$-\frac{1}{2}$ with respect to the zero-framing. Now regarding $D^\prime$ as the core disk of a
$2$-handle attached to $M_0$ along $F_0$, its framing relative to the zero-framing equals 
$-\frac{1}{2}-\frac{1}{-2}=0$ by Lemma 4.6. This proves that the sphere $S$ has self-intersection 
$0$.

To proceed further, we note that the sphere $S$ intersects the symplectic sphere $S_1$ transversely
at a single point. This implies that $S_1$ must be a primitive class, and as $S_1\cdot S_1=4$, it implies 
that $X=V_0\cup_\phi W_0$ must be $\s^2\times \s^2$. With this understood, we let $e_1,e_2$
be a standard basis of $H^2(X)$ such that $c_1(K_X)=-2e_1-2e_2$. It follows easily that
the symplectic spheres $S_1,S_2$ must have classes $S_1=2e_1+e_2$ and $S_2=e_2$
without loss of generality. With this understood, $S\cdot S_1=\pm 1$
and $S\cdot S=0$ implies that $S=\pm e_1$. But this is a contradiction to the fact that $S$ and $S_2$
are disjoint. 

\end{proof}

It is not clear how to settle the ambiguity concerning the induced $spin^c$-structures of $M_0$ 
without appealing to the existence of the symplectic cap $V_0$. 

\vspace{2mm}

{\bf Proof of Proposition 1.6:}

\vspace{2mm}

Part (1) follows immediately from Lemma 7.2. Part (2) follows from Lemma 7.5, as $\tau$ acts nontrivially on the set 
$\{\delta_1\cdot s,\delta_2\cdot s\}$, and the set of $spin^c$-structures of $W_0$ is mapped injectively into the
set of $spin^c$-structures of $M_0$ via restriction, because $i^\ast: H^2(W_0,\Z)\rightarrow H^2(M_0,\Z)$ is injective. 

\vspace{2mm}

Now back to the analysis for the contactomorphisms $\Phi: (M_0,\xi_{inv})\rightarrow (M_0,\xi_{fil})$.
Without loss of generality, we may assume the $spin^c$-structure of $\xi_{fil}$ is 
$\delta_1\cdot s$. Furthermore, observe that there is a unique order $2$ mapping class in 
$\pi_0(Diff(M_0))$ which fixes the $spin^c$-structure $\delta_1\cdot s$, and it can be realized 
by a contactomorphism of $\xi_{fil}$. It follows immediately that there are precisely two contactomorphisms (up to a smooth isotopy) from $(M_0,\xi_{inv})$ to $(M_0,\xi_{fil})$, 
to be denoted by $\Phi_1$, $\Phi_2$, and $\Phi_1$ and $\Phi_2$ have the property that $\Phi_2\circ \Phi_1^{-1}$, a self-diffeomorphism of $M_0=\partial W_0$, is not extendable to a self-diffeomorphism of $W_0$. With this understood, for $i=1,2$, we denote by $X_i$ the symplectic $4$-manifold obtained by gluing 
$V_0$ to $W_0$ via $\Phi_i$ along the contact boundaries. As $\Phi_2\circ \Phi_1^{-1}$ is not
extendable to a self-diffeomorphism of $W_0$, $X_1,X_2$ are not necessarily diffeomorphic. 

The following lemma gives a characterization of the smooth isotopy classes of $\Phi_1$, $\Phi_2$, where we recall that $f_1$ is the singular fiber of the Seifert fibration of $M_0=-\partial V_0$ such that the corresponding singular point in the base $2$-orbifold $S$ is encircled by the dividing set of 
$\xi_{inv}$ as the boundary of a small neighborhood of it, and $f_1,f_2$ are the two singular fibers which bound the co-core disks of the two $2$-handles in the relative handlebody $Z$ constructed in
Lemma 7.4. 

\begin{lemma}
Assuming that the $spin^c$-structure of $\xi_{fil}$ is $\delta_1\cdot s$, the two distinct contactomorphisms $\Phi_1,\Phi_2: (M_0,\xi_{inv})\rightarrow (M_0,\xi_{fil})$ are characterized 
by the property that the homology class of both $\Phi_1(f_1)$, $\Phi_2(f_1)$ is 
$\delta_1\in H_1(M_0,\Z)$. Moreover, precisely one of $\Phi_1(f_2)$, $\Phi_2(f_2)$ carries the homology 
class $\delta_0$ which is trivial in $W_0$. 
\end{lemma}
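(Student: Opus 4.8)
The plan is to understand the smooth isotopy class of a contactomorphism $\Phi:(M_0,\xi_{inv})\to(M_0,\xi_{fil})$ through its induced action on $H_1(M_0,\Z)=\Z_2\times\Z_2$, using the already-established dictionary between $\pi_0(\mathrm{Diff}(M_0))\cong S_3$, the set of $spin^c$-structures, and the three nontrivial homology classes $\delta_0,\delta_1,\delta_2$. First I would recall from the discussion preceding Lemma 7.5 that $\Phi$ is determined up to smooth isotopy by the pair consisting of (a) the isotopy class of $\xi_{fil}$, equivalently its induced $spin^c$-structure (which we have fixed to be $\delta_1\cdot s$), and (b) the choice of one of the two order-$2$ mapping classes that do \emph{not} fix $\delta_1\cdot s$ — these are precisely the two distinct contactomorphisms $\Phi_1,\Phi_2$. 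So the first step is to pin down the homology class carried by $f_1$ in the \emph{target} copy of $M_0$ (the boundary of $W_0$) under $\Phi_i$. Here I would use the identification of $\xi_{inv}$ as the $\s^1$-invariant contact structure with orientation chosen so that $f_1$ is a \emph{negative} transverse knot, together with the fact that a contactomorphism carries the $spin^c$-structure of $\xi_{inv}$ to that of $\xi_{fil}=\delta_1\cdot s$. The class of $f_1$ relative to the contact structure is detected $spin^c$-theoretically: since $f_1$ is the singular fiber encircled by the dividing set, its homology class is the one that ``differs'' between the contact structure and the fixed $spin^c$-structure $s$ in the right way, which forces $[\Phi_i(f_1)]=\delta_1$ for \emph{both} $i=1,2$. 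Concretely: the only ambiguity between $\Phi_1$ and $\Phi_2$ is an order-$2$ mapping class fixing $\delta_1\cdot s$, hence fixing $\delta_1\in H_1(M_0,\Z)$, so it cannot change the class of $\Phi_i(f_1)$.

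The second step is the statement about $f_2$. Recall $f_1,f_2$ are the two singular fibers bounding co-core disks of the two $2$-handles in $Z$, and in the base orbifold $f_1,f_2$ correspond to two of the three singular points while $f_3$ is the third. On $M_0=-\partial V_0$, the three singular fibers carry the three distinct nonzero classes in $H_1(M_0,\Z)$; since $[\Phi_i(f_1)]=\delta_1$, the classes $\{[\Phi_i(f_2)],[\Phi_i(f_3)]\}$ are $\{\delta_0,\delta_2\}$ in some order. The unique order-$2$ mapping class fixing $\delta_1\cdot s$ acts as the transposition swapping $\delta_0\leftrightarrow\delta_2$ (it fixes $\delta_1$ and must be nontrivial). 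Therefore $\Phi_2=g\circ\Phi_1$ with $g$ this transposition, so $[\Phi_1(f_2)]$ and $[\Phi_2(f_2)]=g_*[\Phi_1(f_2)]$ are the two distinct elements $\delta_0$ and $\delta_2$; in particular exactly one of them equals $\delta_0$, the class trivial in $W_0$ (recall $\delta_0$ is the class of the singular fiber $F_0$ bounding an embedded disk in $W_0$, by Lemma 7.2). This gives the ``moreover'' clause.

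The main obstacle I anticipate is Step 1: rigorously identifying which homology class $\Phi_i(f_1)$ carries, i.e., matching the $\s^1$-invariant description of $\xi_{inv}$ (dividing set around the singular point with fiber $f_1$, with the chosen orientation making $f_1$ negative transverse) to the $spin^c$-structure $\delta_1\cdot s$ on the target. The clean way to do this is to track the $spin^c$-structure through the construction of $Z$ in Lemma 7.4 and the gluing $V_0\cup_{\Phi}W_0$, using Lemma 7.5 which already tells us the $spin^c$-structures induced from $W_0$ form the orbit $\{\delta_1\cdot s,\delta_2\cdot s\}$; since we have normalized $\xi_{fil}$ to have $spin^c$-structure $\delta_1\cdot s$, and since the singular fiber $f_1$ on the $V_0$-side is characterized (among the three singular fibers) as the one over the point encircled by the dividing set, the naturality of the whole picture under the $S_3$-action forces $[\Phi_i(f_1)]$ to be the \emph{unique} class left fixed by the stabilizer of $\delta_1\cdot s$, namely $\delta_1$. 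Everything else is bookkeeping with the $S_3$-action on $\{\delta_0,\delta_1,\delta_2\}$ and on $spin^c$-structures, which has already been set up in the paragraphs preceding this lemma. I would also double-check consistency using Lemma 7.3: the diffeomorphism $\psi$ there fixes $f_1$ and swaps $f_2,f_3$ while preserving $\xi_{inv}$, which is exactly compatible with $f_1$'s class being the one singled out and the ambiguity being a transposition of the other two.
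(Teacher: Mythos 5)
Your proposal follows essentially the same route as the paper's, but two points need correction. First, your parenthetical ``(b) the choice of one of the two order-$2$ mapping classes that do not fix $\delta_1\cdot s$'' is backwards: $\Phi_1$ and $\Phi_2$ differ by $\Phi_2\circ\Phi_1^{-1}$, which is a self-diffeomorphism of $(M_0,\xi_{fil})$ preserving $\xi_{fil}$ up to isotopy, hence by the \emph{unique} order-$2$ mapping class that \emph{does} fix $\delta_1\cdot s$. Second, your crux step, identifying $[\Phi_i(f_1)]=\delta_1$, is phrased as ``naturality of the whole picture,'' and Lemma 7.3's $\psi$ appears only as a consistency check at the end; in the paper's proof $\psi$ is the engine. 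The clean argument is: write $\Phi_2=\Phi_1\circ\psi$ (any two distinct contactomorphisms from $(M_0,\xi_{inv})$ differ by a self-contactomorphism of the source, which is isotopic to $\psi$), so $\Phi_2\circ\Phi_1^{-1}=\Phi_1\psi\Phi_1^{-1}$. Thus conjugation by $\Phi_1$ sends the mapping class of $\psi$ --- which is the unique order-$2$ element of $\pi_0(\mathrm{Diff}(M_0))\cong S_3$ fixing $[f_1]$ --- to the unique order-$2$ element fixing $\delta_1\cdot s$, hence $\delta_1$. Since each nonzero class of $H_1(M_0,\Z)$ is fixed by exactly one order-$2$ mapping class, this forces $(\Phi_1)_*[f_1]=\delta_1$, and the same for $\Phi_2$ since $\psi$ fixes $f_1$. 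Your handling of the ``moreover'' clause is correct and matches the paper: with both $\Phi_i(f_1)$ carrying $\delta_1$ and $\Phi_2(f_2)=\Phi_1(\psi(f_2))=\Phi_1(f_3)$, the two images $\Phi_1(f_2),\Phi_2(f_2)$ exhaust $\{\delta_0,\delta_2\}$, so exactly one is $\delta_0$.
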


\begin{proof} 
Let $\Phi:(M_0,\xi_{inv})\rightarrow (M_0,\xi_{fil})$ be any contactomorphism, and let $\psi$ be the self-diffeomorphism of $M_0$ from Lemma 7.3. Then $\Phi\circ\psi: (M_0,\xi_{inv}) \rightarrow (M_0,\xi_{fil})$ must be the other contactomorphism isotopically distinct from $\Phi$. 
Since $\psi$ is characterized by the fact that its
mapping class is the unique order $2$ element fixing the homology class of the singular fiber $f_1$,
and $\Phi_2\circ \Phi_1^{-1}$ is characterized by the property that its mapping class is the unique 
order $2$ element fixing the $spin^c$-structure $\delta_1\cdot s$, it follows easily that 
$\Phi_1,\Phi_2$ are characterized by the property that both $\Phi_1(f_1)$, $\Phi_2(f_1)$ carry the homology class
$\delta_1\in H_1(M_0,\Z)$. 

It remains to show that precisely one of $\Phi_1(f_2)$, $\Phi_2(f_2)$ has the homology class 
$\delta_0$ which is trivial in $W_0$. To this end, we note that $\psi: (M_0,\xi_{inv})\rightarrow (M_0,\xi_{inv})$ switches the two singular fibers $f_2,f_3$, and on the other hand, both $\Phi_1(f_1)$, $\Phi_2(f_1)$ have the homology class $\delta_1\in H_1(M_0,\Z)$. It follows easily that precisely one of $\Phi_1(f_2)$, $\Phi_2(f_2)$ 
carries the homology class $\delta_0\in H_1(M_0,\Z)$.

\end{proof}

We shall fix our notations such that  $\Phi_1(f_2)$ has the homology class $\delta_0$.

\begin{lemma}
{\em(}1{\em)} There is an embedded sphere $S$ of self-intersection $-1$ in $X_1$, which intersects 
transversely with each of the following in a single point: the embedded $\R\P^2$ in $W_0$ and the symplectic sphere $S_2$ in $V_0$. In particular, $X_1=\C\P^2\# \overline{\C\P^2}$. 

{\em(}2{\em)} There is an embedded sphere $\tilde{S}$ of self-intersection $0$ in $X_2$, which intersects 
transversely with each of the following in a single point: the embedded $\R\P^2$ in $W_0$, the symplectic spheres $S_1$ and $S_2$ in $V_0$. In particular, $X_2=\s^2\times \s^2$. 
\end{lemma}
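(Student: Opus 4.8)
The plan is to construct, inside each of the two glued-up $4$-manifolds $X_i = V_0 \cup_{\Phi_i} W_0$, a single embedded sphere that detects the diffeomorphism type, and then invoke the classification of small symplectic (or just smooth) rational surfaces to conclude. The two natural building blocks are, on the $W_0$ side, the smoothly embedded disk $D$ bounded by the singular fiber $F_0$ (from Lemma 7.2), whose boundary carries the homology class $\delta_0$ that is trivial in $W_0$; and on the $V_0$ side, the co-core disks $D_1',D_2'$ of the two symplectic $2$-handles $H_1,H_2$ of Lemma 7.4, whose boundaries are the singular fibers $f_1,f_2$ of $M_0 = -\partial V_0$. The key point is that $\Phi_i$ identifies $\partial V_0$ with $\partial W_0$, and by Lemma 7.7, $\Phi_1(f_2)$ carries the class $\delta_0$ (trivial in $W_0$) while $\Phi_2(f_2)$ does not; on the other hand $\Phi_1(f_1),\Phi_2(f_1)$ both carry $\delta_1$.

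For part (1): since $\Phi_1(f_2) = \delta_0$ bounds the embedded disk $D$ in $W_0$, the union $S := D_2' \cup_{\Phi_1} D$ is an embedded sphere in $X_1$. First I would compute its self-intersection using Lemma 4.6, exactly as in the proof of Lemma 7.5: with respect to the zero-framing of $F_0$ determined by the longitude $\bar l = Q$, the Seifert fibers of $M_0$ have slope $-\tfrac12$, and $D_2'$ regarded as a $2$-handle along $F_0$ has framing (relative to the zero-framing) equal to $s - \tfrac{\bar\alpha_2}{\alpha_2}$ in the notation of Lemma 4.6. The relevant multiplicities here are $\alpha_2 = -2$ (the Seifert invariant of $f_2$ in $M_0 = M((-2,1),(2,1),(2,-1))$, i.e. the ascending-sphere binding) and $\bar\alpha_2 = 1$ (as $K_2$ had framing $0$, cf. Lemma 7.4 and Theorem 4.4), giving framing $-\tfrac12 - \tfrac{1}{-2} = -1 + \tfrac12$; I expect the correct bookkeeping to land on $-1$, so $S\cdot S = -1$. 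Next, $S$ meets the embedded $\R\P^2 \subset W_0$ (the zero-section) transversely in one point, since $F_0$ does by Lemma 7.2 and $D_2' \subset V_0$ is disjoint from $W_0$; and $S$ meets $S_2 = D_2 \cup_\psi D_2'$ transversely in one point, namely the point where $D_2$ meets the co-core disk $D_2'$ of the same handle $H_2$. Having a $(-1)$-sphere, $X_1$ is non-minimal; blowing it down and using the fact (already established in the discussion preceding Lemma 7.5, via the symplectic $0$-sphere $S_2$ and Lemma 5.4) that $X_1$ is diffeomorphic to a Hirzebruch surface with $b_2 = 2$ forces $X_1 = \C\P^2 \# \overline{\C\P^2}$ rather than $\s^2\times\s^2$, since $\s^2\times\s^2$ is minimal.

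For part (2): now $\Phi_2(f_1) = \delta_1$, but more relevantly I would use the other co-core disk. Since $\Phi_2(f_2)$ is \emph{not} the trivial class $\delta_0$, the gluing is arranged so that it is the class $\delta_1$ that bounds in $W_0$ — wait, more precisely: among $f_1,f_2,f_3$, exactly one is identified by $\Phi_2$ with the singular fiber $F_0$ that bounds $D$ in $W_0$, and by the characterization in Lemma 7.7 together with our normalization ($\Phi_1(f_2)=\delta_0$, hence $\Phi_2(f_2)\ne\delta_0$ and $\Phi_2(f_1)=\delta_1$, forcing $\Phi_2(f_3)=\delta_0$ or a relabeling thereof), one checks that for $\Phi_2$ it is $f_1$ whose image — after composing with the self-diffeomorphism $\psi$ of $\xi_{inv}$ — meets $D$ correctly; I would rather argue directly that $\tilde S := D_1' \cup_{\Phi_2} D'$ is an embedded sphere, where $D'$ is the appropriate embedded disk in $W_0$ bounding $\Phi_2$-image of $f_1$ in the class $\delta_1$ (such a disk exists because $\delta_1$ dies in $H_1(W_0)$, so some embedded surface bounds; one must check it can be taken to be a disk — this will follow from the explicit disk-bundle description of $W_0$ and the non-orientable circle bundle picture of Lemma 7.2, where each singular fiber either bounds a disk or, together with the $\R\P^2$, generates the homology). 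Computing the self-intersection of $\tilde S$ via Lemma 4.6 again, now with $\alpha_1 = -2$ and $\bar\alpha_1 = 1$ (the handle $H_1$ had framing $4$, $K_1$ a regular fiber, and the relevant ascending multiplicity is $\bar\alpha_1$) — wait, $f_1$ has Seifert invariant $(-2,1)$ in $M_0$ and the corresponding $\bar\alpha$ should be read off from $H_1$ having framing $4$ relative to zero-framing, i.e. $\bar\alpha_1$ large — I expect this computation to give $\tilde S \cdot \tilde S = 0$. Then $\tilde S$ meets the $\R\P^2$, $S_1$, and $S_2$ each transversely once; the existence of a $0$-sphere $\tilde S$ with $\tilde S \cdot S_1 = 1$ forces $S_1$ primitive, and since $S_1\cdot S_1 = 4$ is even while $\tilde S \cdot \tilde S = 0$, the intersection form argument (exactly as at the end of the proof of Lemma 7.5, run in reverse) shows $X_2$ cannot be $\C\P^2\#\overline{\C\P^2}$, hence $X_2 = \s^2\times\s^2$.

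The main obstacle I anticipate is the framing bookkeeping in the self-intersection computations — getting the signs and the roles of $\alpha_i$ versus $\bar\alpha_i$ exactly right in Lemma 4.6, and correctly tracking how reversing orientations (between $V_0$ and $W_0$, and between fiber and binding orientations) affects the slopes. A secondary technical point is verifying that the relevant homology classes $\delta_0$ (and, in part (2), the class carried by $\Phi_2(f_j)$ for the appropriate $j$) are bounded not merely by embedded surfaces but by embedded \emph{disks} in $W_0$; this should be handled by the explicit description of $W_0$ as the disk bundle and of $M_0 = S(T^*Mb) \cup (D^2\times\s^1)$ from Lemma 7.2, where the relation $\bar l = Q$ shows precisely which fibers bound disks. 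Once the $(-1)$-sphere in $X_1$ and the $0$-sphere $\tilde S$ in $X_2$ meeting $S_1$ once are in hand, the identification of the diffeomorphism types is immediate from minimality of $\s^2\times\s^2$ and the parity of the intersection form.
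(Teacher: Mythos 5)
Your construction for part (1) is the same as the paper's: $S$ is the co-core disk of $H_2$ (which bounds $f_2$ in $-\partial V_0$) glued to the disk in $W_0$ bounded by $F_0=\Phi_1(f_2)$, and the self-intersection is extracted from Lemma~4.6 via the slope $-\tfrac12$ of the Seifert fibration relative to the zero-framing of $F_0$. Your arithmetic in the framing computation wobbles (you write $-\tfrac12 - \tfrac{1}{-2} = -1 + \tfrac12$, which is neither $0$ nor $-1$), and the correct bookkeeping requires being careful about whether one inputs $\alpha_2=-2,\ \bar\alpha_2=1$ or the dual data for the co-core as a handle attached to $M_0$; the paper lands on $-1$ and your conclusion matches, so part (1) is fine modulo that cleanup.

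Part (2) has a genuine gap, and the gap is exactly what forces the paper to do something more elaborate. You propose $\tilde S := D_1' \cup_{\Phi_2} D'$ where $D_1'$ is the co-core of $H_1$ (so $\partial D_1' = -f_1$) and $D'$ is an ``embedded disk in $W_0$ bounding $\Phi_2(f_1)$.'' But $\Phi_2(f_1)$ carries the class $\delta_1\in H_1(M_0,\Z)$, and $\delta_1$ does \emph{not} die under $i_\ast: H_1(M_0,\Z)\to H_1(W_0,\Z)=\Z_2$: the kernel of $i_\ast$ is exactly $\{0,\delta_0\}$ (this is stated just before Lemma~7.5). So $\Phi_2(f_1)$ is not even null-homologous in $W_0$, and the disk $D'$ you need simply does not exist. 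You actually notice halfway through that the class that does bound in $W_0$ is carried by $\Phi_2(f_3)$, but $f_3$ is not the boundary of any co-core disk of $Z$ (the co-cores bound $-f_1$ and $f_2$, per the conventions fixed after Lemma~7.4), so you cannot close off a sphere on the $V_0$ side by that route either. For the same reason the intersection pattern you assert at the end ($\tilde S$ meeting $S_1$, $S_2$, and the $\R\P^2$ each once) could not hold for a sphere built from a single co-core: $D_1'$ is disjoint from the core $D_2$ of $H_2$, so such a sphere would miss $S_2$.

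The paper's $\tilde S$ in $X_2$ is necessarily a more global object: it is assembled from the disk $\tilde D\subset W_0$ bounded by the $pr$-fiber $Q_0$ (not a Seifert fiber), the section $R$ of the Seifert fibration of $M_0=\partial W_0$ over the complement of neighborhoods of $F_0,F_1,F_2,F_3$, the annuli $A_i$ joining each $Q_i$ to the corresponding central fiber $F_i$, and \emph{both} co-core disks $D_1,D_2$ of $Z$ (since $\phi_2(\partial D_1)=F_1$ and $\phi_2(\partial D_2)=F_3$), together with a disk $-(\tilde D'\cup C_3\cup A_2)$ closing off through $W_0'$. The self-intersection is then the sum of three framing contributions $1+1+0$, corrected down to $0$; none of this can be bypassed by looking at a single co-core, because of the $H_1$ obstruction above. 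I would suggest you abandon the ``one co-core disk plus one disk in $W_0$'' template for part (2) and instead construct the sphere out of the section $R$ and the annuli $A_i$, tracking the framings $(\bar m_i,\bar l_i)$ as in \cite{N}.
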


\begin{proof}
First of all, each of $\Phi_1,\Phi_2$ is smoothly isotopic to a Seifert fibration preserving diffeomorphism, to be denoted by $\phi_1,\phi_2$ respectively. Moreover, note that $\delta_0$
is the homology class of the singular fiber $F_0$. Without loss of generality, assume $F_1,F_2$
have the homology classes $\delta_1,\delta_2$ respectively. With this understood, it follows easily
from Lemma 7.6 that $\phi_1(f_1)=\phi_2(f_1)=F_1$, $\phi_1(f_2)=F_0$, and $\phi_2(f_2)=F_2$.
Finally, we shall identify $X_i=V_0\cup_{\phi_i} W_0$, for $i=1,2$.

With the preceding understood, we consider $X_1$ first. In this case, as $\phi_1(f_2)=F_0$,
the co-core disk of the $2$-handle $H_2$ in $Z$ and the embedded disk in $W_0$ bounded by
$F_0$ form an embedded sphere $S$ in $X_1$, which is easily seen to have the intersection property we claimed. Finally, we calculate $S\cdot S=-1$ using Lemma 4.6 as we employed in the proof of Lemma 7.5. 

The argument for $X_2$ is similar but slightly more complex. We begin by fixing a choice of Seifert invariants at the singular fibers
$F_0,F_1,F_2$ to be $(2,-1)$, $(2,1)$, and $(2,-1)$ respectively. With such a choice fixed, it determines a specific trivialization of the Seifert fibration in the complement of the singular fibers. 
More precisely, if we let $S_0$ be the oriented surface with boundary which is obtained by removing 
a disk neighborhood of each singular point of the base $2$-orbifold $S$, then the said trivialization is given by a section $R$ over $S_0$ (cf. \cite{N}). With this understood, for $i=0,1,2$, let $Q_i$ be the corresponding boundary component of $-R$, let $H_i$ be the fiber over $Q_i$, and let $\bar{m}_i$ be the meridian of a regular neighborhood of the singular fiber $F_i$. Moreover, in order for the orientation of $F_1$ to match with that of $\phi_2(B_1^\prime)$ as $B_1^\prime=-f_1$ (here 
$B_1^\prime$ is a binding component of the rational open book $(B^\prime,\pi^\prime)$ 
on $M_0=-\partial V_0$ resulted from the construction of $Z$ in Lemma 7.4), we shall reverse the orientation of $F_1$. As a result, the Seifert invariant at $F_1$ is changed to $(-2,-1)$. With
this understood, we have the following relations
$$
\bar{m}_0=2Q_0-H_0, \;\;  \bar{m}_1=-2Q_1-H_1, \;\; \bar{m}_2=2Q_2-H_2.
$$
From these relations, it follows easily that each $Q_i$ is a longitude at $F_i$, which will be denoted 
by $\bar{l}_i$ in what follows. With this understood, we observe that $Q_0$ is a fiber of $pr: M_0
\rightarrow \R\P^2$, so that it bounds an embedded disk $\tilde{D}$ in $W_0$ which intersects transversely with the embedded $\R\P^2$ in $W_0$ at a single point. On the other hand, for $i=1,2$, $Q_i$ and $F_i$ bound an embedded annulus $A_i$ in the regular neighborhood of $F_i$. If we denote by $D_i$, for $i=1,2$, the co-core disk of the $2$-handle $H_i$ in $Z$, then it is easy to see that the union of $\tilde{D}$, $R$, $A_1$, $A_2$, $D_1$, and $D_2$ forms an embedded sphere 
$\tilde{S}$ in $X_2$ with the desired intersection property 
(as $\phi_2(f_1)=F_1$, $\phi_2(f_2)=F_2$). 

It remains to determine the self-intersection of $\tilde{S}$. To this end, we simply observe that
the regular fibers of the Seifert fibration have a slope $\frac{1}{2}$ and $-\frac{1}{2}$ with respect to 
$(\bar{m}_1,\bar{l}_1)$ and $(\bar{m}_2,\bar{l}_2)$ respectively. On the other hand, regarding
$D_1,D_2$ as the core disk of a $2$-handle attached to $M_0$, Lemma 4.6 implies that the framing of
the $2$-handle equals $\frac{1}{2}-\frac{1}{-2}=1$ and $-\frac{1}{2}-\frac{1}{2}=-1$ respectively
(relative to $(\bar{m}_1,\bar{l}_1)$ and $(\bar{m}_2,\bar{l}_2)$ respectively). It follows easily that 
the self-intersection of $\tilde{S}$ equals $1+(-1)=0$. 

\end{proof}

The proof of Theorem 1.1 is complete. We shall next give a proof of Theorem 1.8.

\begin{lemma}
There is a relative handlebody $Z_0$ built on $M_0$ of two symplectic $2$-handles, which is symplectic with two concave contact boundaries which are both $(M_0,\xi_{inv})$. Moreover,
$c_1(K_{Z_0})\cdot [\omega_{Z_0}]<0$, where $\omega_{Z_0}$ is the symplectic structure on $Z_0$. 

\end{lemma}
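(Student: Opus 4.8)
The goal is to build, via Gay's construction, a relative handlebody $Z_0$ on $M_0$ with two symplectic $2$-handles so that \emph{both} boundary components are $(M_0,\xi_{inv})$, and such that $c_1(K_{Z_0})\cdot[\omega_{Z_0}]<0$. The first step is to realize $M_0$ as a Seifert manifold carrying a rational open book with periodic monodromy whose two binding components are the two singular fibers $F_1,F_2$ swapped by $\tau$ (the ones encircled by $\xi_{inv}$'s dividing set after contacting is not quite the setup here; rather, the dividing set of $\xi_{inv}$ encircles exactly one singular point, say $f_1$). Concretely, I would start from $M_0=M((2,-1),(2,-1),(2,1))$ and use Theorem 3.4 (with Remark 3.5) to produce a rational open book $(B,\pi)$ of periodic monodromy on $M_0$ whose binding $B=B_1\sqcup B_2$ consists of the two singular fibers of Seifert invariant $(2,-1)$, with the third singular fiber $(2,1)$ left untouched. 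One must check the arithmetic conditions in Theorem 3.4 (choosing the common multiple $n$, the $c_i$'s and the $b_i$'s) and then read off the multiplicities $p_1,p_2$ from Remark 3.5. The binding components should be oriented \emph{opposite} to the fibers (so that the supported contact structure is non-transverse, by Theorem 4.1, with dividing set a circle — here there is one binding component with opposite orientation giving the single dividing curve of $\xi_{inv}$); by Corollary 4.2 and the uniqueness of tight contact structures on $M_0$, this supported contact structure is isotopic to $\xi_{inv}$.

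The second step is to attach two symplectic $2$-handles along $B_1,B_2$ with framings $F_1,F_2>0$ relative to the page framing, chosen so that by Theorem 4.4 the resulting "other" boundary component $M'$ is again $M((2,-1),(2,-1),(2,1))=M_0$, with the supported contact structure on it again non-transverse with a single dividing curve, hence again $\xi_{inv}$. Using the formula $\bar\alpha_i=\frac{n}{p_i}-F_i\alpha_i$ from Theorem 4.4, I would solve for $F_i$ so that $\{(\bar\alpha_i,\bar\beta_i)\}$ together with $(2,1)$ reproduce the Seifert invariants of $M_0$ (with the correct sign conventions so that the $\s^1$-invariant structure on $M'$ is again non-transverse with the one dividing curve — i.e.\ exactly one of $\bar\alpha_1,\bar\alpha_2$ should be negative, matching the structure of $M_0$). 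Remark 4.5 describes precisely which values of $\bar\alpha_i$ are attainable with $F_i>0$, so I would verify the target Seifert data fall in the allowed range. One then has $Z_0$ symplectic with two concave contact boundaries, each $(M_0,\xi_{inv})$.

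The third step is the sign of $c_1(K_{Z_0})\cdot[\omega_{Z_0}]$, and this is where I would apply Corollary 5.3: with $g_0=0$, $s=2$ binding components and $r=1$ remaining singular fiber,
\[
c_1(K_{Z_0})\cdot[\omega_{Z_0}]=\Omega\Bigl(n\bigl(1-\tfrac{1}{\alpha_1}-\tfrac{1}{\alpha_2}-\tfrac{1}{\alpha_3}\bigr)-\sum_{i=1}^2 p_i\bigl(1-\tfrac{\bar\alpha_i}{\alpha_i}\bigr)\Bigr),
\]
with $\Omega>0$, $\alpha_1=\alpha_2=2$, $|\alpha_3|=2$, and the sign conventions of Theorem 4.4 in force for the $\alpha_i,\bar\alpha_i$. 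Plugging in the specific values of $n$, $p_i$, $\alpha_i$, $\bar\alpha_i$ found above, this is a finite rational computation, and I expect it to come out strictly negative. If a first naive choice of the open book parameters yields a nonnegative value, I would exploit the freedom in $n$ and in the $b_i$'s (equivalently in $p_i$ and $F_i$, constrained by Remark 4.5) to push the quantity negative — intuitively, making the handles "large" (small $F_i$, hence $\bar\alpha_i$ close to $n/p_i$) drives $c_1(K_{Z_0})\cdot[\omega_{Z_0}]$ down.

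\textbf{Main obstacle.} The delicate point is the bookkeeping: simultaneously arranging (a) the monodromy parameters so that Theorem 3.4 applies and the supported contact structure on $M$ is $\xi_{inv}$, (b) the framings so that $M'\cong M_0$ \emph{with the non-transverse $\xi_{inv}$ again} (the sign conventions on $\bar\alpha_i$ and the count of dividing curves must match $M_0$'s), and (c) the resulting values making the Corollary 5.3 expression strictly negative. Each condition constrains the discrete choices, and verifying they are mutually compatible — together with getting all orientation and sign conventions in Theorems 4.1 and 4.4 exactly right — is the real work; the contact-geometric inputs (Theorems 3.4, 4.1, 4.4, Corollary 5.3, uniqueness of tight structures on $M_0$) are all in hand.
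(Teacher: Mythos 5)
Your framework is the right one — you correctly identify Theorems~3.4 and 4.1 for producing the open book and the $\s^1$-invariant contact structure, Theorem~4.4 and Remark~4.5 for engineering the other boundary, and Corollary~5.3 for the sign of $c_1(K_{Z_0})\cdot[\omega_{Z_0}]$. However, your specific choice of binding, namely both multiplicity-$2$ singular fibers of Seifert invariant $(2,-1)$, is incompatible with the target $M'\cong M_0$ for an arithmetic reason that is invisible in your sketch but fatal in practice. Remark~4.5 records the constraint $\bar\alpha_i\beta_i+\alpha_i\bar\beta_i=1$ on the ascending-sphere Seifert data. If $B_1,B_2$ are both multiplicity-$2$ singular fibers, then $\alpha_i=\pm 2$ and $\beta_i$ is odd for $i=1,2$, which forces $\bar\alpha_i$ to be odd. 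But $M_0$ has exactly three exceptional fibers, all of multiplicity $2$, and one of those three exceptional fibers of $M'$ is already accounted for by the residual non-binding fiber (which becomes $(2,-1)$). The ascending spheres $B'_1,B'_2$ would therefore have to supply two more multiplicity-$2$ fibers, which is impossible since $\bar\alpha_i$ is odd; the only odd options consistent with $M'$ being a small Seifert space are $|\bar\alpha_i|=1$, in which case $M'$ has at most one exceptional fiber and is a lens space, not $M_0$. No choice of $n$, $c_i$, $b_i$, or $F_i>0$ can repair this — the obstruction is a parity invariant of the binding choice itself.

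The paper avoids the problem by taking $B_1$ to be one $(2,-1)$ singular fiber with opposite orientation (which, as you correctly anticipate, produces the single dividing circle of $\xi_{inv}$ via Theorem~4.1) but taking $B_2$ to be a \emph{regular} fiber, recorded as $(\alpha_2,\beta_2)=(1,-1)$ after rewriting $M_0=M((-2,1),(1,-1),(2,1),(2,1))$. With $\alpha_2=1$ the parity constraint evaporates, and the paper can choose $(\bar\alpha_1,\bar\beta_1)=(1,0)$ (the ascending sphere over $B_1$ is regular) and $(\bar\alpha_2,\bar\beta_2)=(-2,-1)$ (a new multiplicity-$2$ fiber with opposite orientation, supplying the dividing circle of the second $\xi_{inv}$), recovering $M'\cong M_0$. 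One further bookkeeping point you omit: the non-binding Seifert data $(\alpha_j,\beta_j)$ appears in $M'$ as $(\alpha_j,-\beta_j)$, so you should be matching against $(2,-1)$, not $(2,1)$. Once the binding choice is corrected, your step~3 via Corollary~5.3 goes through essentially as you describe.
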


\begin{proof}
We shall first exhibit a rational open book $(B,\pi)$ on $M_0$ of periodic monodromy of order $2$, with two binding components $B_1,B_2$ where $B_1$ is a singular fiber with opposite orientation and $B_2$ is a regular fiber but with the same orientation. More concretely, we write
$M_0=M((\alpha_i,\beta_i), (\alpha_j,\beta_j)| i =1,2,j=3,4)$, where $(\alpha_1,\beta_1)=(-2,1)$, 
$(\alpha_2,\beta_2)=(1,-1)$, $(\alpha_j,\beta_j)=(2,1)$ for $j=3,4$. In applying Theorem 3.4, we let
$n=2$, $c_1=c_2=1$, $b_1=0$, $b_2=2$, which obeys $\sum_{i=1}^2 b_i=\sum_{i=1}^2 \frac{c_i}{n}+
\sum_{j=3}^4 \frac{\beta_j}{\alpha_j}$. Furthermore,
$$
\frac{\beta_1}{\alpha_1}+b_1-\frac{c_1}{n}=\frac{1}{-2}+0-\frac{1}{2}=-1<0, \;\;
\frac{\beta_2}{\alpha_2}+b_2-\frac{c_2}{n}=\frac{-1}{1}+2-\frac{1}{2}=\frac{1}{2}>0.
$$
It follows from Theorem 3.4 that the desired rational open book $(B,\pi)$ exists. Finally, by the formula in Remark 3.5, the corresponding multiplicities are $p_1=4$, $p_2=1$. By Theorem 4.1, the $\s^1$-invariant contact structure supported by $(B,\pi)$ can be clearly identified with $\xi_{inv}$, under 
which $B_1=-f_1$. 

Now we attach two symplectic $2$-handles $H_1$, $H_2$ along $B_1,B_2$ to obtain $Z_0$ using Theorem 4.4. To this end, note that $(\bar{\alpha}_1,\bar{\beta}_1)=(1,0)$, 
$(\bar{\alpha}_2,\bar{\beta}_2)=(-2,-1)$ obey $\bar{\alpha}_i\beta_i+\bar{\beta}_i\alpha_i=1$ for 
$i=1,2$. Furthermore, for the corresponding framings, we have 
$F_1=\frac{1}{\alpha_1}(\frac{n}{p_1}-\bar{\alpha}_1)=\frac{1}{-2}(\frac{2}{4}-1)=\frac{1}{4}>0$,
$F_2=\frac{1}{\alpha_2}(\frac{n}{p_2}-\bar{\alpha}_2)=\frac{1}{1}(\frac{2}{1}-(-2))=4>0$ (see Remark 4.5). It follows from Theorem 4.4 that if we attach $H_1$, $H_2$ with framings $F_1=\frac{1}{4}$, $F_2=4$ relative to the page framing, we obtain a relative handlebody $Z_0$ whose other concave contact boundary $(M^\prime,\xi^\prime)$ 
has 
$$
M^\prime=M((\bar{\alpha}_1,\bar{\beta}_1), (\bar{\alpha}_2,\bar{\beta}_2), (\alpha_3,-\beta_3),
(\alpha_4,-\beta_4))=M((1,0),(-2,-1), (2,-1), (2,-1))
$$
which is easily seen to be $M_0$, and $\xi^\prime$ can be identified with $\xi_{inv}$, under which
$f_1$ is identified with the ascending sphere of the $2$-handle $H_2$. We point out that the ascending sphere of the $2$-handle $H_1$ is a regular fiber of $M_0$. Finally, 
$c_1(K_{Z_0})\cdot [\omega_{Z_0}]<0$ follows from Corollary 5.3 by an easy calculation. Hence the lemma. 

\end{proof}

Now suppose we are given two copies of $W_0$, denoted by $W_0$ and $W_0^\prime$, each equipped with a symplectic structure $\omega,\omega^\prime$ with convex contact boundary $(M_0,\xi_{fil})$, 
$(M_0^\prime,\xi_{fil}^\prime)$ respectively (note that $M_0^\prime=M_0$). For simplicity, we assume the $spin^c$-structure associated to $\xi_{fil}$ and $\xi_{fil}^\prime$ is the same $\delta_1\cdot s$ without loss of generality. With this understood, we shall now analyze the smooth isotopy class of the possible contactomorphism
$$
\Psi: -\partial Z_0=(M_0,\xi_{inv})\sqcup (M_0,\xi_{inv})\rightarrow \partial W_0\sqcup \partial W_0^\prime =(M_0,\xi_{fil})\sqcup (M_0^\prime,\xi_{fil}^\prime).
$$
Recall that the singular fibers of $-\partial Z_0$ are labelled by $f_1,f_2,f_3$ on both components,
with the convention that the dividing set of $\xi_{inv}$ is the boundary of a disk neighborhood of the singular point labelling $f_1$. We shall introduce a regular fiber labelled by $f_0$, which is the descending sphere of the
$2$-handle $H_2$ on one component, and the ascending sphere of the $2$-handle $H_1$ on the other component. Finally, we point out that there are embedded cylinders $C_2,C_3$ in $Z_0$ connecting the two copies of $f_2,f_3$ in $-\partial Z_0$. 

On the other hand, the singular fibers of $\partial W_0\sqcup \partial W_0^\prime
=(M_0,\xi_{fil})\sqcup (M_0^\prime,\xi_{fil}^\prime)$ are labelled by $F_0$, $F_1$, $F_2$ 
for $\partial W_0=M_0$, and by the primed-version $F_0^\prime$, $F_1^\prime$, $F_2^\prime$ 
for $\partial W_0^\prime=M_0^\prime$.
With this understood, the smooth isotopy classes of the contactomorphisms $\Psi_1,\Psi_2$ are specified by the following conditions: (1) $\Psi_1,\Psi_2$ will be identical on one of the two components of 
$-\partial Z_0$ and we require $\Psi_1(f_1)=\Psi_2(f_1)=F_1$, $\Psi_1(f_2)=\Psi_2(f_2)=F_0$, and
$\Psi_1(f_3)=\Psi_2(f_3)=F_2$. (2) On the other component of $-\partial Z_0$, $\Psi_1,\Psi_2$ are not identical. We still have $\Psi_1(f_1)=\Psi_2(f_1)=F_1^\prime$, but for the other singular fibers $f_2,f_3$, the images are switched: $\Psi_1(f_2)=\Psi_2(f_3)=F_0^\prime$ 
and $\Psi_1(f_3)=\Psi_2(f_2)=F_2^\prime$.

For $i=1,2$, let $X_i$ be the closed symplectic $4$-manifold obtained by gluing $Z_0$ with two
copies of $W_0$ via the contactomorphism $\Psi_i$. Then $c_1(K_{Z_0})\cdot [\omega_{Z_0}]<0$
implies that both $X_1,X_2$ are symplectic Hirzebruch surfaces (cf. Lemma 5.4). Theorem 1.8 follows immediately from the following lemma.

\begin{lemma}
(1) $X_1$ contains an embedded sphere $S$ of self-intersection $-1$, which intersects transversely at one point with each of the two disjoint copies of $\R\P^2$ contained in $W_0, W_0^\prime$ respectively. In particular, 
$X_1\cong \C\P^2\# \overline{\C\P^2}$.

(2) $X_2$ contains an embedded sphere $S$ of self-intersection $2$,  which intersects transversely at one point with each of the two disjoint copies of $\R\P^2$ contained in $W_0, W_0^\prime$ respectively. In particular, 
$X_2\cong \s^2\times \s^2$.
\end{lemma}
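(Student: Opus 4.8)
The plan is to construct the requisite embedded spheres $S$ in $X_1$ and $X_2$ explicitly by gluing pieces coming from $Z_0$ and the two copies of $W_0$, exactly in the spirit of the proof of Lemma 7.8. Recall that in $-\partial Z_0$ the singular fiber $f_2$ on one side is the descending sphere of the $2$-handle $H_2$, and on the other side $f_1$ is the ascending sphere of $H_2$, while the ascending sphere of $H_1$ is the regular fiber $f_0$ on one side. For part (1), in $X_1=Z_0\cup_{\Psi_1}(W_0\sqcup W_0')$ we have $\Psi_1(f_2)=F_0$ on the first component and $\Psi_1(f_3)=F_0'$ on the second. The key point is that by Lemma 7.2 each of $F_0\subset M_0=\partial W_0$ and $F_0'\subset M_0'=\partial W_0'$ bounds a smoothly embedded disk ($D$ and $D'$ say) meeting the embedded $\R\P^2$ transversely at a single point; and by the description of the handle structure of $Z_0$ there is an embedded cylinder $C_3$ in $Z_0$ joining the two copies of $f_3$. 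Thus I would form $S:=D\cup (\text{co-core of }H_2)\cup C_?\cup D'$ — more precisely, I need to track through which singular fibers the co-core disks of $H_1,H_2$ bound and how $C_2,C_3$ link them, so that the union is an embedded sphere hitting each $\R\P^2$ once.

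First I would pin down, using Theorem 4.4 and the explicit Seifert data from Lemma 7.9, precisely which fibers of $-\partial Z_0$ are the ascending/descending spheres of $H_1,H_2$ on each of the two boundary components, together with a convenient choice of Seifert invariants (hence trivializing sections $R$) at $F_0,F_1,F_2$ and their primed copies, reversing the orientation of $F_1$ as needed so binding orientations match. Second, with those sections fixed, I would assemble $S$ (for $X_1$) from: the disk in $W_0$ bounded by $F_0$, the relevant co-core disk(s) in $Z_0$, the connecting cylinder, and the disk in $W_0'$ bounded by $F_0'$; and check it is embedded with the claimed single transverse intersection with each $\R\P^2$. Third, I would compute $S\cdot S$ by the now-standard device: realize the co-core disks as core disks of $2$-handles attached to $M_0$, compute the slopes of the regular fibers relative to the $(\bar m_i,\bar l_i)$ framings determined by the sections, and apply Lemma 4.6; the contributions from the two sides add, giving $-1$ for $X_1$ and $2$ for $X_2$. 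Finally, since $c_1(K_{Z_0})\cdot[\omega_{Z_0}]<0$ (Lemma 7.9) and the fillings $W_0,W_0'$ are Stein hence exact, Lemma 5.4 shows $c_1(K_{X_i})\cdot[\omega_{X_i}]<0$, so each $X_i$ is a rational or ruled surface; being a symplectic Hirzebruch surface containing an embedded sphere of self-intersection $-1$ forces $X_1\cong\C\P^2\#\overline{\C\P^2}$, and one of self-intersection $2$ (equivalently even self-intersection, necessarily with trivial normal bundle after no blow-down is possible) forces $X_2\cong\s^2\times\s^2$.

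For the two parity/identification conclusions I would argue as in Lemma 7.8: in a Hirzebruch surface $X$, which is either $\C\P^2\#\overline{\C\P^2}$ or $\s^2\times\s^2$, a sphere of self-intersection $-1$ is a section that cannot occur in $\s^2\times\s^2$ (whose sections have even self-intersection), pinning down $X_1$; while a sphere of self-intersection $2$ meeting the symplectic fiber sphere $S_2$ (of square $0$) once is a primitive class whose square has the wrong parity for $\C\P^2\#\overline{\C\P^2}$ when paired against the standard basis $e_1,e_2$ with $c_1(K_X)=-2e_1-2e_2$, pinning down $X_2$. (Alternatively one can use the standard classification of symplectic $S^2$-bundles over $S^2$ together with the explicit homology classes of $S_1,S_2$.) I also should double-check that the two disjoint copies of $\R\P^2$ sit in the complement of $Z_0$ as claimed; this is immediate since each copy lies in the interior of one copy of $W_0$ and the gluing only touches a neighborhood of the boundary.

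The main obstacle I anticipate is the bookkeeping in the second step: correctly matching the orientations and framings of $f_1,f_2,f_3$ on each of the two boundary components of $Z_0$ against $F_0,F_1,F_2$ (and primes) under $\Psi_1$ versus $\Psi_2$, so that the union of disks and cylinders really closes up to an embedded $2$-sphere rather than, say, a genus-$1$ surface or a sphere with an extra intersection; and then carrying the relative-framing computation through Lemma 4.6 with the reversed orientation of $F_1$ built in. Once the Seifert-invariant data is fixed consistently, the self-intersection numbers $-1$ and $2$ should drop out of the same arithmetic as in Lemma 7.8, namely sums of terms of the form $s_i-\bar\alpha_i/\alpha_i$.
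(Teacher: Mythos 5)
Your overall strategy matches the paper's---build the spheres from pieces in $Z_0,W_0,W_0'$, compute self-intersection via Lemma 4.6, then classify the Hirzebruch surface---but there are concrete errors in your setup and a genuinely missing idea for part (2). For part (1): you call $f_2$ the descending sphere of $H_2$, but that descending sphere is $B_2$, the \emph{regular} fiber labelled $f_0$, not a singular fiber; and you state $\Psi_1(f_3)=F_0'$ on the second component, whereas the paper sets up $\Psi_1(f_2)=F_0'$ (it is $\Psi_2$ that sends $f_3$ to $F_0'$). Once corrected, the sphere for $X_1$ is simply $D\cup_{\Psi_1} C_2\cup_{\Psi_1} D'$---no co-core disk enters at all---and $S\cdot S=-1$ follows from the slope $-\frac{1}{2}$ of the Seifert fibers relative to the zero-framing of $F_0$ and $F_0'$ on the two sides.

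The serious gap is part (2). Precisely because $\Psi_2$ swaps the images of $f_2$ and $f_3$ on the second component, there is \emph{no} vertical cylinder in $Z_0$ whose two boundary circles map to $F_0$ and $F_0'$; the sphere cannot be a cylinder plus two disks, and ``the same arithmetic as Lemma 7.8'' does not carry over. The paper's $S$ is built from a four-punctured section $R$ of the Seifert fibration on $M_0=\partial W_0$ (punctured around $F_0,F_1,F_2$ and the regular fiber $F_3:=\Psi_2(f_0)$), capped off by the disk $\tilde{D}\subset W_0$ bounded by $Q_0$, annuli $A_1,A_2,A_3$ in neighborhoods of $F_1,F_2,F_3$, the two core disks $D_1,D_2$ of $H_1,H_2$ in $Z_0$ (which under $\Psi_2$ cap $F_1$ and $F_3$), and a tail $-(\tilde{D}'\cup C_3\cup A_2)$ running through $C_3$ into $W_0'$; $S\cdot S=2$ is then $1+1+0$ from the Lemma 4.6 framings of $D_1,D_2$ and the tail. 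Finally, your closing parity argument invokes $S_1,S_2\subset V_0$, but $V_0$ does not appear in the gluing $X_2=W_0\cup Z_0\cup W_0'$ of Theorem 1.5; the parity argument works without them, since no class in $H_2(\C\P^2\#\overline{\C\P^2})$ has square $2$.
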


\begin{proof}
(1) Since $\Psi_1(f_2)=F_0$ on the first component of $-\partial Z_0$ and $\Psi_1(f_2)=F_0^\prime$
on the other component, and $F_0,F_0^\prime$ each bounds an embedded disk $D, D^\prime$ in $W_0, W_0^\prime$ respectively, there is an embedded sphere $S:=D\cup_{\Psi_1} C_2\cup_{\Psi_1} D^\prime$ in $X_1$ with the said intersection property, where $C_2$ is the cylinder in $Z_0$ connecting the two copies of $f_2$ on the boundary $-\partial Z_0$. To see that $S$ has self-intersection $-1$, simply note that the slope of the regular fibers of the Seifert fibration on $M_0$ and $M_0^\prime$ is $-\frac{1}{2}$ relative to the zero-framing of $F_0,F_0^\prime$. 

(2) To see the embedded sphere $S$ of self-intersection $2$ in $X_2$, we let $F_3=
\Psi_2(f_0)$ which is a regular fiber in $M_0=\partial W_0$. With this understood, we assign 
Seifert invariants $(\alpha_i,\beta_i)$ to the fibers $F_0,F_1,F_2,F_3$ in $M_0=\partial W_0$ as follows: $(\alpha_0,\beta_0)=(2,-1)$, $(\alpha_1,\beta_1)=(-2,-1)$, $(\alpha_2,\beta_2)=(2,1)$,
and $(\alpha_3,\beta_3)=(1,-1)$. According to \cite{N}, this corresponds to a trivialization of
the Seifert fibration on $M_0=\partial W_0$ in the complement of a regular neighborhood of the
fibers $F_0,F_1,F_2,F_3$. Adapting the notations from \cite{N}, let $R$ be the section associated 
to the trivialization, let $Q_i=-\partial R$, for $i=0,1,2,3$, be the component of $-\partial R$ lying
in the neighborhood of fiber $F_i$, and let $H_i$ be the fiber over $Q_i$. Furthermore, let 
$\bar{m}_i$ be the meridian of the regular neighborhood of $F_i$. Then we have the following 
relations:
$$
\bar{m}_0=2Q_0-H_0, \; \bar{m}_1=-2Q_1-H_1, \; \bar{m}_2=2Q_2+H_2, \; \bar{m}_3=Q_3-H_3.
$$
It follows easily from these relations that, if we set $\bar{l}_0=Q_0$, $\bar{l}_1=Q_1$, 
$\bar{l}_2=-Q_2$, and $\bar{l}_3=Q_3$, then $(\bar{m}_i,\bar{l}_i)$, for $i=0,1,2,3$, is a
meridian-longitude pair for the fiber $F_i$. With this understood, recall that $Q_0$ is a fiber of
the non-orientable circle bundle $pr: M_0\rightarrow \R\P^2$ (cf. Lemma 7.2), so $Q_0$ bounds
an embedded disk $\tilde{D}$ in $W_0$, intersecting the embedded $\R\P^2$ (the zero-section in
$W_0$) transversely at a single point. On the other hand, for $i=1,2,3$, $Q_i$ or $-Q_i$ (as a longitude) and the central fiber $F_i$ bounds an embedded annulus $A_i$ in the regular 
neighborhood of $F_i$. Now if we let $D_1,D_2$ be the core disk of the $2$-handle $H_1,H_2$
in $Z_0$, then $\partial D_1=-f_1$, $\partial D_2=f_0$, so that $\Psi_2(\partial D_1)=F_1$,
$\Psi_2(\partial D_2)=F_3$. 

On the other hand, $\Psi_2(f_3)=F_0^\prime\in M_0^\prime=\partial W_0^\prime$, which bounds an
embedded disk $\tilde{D}^\prime$ in $W_0^\prime$, intersecting the corresponding embedded 
$\R\P^2$ (the zero-section in $W_0^\prime$) transversely at a single point. Since 
$\Psi_2(f_3)=F_2\in M_0=\partial W_0$, it follows easily that the union of $\tilde{D}$, $R$, 
$A_1\cup D_1$, $A_3\cup D_2$, and $-(\tilde{D}^\prime\cup C_3\cup A_2)$, forms an embedded 
sphere $S$ in $X_2$. Here $C_3$ is the cylinder in $Z_0$ which connects the two copies of $f_3$
on the boundary $-\partial Z_0$. 

To determine the self-intersection of $S$, we note that the slope of
the regular fibers of the Seifert fibration on $M_0=\partial W_0$ is $\frac{1}{2}$ with respect to
$(\bar{m}_1,\bar{l}_1)$, $\frac{1}{2}$ with respect to $(\bar{m}_2,\bar{l}_2)$, and $-1$ with respect to
$(\bar{m}_3,\bar{l}_3)$. Using Lemma 4.6, we find that the framing of the $2$-handles $H_1,H_2$
is $\frac{1}{2}-\frac{1}{-2}=1$ and $-1-\frac{-2}{1}=1$ respectively (relative to $(\bar{m}_1,\bar{l}_1)$
and $(\bar{m}_3,\bar{l}_3)$). It follows that the total contribution to the self-intersection of $S$ from
the framings of $H_1,H_2$ equals $1+1=2$. One also needs to account the contribution from the
framing of $-(\tilde{D}^\prime\cup C_3\cup A_2)$, which is $\frac{1}{2}+(-\frac{1}{2})=0$
relative to $(\bar{m}_2,\bar{l}_2)$. Putting all contributions into consideration, we conclude that
the self-intersection of the embedded sphere $S$ in $X_2$ equals $2$. It follows, in particular,
that $X_2\cong \s^2\times \s^2$. 

\end{proof}

\begin{remark}
The $\Z_2$-homology class of the embedded $\R\P^2$s in $X_1,X_2$ is represented by an
embedded Lagrangian Klein bottle nearby, whose $\Z_2$-homology class is constrained,
see Shevchishin \cite{She}. Roughly speaking, the $\Z_2$-homology class of an embedded Lagrangian Klein bottle in a symplectic $\C\P^2\#\overline{\C\P^2}$ must be $H-E \pmod{2}$,
and in a symplectic $\s^2\times \s^2$, it must be the $\Z_2$-homology class of one of the
$\s^2$-factors. It is easy to see that the intersection of the embedded sphere $S$ with the embedded 
$\R\P^2$s in Lemma 7.9 is consistent with the constraints above concerning the $\Z_2$-homology classes of embedded Lagrangian Klein bottles. 

\end{remark}

Next, we give a proof for Theorem 1.9. We begin with the construction of the relative handlebodies $Z_1$, $Z_2$.

\begin{lemma}
There exists a relative handlebody $Z_1$ of two symplectic $2$-handles which have two concave
contact boundaries $(M_1,\xi_{Mil})$ and $(M_0,\xi_{inv})$. Moreover,  one has $c_1(K_{Z_1})\cdot [\omega_{Z_1}]<0$, where $\omega_{Z_1}$ is the symplectic structure on $Z_1$. 
\end{lemma}

\begin{proof}
We first construct a rational open book $(B,\pi)$ of periodic monodromy of order $8$, with two binding components on $M_1=M((13,6), (3,2), (2,-1))$, both oriented as fibers. In particular, the Milnor fillable contact structure 
$\xi_{Mil}$ is supported by $(B,\pi)$. 

To this end, we regard $M_1$ as $M(\{(\alpha_i,\beta_i)|i=1,2,3\})$, where 
$(\alpha_1,\beta_1)=(13,6)$, $(\alpha_2,\beta_2)=(3,-1)$ and $(\alpha_3,\beta_3)=(2,1)$, and apply
Theorem 3.4. Let $n=8$, $c_1=7$, $c_2=5$, and $b_1=b_2=1$. Then the condition $b_1+b_2=\frac{c_1}{n}+\frac{c_2}{n}+\frac{\beta_3}{\alpha_3}$ in Theorem 3.4 is satisfied. Moreover,
$$
\frac{\beta_1}{\alpha_1}+b_1-\frac{c_1}{n}=\frac{6}{13}+1-\frac{7}{8}=\frac{61}{104}>0, \;\;
\frac{\beta_2}{\alpha_2}+b_2-\frac{c_2}{n}=\frac{-1}{3}+1-\frac{5}{8}=\frac{1}{24}>0.
$$
It follows immediately that the desired $(B,\pi)$ exists, with the binding components $B_1,B_2$ being the singular fibers of multiplicity $13$ and $3$ respectively. Finally, note that $p_1=61$ and $p_2=1$
for the multiplicities at $B_1,B_2$ (cf. Remark 3.5). 

To proceed further,  we note that $(\bar{\alpha}_1, \bar{\beta}_1)=(-2,1)$, 
$(\bar{\alpha}_2, \bar{\beta}_2)=(2,1)$ satisfy the relations $\alpha_i\bar{\beta}_i+\bar{\alpha}_i\beta_i=1$ for $i=1,2$. The corresponding framings $F_1=\frac{1}{\alpha_1}(\frac{n}{p_1}-\bar{\alpha}_1)=\frac{1}{13}(\frac{8}{61}-(-2))=\frac{10}{61}>0$ and $F_2=\frac{1}{\alpha_2}(\frac{n}{p_2}-\bar{\alpha}_2)=\frac{1}{3}(\frac{8}{1}-2)=2>0$. By Theorem 4.4, if we attach two symplectic $2$-handles to $M_1$ along $B_1, B_2$, to be denoted by $H_1,H_2$, with framings $F_1,F_2$ relative to the page framing, we obtain a relative handlebody $Z_1$ with two concave contact boundary components, one being $(M_1,\xi_{Mil})$, where for the other component 
$(M^\prime,\xi^\prime)$, one has 
$$
M^\prime=M((\bar{\alpha}_1,\bar{\beta}_1), (\bar{\alpha}_2,\bar{\beta}_2), (\alpha_3,-\beta_3))=
M((-2,1), (2,1), (2,-1))=M_0.
$$
Moreover, since $\bar{\alpha}_1<0$ and $\bar{\alpha}_2>0$, the contact structure $\xi^\prime$ is 
the $\s^1$-invariant, non-transverse tight contact structure $\xi_{inv}$ by Theorem 4.1.

It remains to verify $c_1(K_{Z_1})\cdot [\omega_{Z_1}]<0$, which follows easily from Corollary 5.3.

\end{proof}

\begin{lemma}
There exists a relative handlebody $Z_2$ of two symplectic $2$-handles which have two concave
contact boundaries $(L(25,19),\xi_{Mil})$ and $(M_0,\xi_{inv})$. Moreover,  one has $c_1(K_{Z_2})\cdot [\omega_{Z_2}]<0$, where $\omega_{Z_2}$ is the symplectic structure on $Z_2$. 
\end{lemma}

\begin{proof}
We begin by constructing a rational open book $(B,\pi)$ of periodic monodromy of order $4$ and two binding components on $L(25,19)$ which supports the Milnor fillable contact structure $\xi_{Mil}$. To this end, we fix a 
Seifert fibration on the lens space, $L(25,19)=M((13,6), (2,1))$. In order to apply Theorem 3.4, we let 
$(\alpha_1,\beta_1)=(13,6)$, $(\alpha_2,\beta_2)=(1,0)$, $(\alpha_3,\beta_3)=(2,1)$. With this understood, 
we let $n=4$, $c_1=c_2=3$, $b_1=b_2=1$, which obey the condition $b_1+b_2=\frac{c_1}{n}+\frac{c_2}{n}+\frac{\beta_3}{\alpha_3}$ in Theorem 3.4. Moreover,
$$
\frac{\beta_1}{\alpha_1}+b_1-\frac{c_1}{n}=\frac{6}{13}+1-\frac{3}{4}=\frac{37}{52}>0, \;\;
\frac{\beta_2}{\alpha_2}+b_2-\frac{c_2}{n}=\frac{0}{1}+1-\frac{3}{4}=\frac{1}{4}>0.
$$
It follows that there is a compatible rational open book of periodic monodromy of order $4$ with two binding components $B_1,B_2$, where $B_1$ is the singular fiber of multiplicity $13$ and $B_2$ is
a regular fiber, both with the fiber orientation. The $\s^1$-invariant contact structure supported by
$(B,\pi)$ is transverse (as both $B_1,B_2$ have the fiber orientation), which must be the Milnor fillable contact structure $\xi_{Mil}$. Finally, note that the multiplicities $p_1=37$, $p_2=1$ by the formula in Remark 3.5.

Next, we observe that $(\bar{\alpha}_1, \bar{\beta}_1)=(-2,1)$, 
$(\bar{\alpha}_2, \bar{\beta}_2)=(2,1)$ satisfy the relations $\alpha_i\bar{\beta}_i+\bar{\alpha}_i\beta_i=1$ for $i=1,2$. The corresponding framings $F_1=\frac{1}{\alpha_1}(\frac{n}{p_1}-\bar{\alpha}_1)=\frac{1}{13}(\frac{4}{37}-(-2))=\frac{6}{37}>0$ and $F_2=\frac{1}{\alpha_2}(\frac{n}{p_2}-\bar{\alpha}_2)=\frac{1}{1}(\frac{4}{1}-2)=2>0$. Thus by Theorem 4.4, if we attach two symplectic $2$-handles to $L(25,19)$ along $B_1, B_2$, to be denoted by $H_1,H_2$, with framings $F_1,F_2$ relative to the page framing, we obtain a relative handlebody $Z_2$ with two concave contact boundary components, with one being $(L(25,19),\xi_{Mil})$, where for the other component 
$(M^\prime,\xi^\prime)$, one has 
$$
M^\prime=M((\bar{\alpha}_1,\bar{\beta}_1), (\bar{\alpha}_2,\bar{\beta}_2), (\alpha_3,-\beta_3))=
M((-2,1), (2,1), (2,-1))=M_0.
$$
Moreover, since $\bar{\alpha}_1<0$ and $\bar{\alpha}_2>0$, the contact structure $\xi^\prime$ is 
the $\s^1$-invariant, non-transverse tight contact structure $\xi_{inv}$.
Finally, $c_1(K_{Z_2})\cdot [\omega_{Z_2}]<0$ follows easily from Corollary 5.3. 

\end{proof}

Theorem 1.9 follows immediately from the following lemma. 

\begin{lemma}
(1) For any symplectic $\Q$-homology $4$-ball filling $W$ of $(M_1,\xi_{Mil})$, the $4$-manifolds $W\cup Z_1\cup_{\Phi_1} W_0$ and $W\cup Z_1\cup_{\Phi_2} W_0$ are diffeomorphic to $\s^2\times \s^2$  
and $\C\P^2\# \overline{\C\P^2}$ respectively, where $\Phi_1,\Phi_2$ are the contactomorphisms 
from Theorem 1.1.

(2) The $4$-manifolds $W_{Mil}\cup Z_2\cup_{\Phi_1} W_0$ and $W_{Mil}\cup Z_2\cup_{\Phi_2} W_0$ are diffeomorphic to $\C\P^2\# \overline{\C\P^2}$ and $\s^2\times \s^2$ respectively, where $\Phi_1,\Phi_2$ are the contactomorphisms from Theorem 1.1.
\end{lemma}

\begin{proof}
By Lemma 5.4, the $4$-manifolds are all diffeomorphic to a Hirzebruch surface. So we need to determine for each of them whether it is even or odd, using the same argument as in the proof of Lemma 7.7. The point is that inside 
$W_0\cup_{\Phi_i} Z_j$, for $i=1,2$ and $j=1,2$, there is a natural embedded sphere. It suffices to determine the self-intersection number of the sphere in each case. And to this end, the Seifert invariants $(\alpha_1,\beta_1)$
and $(\alpha_2,\beta_2)$ in the construction of the relative handlebodies $Z_1,Z_2$ are crucial information. By the fact that $(\alpha_1,\beta_1)=(13,6)$ for both $Z_1,Z_2$, but $(\alpha_2,\beta_2)=(3,-1)$ for $Z_1$ and
$(\alpha_2,\beta_2)=(1,0)$ for $Z_2$, it follows easily that $W\cup Z_1\cup_{\Phi_1} W_0$ and $W_{Mil}\cup Z_2\cup_{\Phi_1} W_0$ contains an embedded sphere of self-intersection $-\frac{1}{2}-\frac{3}{2}=-2$ and
$-\frac{1}{2}-\frac{1}{2}=-1$ respectively, which imply that $W\cup Z_1\cup_{\Phi_1} W_0=\s^2\times \s^2$ and
$W_{Mil}\cup Z_2\cup_{\Phi_1} W_0=\C\P^2\# \overline{\C\P^2}$. On the other hand, 
$W\cup Z_1\cup_{\Phi_2} W_0$ and $W_{Mil}\cup Z_2\cup_{\Phi_2} W_0$ contains an embedded sphere of
self-intersection $7-2=5$ and $7-1=6$ respectively, which imply that $W\cup Z_1\cup_{\Phi_2} W_0=\C\P^2\# \overline{\C\P^2}$ and $W_{Mil}\cup Z_2\cup_{\Phi_2} W_0=\s^2\times \s^2$. 

\end{proof}

Finally, we give a proof for Theorem 1.5. The following is the key lemma. 

\begin{lemma}
Let $S_1,S_2$ be a pair of embedded symplectic two-spheres in $(X,\omega)$, where $X=\s^2\times \s^2$ or 
$\C\P^2\#\overline{\C\P^2}$, such that $S_1,S_2$ have self-intersection $4$ and $0$ respectively and intersect at a single point with a tangency of order $2$. Fixing the homology classes of $S_1,S_2$, the union $S_1\cup S_2$ has a unique isotopy class in $(X,\omega)$ through such pairs of symplectic two-spheres.
\end{lemma}

\begin{proof}
For $i=0,1$, let $S_1^{(i)},S_2^{(i)}$ be two such pairs of embedded symplectic two-spheres in $(X,\omega)$. 
We will show that there exists a smooth family of such pairs $S_1^{(t)},S_2^{(t)}$, where $t\in [0,1]$, which connects 
$S_1^{(0)},S_2^{(0)}$ to $S_1^{(1)},S_2^{(1)}$. Here $S_1^{(i)},S_2^{(i)}$, for $i=0,1$, have the same homology classes, which are denoted by $A_1$ and $A_2$ respectively. Note that $A_1\cdot A_1=4$ and $A_2\cdot A_2=0$.  The argument uses standard Gromov's  theory of $J$-holomorphic curves, see e.g. \cite{Gr, HLS, IvaS, Bar, Shev, ST}; particularly for the structure of moduli spaces see \cite{IvaS} and for automatic transversality see \cite{HLS}. 

With the preceding understood, let $J$ be any $\omega$-compatible almost complex structure on $X$. Denote
by $\M(A_i,J)$, for $i=1,2$, the moduli space of $J$-holomorphic two-spheres of homology class $A_i$. By
the automatic transversality theorem in \cite{HLS}, if $c_1(TX)\cdot A_i>0$, then $\M(A_i,J)$ is a smooth manifold of dimension 
$$
\dim_\R \M(A_i,J)=2(c_1(TX)\cdot A_i-1)\geq 0.
$$
With this understood, it follows easily from the adjunction formula that 
$\M(A_i,J)$ is a smooth manifold of dimension $2(A_i\cdot A_i+1)$. 

We will actually need to consider a subspace of $\M(A_1,J)$ which consists of $J$-holomorphic two-spheres 
with certain constraints. More precisely, fixing a set of distinct points $\{x_i\}$ in $X$ and a set of complex lines 
$L_i\subset (T_{x_i} X,J)$, and positive integers $\{a_i\}$, the subspace of $\M(A_1,J)$ consisting of $J$-holomorphic two-spheres which pass through the points $\{x_i\}$ and are tangent to $L_i$ with a tangency of
order least $a_i$ when $a_i>1$ (when $a_i=1$ there is no tangency requirement) is a smooth manifold of dimension $2(A_i\cdot A_i+1-\sum_i a_i)$ if $A_i\cdot A_i+1-\sum_i a_i\geq 0$, with automatic transversality. 

With the preceding understood, we note first that $\M(A_2,J)$ is $2$-dimensional. In fact, $\M(A_2,J)$ is a two-sphere, whose elements give rise to a foliation of $X$ by $J$-holomorphic two-spheres carrying the homology class $A_2$. In particular, we observe the following consequence: for each point $x\in X$, there is a unique $J$-holomorphic two-sphere in the foliation which passes through $x$. We denote by $T_x^J$ the complex line in the complex tangent space $(T_x X,J)$ at $x$, which is the tangent space of the $J$-holomorphic two-sphere passing through $x$. With this understood, for any distinct triple points $x_1,x_2,x_3\in X$, we consider the subset 
$\M(A_1,J; x_1,x_2,x_3)$ of $\M(A_1,J)$ which consists of $J$-holomorphic two-spheres carrying the class $A_1$ such that the points $x_1,x_2,x_3$ are contained in the $J$-holomorphic two-sphere, which is tangent to the complex lines  $T_{x_1}^J$, $T_{x_2}^J$ at $x_1,x_2$ but is transversal to $T_{x_3}^J$ at $x_3$. It is easy to see that $\M(A_1,J; x_1,x_2,x_3)$ is a smooth manifold of dimension $A_1\cdot A_1+1-(2+2+1)=0$, which comes with automatic transversality. We observe that either $\M(A_1,J; x_1,x_2,x_3)$ is empty, or consists of a single element, because any two distinct elements in the space must have intersection number at least $5$, but $A_1\cdot A_1=4$. 

Now for $i=0,1$, let $J_i$ be an $\omega$-compatible almost complex structure such that $S_1^{(i)},S_2^{(i)}$ are
$J_i$-holomorphic. We claim that for each $i$, there exist distinct points $x_1^{(i)},x_2^{(i)},x_3^{(i)}\in X$
such that $S_1^{(i)}$ is an element of $\M(A_1,J_i; x_1^{(i)},x_2^{(i)},x_3^{(i)})$; in particular, it is non-empty. 
To see this, note that under the fibration $\pi_i: X\rightarrow \s^2$, whose fibers are the $J_i$-holomorphic 
two-spheres in $\M(A_2,J_i)$ which contains the two-sphere $S_2^{(i)}$ as one of the fiber, the two-sphere
 $S_1^{(i)}$ is sent to the base $\s^2$ as a double branched cover. The branch locus contains at least two distinct
 points, which means that $S_1^{(i)}$ is tangent to at least two distinct fibers of $\pi_i: X\rightarrow \s^2$. It is easy to see that $S_2^{(i)}$ is one of the two fibers. We simply let $x_1^{(i)},x_2^{(i)}$ be the intersection points of 
 $S_1^{(i)}$ with the two distinct fibers, with $x_1^{(i)}$ being the intersection of  $S_1^{(i)}$ with $S_2^{(i)}$. Finally, we pick a point $x_3^{(i)}\in S_1^{(i)}$ such that $\pi_i(x_3)\in \s^2$ does not lie in the branch locus. Then it is easy to see that $S_1^{(i)}$ is an element of $\M(A_1,J_i; x_1^{(i)},x_2^{(i)},x_3^{(i)})$. 
 
Next, we connect $J_0,J_1$ by a smooth path of $\omega$-compatible almost complex structures $J_t$, and connect the distinct triple points $(x_1^{(0)},x_2^{(0)},x_3^{(0)})$ and $(x_1^{(1)},x_2^{(1)},x_3^{(1)})$ by a smooth path $(x_1^{(t)},x_2^{(t)},x_3^{(t)})$, for $t\in [0,1]$, which satisfy the following conditions. First, we note that by Lemmas 2.3 and 2.4 in  \cite{C0}, there are no $J_i$-holomorphic spheres with self-intersection less than $-1$, for $i=0,1$, due to the presence of the $J_i$-holomorphic two-sphere $S_1^{(i)}$, as distinct pseudoholomorphic curves intersect positively. With this understood, we shall choose a generic path $J_t$, so that for any $t\in [0,1]$, there are no $J_t$-holomorphic spheres with self-intersection less than $-1$ because the virtual dimension of the corresponding moduli space is less than or equal to $-2$ (note that in the case of $X=\C\P^2\#\overline{\C\P^2}$, $J_t$-holomorphic $(-1)$-sphere does exist). Note that we are allowed to require that for each $t$, $J_t$ equals the standard complex structure in a Darboux neighborhood of $x_1^{(t)}$, which is assumed. Secondly, for each $t$, 
we require that $x_1^{(t)}, x_2^{(t)}, x_3^{(t)}$ are contained in three distinct $J_t$-holomorphic two-spheres carrying the class $A_2$, and finally, in the case of $X=\C\P^2\#\overline{\C\P^2}$, $x_3^{(t)}$ does not lie in the $J_t$-holomorphic $(-1)$-sphere, for any $t\in [0,1]$ (note that for $t=0$ and $1$, this is true because the sphere $S_1^{(i)}$, carrying the class $2H$,  is disjoint from the $J_i$-holomorphic $(-1)$-sphere for $i=0,1$.) 

With the preceding understood, we claim that the space $\M(A_1,J_t; x_1^{(t)},x_2^{(t)},x_3^{(t)})$ is non-empty for each $t\in [0,1]$, consisting of a single embedded $J_t$-holomorphic two-sphere $S_1^{(t)}$. To see this, note that since $\M(A_1,J_0; x_1^{(0)},x_2^{(0)},x_3^{(0)})$ is non-empty and has automatic transversality, 
$\M(A_1,J_t; x_1^{(t)},x_2^{(t)},x_3^{(t)})$ is non-empty for any $t$ lying in a sufficiently small open neighborhood of $0\in [0,1]$. On the other hand, we note that the $J_t$-holomorphic two-sphere $S_1^{(t)}$ will converge to an embedded $J_{t_0}$-holomorphic two-sphere $S_1^{(t_0)}$ if $t\mapsto t_0$. This is because, by Gromov compactness, if otherwise, then $S_1^{(t)}$ will converge to a $J_{t_0}$-holomorphic cusp-curve, which is easily seen must contain two distinct $J_{t_0}$-holomorphic two-spheres carrying the homology class $A_2$, each containing $x_1^{(t_0)}, x_2^{(t_0)}$ respectively. It is easy to see that this is not possible when $X=\s^2\times \s^2$, because in this case, $A_1=e_1+2e_2$ and $A_2=e_1$ where $e_1,e_2$ are the classes of the two $\s^2$-factors, and by the assumption on $J_t$, there are no $J_{t_0}$-holomorphic spheres with negative self-intersection. If $X=\C\P^2\#\overline{\C\P^2}$, then $A_1=2H$ and $A_2=H-E$ where $H,E$ is the standard basis of $H^2(X)$. It follows easily that the $J_{t_0}$-holomorphic cusp-curve must contain the $J_{t_0}$-holomorphic 
$(-1)$-sphere $C_{t_0}$. With this understood, note that the $J_{t_0}$-holomorphic cusp-curve contains all three points $x_1^{(t_0)},x_2^{(t_0)},x_3^{(t_0)}$. Since $x_1^{(t_0)}, x_2^{(t_0)}, x_3^{(t_0)}$ are contained in three distinct  $J_{t_0}$-holomorphic two-spheres carrying the class $A_2$ by our assumption, it follows easily that $x_3^{(t_0)}$ must lie in the $J_{t_0}$-holomorphic $(-1)$-sphere $C_{t_0}$, which contradicts our assumption on the path $x_3^{(t)}$. This proves the compactness of $\M(A_1,J_t; x_1^{(t)},x_2^{(t)},x_3^{(t)})$ under
$t\mapsto t_0$.
Since each $\M(A_1,J_t; x_1^{(t)},x_2^{(t)},x_3^{(t)})$ has automatic transversality when it is non-empty, it follows from the standard continuity argument that $\M(A_1,J_t; x_1^{(t)},x_2^{(t)},x_3^{(t)})$ is non-empty for each 
$t\in [0,1]$. Finally, let $S_2^{(t)}$ be the embedded $J_t$-holomorphic two-sphere carrying the class $A_2$ and passing through $x_1^{(t)}$. Then $S_1^{(t)},S_2^{(t)}$ form a smooth family of pairs of embedded symplectic 
two-spheres connecting the two given pairs $S_1^{(0)},S_2^{(0)}$ and $S_1^{(1)},S_2^{(1)}$. This finishes off the lemma. 

\end{proof}

{\bf Proof of Theorem 1.5:}

\vspace{2mm}

Let $W$ be a $\Q$-homology $4$-ball which is a symplectic filling of $(M_0,\xi_{tight})$. Then by the uniqueness of tight contact structures on $M_0$ (cf. \cite{GLS}), there is a contactomorphism between $(M_0,\xi_{inv})$ and
 $(M_0,\xi_{tight})$. We can form a closed symplectic $4$-manifold $W\cup V_0$ where $V_0$ is from Theorem 1.1. Note that $W\cup V_0$ is diffeomorphic to either $\s^2\times \s^2$ or $\C\P^2\#\overline{\C\P^2}$.
 Consequently, for one of the two contactomorphisms in Theorem 1.1, i.e., $\Phi_1$ or $\Phi_2$, $W\cup V_0$ is diffeomorphic to either $W_0\cup_{\Phi_1} V_0$ or $W_0\cup_{\Phi_2} V_0$, which for simplicity we write as
 $W_0\cup V_0$. Even more, there is a diffeomorphism between  $W\cup V_0$ 
 and  $W_0\cup V_0$ which preserves the symplectic structures on the two manifolds because the symplectic structures are completely determined by the areas of the pair of symplectic two-spheres $S_1,S_2$ in $V_0$. 
 
It follows easily that there exist two pairs of embedded symplectic two-spheres in $(X,\omega)$, $S_1^{(0)},S_2^{(0)}$ and $S_1^{(1)},S_2^{(1)}$, where $X=\s^2\times \s^2$ or $\C\P^2\#\overline{\C\P^2}$,
 such that each pair of spheres have self-intersection $4$ and $0$ respectively and intersect at a single point 
with a tangency of order $2$, and the two pairs have the same homology classes in $X$. Furthermore,
the complement of a regular neighborhood of one of the two pairs of spheres is diffeomorphic to $W$ while the complement of a regular neighborhood of other pair is diffeomorphic to $W_0$. By Lemma 7.14, there exists a smooth family of pairs of two-spheres $S_1^{(t)},S_2^{(t)}$, where $t\in [0,1]$, which connects 
$S_1^{(0)},S_2^{(0)}$ to $S_1^{(1)},S_2^{(1)}$. The smooth isotopy can be extended to an ambient isotopy of
$X$, which gives rise to a diffeomorphism between $W$ and $W_0$, This finishes the proof.

\section{$\Q$-homology ball symplectic fillings of $M((13,6), (3,2), (2,-1))$}
In this section, we shall give a proof of Theorem 1.10.  Recall that $M_1$ denote the small Seifert space 
$M((13,6), (3,2), (2,-1))$, which is equipped with the Milnor fillable contact structure $\xi_{Mil}$. It is known that $(M_1,\xi_{Mil})$ admits a symplectic filling by a $\Q$-homology ball, provided by the Milnor fiber of the rational homology disk smoothing (cf. \cite{BS}, Figure 1(f) with $q=1$). We are interested in an arbitrary $\Q$-homology ball symplectic filling of $(M_1,\xi_{Mil})$, not necessarily coming from a smoothing of the singularity. Theorem 1.10
reduces the classification of such a filling to an isotopy problem of certain pairs of symplectic rational curves in
$\C\P^2$.

We begin by considering the Hamiltonian $\s^1$-action on $(\C^2,\omega_0)$, 
$\lambda\cdot (z_1,z_2)=(\lambda^2 z_1,\lambda^{13} z_2)$, $\forall \lambda\in\s^1$, which has a Hamiltonian function $H=\frac{1}{2}(2|z_1|^2+13|z_2|^2)$. 
It defines a Seifert fibration on $H^{-1}(1)$, which is diffeomorphic to $\s^3$, with two 
singular fibers of multiplicity $13$ and $2$, given by $z_1=0$ by $z_2=0$ respectively, 

\begin{lemma}
There is a relative handlebody $Z$ of two symplectic $2$-handles built on $H^{-1}(1)$, which is symplectic with two concave contact boundaries $(H^{-1}(1),\xi_{std})$ and $(M_1, \xi_{Mil})$.
Moreover, $c_1(K_{Z})\cdot [\omega_{Z}]<0$. Here $\omega_Z$ denotes the symplectic structure on $Z$ and
$\xi_{std}$ is the standard tight contact structure on $H^{-1}(1)\cong \s^3$. 
\end{lemma}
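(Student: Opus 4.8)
The plan is to follow the template of the handlebody constructions in Lemmas 7.4 and 7.9. I would first realize $H^{-1}(1)\cong\s^3$, with its $(2,13)$-Seifert fibration, as $\s^3=M((13,7),(2,1),(1,-1))$ (which has Euler number $-\tfrac1{26}<0$), and build on it a rational open book of periodic monodromy via Theorem 3.4; then attach two symplectic $2$-handles along its binding components by Gay's construction as packaged in Theorem 4.4; and finally identify the two contact boundaries via Theorem 4.1 and compute the sign of $c_1(K_Z)\cdot[\omega_Z]$ via Corollary 5.3.

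For the open book I would take the binding $B=B_1\sqcup B_2$ where $B_1$ is the regular fibre recorded by $(1,-1)$ and $B_2$ is the multiplicity-$2$ singular fibre $z_1=0$ recorded by $(2,1)$, with monodromy order $n=39$ and combinatorial data $c_1=38$, $c_2=19$, $b_1=2$, $b_2=0$; one checks that these satisfy the hypotheses of Theorem 3.4 (e.g.\ $\tfrac{7}{13}+\tfrac{c_1+c_2}{39}=2\in\Z$ and $b_1+b_2=2$) and that the resulting multiplicities are $p_1=p_2=1$ (Remark 3.5). The point of this particular choice is that the orientation criterion of Theorem 3.4 then gives $\tfrac{\beta_i}{\alpha_i}+b_i-\tfrac{c_i}{n}>0$ for both $i$, so both binding components carry the fibre orientation; by Theorem 4.1 the supported $\s^1$-invariant contact structure is therefore transverse, and since $e(\s^3)<0$ it must be the Milnor fillable, i.e.\ the standard tight, contact structure $\xi_{std}$.

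Next I would attach symplectic $2$-handles along $B_1,B_2$ with page framings $F_1=37$ and $F_2=18$ (both positive), obtaining a symplectic $Z$ with concave contact boundaries $(H^{-1}(1),\xi_{std})$ and $(M^\prime,\xi^\prime)$. By Theorem 4.4 the ascending spheres have multiplicities $\bar\alpha_1=\tfrac{n}{p_1}-F_1\alpha_1=2$ and $\bar\alpha_2=\tfrac{n}{p_2}-F_2\alpha_2=3$, and carrying out the normalization (matching the residues $-7\equiv6\pmod{13}$, $-1\equiv2\pmod3$, $3\equiv-1\pmod2$ and the Euler number $-\tfrac{49}{78}$) shows $M^\prime=M((13,6),(3,2),(2,-1))=M_1$. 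Since $\bar\alpha_1,\bar\alpha_2>0$, Theorem 4.1 gives that $\xi^\prime$ is transverse, hence $\xi^\prime=\xi_{Mil}$ (again because $e(M_1)<0$ and the Milnor fillable contact structure is the unique transverse $\s^1$-invariant one on $M_1$). Finally, substituting $g_0=0$, $n=39$, $p_1=p_2=1$, $\alpha_1=1$, $\alpha_2=2$, $\bar\alpha_1=2$, $\bar\alpha_2=3$ and the single non-binding multiplicity $13$ into the formula of Corollary 5.3 yields $c_1(K_Z)\cdot[\omega_Z]=-21\,\Omega<0$, where $\Omega>0$ is the constant of Lemma 5.1.

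The only genuinely delicate part is the choice of combinatorial data in the second step: one must arrange simultaneously that (i) \emph{both} binding components carry the fibre orientation --- this is what forces the contact structure on $\s^3$ to be $\xi_{std}$ rather than some a priori overtwisted $\s^1$-invariant structure (compare Example 2.5) --- (ii) both page framings $F_i$ come out positive, and (iii) the output Seifert invariants of $M^\prime$, including the residues of the $\bar\beta_i$, match those of $M_1$ on the nose. A naive attempt with the smallest admissible monodromy order $n=13$ cannot meet (i) and (ii) at once; passing to $n=39$ is what makes all three conditions compatible, and with that in hand every remaining step is a formal consequence of Theorems 3.4, 4.1, 4.4 and Corollary 5.3.
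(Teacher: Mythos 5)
Your proposal is correct and follows exactly the paper's own route: the paper likewise applies Theorem 3.4 to build a periodic rational open book on the Brieskorn $(2,13)$-Seifert fibration of $\s^3$, attaches Gay's $2$-handles via Theorem 4.4, pins down $\xi_{std}$ and $\xi_{Mil}$ via Theorem 4.1, and computes the sign of $c_1(K_Z)\cdot[\omega_Z]$ via Lemmas 5.1--5.2; the only difference is the combinatorial data, where the paper takes $n=52$, $c_1=51$, $c_2=25$, $b_1=b_2=1$ (a genuinely rational open book with $p_1=1$, $p_2=2$, giving $-28\,\Omega$) while your $n=39$ produces an integral open book with $p_1=p_2=1$ and $-21\,\Omega$, both equally valid. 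One small slip worth fixing: with the action $\lambda\cdot(z_1,z_2)=(\lambda^2z_1,\lambda^{13}z_2)$, the multiplicity-$2$ singular fibre is $\{z_2=0\}$, not $\{z_1=0\}$ (the latter has multiplicity $13$).
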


\begin{proof}
We begin by describing the rational open book on $H^{-1}(1)$ used in the construction of $Z$.
It is of periodic monodromy of order $52$, with two binding components $B_1,B_2$, where
$B_1$ is a regular fiber defined by $z_2^2-z_1^{13}=0$ and $B_2$ is the singular fiber of multiplicity $2$, defined by $z_2=0$, both given with the fiber orientation. We note that as a consequence, the $\s^1$-invariant contact structure supported by the rational open book is transverse, hence it must be $\xi_{std}$. 

The rational open book is constructed using Theorem 3.4, for which we 
regard $H^{-1}(1)$ as the Seifert manifold $M((\alpha_1,\beta_1), (\alpha_2,\beta_2),
(\alpha_3,\beta_3))$ where $(\alpha_1,\beta_1)=(1,0)$, $(\alpha_2,\beta_2)=(2,-1)$, and 
$(\alpha_3,\beta_3))=(13,7)$. In applying Theorem 3.4, we let $n=52$, $c_1=51$, $c_2=25$,
and $b_1=b_2=1$. It is easy to check that $0<c_i<n$, $gcd(n,c_i)=1$ for $i=1,2$, and
$\sum_{i=1}^2 b_i=\sum_{i=1}^2 \frac{c_i}{n}+\frac{\beta_3}{\alpha_3}$. Moreover,
$\frac{\beta_i}{\alpha_i}+b_i-\frac{c_i}{n}=\frac{1}{52}>0, \mbox{ for } i=1,2$. Thus the assumptions 
in Theorem 3.4 are verified. Finally, using the formula in Remark 3.5, we find the multiplicities at 
$B_1,B_2$ to be $p_1=1$ and $p_2=2$.

To proceed further, let $(\bar{\alpha}_1,\bar{\beta}_1)=(2,1)$,  $(\bar{\alpha}_2,\bar{\beta}_2)=(3,2)$, 
then $\alpha_i\bar{\beta}_i+\bar{\alpha}_i\beta_i=1$ for $i=1,2$. Furthermore, for 
$F_i:=\frac{1}{\alpha_i}(\frac{n}{p_i}-\bar{\alpha}_i)$, we have $F_1=50>0$ and $F_2=\frac{23}{2}>0$.
Consequently, by Theorem 4.4 if we attach two symplectic $2$-handles $H_1,H_2$ to $H^{-1}(1)$ along $B_1,B_2$ with framings $F_1,F_2$ relative to the page framing, we obtain a relative handlebody $Z$ with
two concave contact boundaries, one of which is $(H^{-1}(1),\xi_{std})$, while for the other one
$(M^\prime,\xi^\prime)$, we have 
$$
M^\prime=M((2,1),(3,2), (13,-7))=M((13,6), (3,2),(2,-1))=M_1,
$$
and as $\bar{\alpha}_i>0$ for $i=1,2$, the $\s^1$-invariant contact structure $\xi^\prime$ is transverse
(see Theorem 4.1). Consequently,  $\xi^\prime=\xi_{Mil}$. 

Finally, to see $c_1(K_{Z})\cdot [\omega_{Z}]<0$, we use Lemma 5.2 to compute 
$\sum_{i=1}^2 c_1(K_Z)[p_i D_i]$, where $D_1,D_2$ denote the core disk of $H_1,H_2$ respectively. To this end, we note that the Euler characteristic of the pages 
$\chi(\Sigma)=-n(1-\frac{1}{\alpha_3})=-48$, $p_1(1+F_1)=51$,
and $p_2(1+F_2)=25$. Consequently, $\sum_{i=1}^2 c_1(K_Z)[p_i D_i]=48-51-25=-28<0$,
which implies that $c_1(K_{Z})\cdot [\omega_{Z}]<0$ by Lemma 5.1. 

\end{proof}

Next, we let $V$ be the union of $Z$ with $H^{-1}([0,1])$ along $H^{-1}(1)$, which is easily seen
a symplectic cap of $(M_1,\xi_{Mil})$. We note that $B_1$ bounds a singular holomorphic disk 
$z_2^2-z_1^{13}=0$ in $H^{-1}([0,1])$ and $B_2$ bounds an embedded holomorphic disk $z_2=0$
in $H^{-1}([0,1])$. With this understood, as we argued  in Section 7 for the construction of $V_0$,
one can choose an appropriate $\s^1$-invariant contact form on $H^{-1}(1)$, therefore a specific
symplectic structure on $Z$, such that we can glue $Z$ to $H^{-1}([0,1])$ along $H^{-1}(1)$ 
symplectically, so that the union of the core disks $D_1,D_2$ of the $2$-handles $H_1,H_2$ with 
some symplectic disks $D_1^\prime,D_2^\prime$ in $H^{-1}([0,1])$ form a pair of symplectic 
rational curves $\tilde{C}_1,\tilde{C}_2$ in $V$, where $\tilde{C}_1$ has a unique singularity at
$0\in H^{-1}([0,1])$ modeled by $z_2^2-z_1^{13}=0$, and $\tilde{C}_2$ is embedded 
which is given by $z_2=0$ near $0\in H^{-1}([0,1])$. Finally, $\tilde{C}_1,\tilde{C}_2$ intersect at 
a single point, with a local intersection number $13$. 

We proceed further by computing the self-intersection number of $\tilde{C}_1$ and $\tilde{C}_2$, which
requires us to understand the framings of the $2$-handles $H_1,H_2$ relative to the zero-framings of
the binding components $B_1,B_2$. To this end, we observe the following interesting application of Theorem 3.3.

\begin{lemma}
Let $M$ be a Seifert manifold as given in Theorem 3.3, where the condition (*) is satisfied. Furthermore, we assume $e(M)=-\frac{1}{n\alpha}$. Then the fiber $B$ of multiplicity $\alpha$ 
is null-homologous in $M$, and with respect to the zero-framing of $B$, the regular fibers of the
Seifert fibration on $M$ have slope $\frac{n}{\alpha}$. 
\end{lemma}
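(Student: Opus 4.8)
The statement of Lemma 8.3 concerns a Seifert manifold $M$ as in Theorem 3.3 with condition (*) and the extra hypothesis $e(M)=-\frac{1}{n\alpha}$ (equivalently, the multiplicity $p$ at the binding component $B$ equals $1$, since $p=|n\alpha\cdot e(M)|$). We must show: (i) the fiber $B$ of multiplicity $\alpha$ is null-homologous in $M$, and (ii) with respect to the zero-framing of $B$, the regular fibers of the Seifert fibration have slope $\frac{n}{\alpha}$.

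\medskip

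The plan is to use the explicit description of the rational open book $(B,\pi)$ compatible with the Seifert structure, supplied by Theorem 3.3, together with Lemma 4.6. First I would note that by Theorem 3.3 there is a compatible rational open book of periodic monodromy $(B,\pi)$ on $M$ with $B$ (or $-B$) the fiber of Seifert invariant $(\alpha,\beta)$, order of monodromy $n=\mathrm{lcm}(\alpha_j)$, and multiplicity $p=|n\alpha\cdot e(M)|=1$ at $B$ by the hypothesis $e(M)=-\frac{1}{n\alpha}$. Thus $B$ is a genuine (integral) binding component with $p=1$, which means the page of the open book meets $\partial N(B)$ in a longitude-type curve $\gamma$ with $\gamma=\lambda+q\mu$ in a suitable basis — i.e. the boundary $(\partial\Sigma)$ is isotopic to a longitude. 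This already suggests $B$ is null-homologous: since the page $\Sigma$ is a Seifert surface for $(\partial\Sigma)=\gamma$, and for a connected binding with $p=1$ the page framing equals the Seifert framing, $B$ bounds the page $\Sigma\cap(M\setminus N(B))$ capped appropriately; more directly, $p_iB_i=(\partial\Sigma)_i$ in $H_1(N_i)$ with $p=1$ gives $B=(\partial\Sigma)$ in $H_1(N)$, and $(\partial\Sigma)=\partial(\Sigma\cap(M\setminus N(B)))$ bounds in the complement, so $[B]=0$ in $H_1(M)$.

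\medskip

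For part (ii), I would apply Lemma 4.6 in the degenerate configuration where there is nothing being capped off, or rather extract the slope computation that underlies it. Recall from the proof of Lemma 4.6 that if $(\lambda,\mu)$ is a longitude-meridian pair with $\gamma=p\lambda+q\mu$, $\tau=p'\lambda+q'\mu$, then the orbit class is $H=k\gamma+n\tau=(kp+np')\lambda+(kq+nq')\mu$, so the regular fiber has slope $s=\frac{1}{\alpha}(kq+nq')=\frac{q}{p}+\frac{n}{\alpha p}$, using $\alpha=kp+np'$. Setting $p=1$ and choosing the longitude $\lambda$ to be the zero-framing of $B$ (which is legitimate because $B$ is null-homologous, so its Seifert-surface framing is intrinsically defined and equals the zero-framing), one can arrange $q=0$: indeed, the page framing of a connected integral binding is the Seifert framing, and the page curve $\gamma=(\partial\Sigma)$ is precisely $\lambda$ in the zero-framing basis, so $q=0$. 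Then $s=\frac{n}{\alpha}$, as claimed. The main point to get right is the identification that for $p=1$ and connected binding, the page framing coincides with the Seifert (zero) framing, which I would justify via the formula $F=f-\frac{q}{p}$ relating the page framing to a chosen basis, specialized to $p=1$ and the observation that $(\partial\Sigma)$ is the page boundary.

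\medskip

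I expect the main obstacle to be a bookkeeping one rather than a conceptual one: namely, pinning down the sign and basis conventions so that ``$q=0$ in the zero-framing basis'' is unambiguous, and checking that the choice of $\lambda$ as the zero-framing is consistent with the orientation conventions used in Theorem 3.3 and Lemma 4.6 (in particular, whether $B$ or $-B$ is the binding, which only affects orientation, not the slope up to sign — and here the slope $\frac{n}{\alpha}$ is positive since $e(M)<0$ forces $B$ to have the fiber orientation by Theorem 3.3). Once these conventions are fixed, parts (i) and (ii) are immediate consequences of Theorem 3.3 and the slope formula from the proof of Lemma 4.6.
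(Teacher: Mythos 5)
Your proposal is correct and takes essentially the same route as the paper: both identify $p=1$ from $e(M)=-\frac{1}{n\alpha}$, conclude that the page $\Sigma$ is a genuine Seifert surface for $B$ (whence $B$ is null-homologous and $\gamma=\partial\Sigma$ is the zero-framing longitude), and then compute the fiber slope from $H=k\gamma+n\tau$. The only cosmetic difference is that you specialize the slope formula $s=\frac{q}{p}+\frac{n}{\alpha p}$ already extracted in Lemma 4.6 with $p=1$, $q=0$, whereas the paper re-derives $H=\alpha\gamma+n\mu$ directly from $\alpha=k+np'$ and $\mu=-p'\gamma+\tau$; these are the same calculation.
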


\begin{proof}
For simplicity, we continue to use the notations in Theorem 3.3. Since (*) is satisfied, there is a
rational open book with periodic monodromy of order $n$ on $M$, with a single binding component 
$B$ which is the fiber of multiplicity $\alpha$. Furthermore, with $e(M)=-\frac{1}{n\alpha}<0$,
it follows that the multiplicity $p=1$ at $B$, and $B$ has the fiber orientation. In particular, the page
$\Sigma$ of the rational open book is a Seifert surface of $B$, and the simple closed curve
$\gamma=\partial \Sigma$ is a longitude of $B$ defining the zero-framing. 

With the preceding understood, recall from the proof of Lemma 3.2 that the regular fibers of the
Seifert fibration is given by $H=k\gamma+n\tau$. On the other hand, we have identities 
$\alpha=k+np^\prime$ and $\mu=-p^\prime \gamma+\tau$, as $p=1$, where $\mu$ is the meridian
of $B$. It follows easily that $H=\alpha\gamma+n \mu$, which implies immediately that the slope
of $H$ relative to the zero-framing of $B$ equals $\frac{n}{\alpha}$. This finishes the proof. 

\end{proof}

Now back to computing the self-intersection number of $\tilde{C}_1,\tilde{C}_2$, we note that the
assumptions in Lemma 8.2 are satisfied by $H^{-1}(1)$ in regard to both $B_1,B_2$, so we can use 
it to determine the slope of the regular fibers of the Seifert fibration. To this end, note that for $B_1$,
we have $n=26$ and $\alpha=1$, so the slope is $26$. On the other hand, for $B_2$ we have
$n=13$ and $\alpha=2$, so the slope is $\frac{13}{2}$. Now using Lemma 4.6, we find the framing of
$H_1$ relative to the zero-framing of $B_1$ to be 
$26-\frac{\bar{\alpha}_1}{\alpha_1}=26-\frac{2}{1}=24$, and the framing of $H_2$ to be 
$\frac{13}{2}-\frac{\bar{\alpha}_2}{\alpha_2}=\frac{13}{2}-\frac{3}{2}=5$. Consequently, 
the self-intersection number of $\tilde{C}_1,\tilde{C}_2$ equals $24$ and $5$ respectively. 
Finally, we observe that the adjunction equation implies that
$$
c_1(K_V)\cdot \tilde{C}_1=-14, \;\; c_1(K_V)\cdot \tilde{C}_2=-7.
$$
It is interesting to note that 
$$
c_1(K_Z)\cdot [D_1]=c_1(K_V)\cdot \tilde{C}_1=-14, \;\; 
c_1(K_Z)\cdot [D_2]=c_1(K_V)\cdot \tilde{C}_2=-7,
$$
which are consistent with the identity $c_1(K_Z)\cdot [D_1]+c_1(K_Z)\cdot [2D_1]=-28$ we derived earlier in the proof of Lemma 8.1.

To proceed further, let $W$ be any  $\Q$-homology ball symplectic filling of $(M_1,\xi_{Mil})$. The union 
$X:=W\cup V$ is a symplectic Hirzebruch surface by Lemma 5.4, and is in fact diffeomorphic to
$\C\P^2\#\overline{\C\P^2}$ because it contains a sphere $\tilde{C}_2$ of odd self-intersection. 

We shall symplectically blow down $X$ to $\C\P^2$. To this end, let $H,E$ be a standard basis 
of $H^2(X)$ such that $c_1(K_X)=-3H+E$. It is easy to check that
$$
\tilde{C}_1=5H-E,\;\; \tilde{C}_2=3H-2E.
$$
With this understood, we fix a compatible almost complex structure $J$ such that both 
$\tilde{C}_1,\tilde{C}_2$ are $J$-holomorphic. Now we observe that there is an embedded 
$J$-holomorphic sphere $C$ whose class equals $-mH+(m+1)E$ for some $m\geq 0$ (cf. \cite{C0}, Lemma 2.3). With the presence of $\tilde{C}_1$, $C$ must be a $(-1)$-sphere, because 
$0\leq C\cdot \tilde{C}_1=-4m+1$, implying $m=0$. Moreover, $C$ intersects $\tilde{C}_1$ transversely at a single point away from its singularity. The intersection of $C$ with $\tilde{C}_2$ is either transversal at two points or at one point but with a tangency of order $2$. In the latter case, 
we shall slightly perturb $C$ into a symplectic $(-1)$-sphere, near $C\cap \tilde{C}_2$, such that the symplectic $(-1)$-sphere intersects $\tilde{C}_2$ transversely, positively, at two points. After this perturbation if necessary, we can symplectically blow down $X$ to $\C\P^2$ along the symplectic 
$(-1)$-sphere such that $\tilde{C}_1,\tilde{C}_2$ have a well-defined descendant in $\C\P^2$, to be denoted by $C_1,C_2$ (see \cite{C2}, Section 4, for a general discussion). Moreover, $C_1$ and $C_2$ can be made pseudo-holomorphic for some compatible almost complex structure which is integrable near the intersection of $C_1$ and $C_2$. It is clear that $C_1$ is a degree $5$ rational curve with a unique singularity whose link is a $(2,13)$-torus knot, and $C_2$ is a degree $3$ 
rational nodal curve (both in the pseudo-holomorphic sense). Finally, $C_1\cap C_2=\{x_1,x_2\}$, where $x_1$ is the singularity of $C_1$ and $x_2$ is the nodal point of $C_2$, with the local intersection number of $C_1,C_2$ at $x_1, x_2$ equaling $13, 2$ respectively. This finishes the proof of Theorem 1.10 in one direction.

Conversely, suppose $C_1,C_2$ is a pair of $J$-holomorphic rational curves in $\C\P^2$, where
$C_1$ is of degree 5, with a unique cuspidal singularily $x_1$ whose link is a (2,13)-torus knot, and $C_2$ is a degree $3$ nodal curve with nodal point $x_2$, such that $C_1\cap C_2=\{x_1,x_2\}$
with the local intersection number of $C_1,C_2$ at $x_1, x_2$ equaling $13, 2$ respectively.
Here $J$ is $\omega$-compatible with respect to some symplectic structure $\omega$ on
$\C\P^2$, and we assume $J$ is integrable in a neighborhood of $x_1,x_2$. 

To see the $\Q$-homology ball symplectic filling of $(M_1,\xi_{Mil})$ associated to the 
configuration $C_1\cup C_2\subset \C\P^2$, we shall perform a holomorphic blowing up 
of $\C\P^2$ at the point $x_2$ as $J$ is integrable near $x_2$, and let $\tilde{C}_1,\tilde{C}_2$ be the proper transforms of $C_1,C_2$ respectively. Then $\tilde{C}_1,\tilde{C}_2$ are $\tilde{J}$-holomorphic rational curves 
in a symplectic $\C\P^2\# \overline{\C\P^2}$ (see \cite{C5}, Lemma 3.1, for the technical details). Let $W$ be the complement of a regular neighborhood of $\tilde{C}_1\cup\tilde{C}_2$ in $\C\P^2\# \overline{\C\P^2}$. We will show that $W$ is a symplectic filling of $(M_1,\xi_{Mil})$ (which is clearly a $\Q$-homology ball), and this will finish the proof of Theorem 1.10 in the other direction.

A naive approach would be to construct a regular neighborhood of $\tilde{C}_1\cup\tilde{C}_2$ with a concave contact boundary $(M_1,\xi_{Mil})$ using the symplectic model $V=H^{-1}([0,1])\cup Z$, where $Z$ is a relative handlebody constructed in the fashion of Lemma 8.1. However, in view of Remark 6.5, there might be geometric constraints coming from the areas of the curves $\tilde{C}_1,\tilde{C}_2$. In the following lemma, we spell out
what this constraint is. 

\begin{lemma}
Suppose there is a rational open book with periodic monodromy with binding components $B_1,B_2$, which is constructed in the fashion of Lemma 8.1, such that the corresponding relative handlebody $Z$ is symplectic with
concave contact boundaries $(H^{-1}(1),\xi_{std})$ and $(M_1, \xi_{Mil})$, and $H^{-1}([0,1])\cup Z$ gives a symplectic model for a regular neighborhood of $\tilde{C}_1\cup\tilde{C}_2$. Then the areas of 
$\tilde{C}_1, \tilde{C}_2$ must obey the following constraint 
$$
\frac{24}{13}<\frac{Area(\tilde{C}_1)}{Area(\tilde{C}_2)}<\frac{13}{5}.
$$
\end{lemma}

\begin{proof}
Recall that $H^{-1}(1)$ is regarded as a Seifert manifold 
$M((\alpha_1,\beta_1), (\alpha_2,\beta_2), (\alpha_3,\beta_3))$, where $(\alpha_1,\beta_1)=(1,0)$, 
$(\alpha_2,\beta_2)=(2,-1)$, and $(\alpha_3,\beta_3))=(13,7)$, in the construction in Lemma 8.1. 
Suppose there is a rational open book constructed by applying Theorem 3.4, where we choose 
a positive multiple $n$ of $\alpha_3=13$, and $0<c_i<n$ such that
$gcd(n,c_i)=1$ for $i=1,2$, and integers $b_1,b_2$ such that $\sum_{i=1}^2 b_i=\sum_{i=1}^2 \frac{c_i}{n}+\frac{\beta_3}{\alpha_3}$, with $\frac{\beta_i}{\alpha_i}+b_i-\frac{c_i}{n}>0$ for $i=1,2$. With this understood, we note that $\frac{\beta_i}{\alpha_i}+b_i-\frac{c_i}{n}=\frac{p_i}{\alpha_i n}$ for $i=1,2$, where $p_1,p_2$ are
the multiplicities of the rational open book at the binding components $B_1,B_2$.

Assume $F_1,F_2>0$ are the framings relative to the page framing of the symplectic $2$-handles attached along
$B_1,B_2$. Note that $(\bar{\alpha}_1,\bar{\beta}_1)=(2,1)$,  $(\bar{\alpha}_2,\bar{\beta}_2)=(3,2)$, and the framings are given by the formula $F_i=\frac{1}{\alpha_i}(\frac{n}{p_i}-\bar{\alpha}_i)$ for $i=1,2$. With this understood, we set $r:=\frac{Area(\tilde{C}_1)}{Area(\tilde{C}_2)}$. Then in view of Remark 6.5, one has $r=\frac{p_1F_1}{p_2F_2}$. 

To proceed further, we note that $p_iF_i=n(\frac{1}{\alpha_i}-\bar{\alpha}_i\cdot \frac{p_i}{\alpha_i n})$ for $i=1,2$.
Setting $x=\frac{p_1}{\alpha_1 n}$ and $y=\frac{p_2}{\alpha_2 n}$, and noting that $\alpha_1=1,\alpha_2=2$ and
$\bar{\alpha}_1=2$ and $\bar{\alpha}_2=3$, one obtains $1-2x=r(\frac{1}{2}-3y)$. On the other hand, 
$$
x+y=\sum_{i=1}^2 (\frac{\beta_i}{\alpha_i}+b_i-\frac{c_i}{n})=\sum_{i=1}^3 \frac{\beta_i}{\alpha_i}=\frac{1}{26}.
$$
Solving for $y$, one obtains $y=\frac{13r-24}{26(2+3r)}$. With $0<y<\frac{1}{26}$, one arrives at $\frac{24}{13}<r<\frac{13}{5}$. 

\end{proof}

With the preceding understood, we note that the proper transforms $\tilde{C}_1,\tilde{C}_2$ of $C_1,C_2$ in 
$\C\P^2\#\overline{\C\P^2}$ have the homology classes 
$$
\tilde{C}_1=5H-E, \tilde{C}_2=3H-2E, 
$$																						
where the area of the exceptional divisor $E$ is sufficiently small, see Lemma 3.1 in \cite{C5}. It follows easily that the ratio of the areas, i.e.,  $\frac{Area(\tilde{C}_1)}{Area(\tilde{C}_2)}$, which lies in a small neighborhood of $\frac{5}{3}$, may not necessarily be in the range $\frac{24}{13}<\frac{Area(\tilde{C}_1)}{Area(\tilde{C}_2)}<\frac{13}{5}$.

To get around of this issue, we observe that the complement $W$ of a regular neighborhood of 
$\tilde{C}_1\cup \tilde{C}_2$ in $\C\P^2\#\overline{\C\P^2}$ is a symplectic filling of $(M_1,\xi_{LM})$, where
$\xi_{LM}$ is the Li-Mak contact structure of $M_1$ associated to the minimal normal crossing resolution of 
$\tilde{C}_1\cup \tilde{C}_2$. With this understood, the remaining part of Theorem 1.10
follows immediately from the following lemma. 

\begin{lemma}
There is a topologically trivial symplectic cobordism $W^\prime$ from $(M_1,\xi_{LM})$ to $(M_1,\xi_{Mil})$.
\end{lemma}

For a proof, we consider the symplectic manifold $V$, which is the union of the $4$-ball $H^{-1}([0,1])$ with
the relative handlebody $Z$ constructed in Lemma 8.1. We note that inside $V$, there is a pair of symplectic rational curves $\tilde{C}_1,\tilde{C}_2$ intersecting at the origin $0\in H^{-1}([0,1])$. Moreover, $\tilde{C}_1$
has a unique cuspidal singularity at $0\in H^{-1}([0,1])$, while $\tilde{C}_2$ is embedded, and near their intersection 
point $\tilde{C}_1,\tilde{C}_2$ are given by $z_2^2-z_1^{13}=0$ and $z_2=0$ in some local holomorphic coordinates, and the self-intersections of $\tilde{C}_1,\tilde{C}_2$ are $24$ and $5$ respectively. We should also note, in view of Remark 6.5, that we may assume without loss of generality that 
$Area(\tilde{C}_1)=50$ and $Area(\tilde{C}_2)=23$, as from the construction in Lemma 8.1, we have
$p_1=1$, $F_1=50$, and $p_2=2$, $F_2=\frac{23}{2}$. 

To proceed further, we shall first construct a regular neighborhood $U$ of $\tilde{C}_1\cup \tilde{C}_2$ in the interior of $V$, which has a concave contact boundary $(M_1,\xi_{LM})$. To this end, consider the minimal normal crossing 
resolution of $\tilde{C}_1\cup \tilde{C}_2$, which is a normal crossing divisor $D$ in a successive blowing up
$\tilde{V}$ of $V$. The plumbing graph $\Gamma$ for the divisor $D$ is shown in Figure 1. We label the vertices of
$\Gamma$ by $v_1,v_2,\cdots,v_{10}$ as shown there, where the components of $D$ represented by $v_1,v_2$
are the proper transforms of $\tilde{C}_1,\tilde{C}_2$, marked by $\hat{C}_1, \hat{C}_2$. As the labeling order of the components of $D$ is being fixed as in Figure 1, the intersection matrix of $\Gamma$ is a well-defined 
$10\times 10$ matrix, to be denoted by $Q_\Gamma$, which is clearly non-singular and not negative definite. 
																		
\begin{picture}(500,120)
\put(230,100){\circle*{5}}
\put(230,70){\circle*{5}}
\put(260,70){\circle*{5}}
\put(290,70){\circle*{5}}
\put(200,70){\circle*{5}}
\put(170,70){\circle*{5}}
\put(140,70){\circle*{5}}
\put(110,70){\circle*{5}}
\put(80,70){\circle*{5}}
\put(50,70){\circle*{5}}
\put(50,70){\line(1,0){240}}
\put(230,70){\line(0,1){30}}
\put(235,100){\makebox(0,0)[l]{$-2$}}
\put(50,65){\makebox(0,0)[t]{$-2$}}
\put(80,65){\makebox(0,0)[t]{$-2$}}
\put(110,65){\makebox(0,0)[t]{$-2$}}
\put(140,65){\makebox(0,0)[t]{$-2$}}
\put(170,65){\makebox(0,0)[t]{$-2$}}
\put(200,65){\makebox(0,0)[t]{$-3$}}
\put(230,65){\makebox(0,0)[t]{$-1$}}
\put(260,65){\makebox(0,0)[t]{$-2$}}
\put(290,65){\makebox(0,0)[t]{$-2$}}
\put(230,117){\makebox(0,0)[t]{$\hat{C}_1$}}
\put(290,87){\makebox(0,0)[t]{$\hat{C}_2$}}
\put(225,100){\makebox(0,0)[r]{$v_1$}}
\put(50,54){\makebox(0,0)[t]{$v_{10}$}}
\put(80,54){\makebox(0,0)[t]{$v_9$}}
\put(110,54){\makebox(0,0)[t]{$v_8$}}
\put(140,54){\makebox(0,0)[t]{$v_7$}}
\put(170,54){\makebox(0,0)[t]{$v_6$}}
\put(200,54){\makebox(0,0)[t]{$v_5$}}
\put(230,54){\makebox(0,0)[t]{$v_4$}}
\put(260,54){\makebox(0,0)[t]{$v_3$}}
\put(290,54){\makebox(0,0)[t]{$v_2$}}
\put(180,25){\makebox(0,0)[t]{Figure 1: The graph $\Gamma$ of the minimal normal crossing resolution of 
$\tilde{C}_1\cup \tilde{C}_2$}}
\end{picture}

With this understood, we observe the following lemma. 

\begin{lemma}
Fix $\lambda_1,\lambda_2>0$, and let $\vec{\lambda}=(\lambda_1,\lambda_2, 0,0,\cdots,0)^T\in \R^{10}$. Then
for any vector $\vec{a}\in \R^{10}$ which is sufficiently close to $\vec{\lambda}$, there is a vector 
$\vec{z}\in\R^{10}$ with positive entries such that $Q_\Gamma \vec{z}=\vec{a}$ if and only of $\frac{24}{13}<\frac{\lambda_1}{\lambda_2}<\frac{13}{5}$.
\end{lemma}

\begin{proof}
Since $Q_\Gamma$ is non-singular, it suffices to show that there is a vector $\vec{x}\in\R^{10}$ with positive entries such that $Q_\Gamma \vec{x}=\vec{\lambda}$ if and only of $\frac{24}{13}<\frac{\lambda_1}{\lambda_2}<\frac{13}{5}$.

Let $x_1,x_2,\cdots,x_{10}$ be the entries of $\vec{x}$. Then from the last six equations in 
$Q_\Gamma \vec{x}=\vec{\lambda}$, one obtains easily that
$(x_4,x_5,x_6,x_7,x_8,x_9,x_{10})=(13\delta, 6\delta,  5\delta,  4\delta,  3\delta,  2\delta,  \delta)$ for some
$\delta\in\R$. On the other hand, the first four equations in $Q_\Gamma \vec{x}=\vec{\lambda}$ give us to the following system of equations
$$
\left \{\begin{array}{lll}
-2x_1+ 13\delta &=& \lambda_1\\
-2x_2+x_3 & = & \lambda_2\\
x_2-2x_3+ 13\delta & = & 0\\
x_1+x_3-13\delta+6\delta & = & 0\\
\end{array}
\right. 
$$
where we substituted $x_4,x_5$ by $13\delta$, $6\delta$ respectively. It follows easily from these equations that
$$
\delta=\frac{1}{49}(3\lambda_1+2\lambda_2)>0, \mbox{ and } x_1=\frac{1}{2} (13\delta-\lambda_1),\;  x_2=\lambda_1-12\delta, \; x_3=\frac{1}{2}(\delta+\lambda_1)>0.
$$
With this understood, it is easy to check that $x_1>0$ if and only if $13\lambda_2>5\lambda_1$, and  $x_2>0$ 
 if and only if $24\lambda_2<13\lambda_1$. The lemma follows immediately.
 
\end{proof}
 
 Note that the range for the ratio $\frac{\lambda_1}{\lambda_2}$ in Lemma 8.5 is identical to the range for 
 $\frac{Area(\tilde{C}_1)}{Area(\tilde{C}_2)}$ in Lemma 8.3. It is possible that this is not a coincidence, but we do not have a good explanation of it at this moment. 

\vspace{2mm}

With Lemma 8.5 at hand, we shall perform a successive symplectic blowing up to $V$, where the underlying operations are holomorphic blowing ups, see Lemma 3.1 of \cite{C5}. This results a symplectic normal crossing divisor $D$ in a symplectic manifold $\tilde{V}$, with plumbing graph $\Gamma$ in Figure 1. 
Furthermore, according to Lemma 3.1 in \cite{C5}, one can arrange so that the components of $D$ which are 
either an exceptional divisor or a proper transform of an exceptional divisor have arbitrarily small areas, while the components $\hat{C}_1, \hat{C}_2$ have areas which are arbitrarily close to the areas of $Area(\tilde{C}_1)=50$ and $Area(\tilde{C}_2)=23$ respectively. With this understood, let $\vec{a}$ denote the vector whose entries are the areas of the components of the divisor $D$, and let $\lambda_1=50$ and $\lambda_2=23$ in Lemma 8.5, then there is a vector $\vec{z}$ of positive entries such that $Q_\Gamma \vec{z}=\vec{a}$, because 
$\frac{24}{13}<\frac{\lambda_1}{\lambda_2}<\frac{13}{5}$ is satisfied. Consequently, by Li-Mak \cite{LM}, the symplectic divisor $D$ has a regular neighborhood $\tilde{U}$ inside $\tilde{V}$, which has a concave contact boundary $(M_1,\xi_{LM})$. We then use the successive
symplectic blowing down procedure described in Section 4 of \cite{C2} to blow down $\tilde{U}$ to $U$, reducing 
$D$ to a pair of symplectic rational curves $C_1^\prime,C_2^\prime$ inside $U$. The boundary of $U$ is identical to
the boundary of $\tilde{U}$, so are the symplectic structures near the boundaries, so $U$ also has a concave contact boundary $(M_1,\xi_{LM})$. The curves $C_1^\prime,C_2^\prime$ are not necessarily the same as 
the original curves $\tilde{C}_1,\tilde{C}_2$, but they have the same self-intersections, the same local structures near their intersection point, and most importantly the same areas, i.e, 
$Area({C}_1^\prime)=50$ and $Area({C}_2^\prime)=23$, and $U$ is a regular neighborhood of 
$C_1^\prime\cup C_2^\prime$. Now one can build a smaller regular neighborhood of 
$C_1^\prime\cup C_2^\prime$ inside $U$, which can be identified with $V(\lambda_0,\epsilon_2)$, 
with $\lambda_0,\epsilon_2>0$ sufficiently small, where $V(\lambda_0,\epsilon_2)$ is the union of the relative handlebody $Z$ from Lemma 8.1 (but having very ``skinny" symplectic $2$-handles as $\epsilon_2>0$
is chosen to be small) with the $4$-ball $H^{-1}([0,\lambda_0])$ which is also small. Note that $V(\lambda_0,\epsilon_2)$ has a concave contact boundary $(M_1,\xi_{Mil})$, and $U\setminus V(\lambda_0,\epsilon_2)$ is a  topologically trivial symplectic cobordism from $(M_1,\xi_{LM})$ to $(M_1,\xi_{Mil})$. This finishes the proof of Lemma 8.4, and the proof of Theorem 1.10 is completed.

\section{Embeddings of other Stein surfaces and an infinite family of lens spaces}

In this section, we give a proof for Theorem 1.12 and Theorem 1.14. 

Recall that for $\delta\geq 4$, $M_\delta$ denote the lens space $L(\delta^2,-\delta-1)$.

\begin{lemma}
$M_\delta=M((\delta-3,-(\delta-2)), (4\delta-3,3\delta-2))$ as a Seifert fibered space. 
\end{lemma}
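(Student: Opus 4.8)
The plan is to verify the claimed Seifert data directly by a surgery calculus / continued-fraction computation. The lens space $L(\delta^2,-\delta-1)$ has a standard description as surgery on an unknot, or equivalently via a linear plumbing graph whose weights come from the (negative) continued fraction expansion of $-\delta^2/(-\delta-1)$, i.e. of $\delta^2/(\delta+1)$. On the other hand, a Seifert fibered description $M((\alpha_1,\beta_1),(\alpha_2,\beta_2))$ over $S^2$ with two multiple fibers is a lens space $L(p,q)$, and there are well-known formulas expressing $p$ and $q$ in terms of the Seifert invariants (for instance $p = |\alpha_1\beta_2 + \alpha_2\beta_1|$ up to the Euler-number normalization, with $q$ determined by the same data). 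So the first step is to recall the precise dictionary between a two-fiber Seifert presentation over $S^2$ and the lens space it represents, being careful about orientation conventions (the paper uses a specific convention, compatible with Lemma 3.2 and Theorem 4.4).

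Concretely, I would set $(\alpha_1,\beta_1) = (\delta-3, -(\delta-2))$ and $(\alpha_2,\beta_2) = (4\delta-3, 3\delta-2)$, compute the order of $H_1$, which should come out to $|\alpha_1\beta_2 - \alpha_2\beta_1|$ (or the analogous signed expression) and check it equals $\delta^2$; then compute the relevant "second" invariant $q$ and check it is congruent to $-\delta-1$ modulo $\delta^2$ (equivalently, that the two framed-link / continued-fraction presentations agree after a sequence of blow-ups and handle slides). The arithmetic: $\alpha_1\beta_2 - \alpha_2\beta_1 = (\delta-3)(3\delta-2) - (4\delta-3)(-(\delta-2)) = (3\delta^2 - 11\delta + 6) + (4\delta^2 - 11\delta + 6)$ — wait, one must be careful with signs; I would recompute cleanly: $(\delta-3)(3\delta-2) = 3\delta^2 - 11\delta + 6$ and $(4\delta-3)(\delta-2) = 4\delta^2 - 11\delta + 6$, so $\alpha_1\beta_2 + \alpha_2\beta_1 = (3\delta^2-11\delta+6) - (4\delta^2-11\delta+6) = -\delta^2$, giving $|H_1| = \delta^2$ as desired. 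The remaining task is to pin down $q \equiv -\delta-1 \pmod{\delta^2}$, which I would do by solving the appropriate linear congruence coming from the gluing matrix (the meridian of the surgery solid torus expressed in the Seifert basis), using the identity $(\delta-3) + (4\delta-3) \cdot (\text{something}) \equiv \cdots$; alternatively, one can just exhibit an explicit chain of Rolfsen twists converting the two-component framed link $(\alpha_1/\beta_1, \alpha_2/\beta_2)$-plumbing into the single $(-\delta^2/(\delta+1))$-framed unknot.

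An alternative, perhaps cleaner route: note that $L(\delta^2, -\delta-1)$ is the link of the cyclic quotient singularity $\frac{1}{\delta^2}(1, \delta-1)$ referenced in the introduction, whose minimal resolution is the linear plumbing along the continued fraction $[\,\cdot\,]$ of $\delta^2/(\delta^2 - (\delta-1)\cdot?\,]$; and a Seifert presentation with two fibers is read off from such a plumbing by splitting the chain at a central vertex. I would compute the negative continued fraction expansion of $\delta^2/(\delta+1)$ (it is $[\delta+1, 2, 2, \ldots, 2]$ with $\delta-2$ twos, a classical fact), then observe that this chain, split appropriately, yields fibers of multiplicities $\delta-3$ and $4\delta-3$ — since $[2,2,\ldots,2]$ with $k$ twos equals $(k+1)/k$, and combining with the head $\delta+1$ produces exactly the stated $\alpha_i, \beta_i$. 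Either way the argument is a finite, elementary computation.

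The main obstacle is purely bookkeeping: matching the paper's orientation and normalization conventions for Seifert invariants (the sign of the Euler number, whether $\beta_i$ is taken in $(0,\alpha_i)$ or allowed negative as here, and the direction of the gluing) so that the continued-fraction / linear-algebra computation lands on $-\delta-1$ rather than its inverse or negative modulo $\delta^2$. I would resolve this by fixing conventions once — say, those implicit in Lemma 3.2 and used throughout Section 7 — and checking the small case $\delta = 4$ by hand ($M((1,-2),(13,10))$ should be $L(16,-5)$), which simultaneously validates the general formula and the sign conventions. Once the conventions are fixed and the base case checks out, the general $\delta$ case follows from the explicit continued-fraction identity above with no further subtlety.
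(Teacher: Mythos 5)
Your main route is correct and is essentially the paper's proof: the paper also cites the Neumann-style formula identifying $M((\alpha_1,\beta_1),(\alpha_2,\beta_2))$ with $L(a,b)$ where $a=\alpha_1\beta_2+\beta_1\alpha_2$ and $b=\alpha_1\beta_2'+\beta_1\alpha_2'$ for any $(\alpha_2',\beta_2')$ with $\alpha_2\beta_2'-\beta_2\alpha_2'=1$, gets $a=-\delta^2$ exactly as you do, and then fixes the second invariant by the explicit choice $(\alpha_2',\beta_2')=(-4,-3)$, yielding $b=\delta+1$ and hence $L(-\delta^2,\delta+1)=L(\delta^2,-\delta-1)$. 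One small caution on your alternative route: the negative continued fraction of $\delta^2/(\delta+1)$ is $[\delta,2,\ldots,2]$ with $\delta$ twos (check $\delta=4$: $16/5=[4,2,2,2,2]$), not $[\delta+1,2,\ldots,2]$ with $\delta-2$ twos, so that identity would need correcting before you could read off the Seifert data from the plumbing chain.
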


\begin{proof}
We recall the fact that the Seifert manifold $M((\alpha_1,\beta_1), (\alpha_2,\beta_2))$
is diffeomorphic to the lans space $L(a,b)$, where $a=\alpha_1\beta_2+\beta_1\alpha_2$ and $b=\alpha_1\beta_2^\prime+\beta_1\alpha_2^\prime$, for any $\alpha_2^\prime$, $\beta_2^\prime$ obeying $\alpha_2\beta_2^\prime-\beta_2\alpha_2^\prime=1$ (cf. \cite{N}). (Here if $a<0$, then $L(a,b)=L(-a,-b)$.) It follows easily that $M((\delta-3,-(\delta-2)), (4\delta-3,3\delta-2))=L(a,b)$, where 
$$
a=(\delta-3)(3\delta-2)-(\delta-2)(4\delta-3)=-\delta^2, \; b=(\delta-3)\cdot (-3)-(\delta-2)\cdot (-4)=\delta+1.
$$
(We choose $(\alpha_2^\prime,\beta_2^\prime)=(-4,-3)$ in the above.) The lemma follows immediately.

\end{proof}

\begin{lemma}
There is a relative handlebody $Z_\delta$ of two symplectic $2$-handles built on $M_\delta$,
which is symplectic with two concave contact boundaries $(M_\delta,\xi_{can})$ and
$(M_\delta^\prime,\xi_{Mil})$. Moreover, $c_1(K_{Z_\delta})\cdot [\omega_{Z_\delta}]<0$,
where $\omega_{Z_\delta}$ is the symplectic structure on $Z_\delta$.
\end{lemma}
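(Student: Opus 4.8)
The plan is to follow the template established for the earlier relative handlebodies (Lemma 7.4, Lemma 7.9, Lemma 8.1): namely, to realize $(M_\delta,\xi_{can})$ as the concave boundary obtained from $(\mathbb{S}^3,\xi_{std})$ by attaching two symplectic $2$-handles along suitable binding components of a rational open book with periodic monodromy, and then to identify the other concave boundary with $(M_\delta',\xi_{Mil})$ using Theorem 4.4. First I would present $\mathbb{S}^3$ (or rather a convenient copy of $H^{-1}(1)$) as a Seifert manifold $M((\alpha_1,\beta_1),(\alpha_2,\beta_2),(\alpha_3,\beta_3))$ so that, after building a rational open book via Theorem 3.4 with a carefully chosen order $n$, multiplicities $c_1,c_2$, and integers $b_1,b_2$, the two binding components $B_1,B_2$ become specified fibers (one regular, one singular), giving the standard tight contact structure $\xi_{std}$ on the $\mathbb{S}^3$ side (it must be transverse since the bindings carry the fiber orientation). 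Then Theorem 4.4 with appropriate framings $F_1,F_2>0$ produces the other Seifert boundary $M' = M((\bar\alpha_1,\bar\beta_1),(\bar\alpha_2,\bar\beta_2),(\alpha_3,-\beta_3))$, and I would check — using Lemma 9.1 for the Seifert presentation of $M_\delta$ and the known Seifert data of $M_\delta' = M((2,1),(6\delta-17,6),(4\delta-3,\delta-1))$ — that the output can be arranged to be exactly $(M_\delta,\xi_{can})$ and $(M_\delta',\xi_{Mil})$ respectively. The sign of $c_1(K_{Z_\delta})\cdot[\omega_{Z_\delta}]$ is then handled purely computationally by Corollary 5.3 (or Lemmas 5.1–5.2): compute $\chi(\Sigma) = -n(1 - \sum 1/\alpha_k)$ and the terms $p_i(1+F_i)$ and verify the resulting integer is negative.

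Concretely, the key steps in order are: (i) choose the Seifert presentation of the $\mathbb{S}^3$ from which $Z_\delta$ is built and the data $(n, c_1, c_2, b_1, b_2)$ in Theorem 3.4, verifying the hypotheses $0 < c_i < n$, $\gcd(n,c_i)=1$, $\sum b_i = \sum c_i/n + \beta_3/\alpha_3$, and $\beta_i/\alpha_i + b_i - c_i/n \neq 0$; (ii) read off the multiplicities $p_1,p_2$ from Remark 3.5; (iii) choose $(\bar\alpha_i,\bar\beta_i)$ with $\alpha_i\bar\beta_i + \bar\alpha_i\beta_i = 1$ so that $M' = M_\delta'$, and check the framings $F_i = \frac{1}{\alpha_i}(\frac{n}{p_i} - \bar\alpha_i) > 0$ using Remark 4.5; (iv) invoke Theorem 4.1 to see that, since all relevant $\bar\alpha_i > 0$ (the bindings on the $M_\delta'$ side carry the fiber orientation), the induced $\mathbb{S}^1$-invariant contact structure on $M_\delta'$ is transverse and hence equals $\xi_{Mil}$; similarly, since $M_\delta$ is a lens space and the contact structure on it is the universally tight one from Theorem 2.3(1) or an $\mathbb{S}^1$-invariant tight structure, identify it with $\xi_{can}$ via Honda's uniqueness; (v) compute $c_1(K_{Z_\delta})\cdot[\omega_{Z_\delta}]$ via Corollary 5.3 and check negativity.

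The main obstacle I expect is step (i)–(iii): finding the right combinatorial data so that Theorem 4.4 outputs precisely the target small Seifert space $M((2,1),(6\delta-17,6),(4\delta-3,\delta-1))$ with the correct orientation conventions, and doing this uniformly in $\delta$. Since $M_\delta$ has two singular fibers but we need the construction to reach a three-singular-fiber space, the regular fiber $B_1$ on which we attach a handle must become a singular fiber on the $M_\delta'$ side, which pins down $\bar\alpha_1$ and hence the framing $F_1$, and all of this must be consistent with the normalized Seifert invariants; this is where the bulk of the bookkeeping lies. A secondary subtlety is making sure the sign conventions of Theorem 4.4 (the clause that $\bar\alpha_i < 0$ flips the orientation of the corresponding binding) are tracked correctly so that the orientation of $M_\delta'$ as a contact boundary matches the Milnor fillable one. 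Once the data is correctly chosen, steps (iv) and (v) are routine: (iv) is an immediate application of Theorem 4.1 together with the uniqueness results for tight contact structures on lens spaces (Honda) and the identification of the transverse $\mathbb{S}^1$-invariant structure with the Milnor fillable one, and (v) is a finite arithmetic check from Corollary 5.3.
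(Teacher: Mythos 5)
Your approach does not match the statement you are asked to prove, and the mismatch is structural, not cosmetic. The lemma asserts that $Z_\delta$ is a relative handlebody \emph{built on $M_\delta$} with the two concave contact boundaries being $(M_\delta,\xi_{can})$ and $(M_\delta',\xi_{Mil})$. In Gay's construction, if you attach symplectic $2$-handles to $[0,1]\times N$ along binding components of a rational open book on $N$, the two concave boundaries you obtain are (a copy of) $N$ itself, with its $\s^1$-invariant contact structure, and the surgered manifold $N'$. So building on $\s^3$ (as you propose, copying Lemmas 7.4 and 8.1) yields $(\s^3,\xi_{std})$ as one of the two boundaries, which is not what the lemma asks for. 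The template you actually want is Lemma 7.9, where the handlebody is built directly on $M_0$ rather than on $\s^3$.

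Concretely, the paper uses the Seifert presentation of $M_\delta$ from Lemma 9.1, $M_\delta=M((\delta-3,-(\delta-2)),(4\delta-3,3\delta-2))$, and constructs (Theorem 3.4) a rational open book on $M_\delta$ with bindings $B_1$ a regular fiber (fiber orientation) and $B_2$ the singular fiber of multiplicity $\delta-3$ but with \emph{opposite} orientation. Because $B_2$ is reversed, the supported $\s^1$-invariant contact structure on $M_\delta$ is non-transverse with dividing circle separating the two singular points (Theorem 4.1), and the hypothesis $e(M_\delta)>0$ from Lemma 9.1 is exactly what lets you invoke Theorem 2.3(1) to conclude it is universally tight, hence $\xi_{can}$. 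This identification of the $M_\delta$-side contact structure with $\xi_{can}$ is the load-bearing step of the lemma, and your plan does not contain it: your step (iv) gestures at Honda's uniqueness, but that only helps once you already know the structure you built is tight, and tightness is precisely what Theorem 2.3(1) together with the $e(M_\delta)>0$ condition buys. Moreover, in the fibration of Lemma 9.1 there can be no \emph{transverse} $\s^1$-invariant contact structure at all (transversality requires negative Euler number), so an approach that makes $M_\delta$ arrive as an "output" side with all $\bar\alpha_i>0$ would produce a transverse structure and simply cannot give $\xi_{can}$ in this presentation. Once the construction is set up correctly on $M_\delta$, the rest of your outline — reading off multiplicities from Remark 3.5, choosing $(\bar\alpha_i,\bar\beta_i)$ with $\alpha_i\bar\beta_i+\bar\alpha_i\beta_i=1$ and positive $F_i$ so that Theorem 4.4 outputs $M_\delta'$ with a transverse ($=$ Milnor fillable) structure, and the sign computation via Lemma 5.1/5.2 or Corollary 5.3 — is the right sequence of steps.
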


\begin{proof}
We fix the Seifert fibration on $M_\delta$ from Lemma 9.1, and construct a compatible rational open book with periodic monodromy by applying Theorem 3.4. Adapting the notations therein, we regard $M_\delta$ as the Seifert fibered space $M((\alpha_1,\beta_1), (\alpha_2,\beta_2), 
(\alpha_3,\beta_3))$, where $(\alpha_1,\beta_1)=(1,0)$, $(\alpha_2,\beta_2)=(\delta-3, -(\delta-2))$, and $(\alpha_3,\beta_3)=(4\delta-3, 3\delta-2)$. With this understood, the index $i=1,2$, and index $j=3$.

Next, we proceed by letting $n=\alpha_3=4\delta-3$, $c_1=3\delta-2$, $c_2=2\delta-2$, and $b_1=b_2=1$. It is easy to check that $gcd(n,c_i)=1$ for $i=1,2$, and
$b_1+b_2=\frac{\beta_3}{\alpha_3}+\sum_{i=1}^2 \frac{c_i}{n}$ holds true. With this understood, we note that
$$
\frac{\beta_1}{\alpha_1}+b_1-\frac{c_1}{n}=\frac{\delta-1}{4\delta-3}>0,\;\;\; 
\frac{\beta_2}{\alpha_2}+b_2-\frac{c_2}{n}=-\frac{2\delta^2-4\delta+3}{(\delta-3)(4\delta-3)}<0.
$$
Thus by Theorem 3.4, there is a compatible rational open book $(B,\pi)$ on $M_\delta$ with
a periodic monodromy of order $n=4\delta-3$, such that the binding $B$ has two components 
$B_1$, $B_2$, where $B_1$ is a regular fiber of the Seifert fibration on $M_\delta$, and $B_2$ is the singular fiber of multiplicity $\delta-3$ but given with the opposite orientation. Finally, we note that the multiplicities 
$p_1$, $p_2$ of $(B,\pi)$ at $B_1,B_2$ are given by (cf. Remark 3.5)
$$
p_1=\delta-1,\;\; p_2=2\delta^2-4\delta+3.
$$

With the preceding understood, let $\xi$ be the $\s^1$-invariant contact structure on $M_\delta$
which is supported by the rational open book $(B,\pi)$. Then by Theorem 4.1, $\xi$ is non-transverse 
as $B_2$ has the opposite orientation, and moreover, the dividing set of $\xi$ consists of a circle surrounding the singular point of multiplicity $\delta-3$ in the base $2$-orbifold of the Seifert fibration on $M_\delta$. We note that since $e(M_\delta)>0$ in Lemma 9.1, it follows from Theorem 2.3(1) that
$\xi$ is a universally tight contact structure on the lens space $M_\delta$, i.e., $\xi=\xi_{can}$.

To proceed further, let $(\bar{\alpha}_1,\bar{\beta}_1)=(2,1)$ and $(\bar{\alpha}_2,\bar{\beta}_2)=(6\delta-17,6\delta-11)$. Since $B_2$ has opposite orientation, we shall change 
$(\alpha_2,\beta_2)$ to its negative $(-\alpha_2,-\beta_2)=(-\delta+3, \delta-2)$ in the application of
Theorem 4.4. With this understood, note that $\alpha_i\bar{\beta}_i+\bar{\alpha}_i\beta_i=1$ for $i=1,2$. If we attach symplectic $2$-handles $H_1,H_2$ along $B_1,B_2$, with framings 
$F_i=\frac{1}{\alpha_i}(\frac{n}{p_i}-\bar{\alpha}_i)$ (for $i=1,2$) relative to the page framing, 
where $F_1=2+\frac{1}{\delta-1}>0$, $F_2=6+\frac{2\delta-2}{2\delta^2-4\delta+3}>0$, we obtain a 
relative handlebody $Z_\delta$ with two concave contact boundaries, one of which is 
$(M_\delta,\xi_{can})$, while for the other one $(M^\prime,\xi^\prime)$, we use Theorem 4.4 to find that 
$$
M^\prime=M((2,1), (6\delta-17,6), (4\delta-3, \delta-1))=M_\delta^\prime, 
$$
and that $\xi^\prime$ is the Milnor fillable contact structure $\xi_{Mil}$ on $M_\delta^\prime$ as both 
$\bar{\alpha}_1, \bar{\alpha}_2>0$, so that $\xi^\prime$ must be transverse. Finally, a direct calculation with
Lemmas 5.1 and 5.2 shows that 
$c_1(K_{Z_\delta})\cdot [\omega_{Z_\delta}]=-\Omega (p_1+6p_2+1)<0$. 

\end{proof}

\begin{lemma}
$Z_\delta$ is simply connected. 
\end{lemma}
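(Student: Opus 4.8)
The plan is to compute $\pi_1(Z_\delta)$ directly from its handle decomposition and the Seifert-fibered structure of the concave contact boundary $M_\delta=\partial_- Z_\delta$. Recall from the proof of Lemma 9.3 that $Z_\delta$ is obtained from $[0,1]\times M_\delta$ by attaching two symplectic $2$-handles $H_1,H_2$ along the binding components $B_1,B_2$ of the rational open book $(B,\pi)$, where $B_1$ is a regular fiber and $B_2$ is the singular fiber of multiplicity $\delta-3$ (with reversed orientation). Thus $Z_\delta$ deformation retracts onto $M_\delta$ with two $2$-cells attached along $B_1$ and $B_2$, and by van Kampen
\[
\pi_1(Z_\delta)=\pi_1(M_\delta)\big/\langle\langle\, [B_1],\,[B_2]\,\rangle\rangle .
\]
First I would write down $\pi_1(M_\delta)$ from the Seifert presentation $M_\delta=M((1,0),(\delta-3,-(\delta-2)),(4\delta-3,3\delta-2))$ of Lemma 9.1: since $M_\delta$ is a lens space its fundamental group is the cyclic group $\Z/\delta^2$, generated by the class of a regular fiber $h$, with the singular fibers expressible in terms of $h$ via the standard Seifert relations $h^{\alpha_k}=(\text{section generator})^{-\beta_k}$ together with $\prod$ of the section generators being trivial.

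Next I would identify the classes $[B_1]$ and $[B_2]$ in $\pi_1(M_\delta)=\Z/\delta^2$. Since $B_1$ is a regular fiber, $[B_1]=h$, so the relation $[B_1]=1$ alone already kills $h$ and hence all of $\pi_1(M_\delta)$ — but one must be careful that $B_1$ is attached with the \emph{fiber} orientation and that no subtlety is hidden in the fact that the $2$-handle is attached along $B_1\subset\{1\}\times M_\delta$ rather than along a fiber of the ambient Seifert structure on $Z_\delta$'s boundary; here the relevant observation is purely $3$-dimensional: the attaching circle of $H_1$ is isotopic in $M_\delta$ to a regular fiber, which normally generates $\pi_1(M_\delta)$ because the base orbifold is a sphere. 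Therefore attaching $H_1$ already produces a simply connected space, and attaching $H_2$ (along any circle whatsoever) cannot change that. So $\pi_1(Z_\delta)=1$. Alternatively, and perhaps more cleanly for the writeup, I would invoke Lemma 3.2 / Theorem 4.4 description of $M_\delta'=\partial_+Z_\delta$ together with the fact that $B_1$ is a regular fiber to see that the ascending sphere picture gives a Kirby diagram in which the core of $H_1$ caps off the fiber class.

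The step I expect to require the most care is the bookkeeping of orientations and Seifert invariants needed to conclude that $[B_1]$ really is a generator of $\pi_1(M_\delta)$ — i.e., verifying that the regular fiber of the Seifert fibration fixed in Lemma 9.1 normally generates $\pi_1$ of this particular lens space $L(\delta^2,-\delta-1)$. This is a standard fact (for a Seifert fibration over $S^2(\alpha_1,\dots)$ the fiber generates $\pi_1$ up to the cyclic group generated by the $\alpha_k$-torsion, and over a base $S^2$ with these invariants the fiber is in fact a generator), but I would spell out the Seifert presentation explicitly and check that $h$ has order exactly $\delta^2$ in it, so that killing $h$ kills everything. Once that is in hand the lemma is immediate, and I would phrase the final sentence as: since attaching the $2$-handle $H_1$ along $B_1$ already kills a generator of $\pi_1(M_\delta)$, the space $Z_\delta$ is simply connected.
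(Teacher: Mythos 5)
Your overall strategy --- compute $\pi_1(Z_\delta)=\pi_1(M_\delta)/\langle\langle [B_1],[B_2]\rangle\rangle$ and show a generator dies --- is the right one and matches the paper's approach, but the specific claim you lean on is false for infinitely many $\delta$. You assert that $B_1$, being a regular fiber $h$, normally generates $\pi_1(M_\delta)$ ``because the base orbifold is a sphere.'' That is not the case. For any Seifert fibration over a $2$-orbifold $S$, killing the regular fiber class yields exactly $\pi_1^{orb}(S)$, and for $S=S^2(\delta-3,4\delta-3)$ this orbifold group is cyclic of order $\gcd(\delta-3,4\delta-3)=\gcd(\delta-3,9)$. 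So whenever $3\mid\delta$ (so that $\gcd>1$), attaching $H_1$ alone produces a space with nontrivial fundamental group of order $3$ or $9$, and the conclusion you draw from $[B_1]=h$ being ``a generator'' simply fails. In particular, the paragraph you flagged as ``the step requiring most care'' is precisely where the argument breaks.

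The paper sidesteps this by working with the other attaching circle: it shows that $B_2$, the singular fiber of multiplicity $\delta-3$, has order $\delta^2$ in $\pi_1(M_\delta)\cong\Z/\delta^2$, hence is a generator, so attaching $H_2$ alone already kills everything. To prove that $B_2$ generates, one sets $r=\gcd(\delta-3,4\delta-3)$ and separates cases: if $r=1$ the orbifold group is trivial and the regular fiber generates, hence so does $B_2$; if $r>1$ the regular fiber $F$ has order only $\delta^2/r$, but using $F=(\delta-3)B_2$ and the fact that $(\delta-3)/r$ is prime to $\delta^2$ one gets $\operatorname{ord}(B_2)=r\cdot\operatorname{ord}(F)=\delta^2$. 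If you want to repair your version, you would need to either reproduce this argument for $B_2$, or argue that the two relations $[B_1]=1$ and $[B_2]=1$ jointly kill $\pi_1(M_\delta)$ (which is true but requires identifying the image of $B_2$ in the quotient $\Z/r$); simply invoking $[B_1]$ does not suffice.
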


\begin{proof}
The key observation is that the binding component $B_2$ in the construction of $Z_\delta$, which is
the singular fiber of multiplicity $\delta-3$ in the Seifert fibered space 
$M_\delta=M((\delta-3, -(\delta-2)), (4\delta-3, 3\delta-2))$, is a generator of $\pi_1(M_\delta)$ which has order $\delta^2$. As $B_2$ is contractible in $Z_\delta$ because of the $2$-handle attached along it, we see immediately that $Z_\delta$ is simply connected.

To see that $B_2$ is a generator of $\pi_1(M_\delta)$, we begin with the observation that 
$r:=gcd(\delta-3,4\delta-3)$ only takes values $1,3$ or $9$, and $r>1$ if and only if 
$\delta=0\pmod{3}$, in which case, one also has $r=gcd(\delta-3,\delta^2)$. With this understood, we consider the case where $r=1$ first. In this case, the orbifold fundamental group of the base $2$-orbifold of $M_\delta=M((\delta-3, -(\delta-2)), (4\delta-3, 3\delta-2))$ is trivial, from which it follows 
that the fiber class must generate $\pi_1(M_\delta)$. For the case where $r>1$, the orbifold fundamental group of the base $2$-orbifold is nontrivial, which is cyclic of order $r$. It follows easily that the regular fiber $F$ has order $\delta^2/r$. With this understood, we note that $F=(\delta-3)B_2$, Writing $\delta-3=rs$ for some $s\in\Z$. Then $s$ is relatively prime with $\delta^2$ because of the fact that $r=gcd(\delta-3,\delta^2)$. Now let $s^\prime$ be given such that $ss^\prime=1\pmod{\delta^2}$. Then we have $rB_2=s^\prime F$ in $\pi_1(M_\delta)$. It follows easily that $ord(B_2)=r\cdot ord(F)=\delta^2$, which shows that $B_2$ is a generator of $\pi_1(M_\delta)$. 

\end{proof}

Let $W_\delta^\prime$ be any $\Q$-homology ball symplectic filling of $(M_\delta^\prime,\xi_{Mil})$.
We form the closed symplectic $4$-manifold $X_\delta:=W_\delta\cup Z_\delta \cup 
W_{\delta}^\prime$. Then since $c_1(K_{Z_\delta})\cdot [\omega_{Z_\delta}]<0$, $X_\delta$
is a symplectic Hirzebruch surface by Lemma 5.4. To finish off the proof of Theorem 1.12,
it remains to show that $X_\delta\cong \s^2\times \s^2$ if and only of $\delta\geq 5$ and is odd. 

\begin{lemma}
There is a class $\alpha\in H_2(X_\delta)$ such that $\alpha\cdot \alpha\equiv 2 \pmod{4}$
when  $\delta\geq 5$ and is odd, and $\alpha\cdot \alpha \equiv \pm 1 \pmod{4}$ when $\delta\geq 4$ and is even. 
\end{lemma}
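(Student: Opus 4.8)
The plan is to produce the class $\alpha$ explicitly from the handle-theoretic structure of $X_\delta$, exactly in the spirit of the self-intersection computations carried out in Sections 7 and 8. First I would fix the Seifert invariants on $M_\delta=M((\delta-3,-(\delta-2)),(4\delta-3,3\delta-2))$ and on $M_\delta^\prime$ coming from Lemma 9.2, and choose, following \cite{N}, sections $R$ over the complements of regular neighborhoods of the relevant singular fibers on both boundary components. As in the proofs of Lemmas 7.5, 7.8 and 7.9, each such section, together with the core disks $D_1,D_2$ of the two symplectic $2$-handles of $Z_\delta$ and suitable annuli $A_i$ inside the regular neighborhoods of the fibers $B_i$ and $B_i^\prime$, assembles into an embedded surface in $X_\delta$; I would pick the assembly whose boundary pieces are glued through the fibers $B_1,B_1^\prime$ (the regular-fiber bindings) so that the resulting surface is a sphere, since $W_\delta$ and $W_\delta^\prime$ are $\Q$-homology balls and the regular fiber of a lens-space Seifert fibration is null-homologous (Lemma 8.2). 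Call this sphere $\alpha$ and compute $\alpha\cdot\alpha$ by the bookkeeping of Lemma 4.6: the framing of the $2$-handle $H_i$ relative to a chosen longitude $\bar{l}_i$ at $B_i$ equals $s_i-\bar{\alpha}_i/\alpha_i$, where $s_i$ is the slope of the $\s^1$-orbits, and an analogous contribution comes from the glued-in annulus and section on the $M_\delta^\prime$ side.

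The concrete steps, in order, are: (1) write down $(\bar\alpha_i,\bar\beta_i)$ and the multiplicities $p_i$ from the proof of Lemma 9.2; (2) use Lemma 8.2 applied to $H^{-1}(1)$-type reasoning, or directly the formula $s_i=\frac{q_i}{p_i}+\frac{n}{\alpha_i p_i}$ from the proof of Lemma 4.6, to get the orbit slopes at $B_1$ and $B_1^\prime$ relative to the zero-framings defined by the pages; (3) add up the framing contributions to get $\alpha\cdot\alpha$ as an explicit rational-turned-integer expression in $\delta$; (4) reduce that expression modulo $4$ and verify the claimed dichotomy. I expect $\alpha\cdot\alpha$ to come out as a linear or mildly quadratic polynomial in $\delta$ whose residue mod $4$ is governed by the parity of $\delta$ together with the parity of $\delta-3\pmod 4$, which is precisely the source of the odd/even split; the case $\delta\geq 5$ odd should give $\alpha\cdot\alpha\equiv 2$, and $\delta$ even should give $\alpha\cdot\alpha\equiv\pm1$, i.e. $\alpha$ is a class of odd square, forcing $X_\delta\not\cong\s^2\times\s^2$ in the even case while in the odd case the existence of a class of square $\equiv 2\pmod 4$ in a rational ruled surface forces the even intersection form, hence $X_\delta\cong\s^2\times\s^2$.

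The main obstacle I anticipate is the careful orientation and basis bookkeeping across the three pieces $W_\delta$, $Z_\delta$, $W_\delta^\prime$: one must track how the reversal of page orientation in Gay's construction (as in the proof of Theorem 4.4) flips meridian-longitude pairs, and how the section $R$ on the $M_\delta^\prime$ side matches the ascending spheres $B_i^\prime$, so that the glued surface is genuinely closed and embedded rather than merely immersed or non-orientable. A secondary subtlety is checking that the homology class $\alpha$ so constructed is independent of the choice of $W_\delta,W_\delta^\prime$ (it is, since both are $\Q$-homology balls, so the ambiguity lies in $H_2$ of a $\Q$-homology ball, which is torsion, and self-intersection numbers are unaffected). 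Once the gluing is set up correctly the mod-$4$ computation itself is routine arithmetic, parallel to the computations $S\cdot S=-1$ and $\tilde S\cdot\tilde S=0$ in Lemma 7.8 and $S\cdot S=2$ in Lemma 7.10.
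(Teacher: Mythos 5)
Your approach differs from the paper's in a way that creates real difficulties, and it also misses a key case split. The paper does not build an embedded sphere passing through $W_\delta$ and $W_\delta^\prime$; it takes $\alpha$ to be the homology class of the singular chain $\Sigma\cup p_1D_1\cup p_2D_2$, where $\Sigma$ is a page of the rational open book on $M_\delta$ and $D_1,D_2$ are the core disks of the two $2$-handles. Since $\partial\Sigma$ is homologous to $p_1B_1+p_2B_2$ inside the binding neighborhoods and $\partial D_i=B_i$, this chain closes up entirely inside $Z_\delta$, with no reference to the fillings at all. Its self-intersection is then computed by the formula $\alpha\cdot\alpha=\sum_i p_i\,\frac{n-\bar\alpha_ip_i}{\alpha_i}$, which is a quartic in $\delta$ (not linear or quadratic as you guess). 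Your proposed embedded-sphere construction runs into trouble: $B_1$ and $B_1^\prime$ are not null-homologous in $M_\delta$ and $M_\delta^\prime$ in general (these are $\Q$-homology spheres with large torsion), and even if some multiple bounds a surface in a $\Q$-homology ball filling, nothing forces that surface to be a disk. Your citation of Lemma 8.2 is a misapplication; that lemma concerns a fiber in $\s^3=H^{-1}(1)$ with a specific Euler-number hypothesis, not the regular fiber of a lens-space Seifert fibration.

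The second, and more substantive, gap is that a single rational open book does not suffice. With the open book from Lemma 9.2 (of order $n=4\delta-3$, multiplicities $p_1=\delta-1$, $p_2=2\delta^2-4\delta+3$), the quartic $\alpha\cdot\alpha=(\delta-1)(2\delta-1)+(2\delta^2-4\delta+3)(12\delta^2-22\delta+16)$ reduces mod $4$ to $\pm1$ for $\delta$ even and to $2$ for $\delta\equiv1\pmod4$, but to $0$ for $\delta\equiv3\pmod4$, which proves nothing in that residue class. The paper handles $\delta\equiv3\pmod4$ by switching to a \emph{second} rational open book on the same $M_\delta$ with the same bindings but order $n=8\delta-6$ and multiplicities $p_1=3\delta-2$, $p_2=5\delta^2-11\delta+6$, whose resulting class has $\alpha\cdot\alpha\equiv2\pmod4$. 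Your ``the residue mod $4$ is governed by the parity of $\delta$ together with the parity of $\delta-3$'' heuristic does not anticipate this, and there is no way to recover the $\delta\equiv3\pmod4$ case from the first open book alone; you would hit $0\pmod4$ and be stuck. This reliance on the freedom to vary the supporting open book (cf. Remark 4.7) is an essential idea that your proposal misses.
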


\begin{proof}
Consider the rational open book $(B,\pi)$ on $M_\delta$ from the proof of Lemma 9.2.
Let $\Sigma$ be a page of $(B,\pi)$. Note that the multiplicity at the binding components 
$B_1,B_2$ is $p_1=\delta-1$ and $p_2=2\delta^2-4\delta+3$ respectively. It follows that the union 
$\Sigma\cup p_1D_1\cup p_2 D_2$ forms a closed, oriented singular chain 
in $X_\delta$. Here $D_1,D_2$ denote the core disk of the $2$-handles $H_1,H_2$
attached along $B_1,B_2$. We let $\alpha\in H_2(X_\delta)$ be the homology class of the singular 
chain. 

To compute the self-intersection of $\alpha$, we fix a framing at $B_1,B_2$, given by some choice
of longitude-meridian pair $(\lambda_1,\mu_1)$ and $(\lambda_2,\mu_2)$ respectively. 
Continue to use the notations from Lemma 3.2 and Lemma 4.6, we have 
$\gamma_i=p_i\lambda_i+q_i\mu_i$, $\tau_i=p_i^\prime\lambda_i+q_i^\prime \mu_i$ for
some $q_i,q_i^\prime$ obeying $p_iq_i^\prime-p_i^\prime q_i=1$, for $i=1,2$. Note that
$\gamma_1,\gamma_2$ are the two boundary components of the page $\Sigma$. 
As we have shown in Lemma 4.6, the regular fibers of the Seifert fibration have slopes $s_1,s_2$
with respect to $(\lambda_1,\mu_1)$ and $(\lambda_2,\mu_2)$, where
$$
s_1=\frac{q_1}{p_1}+\frac{n}{\alpha_1 p_1}, \;\; s_2=\frac{q_2}{p_2}+\frac{n}{\alpha_2 p_2}.
$$
Consequently, the framings of $H_1,H_2$ relative to $(\lambda_1,\mu_1)$ and $(\lambda_2,\mu_2)$
are given by
$$
f_1=\frac{q_1}{p_1}+\frac{n}{\alpha_1 p_1}-\frac{\bar{\alpha}_1}{\alpha_1},\;\;
f_2=\frac{q_2}{p_2}+\frac{n}{\alpha_2 p_2}-\frac{\bar{\alpha}_2}{\alpha_2}.
$$
With this understood, the self-intersection of the homology class $\alpha$ is given by
$$
\alpha\cdot \alpha= \sum_{i=1}^2 p_i\gamma_i\cdot (\lambda_i+f_i\mu_i)=\sum_{i=1}^2
p_i (p_if_i-q_i)= \sum_{i=1}^2 p_i (\frac{n-\bar{\alpha}_i p_i}{\alpha_i}). 
$$
With $p_1=\delta-1$, $p_2=2\delta^2-4\delta+3$, and $n=4\delta-3$, $\alpha_1=1$, 
$\bar{\alpha}_1=2$, $\alpha_2=-\delta+3$, $\bar{\alpha}_2=6\delta-17$, we have
\begin{eqnarray*}
\alpha\cdot\alpha & = &(\delta-1)(4\delta-3-2(\delta-1))+ (2\delta^2-4\delta+3)\frac{4\delta-3-(6\delta-17)(2\delta^2-4\delta+3)}{-\delta+3}\\
& = & (\delta-1)(2\delta-1)+(2\delta^2-4\delta+3)(12\delta^2-22\delta+16).\\
\end{eqnarray*}
It follows easily that when $\delta$ is even, $\alpha\cdot\alpha\equiv \pm 1\pmod{4}$. If $\delta$ is odd 
and $\delta\equiv 1\pmod{4}$, we have $\alpha\cdot\alpha\equiv 2\pmod{4}$. If $\delta$ is odd and 
$\delta=-1\pmod{4}$, we have $\alpha\cdot\alpha\equiv 0 \pmod{4}$. 

So we have to deal with the case where $\delta$ is odd and $\delta=-1\pmod{4}$ separately. We shall
consider a different rational open book on $M_\delta$, with the same binding components
$B_1,B_2$, but having a different page $\Sigma$ and different multiplicities $p_1,p_2$
at $B_1,B_2$. To this end, in the application of Theorem 3.4, we instead let $n=8\delta-6$,
$c_1=c_2=5\delta-4$. Since $\delta$ is odd, we have $gcd(n,c_i)=1$ for $i=1,2$. We continue to
have $b_1=b_2=1$, and $b_1+b_2=\frac{\beta_3}{\alpha_3}+\sum_{i=1}^2 \frac{c_i}{n}$ holds true.
Furthermore,
$$
\frac{\beta_1}{\alpha_1}+b_1-\frac{c_1}{n}=\frac{3\delta-2}{8\delta-6}>0,\;\;\; 
\frac{\beta_2}{\alpha_2}+b_2-\frac{c_2}{n}=-\frac{5\delta^2-11\delta+6}{(\delta-3)(8\delta-6)}<0.
$$
So $B_1$ will have the fiber orientation, but $B_2$ has the opposite orientation. As a consequence,
we change $(\alpha_2,\beta_2)$ to $(-\alpha_2,-\beta_2)=(-\delta+3,\delta-2)$. Finally, the multiplicities 
at $B_1,B_2$ are $p_1=3\delta-2$ and $p_2=5\delta^2-11\delta+6$ respectively.

Let $\alpha$ be the homology class of $\Sigma\cup p_1D_1\cup p_2 D_2$. By the same formula, we calculate the self-intersection of $\alpha$, with $\delta=-1\pmod{4}$ in the last step:
\begin{eqnarray*}
\alpha\cdot\alpha &= & \sum_{i=1}^2 p_i (\frac{n-\bar{\alpha}_i p_i}{\alpha_i})\\
& = & (3\delta-2)(8\delta-6-2(3\delta-2))+(5\delta^2-11\delta+6)
\frac{8\delta-6-(6\delta-17)(5\delta^2-11\delta+6)}{-\delta+3}\\
& = & (3\delta-2)(2\delta-2)+(5\delta^2-11\delta+6)(30\delta^2-61\delta+32)\\
& \equiv & 0+ (1-1+2)\cdot (2+1+0) \equiv 2 \pmod{4}\\
\end{eqnarray*}

The proof of the lemma is complete, and Theorem 1.12 follows immediately. 

\end{proof}

We end this section with a proof of Theorem 1.14. First, by Theorem 1.17, 
$W_C$ is a symplectic filling of $(M_C,\xi_{inv})$ where $\xi_{inv}$ is an $\s^1$-invariant, non-transverse contact structure on $M_C=M((3,-1), (3,1), (2,-1))$. In fact, there are additional informations
concerning $(M_C,\xi_{inv})$ which are contained in the proof of Theorem 1.17. If we adapt the notations from Theorem 4.4, we actually have $M_C=M((-3,4), (3,1), (2,1))$, where the Seifert invariant $(-3,4)$ indicates that the corresponding fiber is the binding component of the rational open book supporting $\xi_{inv}$, and the binding component has the opposite orientation of the fiber. 
See Example 6.7(1), with $(p,q)=(2,3)$. 

\begin{lemma}
There is a simply connected relative handlebody $Z$ consisting of one symplectic $2$-handle, which has two concave contact boundaries $(M_C,\xi_{inv})$ and $(M_{C^\prime}, \xi_{inv}^\prime)$, where 
$M_C=M((3,-1), (3,1), (2,-1))$, $M_{C^\prime}=M((22,7), (3,2), (2,-1))$, and $\xi_{inv}^\prime$ is an $\s^1$-invariant, transverse contact structure on $M_{C^\prime}$. Moreover, $c_1(K_{Z})\cdot [\omega_{Z}]<0$ for the
symplectic structure $\omega_Z$ on $Z$.
\end{lemma}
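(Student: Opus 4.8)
The plan is to mimic closely the proofs of Lemmas 7.9, 8.1, and 9.2, which all share the same template: exhibit an explicit rational open book of periodic monodromy on the ``source'' Seifert manifold (here $M_C$) using Theorem 3.4, attach one symplectic $2$-handle along the designated binding component using Theorem 4.4 and Gay's construction, identify the resulting second concave boundary via Theorem 4.4, identify the contact structures via Theorem 4.1, and compute $c_1(K_Z)\cdot[\omega_Z]$ via Corollary 5.3 (or Lemmas 5.1--5.2). First I would record that, as noted in the paragraph preceding the lemma, $M_C$ carries a compatible rational open book with a single binding component $B$ which is the fiber with Seifert data $(-3,4)$ (the orientation-reversed multiplicity-$3$ fiber over which the dividing set of $\xi_{inv}$ sits), coming from the presentation $M_C=M((-3,4),(3,1),(2,1))$ and the case $(p,q)=(2,3)$ of Example 6.5(1); in the notation of Theorem 3.3 this means $\alpha=-3$, the monodromy has order $n=\mathrm{lcm}(3,2)=6$, and the multiplicity $p$ at $B$ is determined by $p=|n\alpha\cdot e(M_C)|$.

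Next I would choose the single page framing $F>0$ so that Theorem 4.4 produces the required second boundary $M_{C^\prime}=M((22,7),(3,2),(2,-1))$. Concretely, the two unchanged singular fibers of $M_C$ are the $(3,1)$ and $(2,1)$ fibers, which under Theorem 4.4 become $(3,-1)$ and $(2,-1)$; since we want $M((22,7),(3,2),(2,-1))$ I would rewrite $(3,-1)$ as $(3,2)$ (shifting $b$) so the target is consistent, and then solve $\bar\alpha\beta+\alpha\bar\beta=1$ together with $\bar\alpha=\tfrac{n}{p}-F\alpha$ for the ascending-sphere multiplicity $\bar\alpha$ that yields the $(22,7)$ fiber on $M_{C^\prime}$ — i.e. take $(\bar\alpha,\bar\beta)$ the unique solution with $\bar\alpha>0$ corresponding to the $(22,7)$ data (with the appropriate sign/orientation bookkeeping from Theorem 4.4), and read off $F=\frac1\alpha(\frac np-\bar\alpha)$; I would then check $F>0$, which is exactly the condition (i)/(ii) of Remark 4.5 and is where one verifies that this particular pair $(\alpha_i,\beta_i)\mapsto(\bar\alpha_i,\bar\beta_i)$ is actually realizable. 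Since $\bar\alpha>0$, Theorem 4.1 gives that the contact structure $\xi_{inv}^\prime$ supported by $(B^\prime,\pi^\prime)$ on $M_{C^\prime}$ is transverse (it is then the Milnor fillable contact structure, matching the last sentence of Theorem 1.11). Simple-connectivity of $Z$ follows exactly as in Lemma 9.3: the binding $B$, which is killed by the attached $2$-handle, is a generator of $\pi_1(M_C)$ — this should be checked from the Seifert presentation of $M_C$ (its order is $|H_1(M_C)|$ and the base orbifold group relations make the relevant multiple fiber a generator). Finally, $c_1(K_Z)\cdot[\omega_Z]<0$ follows from Corollary 5.3: with $s=1$ binding component, $r=3$ unchanged fibers of multiplicities $3,3,2$, order $n=6$, multiplicity $p$ at $B$, and the computed $\bar\alpha$, the bracket $n(s+r-2-\sum\frac1{\alpha_i}-\sum\frac1{\alpha_j})-p(1-\frac{\bar\alpha}{\alpha})$ evaluates to a negative number, which (by Lemma 5.1, $\Omega>0$) gives the claim.

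The main obstacle I expect is bookkeeping, not depth: getting every sign and orientation convention right when passing between the Seifert presentations $M((3,-1),(3,1),(2,-1))$, $M((-3,4),(3,1),(2,1))$, and the bar-version data on $M_{C^\prime}=M((22,7),(3,2),(2,-1))$, in particular ensuring that the ``opposite orientation'' conventions of Theorems 3.3, 3.4, 4.1, and 4.4 are applied consistently to the single orientation-reversed binding component $B$. A secondary point requiring care is verifying $F>0$ and the congruence checks in Lemma 9.3-style arguments for $\pi_1$; but these are finite arithmetic verifications once the conventions are pinned down. I would also double-check against the independent consistency test used in Section 8 — namely that $c_1(K_Z)\cdot[D]$ matches the adjunction computation $c_1(K_V)\cdot\tilde C$ for the corresponding symplectic curve $\tilde C$ obtained by gluing the core disk of the $2$-handle to the symplectic disk bounded by $B$ inside the standard model over $H^{-1}([0,1])$ — which here should reproduce the data for the degree-$3$ cuspidal curve $C$ and the degree-$8$ unicuspidal curve $C^\prime$ of Theorem 1.11, giving confidence that the framing $F$ has been chosen correctly.
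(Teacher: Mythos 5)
Your proposal follows the same route as the paper's proof: recover the rational open book on $M_C$ from the presentation $M_C=M((-3,4),(3,1),(2,1))$ via Theorem 3.3 (with $n=6$, multiplicity $p=|n\alpha\cdot e(M_C)|=9$, $\alpha=-3$), solve $\alpha\bar{\beta}+\beta\bar{\alpha}=1$ for $(\bar{\alpha},\bar{\beta})=(22,29)$ so the ascending sphere lands on the $(22,7)$ fiber of $M_{C'}$, read off $F=\frac{1}{-3}(\frac{6}{9}-22)=\frac{64}{9}>0$, then apply Theorem 4.4, Theorem 4.1 (transversality from $\bar{\alpha}>0$), Corollary 5.3, and the Seifert-presentation $\pi_1$ argument exactly as in Lemma 9.3. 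One small bookkeeping slip to flag: in the Corollary 5.3 evaluation you take $r=3$ with multiplicities $3,3,2$, but the binding fiber (of absolute multiplicity $3$) is already counted as the $i$-index and must not be double-counted among the $j$'s, so $r=2$ with $\alpha_j=3,2$; the bracket then gives $\Omega\cdot(-72)$ as in the paper — your count would give $\Omega\cdot(-68)$, which is still negative, so the final conclusion is unaffected, but the arithmetic should be corrected.
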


\begin{proof}
The rational open book $(B,\pi)$ on $M_C$ which supports $\xi_{inv}$ can be easily recovered 
by Theorem 3.3, where the order of the periodic monodromy equals $n=6$, and the multiplicity
$p=9$ at the binding component $B$, which has Seifert invariant $(\alpha,\beta)=(-3,4)$.
With this understood, observe that $(\bar{\alpha},\bar{\beta})=(22, 29)$ obeys $\alpha\bar{\beta}
+\beta\bar{\alpha}=1$, with the corresponding framing $F=\frac{1}{\alpha}(\frac{n}{p}-\bar{\alpha})
=\frac{1}{-3}(\frac{6}{9}-22)=\frac{64}{9}>0$. Thus by Theorem 4.4, if we attach a symplectic
$2$-handle $H$ along $B$ with a framing $F=\frac{64}{9}$ relative to the page framing from the
rational open book $(B,\pi)$, we get a relative handlebody $Z$, which is symplectic with two
concave contact boundaries, one of which is $(M_C,\xi_{inv})$, while the other one 
$(M^\prime,\xi^\prime)$ can be determined using the formula in Theorem 4.4:
$$
M^\prime=M((22,29), (3,-1), (2,-1))=M((22,7), (3,2), (2,-1))=M_{C^\prime}, 
$$
and $\xi^\prime=\xi_{inv}^\prime$ is transverse because $\bar{\alpha}=22>0$. See Example 6.7(5). 

To determine the sign of $c_1(K_{Z})\cdot [\omega_{Z}]$, we can use the formula in Corollary 5.3,
where $s=1,r=2$, $n=6$, $p=9$, $\alpha_i=-3$, $\bar{\alpha}_i=22$, 
$\alpha_j=2,3$, and $\Omega>0$: 
$$
c_1(K_{Z})\cdot [\omega_{Z}]=\Omega \cdot 
(6(1+2-2-\frac{1}{-3}-\frac{1}{2}-\frac{1}{3})- 9(1-\frac{22}{-3}))=\Omega\cdot (-72)<0.
$$
(If we use Lemma 5.2 to compute $c_1(K_Z)\cdot [pD]$, we get $-72$ which is consistent.)

It remains to show that $Z$ is simply connected. The fundamental group of 
$M_C=M((-3,4), (2,1), (3,1))$ has the following presentation (cf. \cite{N}):
$$
\pi_1(M_C)=\{q_1,q_2,q_3,h| q_1^3 h^{-4}=q_2^2h=q_3^3 h=q_1q_2q_3=1\}.
$$
The attaching of a $2$-handle to $M_C$ along the singular fiber of Seifert invariant $(-3,4)$ means that there is a pair of integers $(\alpha^\prime,\beta^\prime)$, with $3\beta^\prime+4\alpha^\prime=1$, such that $q_1^{\alpha^\prime}h^{\beta^\prime}=1$ in $\pi_1(Z)$. It follows easily that $h=1$ in 
$\pi_1(Z)$, which in turn implies that $q_1=1$, $q_2^2=q_3^3=1$ in $\pi_1(Z)$. With $q_2q_3=1$, it follows that $q_2=q_3=1$ in $\pi_1(Z)$ as well. This shows that each of the generators $q_1,q_2,q_3,h$ of 
$\pi_1(M_C)$ is trivial in $\pi_1(Z)$, proving that $\pi_1(Z)=1$. This finishes off the proof. 

\end{proof}

Theorem 1.14 follows immediately from Lemma 9.5: $X:=W_C\cup Z\cup W_{C^\prime}$ is a
symplectic $\Q$-homology $\C\P^2$, which must be diffeomorphic to $\C\P^2$ by Taubes \cite{T},
as $c_1(K_X)\cdot [\omega_X]=c_1(K_{Z})\cdot [\omega_{Z}]<0$ by Lemma 5.4. 


\vspace{2mm}

{\Small University of Massachusetts, Amherst.\\
{\it E-mail:} wch@umass.edu

\end{document}